\newtheorem{theorem}{Theorem}
\numberwithin{theorem}{section}
\newtheorem{proposition}[theorem]{Proposition}
\newtheorem{lemma}[theorem]{Lemma}
\newtheorem{claim}[theorem]{Claim}
\newtheorem{corollary}[theorem]{Corollary}
\newtheorem{question}[theorem]{Question}
\theoremstyle{definition}
\newtheorem{example}[theorem]{Example}
\newtheorem{remark}[theorem]{Remark}
\newtheorem{definition}[theorem]{Definition}
\newcommand{\g}{\mathfrak{g}}
\newcommand{\ord}{\text{ord}}
\newcommand{\Id}{\text{Id}}
\newcommand{\iterates}{\{1,2,3,4,6, 8, 12\}}
\newcommand{\V}{\mathcal{V}}
\newcommand{\OO}{\mathcal{O}}
\newcommand{\F}{\mathbb F}
\newcommand{\A}{\mathbb A}
\newcommand{\Z}{\mathbb Z}
\newcommand{\C}{\mathbb C}
\newcommand{\bP}{\mathbb P}
\newcommand{\T}{\mathcal{T}}
\newcommand{\Q}{{\mathbb Q}}
\newcommand{\R}{{\mathbb R}}
\newcommand{\bG}{\mathbb G}
\newcommand{\indet}{\text{Indet}}
\newcommand{\inj}{\hookrightarrow}
\newcommand{\lra}{\longrightarrow}
\newcommand{\dra}{\dashrightarrow}
\newcommand{\End}{\text{End}}
\title{Rational self-maps of projective surfaces with a regular iterate}
\author{Sina Saleh}
\begin{document}
\begin{abstract}
We show that if $\Phi: X \dra X$ is a dominant rational self-map of a projective surface $X$ over $\C$ with a regular and non-invertible iterate $\Phi^n$, then we can take $n \leq 12$.  This bound is sharp and realized on $X = \bP^2$. In the case where $\Phi$ is a birational self-map of $\bP^2$ we prove that as long as $\Phi$ does not preserve a non-constant fibration, if some iterate $\Phi^n$ is regular then $\Phi$ itself must be regular.
\end{abstract}
\maketitle

\section{Introduction}

In \cite{BGR}, Bell, Ghioca, and Reichstein investigated dominant rational self-maps \(\Phi\) of a variety \(X\) that admit a regular iterate. We refer to such maps as \textit{eventually regular}. The authors suggest in \cite{BGR} that imposing a suitable rigidity hypothesis on the variety \(X\) leads to strong structural conclusions about an endomorphism \(\Phi\) when it is eventually regular. In particular, \cite[Theorem 1.1(a)]{BGR} shows that if \(\Phi\) is an eventually regular dominant rational self-map of a semiabelian variety \(G\) over an algebraically closed field of characteristic \(0\), then either \(\Phi\) preserves a non-constant fibration or \(\Phi\) itself must be regular.  

This paper on the other hand focuses on rational self-maps of smooth projective surfaces over \(\mathbb{C}\) that are eventually regular without imposing additional assumptions on the type of surface. A classic example is the standard Cremona transformation $\sigma([x:y:z]) = [yz:xz:xy]$ on $\bP^2$ which is not regular but whose second iterate is the identity. We will investigate the structure of eventually regular self-maps with a regular but non-invertible iterate. We provide explicit examples in Section 2.  
\begin{theorem}
\label{thm:main}
Let $\Phi: \bP^2 \dra \bP^2$ be a dominant rational self-map over $\C$. Assume that $\Phi^k$ is a non-invertible endomorphism for some $k \ge 1$.  Then, $\Phi^i$ is an endomorphism for some $i \in \iterates$. Also, for every $i \in \iterates$ there exists a monomial rational self-map $\Phi: \bP^2 \dra \bP^2$ over $\C$ whose first iterate that becomes a non-invertible endomorphism of $\bP^2$ is $\Phi^i$. 
\end{theorem}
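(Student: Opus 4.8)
If $n_0:=\min\{n\ge1:\Phi^n\text{ is a morphism}\}=1$ there is nothing to prove, so assume $n_0\ge2$ (this $n_0$ exists by hypothesis). The plan has four steps; I expect the second one, the reduction to monomial maps, to be where the real work lies.

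\emph{Step 1: a minimal exponent.} I would first note that $S:=\{n\ge1:\Phi^n\text{ is a morphism}\}$ equals $n_0\Z_{\ge1}$: since $\Phi^{n_0}$ is a finite surjective morphism of $\bP^2$, for $m>n$ in $S$ the composite $\Phi^m=\Phi^{m-n}\circ\Phi^n$ is a morphism iff $\Phi^{m-n}$ is one (finiteness and surjectivity of $\Phi^n$ keep it from absorbing indeterminacy), and $S$ is closed under addition. Hence it suffices to prove $n_0\in\iterates$. Also $\Phi^{n_0}$ is non-invertible (otherwise every $\Phi^{mn_0}$ would lie in $\mathrm{PGL}_3$, contradicting the hypothesis); writing $D:=\deg\Phi^{n_0}\ge2$ and using multiplicativity of the dynamical degrees together with $\lambda_1=D$, $\lambda_2=D^2$ for a degree-$D$ endomorphism of $\bP^2$, we get $\lambda_1(\Phi)^2=\lambda_2(\Phi)=D^{2/n_0}$.

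\emph{Step 2: reduction to a $2\times2$ matrix.} I would show that the problem of finding the first regular iterate of $\Phi$ is equivalent to the same problem for a monomial map $\Phi_M$ attached to some $M\in\mathrm{Mat}_2(\Z)$ with $\det M\ne0$. The inputs are: the structure theory of surface self-maps with $\lambda_1^2=\lambda_2>1$ (such a map is birationally conjugate to an endomorphism of a rational surface); the fact that a smooth projective rational surface admitting a non-invertible endomorphism is toric; and the classification of endomorphisms of toric surfaces as monomial maps composed with torus translations, the translation not affecting which iterates are morphisms. (One could instead study the centralizer of the polarized endomorphism $\Phi^{n_0}$ in $\mathrm{Bir}(\bP^2)$ directly.) Now $\Phi_M^{\,j}=\Phi_{M^j}$ is a morphism of $\bP^2$ precisely when $M^j$ maps the fan of $\bP^2$ into itself, and the monoid of such matrices is exactly $\{c\gamma:c\in\Z_{\ge1},\ \gamma\in S_3\}$, where $S_3\hookrightarrow\mathrm{GL}_2(\Z)$ permutes the rays $e_1,e_2,-e_1-e_2$ (a short argument shows such an $M$ carries each ray to a ray, and the single relation $e_1+e_2+(-e_1-e_2)=0$ then forces the three scaling factors to agree). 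Minimality of $n_0$ forces $M^{n_0}=c\gamma$ with $c\ge2$ — if $c=1$ then $M^{n_0}$ has finite order, impossible since $|\det M|\ge2$ — so with $t:=\operatorname{ord}(\gamma)\in\{1,2,3\}$ we get $M^{\,n_0t}=\lambda I$, $\lambda=c^{t}\ge2$.

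\emph{Step 3: the arithmetic.} From $M^{\,n_0t}=\lambda I$ with $\lambda>0$, the eigenvalues of $M$ are $re^{\pm i\theta}$ with $r^{\,n_0t}=\lambda$, and $M^{n_0}=c\gamma$ forces $e^{in_0\theta}\in\{1,\omega,\bar\omega\}$ for a primitive cube root $\omega$ and rules out $\gamma$ being a transposition, so $t\in\{1,3\}$. Real eigenvalues give $n_0\le2$ directly. Otherwise $e^{i\theta}$ is a primitive $N$-th root of unity, and $M^{N}=r^{N}I$ is again a positive multiple of $I$, hence fan-preserving for $\bP^2$, so $n_0=N$ if $t=1$ and $N=3n_0$ if $t=3$. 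Since $\operatorname{tr}M=2\sqrt{\det M}\,\cos\theta\in\Z$ with $\det M\ge2$, the whole question reduces to: for which integers $m\ge3$ does $2\sqrt{d}\,\cos(2\pi k/m)\in\Z$ admit an integer solution $d\ge2$ with $\gcd(k,m)=1$? If $\varphi(m)\ge6$ then $\cos(2\pi k/m)$ has degree $\ge3$ over $\Q$ and there is no solution; among $m$ with $\varphi(m)\le4$, the values $m=5,10$ are excluded because $\cos(2\pi k/m)$ then lies in $\Q(\sqrt5)$ while $\sqrt d\,(\sqrt5\mp1)\notin\Q$ for all $d\ge2$, and $m\in\{3,4,6,8,12\}$ all occur. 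In the case $t=3$, $m=3n_0$ must be divisible by $3$, hence $m\in\{3,6,12\}$ and $n_0\in\{1,2,4\}$, nothing new. Together with the real-eigenvalue case, $n_0\in\iterates$.

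\emph{Step 4: sharpness.} For each $i\in\iterates$ I would exhibit $\Phi_M$ with $\det M\ge2$ and $M^{i}$ a positive multiple of $I$, so that $\Phi_M^{\,i}$ is the power map $[x^{d}:y^{d}:z^{d}]$ with $d=|\det M|^{i/2}\ge2$ — a non-invertible endomorphism of $\bP^2$ — while $M^j$ violates the fan condition for $0<j<i$, so $\Phi_M^{\,j}$ is not even a morphism. One can take $M=2I$ for $i=1$; $M=\left(\begin{smallmatrix}0&2\\1&0\end{smallmatrix}\right)$ for $i=2$ (here $M^{2}=2I$, but $M$ sends $-e_1-e_2$ off the fan); $M=\left(\begin{smallmatrix}0&-2\\1&0\end{smallmatrix}\right)$ for $i=4$ ($M^{2}=-2I$, $M^{4}=4I$); $M=\left(\begin{smallmatrix}0&-4\\1&-2\end{smallmatrix}\right)$ for $i=3$ ($M^{3}=8I$); $M=\left(\begin{smallmatrix}0&-4\\1&2\end{smallmatrix}\right)$ for $i=6$ ($M^{3}=-8I$, $M^{6}=64I$); $M=\left(\begin{smallmatrix}0&-2\\1&2\end{smallmatrix}\right)$ for $i=8$ ($M^{4}=-4I$, $M^{8}=16I$); and $M=\left(\begin{smallmatrix}0&-3\\1&3\end{smallmatrix}\right)$ for $i=12$ ($M^{6}=-27I$, $M^{12}=729I$). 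In each case one checks the fan condition on $M,M^{2},\dots,M^{i-1}$ by hand — e.g.\ the last matrix has trace $3$ and determinant $3$, so eigenvalues $\sqrt3\,e^{\pm i\pi/6}$, and none of its first eleven powers is a scalar or a scalar multiple of a fan automorphism — so the first morphism iterate of $\Phi_M$ is exactly $\Phi_M^{\,i}$, and it is non-invertible.
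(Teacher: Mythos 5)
Your Step 2 is where the proposal breaks, and in two distinct places. First, the asserted ``classification of endomorphisms of toric surfaces as monomial maps composed with torus translations'' is false: $\bP^2$ is toric and $[X:Y:Z]\mapsto[X^2+YZ:Y^2:Z^2]$ is an endomorphism that is not monomial up to translation; likewise the paper's Example \ref{ex:2-reg-mix}, $(x,y)\mapsto(x+y,x^2+y)$, has $\lambda_1^2=\lambda_2$ and its square is a regular non-monomial endomorphism of $\bP^2$, and no iterate splits or is monomial. The correct statement (Iitaka) is that endomorphisms of $\bG_m^2$ itself are group endomorphisms composed with translations, but to invoke it you must first produce a totally invariant divisor consisting of the full triangle of three lines so that $\Phi$ restricts to a self-map of the torus. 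That is exactly what the paper's Proposition \ref{prop:totally-inv} together with the linearity theorem (Theorem \ref{conj:lin-conj}) delivers, and even then only in one of three cases; when the invariant divisor is one or two lines, $\Phi$ is a general (possibly variable-mixing) polynomial endomorphism of $\C^2$, and showing $\Phi^2$ is already regular requires the Favre--Jonsson valuative-tree analysis (Proposition \ref{prop:FJ-5.3}, Lemma \ref{lem:skew-then-split}). Your proposal has no substitute for these cases: the monomial reduction would have to absorb them, and it cannot. Second, the regularization input is unproven and, even if granted, does not give the claimed equivalence: the paper only constructs a \emph{normal} model on which $\Phi$ becomes regular (Proposition \ref{prop:good-normal-model}) and explicitly leaves smoothness open (Question 8.2); and being an endomorphism of $\bP^2$ is not invariant under birational conjugation, so the quantity $n_0$ for $\Phi$ and for a birationally conjugate monomial model need not agree. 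A smaller gap of the same flavor sits in Step 1: the claim that $\Phi^m$ regular and $\Phi^n$ a finite surjective morphism force $\Phi^{m-n}$ regular is precisely the paper's Lemma \ref{lem:regularComposition}, whose proof needs the singular inverse function theorem and a Hartogs argument, not just ``finiteness and surjectivity''.

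By contrast, your Steps 3 and 4 are essentially sound and close in spirit to what the paper does once it has reached the torus case: your fan-monoid arithmetic (eigenvalues $\sqrt{d}\,e^{\pm i\theta}$ with $e^{i\theta}$ a root of unity of bounded degree) is an equivalent route to Lemma \ref{lem:smallestPower}, and your explicit matrices for $i\in\iterates$ play the same role as Examples \ref{ex:2-regular}--\ref{ex:3,4,6,8}, with the fan criterion correctly detecting which monomial iterates extend to $\bP^2$. So the sharpness half of the theorem is fine, but the upper bound half rests on a reduction that is both unestablished (smooth regularization, conjugation-invariance of $n_0$) and in part simply false (monomiality of toric endomorphisms), and it omits the one- and two-line cases that constitute much of the paper's actual proof.
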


In the case where $\Phi$ is birational we prove the next theorem.
\begin{theorem}
\label{thm:birational}
Let $\Phi: \bP^2 \dra \bP^2$ be a birational self-map over $\C$. Assume that  $\Phi^k$ is an endomorphism for some $k \ge 1$. Moreover, assume that there does not exist a non-constant rational fibration $f: \bP^2 \dra \bP^1$ such that $f \circ \Phi = f$. Then, $\Phi$ itself is regular. 
\end{theorem}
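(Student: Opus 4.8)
The plan is to reduce to the theory of birational self‑maps of $\bP^2$ with bounded degree growth (equivalently, automorphisms of rational surfaces) and then to run through the classification of $\langle\Phi\rangle$‑minimal models, extracting in each case either the conclusion $\Phi\in\mathrm{PGL}_3(\C)$ or an honest $\Phi$‑invariant fibration. First I would dispose of the reductions. A regular birational self‑map of $\bP^2$ is an automorphism, so $\Phi^k\in\mathrm{PGL}_3(\C)=\mathrm{Aut}(\bP^2)$; if in addition $\deg\Phi=1$ we are already done, so assume $\deg\Phi\ge 2$. Writing $\Phi^{nk+r}=(\Phi^k)^n\circ\Phi^r$ for $0\le r<k$ and using submultiplicativity of the degree, $\deg\Phi^m\le\max_{0\le r<k}\deg\Phi^r$ for every $m$, so the sequence $(\deg\Phi^m)_m$ is bounded and $\Phi$ is \emph{elliptic} in the sense of Diller--Favre.

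Next I would invoke the structure theory. Being elliptic, there is a birational morphism $\pi\colon W\to\bP^2$ — the blow‑up of $\bP^2$ along the (finite, $\Phi$‑periodic) cluster of base points of the iterates of $\Phi$ — such that the lift $\hat\Phi$ of $\Phi$ is an automorphism of $W$. The isometry $\hat\Phi^{*}$ of the Lorentzian lattice $\mathrm{Pic}(W)$ has bounded orbits, hence is semisimple with roots‑of‑unity eigenvalues, hence of finite order $N$. If $N=1$, then $\hat\Phi^{*}$ fixes the class of every $\pi$‑exceptional curve, hence fixes each such curve (the unique effective member of its class), so $\hat\Phi$ preserves $\mathrm{Exc}(\pi)$ and descends to an automorphism of $\bP^2$: $\Phi\in\mathrm{PGL}_3(\C)$, done. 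So assume $N\ge 2$; then $\rho(W)\ge 2$, and contracting $\hat\Phi$‑orbits of pairwise disjoint $(-1)$‑curves we reach an $\hat\Phi$‑equivariant minimal model $Y$ which, by Iskovskikh's classification of $G$‑minimal rational surfaces, is either $\bP^2$, a Hirzebruch surface $\F_n$ ($n=0$ or $n\ge 2$), a $\Psi$‑minimal conic bundle, or a $\Psi$‑minimal del Pezzo surface, where $\Psi$ denotes the induced automorphism.

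For all non‑$\bP^2$ outcomes I would produce an invariant fibration. Here $Y$ carries a nonempty finite set of conic‑bundle classes, permuted by $\Psi^{*}$. Choosing a $\Psi^{*}$‑orbit $\{D_1,\dots,D_m\}$ with corresponding pencils $f_1,\dots,f_m$ on $\bP^2$, the rational map $(f_1,\dots,f_m)\colon\bP^2\dashrightarrow(\bP^1)^m$ is $\Phi$‑semiequivariant for an automorphism of $(\bP^1)^m$ permuting the factors and acting by Möbius maps on each; if its image is a surface, any non‑constant invariant rational function on that surface (the quotient of the image by this finite‑type action) pulls back to a $\Phi$‑invariant rational fibration of $\bP^2$, contradicting the hypothesis. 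If the image is a curve, $\Phi$ preserves a single pencil with base action $h\in\mathrm{PGL}_2(\C)$; if $h$ has finite order, composing with the quotient of the base by $\langle h\rangle$ gives a genuinely $\Phi$‑invariant fibration and we are done, while if $h$ has infinite order it fixes each of the finitely many singular fibres, so the conic bundle has at most two singular fibres and $\rho(Y)$ is small, and a direct examination of $\mathrm{Aut}(Y)$ (as for $\F_0$ or $\mathbb{F}_1$‑type blow‑ups) shows that $\hat\Phi$ either descends to $\bP^2$ or swaps the two conic‑bundle structures, the latter again yielding an invariant fibration by the previous sentence. The same strategy handles a $\Psi$‑equivariant fibration of genus $\ge 1$ (e.g. a Halphen‑type pencil): its base automorphism fixes the critical values and restricts to a finite‑order automorphism of an invariant fibre, and since $\mathrm{Aut}^{0}(Y)=1$ for such $Y$ the base action has finite order, so one again descends to a $\Phi$‑invariant fibration of $\bP^2$ — contradiction.

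The remaining case, $Y=\bP^2$, is where the main work lies: $\Phi$ is conjugate in $\mathrm{Bir}(\bP^2)$ to some $B\in\mathrm{PGL}_3(\C)$, say $\Phi=\rho B\rho^{-1}$, with $\Phi^k=\rho B^k\rho^{-1}\in\mathrm{PGL}_3(\C)$. The hard point is a rigidity statement for conjugacy classes of linear maps inside the Cremona group: if $B^k$ is generic (diagonalizable with multiplicatively independent eigenvalue ratios) its centralizer in $\mathrm{Bir}(\bP^2)$ is exactly the maximal torus, which forces $\rho\in\mathrm{PGL}_3(\C)$ and hence $\Phi\in\mathrm{PGL}_3(\C)$; if $B^k$ is special then $B$ is special too, and one must pass through the classification of elements of finite order and of one‑parameter subgroups of $\mathrm{PGL}_3(\C)$, together with their Cremona conjugacy classes (e.g. Geiser, Bertini and de Jonquières involutions, which are \emph{not} linearizable), checking in every case that $\Phi$ either lies in $\mathrm{PGL}_3(\C)$ or preserves one of the finitely many $B$‑invariant pencils of lines with a \emph{finite‑order} base action — the latter again contradicting the standing hypothesis. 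I expect this last step, i.e. the interplay between non‑linearizable finite‑order elements and invariant pencils, to be the principal obstacle; the preceding steps are essentially bookkeeping over the classification of elliptic Cremona elements.
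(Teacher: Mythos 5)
Your proposal follows a genuinely different route from the paper (ellipticity in the sense of Diller--Favre, regularization to an automorphism of a blow-up, equivariant minimal models, then rigidity of linear maps inside $\mathrm{Bir}(\bP^2)$), whereas the paper never regularizes: it uses Proposition \ref{prop:totally-inv} and Lemma \ref{lem:add-more-totally-invs} to show that $\Phi$ restricts to an automorphism of the complement of the $\Phi^k$-invariant hyperplane configuration, pins down that configuration and the Jordan form of $\Phi^k$ via Proposition \ref{prop:inv-subvarieties} (a linear-recurrence argument), and then concludes by a short explicit computation with monomial-affine maps on $\bG_m^2$, resp.\ de Jonqui\`eres-type maps on $\A^2\setminus\{y=0\}$, using Lemma \ref{lem:skew-then-split}. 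However, as it stands your argument has a genuine gap exactly where the theorem's content lies: the case where the minimal model is $\bP^2$, i.e.\ $\Phi$ is Cremona-conjugate to a linear map while $\Phi^k$ is already linear. The ``rigidity statement'' you invoke (centralizer of a generic linear map in $\mathrm{Bir}(\bP^2)$ equals the torus, and a full analysis of the special Jordan types and non-linearizable finite-order elements) is asserted, not proved, and you yourself flag it as the principal obstacle. This is not bookkeeping: the hypothesis that no fibration satisfies $f\circ\Phi=f$ must enter precisely here, since Example \ref{ex:bir-large-order} gives a birational (indeed affine) map with a linear iterate that is \emph{not} linear; so no amount of classification of elliptic elements can close this case without an argument that uses the fibration hypothesis, and your sketch does not supply one. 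Note also that your intended use of the centralizer of $B^k$ does not directly constrain the conjugating map $\rho$, and that $\Phi^k$ need not be ``generic'' a priori — ruling out multiplicatively dependent eigenvalues or unipotent parts is itself part of what must be proved from the no-fibration hypothesis (this is what the paper's Proposition \ref{prop:inv-subvarieties} does).

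There is a second, smaller problem in the non-$\bP^2$ branches. When the image of $(f_1,\dots,f_m)$ is a surface you claim a non-constant invariant rational function on it and speak of ``the quotient of the image by this finite-type action''; but the induced action on the image need not be finite, and a bounded-degree automorphism of a rational surface need not admit any non-constant invariant rational function (a diagonal automorphism $(x,y)\mapsto(\alpha x,\beta y)$ with multiplicatively independent $\alpha,\beta$ has none), so this step does not produce the required $f$ with $f\circ\Phi=f$. Similarly, the infinite-order base-action subcase of the conic-bundle analysis and the Halphen case are handled only by ``a direct examination shows''. The overall architecture (descend when the Picard action is trivial; otherwise extract a fibration or land in the linear-conjugacy case) is reasonable, but in its present form the proof is incomplete both at its central case and at these auxiliary reductions.
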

\begin{remark}
\label{rem:no-fibration-pres-nec}
Example \ref{ex:bir-large-order} illustrates that if we allow $\Phi$ to preserve a non-constant fibration, then $\Phi$ could take any arbitrary number of iterations to become regular. So, there is no hope of giving a bound similar to Theorem \ref{thm:main}. This explains why the extra assumption that no non-constant rational fibration is preserved is necessary in Theorem \ref{thm:birational}.   
\end{remark}

The next theorem is a generalization of the first part of Theorem \ref{thm:main} to the case of all complex projective surfaces.
\begin{theorem}
\label{thm:more-precise}
Let $X$ be a smooth projective surface over $\C$ and let $\kappa(X)$ denote the Kodaira dimension of $X$. Suppose that $\Phi$ is a dominant rational self-map of $X$ with a regular iterate that is not invertible. Then, one of the following must hold:
\begin{enumerate}
    \item[(i)] $\kappa(X) \ge 0$, $X$ is minimal and $\Phi$ itself is regular, or
    \item[(ii)] $X$ is a $\bP^1$-bundle over a curve $C$ of genus at least 1 and $\Phi$ itself is regular, or

    \item[(iii)] $X$ is a toric surface and $\Phi^i$ is regular for some $i \in \iterates$. 
\end{enumerate}
\end{theorem}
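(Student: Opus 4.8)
The plan is to run through the Enriques--Kodaira classification of $X$, using the classification of smooth projective surfaces admitting a non-isomorphic surjective endomorphism (Fujimoto--Nakayama, and Nakayama in the ruled case). First replace $\Phi$ by the hypothesized regular iterate $\psi := \Phi^k$: this is a finite surjective endomorphism of $X$ of topological degree $\ge 2$, so $X$ carries such an endomorphism. Surfaces of general type admit no dominant rational self-map of degree $>1$, so $\kappa(X) \ne 2$. The remaining cases split as $\kappa(X) \ge 0$ (leading to (i)); and $\kappa(X) = -\infty$, where $X$ is ruled over a curve $C$ and one distinguishes $g(C) \ge 1$ (leading to (ii)) from $g(C) = 0$ (leading to (iii)).

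Suppose $\kappa(X) \in \{0,1\}$. From $K_X = \psi^{\ast} K_X + R$ with $R \ge 0$ effective, and comparing $\psi^{\ast} K_X$ with $K_X$ numerically (using that $K_X$ is nef and, when $\kappa(X)=0$, numerically trivial), one gets $R = 0$, i.e.\ $\psi$ is \'etale. This excludes K3 and Enriques surfaces, which are simply connected, respectively have $\pi_1 = \Z/2$, and so admit no \'etale self-cover of degree $\ge 2$; and it forces $X$ to be minimal, because the finitely many $(-1)$-curves on a non-minimal $X$ with $\kappa(X) \ge 0$ would have to be permuted by $\psi$ with each of them having $\deg\psi \ge 2$ preimages among them, which is numerically impossible. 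So $X$ is minimal with $\kappa(X) \ge 0$, which is (i); it remains to see $\Phi$ itself is regular. On an abelian surface this is automatic, since rational maps from a smooth variety to an abelian variety are morphisms. On a hyperelliptic or properly elliptic surface, $\Phi$ preserves the Albanese, respectively Iitaka, fibration $f \colon X \to C$, hence descends to a morphism $\bar\Phi \colon C \to C$; as $\psi$ is \'etale, $\bar\Phi$ is an automorphism, so $\Phi$ restricts to a dominant rational, hence regular, self-map on the general (smooth, elliptic-curve) fiber, and combining this with the fact that $\Phi^k$ is everywhere defined rules out the remaining indeterminacy.

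Now let $\kappa(X) = -\infty$, so $X$ is ruled over a curve $C$. If $g(C) \ge 1$, the ruling $X \to C$ is intrinsic (it is the Stein factorization of the Albanese map, and there is no non-constant map $\bP^1 \to C$), so $\Phi$ and $\psi$ descend to $C$; a reducible fiber would contain a $(-1)$-curve obstructing the degree-$\ge 2$ endomorphism, so $X$ is a $\bP^1$-bundle over $C$, and a fiberwise argument as in case (i) shows $\Phi$ is regular --- this is (ii). If $g(C) = 0$, then $X$ is a smooth rational surface carrying a non-invertible surjective endomorphism, so by Nakayama's classification $X$ is toric; this is (iii). One then shows that an eventually regular dominant rational self-map of a smooth toric surface is, after conjugating by an automorphism, controlled by an integer matrix acting on the fan, and re-runs the argument proving the first part of Theorem~\ref{thm:main}: the obstruction to $\Phi^i$ being regular is governed by how the fan of $X$ and its iterated images interlock, which I expect to be constrained by the symmetry group of the fan --- a finite subgroup of $GL_2(\Z)$, and hence of order lying in $\iterates$. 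One thereby obtains a regular iterate $\Phi^i$ with $i \in \iterates$, the extreme values $8$ and $12$ coming from the highly symmetric toric surfaces $\bP^1 \times \bP^1$ and the hexagonal del Pezzo surface, just as $\{1,2,3,4,6\}$ and indeed all of $\iterates$ already occur on $\bP^2$.

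The main obstacle is case (iii). Two steps there carry the real weight: reducing a general eventually regular rational self-map of a toric surface to an essentially monomial one --- a priori the indeterminacy need not be compatible with any toric structure, so one must show $\Phi$ preserves the toric boundary or perform an equivariant modification --- and the combinatorial analysis of iterated fans that singles out exactly $\iterates$ rather than some larger set. This is precisely the content of Theorem~\ref{thm:main} and its proof, which the present statement generalizes. A secondary, more technical difficulty is establishing minimality of $X$ in (i) and (ii) and then upgrading ``some iterate of $\Phi$ is regular'' to ``$\Phi$ is regular'': the \'etale structure, respectively the $\bP^1$-bundle structure together with the induced automorphism of the base, provides the leverage, but one must track carefully how any indeterminacy of $\Phi$ would propagate under iteration in order to exclude it.
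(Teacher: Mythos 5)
Your trichotomy (classification into $\kappa(X)\ge 0$, $\bP^1$-bundles over genus $\ge 1$, and toric) matches the paper's skeleton, but the proposal has genuine gaps at exactly the points where the theorem is non-trivial. The most important missing idea is the paper's Proposition \ref{prop:totally-inv}: if $\Phi$ is not regular but $\Phi^k$ is, then the indeterminacy loci of $\Phi,\dots,\Phi^{k-1}$ form a non-empty finite set of points totally invariant under $\Phi^k$ (and, for $\bP^2$ and $\bP^1$-bundles, one can enlarge this to a totally invariant divisor on whose complement $\Phi$ restricts to a quasi-finite surjective endomorphism). In case (i) the paper concludes instantly: a finite étale $\Phi^k$ of degree $\ge 2$ cannot have a totally invariant point, so $\Phi$ must already be regular. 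Your substitute --- descend to the base of the Iitaka/Albanese fibration and argue fiberwise, then assert that regularity of $\Phi^k$ ``rules out the remaining indeterminacy'' --- does not work as stated: by Remark \ref{rem:no-fibration-pres-nec}'s companion Remark 1.5 in the paper, the only non-vacuous subcase is a properly elliptic surface with singular fibers, where the generic fiber sees nothing and the indeterminacy sits precisely on the singular fibers; regularity on the general fiber plus regularity of $\Phi^k$ gives no mechanism to exclude it. Case (ii) is likewise only gestured at (``a fiberwise argument as in case (i)''), whereas the paper needs Amerik's trivialization after étale base change, the splitting of endomorphisms of $C\times\bP^1$ (Proposition \ref{prop:trivial-then-split}), an analysis of totally invariant horizontal curves including a genuinely analytic argument when the invariant curve is a $2$-section, and the skew-product Lemma \ref{lem:skew-then-split}.

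In case (iii) you defer to ``Theorem \ref{thm:main} and its proof,'' but in the paper the first half of Theorem \ref{thm:main} is \emph{deduced from} Theorem \ref{thm:more-precise}, so this is circular as a proof plan; and your proposed mechanism for the bound $\iterates$ is not the right one. The bound does not come from the symmetry group of the fan of $X$ (finite subgroups of $\mathrm{GL}_2(\Z)$ have element orders in $\{1,2,3,4,6\}$ only), nor are $8$ and $12$ realized on $\bP^1\times\bP^1$ or the hexagonal del Pezzo --- they already occur on $\bP^2$ (Examples \ref{ex:12-reg}, \ref{ex:3,4,6,8}), while for $\bP^1\times\bP^1$ the paper gets only $\{1,2,3,4,6\}$. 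The actual source is arithmetic: once one knows (via the totally invariant boundary divisor from Proposition \ref{prop:totally-inv}, the linearity theorem for totally invariant curves in $\bP^2$, and Nakayama's invariance of negative curves) that $\Phi$ restricts to an endomorphism of a boundary complement isomorphic to $\bG_m^2$, $\A^2$, or $\A^2\setminus\{x=0\}$, the $\bG_m^2$ case reduces to Lemma \ref{lem:smallestPower} --- the eigenvalue ratio of the integer matrix is a quadratic root of unity, giving $\{1,2,3,4,6\}$, and the positivity requirement on the diagonal entries produces $8$ and $12$ --- while the $\A^2$-type cases are bounded by $2$ via the Favre--Jonsson fixed-point analysis on the valuative tree (Proposition \ref{prop:FJ-5.3} and its extension, Proposition \ref{prop:5.3-extension}, needed for non-minimal toric surfaces). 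None of these ingredients, nor workable replacements for them, appear in your proposal, so as written it does not constitute a proof.
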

\begin{remark}
\label{rem:vac-when-abelian}
Theorem \ref{thm:more-precise} is vacuous when $X$ is an abelian or hyperelliptic surface. To see this, note that abelian and hyperelliptic surfaces do not contain rational curves (see \cite[Theorem 4.1 and Corollary 4.2]{surf-survey}) and any morphism to a surface $X$ that contains no rational curves must be regular. Indeed, if it were not, then by resolution of singularities we could resolve its indeterminacies by a sequence of blow-ups, and the images of the resulting exceptional divisors would be rational curves in $X$, contradicting our assumption.  So, $\Phi$ must already be regular for abelian and hyperelliptic surfaces. 
\end{remark}
\begin{remark}
Remark \ref{rem:vac-when-abelian} along with \cite[Theorem 3.2]{Fujimoto} show that the only interesting case of Theorem \ref{thm:more-precise} when $\kappa(X) \ge 0$ is when $X$ is an elliptic surface with Kodaira dimension equal to 1. There are examples of such surfaces that admit rational self-maps that are not regular everywhere. For example, let $X$ be an elliptic surface such that $\pi: X 
\lra C$ has singular fibers and let $E$ be the generic fiber of $\pi$. Then, any endomorphism of $E$, which is an elliptic curve over $\C(C)$, gives rise to a rational self-map of $X$ which has indeterminacy at the singular fibers of $\pi$. So, when $\kappa(X) \ge 0$, Theorem 1.4 will be non-trivial in the case of elliptic surfaces of $\kappa(X) = 1$ with singular fibers.
\end{remark}

\begin{remark}
In contrast to Theorem \ref{thm:main} which only treats the characteristic zero case, \cite[Theorem 1.1]{BGR} classifies eventually regular dominant rational self-maps of semiabelian varieties defined over a field of positive characteristic as well.
\end{remark}

\begin{remark}
\label{rem:birational-exc}
Example \ref{ex:non-minimal} provides a counterexample to the possible generalization of Theorem \ref{thm:birational} when $\Phi$ is allowed to be a birational self-map of an arbitrary complex projective surface. Having said that, there are surfaces other than $\bP^2$ where Theorem \ref{thm:birational} still holds.  For example, if $X$ is a minimal complex projective surface with $\kappa(X) \ge 0$, then by \cite[Proposition 7.5]{diller-favre}, any dominant rational self-map of $X$ is already regular. So, Theorem \ref{thm:main} is vacuously true. Also, even though we have not treated the case of Hirzebruch surfaces for Theorem \ref{thm:birational}, it is worth mentioning that we still expect Theorem \ref{thm:birational} to hold for these surfaces. 

\end{remark}
\begin{remark}
\label{rem:good-model-bir}
As explained in Remark \ref{rem:birational-exc}, Theorem \ref{thm:birational} does not hold for arbitrary complex projective surfaces. Having said that, \cite[Proposition 2.12]{favre} shows that if $\Phi: X \dra X$ is birational and eventually regular, then there is a smooth projective surface $\hat{X}$, a birational morphism $\pi: \hat{X} \lra X$, and an automorphism $F: \hat{X} \lra \hat{X}$ such that  $\Phi \circ \pi = \pi \circ F$. This seems to be the most general conclusion one can reach in the birational setting regardless of the type of the surface $X$. 

In the case of a general eventually regular self-map, we can always find a normal model $\hat{X}$ such that $\Phi$ lifts to an endomorphism of $\hat{X}$ (see Section \ref{sec:good-normal-model}). 
\end{remark}

Theorem \ref{thm:more-precise} motivates the following question.

\begin{question}
\label{question:boundIterate}
Let $X$ be a variety over $\C$.  Let $\Phi: X \dra X$ be a dominant rational self-map over $\C$ that is eventually regular. Moreover, assume that there does not exist a non-constant rational map $f: X \dra \bP^1$ such that $f \circ \Phi = f$. Does there exist a constant $\ell(X)$ such that $\Phi^i$ is already regular for some $i \in \{1,2,\dots,\ell(X)\}$? 
\end{question} 

As illustrated by Example \ref{ex:prod}, the hypothesis ruling out invariant fibrations is necessary. We thank Serge Cantat for pointing out this example.

\begin{remark}
 The answer to Question \ref{question:boundIterate} is also positive when \(X = \mathbb{A}^1\). It is well known that if a rational function of degree at least 2 over a field of characteristic \(0\) has a regular iterate—i.e., a polynomial iterate—then its second iterate must already be regular (see \cite[Section 4.1]{Bea91}). In arbitrary characteristic, \cite{Sil96} provides a complete classification of rational functions with polynomial iterates. 
\end{remark}
\begin{remark}
We note that Theorem \ref{thm:more-precise} also shows that there is a uniform bound (not depending on $X$) for the number of iterations of an eventually regular self-map to become regular under the assumption that $X$ is a smooth projective surface. So, one could naturally rephrase Question \ref{question:boundIterate} asking for a uniform bound depending on the dimension of $X$ only.   
\end{remark}

It turns out that if a dominant rational self-map $\Phi$ of a projective variety $X$ is not regular itself but has an iterate $\Phi^k$ that is regular and non-invertible, then there must exist a subvariety $W$ of $X$ that is totally invariant under $\Phi^k$ that is $\Phi^{-k}(W) = W$. More precisely, in the case of projective surfaces, we have the next proposition which is the analog of \cite[Proposition 1.2]{BGR} and will be instrumental in our proofs. 

\begin{proposition}
\label{prop:totally-inv}
Let $X$ be a smooth projective variety over $\C$ and $\Phi: X \dra X$ be a dominant self-map that is not regular. Suppose that $\Phi^k$ is regular for some $k \ge 2$. Then, there exists a proper non-empty subvariety $W$ of codimension at least 2 such that $\Phi^{-k}(W) = W$. Furthermore, if one of the following holds:
\begin{enumerate}
    \item $X = \bP^n$, or
    \item $X$ is a $\bP^1$-bundle over a smooth projective curve $C$,
\end{enumerate}
then there exists a subvariety $V$ of pure codimension 1 such that $\Phi^{-k}(V) = V$ and $\Phi$ induces a quasi-finite and surjective endomorphism on $X \setminus V$.
\end{proposition}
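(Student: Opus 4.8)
The plan is to mimic the strategy of \cite[Proposition 1.2]{BGR}, using the resolution of indeterminacy of $\Phi$ together with the hypothesis that $\Phi^k$ is a morphism. Let $\pi_1: Y \to X$ and $\pi_2: Y \to X$ be a smooth projective surface (or variety) resolving $\Phi$, so that $\Phi \circ \pi_1 = \pi_2$ on $Y$, with $\pi_1$ a birational morphism. Since $\Phi$ is not regular, $\pi_1$ must contract some exceptional divisor $E$, and the image $W_0 := \pi_1(E)$ is a proper subvariety of $X$ of codimension at least $1$ (in the surface case, a point or a curve). The first step is to show that the $\Phi^k$-orbit closure of (a suitable component of) the ``bad locus'' of $\Phi$ is preserved by $\Phi^{-k}$: one takes $W$ to be, roughly, the union over $j$ of $\Phi^{jk}(W_0)$, or better, a maximal $\Phi^{-k}$-invariant subvariety built from the indeterminacy/contracted loci. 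Because $\Phi^k$ is a regular dominant self-map of a projective variety, preimages behave well, and a dimension/Noetherian argument forces the existence of a nonempty $W$ with $\Phi^{-k}(W)=W$ of codimension $\le 2$; the codimension bound comes from the fact that the indeterminacy locus of a rational map from a smooth variety has codimension $\ge 2$, while contracted divisors map to things of codimension $\ge 1$.

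For the ``furthermore'' part, the point is to upgrade $W$ to a genuine divisor $V$ in the two special cases. When $X = \bP^n$ or a $\bP^1$-bundle over a curve, the key structural input is that $X$ carries few contractible configurations: on $\bP^n$ every effective divisor is ample (so nothing can be contracted to lower dimension by a morphism without the whole thing being degenerate), and on a $\bP^1$-bundle the only curves with negative self-intersection in a fiber direction are controlled. More concretely, I would argue as follows. Consider $\psi := \Phi^k$, a non-invertible endomorphism of $X$. If $\psi$ is finite, then $\Phi = \psi \circ \Phi^{k-1}$ shows $\Phi$ maps dominantly and one analyzes the branch/ramification divisor; if $\psi$ is not finite, it contracts a curve, and on $\bP^n$ or a $\bP^1$-bundle over a curve the locus contracted by a surjective endomorphism (together with its total transform structure) is a divisor, since these surfaces have Picard rank $1$ or $2$ and the only way a morphism can fail to be finite is to contract a curve class spanning part of the Mori cone. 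One then sets $V$ to be the union of the contracted divisors of $\psi$ and the images under $\Phi^j$ ($0 \le j < k$) of the contracted divisors of $\Phi$ itself, checks $\Phi^{-k}(V) = V$ using $\psi = \Phi^k$ and the fact that $\psi^{-1}$ of a divisor is a divisor (as $\psi$ is a surjective morphism of smooth surfaces), and verifies that off $V$ the map $\Phi$ has no indeterminacy points and contracts nothing, hence is quasi-finite and surjective onto $X \setminus V$.

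The main obstacle I anticipate is the ``pure codimension $1$'' and quasi-finiteness claim in cases (1) and (2): one must rule out that the invariant set $W$ is forced to be lower-dimensional (e.g. a finite set of points) with no divisorial invariant companion, and one must ensure that after removing $V$ the induced map is everywhere defined. This requires knowing that the indeterminacy points of $\Phi$ lie on the invariant divisor $V$ — equivalently, that $\Phi$ cannot have an isolated indeterminacy point whose orbit avoids all contracted curves. For $\bP^2$ this is handled by the classical fact (Diller--Favre) relating indeterminacy points and contracted curves of a birational or algebraically stable model, together with the observation that the exceptional curves of the resolution $\pi_1$ must map into the indeterminacy/exceptional configuration of $\Phi$, which is then swept into $V$; for $\bP^1$-bundles one uses the explicit description of such surfaces and their effective cones. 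I would isolate this as the technical heart of the argument and treat cases (1) and (2) in turn, since the Picard-rank-one case is genuinely simpler than the ruled case.
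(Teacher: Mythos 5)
Your sketch has a genuine gap at its central step: you assert that a Noetherian/orbit-closure argument applied to the forward images $\Phi^{jk}(W_0)$ ``forces'' a nonempty $W$ with $\Phi^{-k}(W)=W$. Forward-orbit constructions only give forward invariance ($\Phi^k(W)\subseteq W$, or at best $\overline{\Phi^k(W)}=W$); total invariance is a far stronger statement, asserting that no point \emph{outside} $W$ is mapped into $W$ by $\Phi^k$, and it does not follow from dimension counts or Noetherianity. The paper obtains it from a regularity-descent lemma (the analog of \cite[Lemma 4.1]{BGR}, Lemma \ref{lem:regularComposition}): if $\Psi$ is a finite dominant morphism and $f\circ\Psi=g$ with the target projective, then $g$ regular at $x$ forces $f$ regular at $\Psi(x)$. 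Applied with $\Psi=\Phi^k$ this gives $\Phi^{-k}\bigl(\indet(\Phi^{m})\bigr)=\indet(\Phi^{m+k})$, and the stabilization of the decreasing chain $\indet(\Phi)\supseteq\indet(\Phi^{k+1})\supseteq\cdots$ then yields the totally invariant $W$ (and in fact that the chain is constant). Proving that lemma is itself nontrivial here because $\Phi^k$ need not be \'etale: the paper uses a singular inverse function theorem \cite[Corollary 1.1.8]{JW15} to reduce \'etale-locally to a finite flat situation and then a Hartogs extension argument. Your proposal contains no substitute for this ingredient, so the first assertion of the proposition is not established.

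The ``furthermore'' part also rests on claims that do not hold or are not justified. On $\bP^n$ a surjective endomorphism is finite, so $\psi=\Phi^k$ has no contracted divisors, and $\Phi$ itself need not a priori contract any curve; building $V$ out of ``contracted divisors of $\psi$ and of $\Phi$'' therefore does not get off the ground, and the appeal to Diller--Favre (a birational-surface statement) does not show that the indeterminacy points lie on an invariant divisor — which you correctly identify as the crux but leave unproved. The paper's route is different: it takes $V$ to be the union of $\Phi^{-i}$-preimages, $0\le i\le k-1$, of the indeterminacy loci (already known to be totally invariant by the first part), checks that $\Phi$ restricts to a quasi-finite surjective endomorphism of the complement, and then forces pure codimension $1$ by (a) for $\bP^1$-bundles, adding the totally invariant fibers through the isolated points of $V$ (using that $\Phi^{2k}$ preserves fibers, Lemmas \ref{lem:MSS-5.8} and \ref{lem:add-more-totally-invs}); and (b) for $\bP^n$, a common-factor argument in homogeneous coordinates showing $V$ cannot have codimension $\ge 2$ (otherwise $\Phi^k$, computed by composing the defining forms, would be indeterminate along $\indet(\Phi)$), followed by discarding the codimension-$\ge 2$ components via extension of the morphism into the affine complement of the divisorial part. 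Without these steps your plan does not yield either the purity of codimension $1$ or the regularity of $\Phi$ off $V$.
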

It is likely that Proposition \ref{prop:totally-inv} holds in a more general setting. However, the above version will be more than sufficient for our proof of Theorem \ref{thm:more-precise}. 

The main ingredient in the proof of \cite[Proposition 1.2]{BGR} is \cite[Lemma 4.1]{BGR}. Using \cite[Lemma 4.1]{BGR} one can show that the indeterminacy locus of $\Phi$ is totally invariant under $\Phi^k$. This subvariety must have codimension 1 by \cite[Proposition 1.3]{AR86}. Our proof of Proposition \ref{prop:totally-inv} uses a similar approach. We prove an analog of \cite[Lemma4.1]{BGR} (See Lemma \ref{lem:regularComposition}) in the case of rational self-maps between projective surfaces and use it to prove that the indeterminacy loci of all $\Phi^i$ for $i = 1,\dots,k-1$ are totally invariant under $\Phi^k$. But there are two new sources of difficulty: 
\begin{itemize}
    \item[($\ast$)] the indeterminacy locus of a non-regular rational self-map of a projective surface $X$ has codimension 2 and due to this the codimension of the subvariety $W$ will be 2, and

    \item [$(\ast\ast)$] The regular iterate $\Phi^k$ is no longer \'etale necessarily, which has to be the case when $X$ is a semiabelian variety.
\end{itemize}  As we will explain in the proof strategy for Theorem \ref{thm:more-precise}, the existence of a totally invariant subvariety of codimension 2 is sufficient for us to prove Theorem \ref{thm:more-precise} in the case of $\kappa(X) \ge 0$. In the case of ruled surfaces, especially when $X = \bP^2$, the existence of a subvariety $V$ of pure codimension 1 is crucial for our arguments. So, $(\ast)$ becomes a true issue when $X$ is a ruled surface. To construct $V$ in this case, we will also consider the preimages of the subvariety $W$ under the iterations of $\Phi$ (see equation \eqref{eqn:def-V} in Section \ref{sec:key-prop}). To deal with $(\ast \ast)$, we use a singular inverse function theorem \cite[Corollary 1.1.8]{JW15} which allows to us conduct a local analysis near the ramified points of the endomorphism $\Phi^k$.

\subsection{Proof strategy for Theorems \ref{thm:main} and \ref{thm:more-precise}} Note that the first conclusion of Theorem \ref{thm:main} is clearly a corollary of Theorem \ref{thm:more-precise}. Also, the second conclusion is a consequence of Examples \ref{ex:2-regular} through \ref{ex:3,4,6,8} given in Section \ref{sec:examples}. So, it suffices to show Theorem \ref{thm:more-precise} only. We assume that $\Phi^k$ is regular for some $k \ge 1$. Moreover, we can assume that $\Phi$ itself is not regular because we would be done otherwise. Proposition \ref{prop:totally-inv} then shows that there exists a totally invariant subvariety $W$ of pure codimension 2. Using the classification of projective surfaces admitting non-invertible endomorphisms (See Theorems \ref{thm:pos-kod} and \ref{thm:ruled}) we conclude that there are three main cases to consider:
\begin{enumerate}
    \item[\textbf{1.}] $\kappa(X) \ge 0$, $X$ is minimal and $\Phi^k$ is a finite \'etale endomorphism, or

    \item[\textbf{2.}] $X$ is a $\bP^1$-bundle over a curve $C$ of genus at least 1, or

    \item[\textbf{3.}] $X$ is a toric surface. 
\end{enumerate}

In case 1, since $W$ is a subset of finitely many points that are totally invariant under $\Phi^k$ and $\Phi^k$ is \'etale, we conclude that $\Phi^k$ is an automorphism which contradicts the hypothesis that $\Phi^k$ is non-invertible. In case 2, we use a theorem of Amerik, \cite[Theorem 1]{AmerikProjBundles} to deduce that $X$ must be a product of the form $C \times \bP^1$ after a finite base change. If $X$ is a product, this allows us to conclude that $\Phi^k$ is a product map (See Proposition \ref{prop:trivial-then-split}). As a consequence, since $W$ is totally invariant, the union of the fibers of the two projections $C\times \bP^1 \lra \bP^1$ and $C\times \bP^1 \lra C$ containing $W$ must also be totally invariant. Then, we can deduce that $\Phi$ is a skew-product inducing a polynomial endomorphism on all fibers of the projection $C\times \bP^1 \lra C$ (See Lemmas \ref{lem:rat-pres-fibers} and \ref{lem:add-more-totally-invs}). Using these nice forms of $\Phi$ and $\Phi^k$ we can then argue directly that the map $\Phi$ itself must be regular (See Lemma \ref{lem:skew-then-split}). When $X$ is not a product, the approach is similar but requires additional
subtle arguments.

In Case 3, we can use Proposition \ref{prop:totally-inv} to deduce that there exists a totally invariant subvariety $V$ pure codimension 1 such that $\Phi$ induces a quasi-finite and surjective endomorphism on $X \setminus V$. The most difficult situation to analyze is when the self-map $\Phi$ induces an endomorphism on $X \setminus V \simeq \C^2$ for some curve $V$ in $X$. To understand the smallest number of iterations it takes for $\Phi$ to become regular on $X$ we need to understand the period of the valuations corresponding to the prime divisors contained in $V$. The results in \cite{FJ07} about fixed valuations allow us to bound these periods. In recent years the valuative tree has proven to be a very useful tool in algebraic and arithmetic dynamics. To name a few articles that use it, we refer the reader to \cite{Jonsson-Wulcan-height, Xie-DML-aut, Xie-DML-End, Xie-ZDOC-C2, cantat-xie, Dang-Tiozzo-limit}. It is also worth mentioning that the valuative theory of the complex affine plane has recently been extended to all affine surfaces in \cite{abboud}. 

Let us explain in more detail the proof plan in Case 3. There are three main subcases to consider: 
\begin{itemize}
    \item[\textbf{3(a).}] $X = \bP^2$ or
    \item[\textbf{3(b).}] $X$ is a Hirzebruch surface $\F_n$ for some $n \ge 0$ or
    \item[\textbf{3(c).}] $X$ is obtained from $\F_n$ for some $n\ge 1$ after finitely many point blow-ups.  
\end{itemize}
The strategy for handling all three of these subcases are similar. To explain the strategy, we focus on case 3(a) and refer the reader to subsections \ref{subsec:Hirzebruch} and \ref{subsec:general-toric} for a detailed treatment of the remaining cases. Assuming $X = \bP^2$, we know by Proposition \ref{prop:totally-inv} that there is a subvariety $V$ of pure dimension 1 such that $\Phi^{-k}(V) = V$. By the linearity theorem for $\bP^2$ (See Theorem \ref{conj:lin-conj}), $V$ is a union of at most three hyperplanes. Using Proposition \ref{prop:totally-inv} we assume that $\Phi$ induces a quasi-finite and surjective endomorphism of an open subset $U := X \setminus V$ of $X$ that is isomorphic to $\A^2, \A^2 \setminus \{x = 0\}$ or $\A^2 \setminus \{xy = 0\} \simeq \bG_m^2$. When $U$ is isomorphic to $\A^2$ or $\A^2 \setminus \{x = 0\}$, we can assume that $U \simeq \A^2$ without loss of generality at the expense of replacing $\Phi$ with $\Phi^2$. A proposition by Favre and Jonsson about the fixed points of the map induced by $\Phi$ on the valuative tree at infinity (See Proposition \ref{prop:FJ-5.3}) allows us to conclude that $\Phi^2$ must already extend to an endomorphism of $X$.

The last case to consider is $U \simeq \bG_m^2$. This is the only case where we can have dominant rational self-maps taking $3,4,6,8,$ or $12$ iterations to become regular. For concrete examples see Examples \ref{ex:12-reg} and $\ref{ex:3,4,6,8}$. Since $\Phi$ induces an endomorphism of $U \simeq \bG_m^2$, it is given by a group endomorphism $\varphi$ composed with a translation (See \cite[Theorem 2]{Iitaka}). The group endomorphism $\varphi$ corresponds to an invertible matrix $A \in M_{2,2}(\Z)$. Since by our assumption, the lines $\{x = 0\}$ and $\{y = 0\}$ are totally invariant under an iterate of $\Phi$, we conclude that some power of $A$ must be a diagonal matrix with non-negative entries. We can then bound the smallest power of $A$ with this property and conclude that $A^i$ is already a diagonal matrix with non-negative entries for some $i \in \iterates$ (See Lemma \ref{lem:smallestPower}). It follows that $\Phi^i$ must already extend to $X$.

\subsection{Proof strategy for Theorem \ref{thm:birational}} Similar to the proof of Theorem \ref{thm:more-precise} we first assume that $\Phi$ itself is not regular and $\Phi^k$ is an automorphism of $\bP^2$ for some $k \ge 2$. We also assume that $\Phi$ does not preserve any non-constant fibrations. Then, using Proposition \ref{prop:totally-inv} we can find a subvariety $V$ of pure codimension 1 such that $\Phi^{-k}(V) = V$ and $\Phi$ induces an automorphism of $\bP^2 \setminus V$. Since $\Phi^k$ is an automorphism of $\bP^2$ and it does not preserve a non-constant fibration it is then not difficult to show that after a change of coordinates we may assume that $V = V(X_1X_2) \text{ or } V(X_0X_1X_2)$ (See Proposition \ref{prop:inv-subvarieties}). Using the fact that $\Phi$ is an automorphism of $\bP^2 \setminus V$ we conclude that $\Phi$ has a computationally convenient form (See Equations \eqref{eqn:form-of-phi-1} and \eqref{eqn:form-of-phi-2}). A straightforward computation using the form of $\Phi$ and using the fact that $\Phi^k$ is given by some matrix in $\text{PGL}_3(\C)$ we can then show that $\Phi$ itself must be an automorphism of $\bP^2$. 

\subsection{Notation and terminology.} Throughout this paper $\bP^n$ denotes the $n$-dimensional projective space over $\C$. We use $X$, $Y$, and $Z$ as the homogeneous coordinates of $\bP^2$ and $X_0,X_1,\dots,X_n$ as the homogeneous coordinates of $\bP^n$ when $n$ is arbitrary. Given a homogeneous polynomial $f \in \C[X_0,\dots,X_n]$, we let $Z(f) \subset \bP^n$ denote the vanishing locus of $f$. We identify $\C^2$ with the open subset $\bP^2 \setminus \{Z = 0\}$ of $\bP^2$ and use $x$ and $y$ to denote the standard coordinates of $\C^2$. We let $\A^n$ and $\bG_m^n$ denote the affine $n$-space and the $n$-dimensional algebraic torus, respectively. We use $x_1,\dots,x_n$ to denote the coordinates of both $\A^n$. Unless otherwise specified, we always view $\bG_m^n$ as an open subvariety of $\A^n$ defined by $\A^n \setminus Z(x_1\cdots x_n)$ and view $\A^n$ as an open subvariety of $\bP^n$ defined by $\bP^n \setminus Z(X_n)$. Any endomorphism of $\bG_m^n$ is given by a translation composed with a group endomorphism. If  $A := (a_{ij})_{1 \le i,j \le n} \in M_{n,n}(\Z)$ and $\vec{\lambda} = (\lambda_1,\dots,\lambda_n) \in \C^n$ are given, the associated endomorphism of $\bG_m^n$ is defined as 
\[
(x_1,\dots,x_n) \mapsto (\lambda_1x_1^{a_{11}}\cdots x_n^{a_{1n}}, \dots, \lambda_n x_1^{a_{n1}}\cdots x_n^{a_{nn}}). 
\]
For conciseness, we will also write this as 
\[
\vec{x} \mapsto \vec{\lambda} + A \cdot \vec{x}.
\]
In this context, we write $\vec{0}$ for the identity element $(1,\dots,1) \in \bG_m^n$. By a monomial self-map of $\bP^n$ we mean an endomorphism of $\bG_m^n \subset \bP^n$ given by $\lambda = \vec{0}$ and any matrix $A \in M_{n,n}(\Z)$ viewed as a rational self-map of $\bP^n$.  

Given a variety $X$ and a rational self-map $\Phi$, by a \emph{change of coordinates} we mean an automorphism $\iota: X \lra X$ changing the dynamical system from $(X, \Phi)$ to $(X, \iota \circ \Phi \circ \iota^{-1})$. 

For a rational map $f: X \dra Y$ between varieties $X$ and $Y$ and a subset $Z \subseteq Y$, we abuse notation and let $f^{-1}(Z)$ denote the set-theoretic inverse image of $Z$ under the restriction $f|_{\text{dom}(f)}: \text{dom}(f) \lra Y$ where $\text{dom}(f)$ is the domain of definition for the rational map $f$. 

\subsection{Outline of the paper.} In Section \ref{sec:examples} we give interesting examples of eventually regular self-maps of surfaces. In Section \ref{sec:facts} we collect all the facts related to endomorphisms of smooth projective surfaces that we need to prove our main results. In Section \ref{sec:val-tree} we review the basics of the valuative tree at infinity and also prove an extension of a proposition appearing \cite{FJ07} that we will need in our proofs (See Proposition \ref{prop:5.3-extension}). Section \ref{sec:lemmas} is dedicated to two elementary but useful lemmas which we will use frequently in our proofs. In Section \ref{sec:key-prop} we prove Proposition \ref{prop:totally-inv}. In Section \ref{sec:good-normal-model} we digress a bit to prove the existence of normal models for eventually regular self-maps of varieties. This is mainly inspired by \cite[Proposition 2.12]{favre}. We also describe the structure of the normal model in the case where $X$ is a projective surface. However, we do not need any of the results of this section for the proofs of our main results. In Sections \ref{sec:toric} and \ref{sec:P1-bundle-over-C} we handle the case of toric surfaces and $\bP^1$-bundles over a curve of genus at least 1, respectively. In Section \ref{sec:proof-main-thms} we put the results from Sections \ref{sec:toric} and \ref{sec:P1-bundle-over-C} together to prove Theorems \ref{thm:main} and \ref{thm:more-precise}. Finally, in Section \ref{sec:birational} we describe the structure of totally invariant subvarieties of $\bP^n$ under rational self-maps not preserving non-constant fibrations (See Proposition \ref{prop:inv-subvarieties}) and use it to prove Theorem \ref{thm:birational}.

\subsection*{Acknowledgments.} I would like to express my gratitude to my advisor, Laura DeMarco, for her constant support, advice, and constructive feedback throughout this project. I am also very grateful to Jason Bell, Dragos Ghioca, and Zinovy Reichstein for carefully reading a preliminary version of this paper and offering many valuable comments. I thank Jit Wu Yap for numerous helpful conversations on this topic. I am indebted to Max Weinreich for drawing my attention to \cite{FJ07}, which proved especially useful for this work, and for many valuable comments on an earlier version of the article. I am grateful to Charles Favre for introducing me to \cite[Proposition 2.12]{favre}, which was the main source of inspiration for Section~\ref{sec:good-normal-model}. I owe special thanks to Serge Cantat for providing Example~\ref{ex:prod}, which gives a negative answer to the original formulation of Question~\ref{question:boundIterate}, and for suggesting a refinement of the statement of Theorem~\ref{thm:main}. I also thank Mattias Jonsson for many helpful conversations and for his feedback on Section~\ref{sec:good-normal-model}. Finally, I would like to thank Nathan Chen for his comments on the earlier version of the article and pointing out an error in the statement of Proposition \ref{prop:totally-inv}.

\section{Examples}\label{sec:examples}

Recall that when $X$ is not a toric surface, Theorem \ref{thm:more-precise} shows that any eventually regular and non-invertible endomorphism is already regular. In this section we will give a series of interesting examples of rational self-maps of $\bP^2$ that are not regular themselves but are eventually regular and non-invertible. We also provide a clarifying example regarding the eventually regular and invertible rational self-maps on surfaces with $\kappa(X) = -\infty$ in the end.   
\begin{example}
\label{ex:2-regular}
Consider the rational map $\Phi$ given by $(x,y) \mapsto (x^2, y^{-2})$ on $\C^2$. We can also view $\Phi$ as a rational self-map of $\bP^2$ and is given in homogeneous coordinates by 
\[
[X:Y:Z] \mapsto [X^2Y^2:Z^4:Y^2Z^2]
\]
which has indeterminacy at the points $[0:1:0]$ and $[1:0:0]$. But, $\Phi^2$ is given by 
\[
[X:Y:Z] \mapsto [X^4:Y^4:Z^4],
\]
which is regular everywhere on $\bP^2$. In general, any map of the form $(x, y) \mapsto (p(x), y^{-d})$ for a polynomial of degree $d \ge 1$ will have the property that the second iterate is regular on $\bP^2$. 
\end{example}

\begin{example}
\label{ex:2-reg-mix}
Consider the map $\Phi$ defined by $(x,y) \mapsto (x + y, x^2 + y)$ on $\C^2$. Then, $\Phi:\bP^2 \dra \bP^2$ is given in homogeneous coordinates by 
\[
[X:Y:Z] \mapsto [XZ + YZ: X^2 + YZ : Z^2]. 
\]
This map has a singularity at $[0:1:0]$. But, $\Phi^2$ is given by 
\[
[X:Y:Z] \mapsto [X^2 + XZ + 2YZ: 2X^2 + 2XY + Y^2 + YZ : Z^2],  
\]
which is regular everywhere on $\bP^2$. This example is interesting since, in contrast to Example \ref{ex:2-regular} and the rest of the examples we will see, none of the iterates are a product of the form $(x,y) \mapsto (a(x),b(y))$ for rational maps $a$ and $b$. In other words, the map $\Phi$ ``mixes" the variables $x$ and $y$. 

In general, any map of the form $(x,y) \mapsto (c_1x + c_2y, c_3x^2 + c_4y)$ for $c_1,c_2,c_3,c_4 \in \C$ with $c_1c_2c_3 \ne 0$ has the property that $\Phi^2$ is the first iterate that becomes regular on $\bP^2$. Identifying the self-maps that are the same after conjugation by automorphisms of $\bP^2$, this gives a two-dimensional family of rational self-maps of $\bP^2$ with this property. Indeed, if we let $c_2 = c_3 = 1$, then it is not difficult to check that the maps corresponding to $(c_1,1,1,c_4)$ are all distinct modulo conjugation by an affine transformation of $\C^2$. 
\end{example}
\begin{example}
\label{ex:12-reg}
Let $\Phi$ be the rational self-map of $\C^2$ given by 
\[
(x,y) \mapsto (x^3y, x^{-3}y^{3}). 
\]
This map can be viewed as an endomorphism of $\bG_m^2 \simeq \C^2 \setminus\{xy = 0\}$ which is given by the two by two matrix $$A = \begin{pmatrix}
    3 & 1 \\ -3 & 3
\end{pmatrix}.$$ It is not difficult to see that for $\Phi^k$ to extend to $\bP^2$, $A^k$ must be a diagonal matrix with positive entries. The first power of $A$ that has this property is $$A^{12} = \begin{pmatrix}
    2985984 & 0 \\ 0 & 2985984
\end{pmatrix}.$$ So, $\Phi^{12}$ is the first iterate that extends to an endomorphism of $\bP^2$. 

\end{example}
\begin{example}
\label{ex:3,4,6,8}
 Take $\Phi_1,\Phi_2, \Phi_3$, $\Phi_4$, and $\Phi_5$ to be endomorphisms of $\bG_m^2$ given by matrices 
\[ A_1 = \begin{pmatrix}
    -2 & 0 \\ 0 & -2
\end{pmatrix},  A_2 = \begin{pmatrix}
    -2 & -2 \\ 2 & 0
\end{pmatrix}, A_3 = \begin{pmatrix}
    0 & -2 \\ 2 & 0
\end{pmatrix}, A_4 = \begin{pmatrix}
    2 & 2 \\ -2 & 0
\end{pmatrix} \text{ and } A_5 = \begin{pmatrix}
    1 & 1 \\ -1 & 1
\end{pmatrix},
\]
respectively. Then, arguing as in Example \ref{ex:12-reg}, the first iterates of $\Phi_1,\Phi_2,\Phi_3,\Phi_4,$ and $\Phi_5$ that extend to $\bP^2$ are $\Phi_1^2$, $\Phi_2^3$, $\Phi_3^4, \Phi_4^6$, and $\Phi_5^8$. 
\end{example}
\begin{remark}
\label{rem:bad-then-Gm}
One important thing to note is, as can already be seen from the above examples in the case of $\bP^2$, the rational self-maps of toric surfaces that take more than two iteration to become regular come from endomorphisms of some open subvariety $U \simeq \bG_m^2$ of the toric surface. This will be a consequence of our proofs of Theorem \ref{thm:P2} and Propositions \ref{prop:P1-times-P1} and \ref{prop:Hirzebruch} (See Remarks \ref{rem:long-then-Gm-P2} and \ref{rem:long-then-Gm-P1-bundle}). 
\end{remark}
\begin{remark}
\label{rem:examples-for-second-part}
Examples \ref{ex:12-reg} and \ref{ex:3,4,6,8} along with the monomial endomorphism of $\bP^2$ given in affine coordinates by $(x,y) \mapsto (x^2,y^2)$ prove the second part of Theorem \ref{thm:main}.   

\end{remark}
\begin{example}
\label{ex:bir-large-order}
Let $\zeta_n$ be a primitive $2n$-th root of unity for $n \ge 3$ and consider the automorphism $\Phi_n: \A^2 \lra \A^2$ given by 
\[
(x,y) \mapsto (c\zeta_n x, c^2y + x^2). 
\]
for some $c \in \C^\ast$. Then, it is easy to see that $\Phi_n^n = (c^n\zeta_n^{n}x, c^{2n}y)$ and thus it induces a regular map on $\bP^2$ while all of $\Phi_n, \cdots, \Phi_n^{n-1}$ are not regular as rational self-maps on $\bP^2$. Note that $\Phi_n^n$ preserves the fibration $f: \A^2 \lra \A^1$ given by $f(x,y) = \frac{x^2}{y}$. 
\end{example}
\begin{example}
\label{ex:prod}
Fix $n \ge 3$ and let $\Phi_n: \bP^2 \dra \bP^2$ be the rational map defined in Example \ref{ex:bir-large-order}. Let $X$ be any variety over $\C$ admitting a non-invertible endomorphism $\Psi: X \lra X$. Then, the rational self-map $(\Psi, \Phi_n)$ on $X \times \bP^2$ becomes regular after exactly $n$ iterations. 
\end{example}
\begin{example}
\label{ex:non-minimal}
Let $X = E \times E$ where $E$ is an elliptic curve. By the discussion in \cite[Section 2.4.6]{cantat-aut}), $X$ admits loxodromic automorphisms, namely, those given by matrices in $\text{SL}_2(\Z)$ with trace greater than $2$.  Let $f$ be a loxodromic automorphism of $X$ and let $x \in X$ be a periodic point of period $n \ge 1$ under $f$. Then, $f$ induces a rational self-map of the blowup $\text{Bl}_x(X)$ of $X$ at $x$. Since $x$ takes exactly $n$ iterations under $f$ to return to itself, we conclude that $f^n$ is the first iterate that becomes regular on $\text{Bl}_x(X)$. Note that the blowups $\text{Bl}_x(X)$ for different $x \in X$ are all isomorphic, say, to a smooth surface $\hat{X}$, since the action of $\text{Aut}(X)$ on $X$ is transitive. Also, $n$ could be chosen to be arbitrarily large by \cite[Theorem 4.7]{cantat-aut}. Moreover, since $f$ is loxodromic, it cannot preserve a non-constant fibration by \cite[Lemma 2.5]{ML24}.

This example shows that there cannot be a bound on the number of iterations it takes for an eventually regular birational self-map $\Phi$ of $\hat{X}$ to become regular even under the extra assumption that $\Phi$ does not preserve a non-constant fibration.
\end{example}

\section{Facts about endomorphisms of smooth projective surfaces}
\label{sec:facts}
Our first step in proving Theorem \ref{thm:main} is to understand which smooth projective surfaces can have non-invertible endomorphisms. The next two results describe such surfaces and will be an important ingredient in our proof of Theorem \ref{thm:main}. 

\begin{theorem}
\label{thm:pos-kod}
Suppose $X$ is a surface with $\kappa(X) \ge 0$ that admits a non-invertible surjective endomorphism $f$. Then, $X$ is minimal and $f$ is a finite \'etale morphism. 
\end{theorem}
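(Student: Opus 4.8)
The statement is classical (see, e.g., \cite{Fujimoto}); I would organize the proof in three steps.

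\emph{Step 1: reduce to a finite morphism of degree $d\ge 2$ and record the ramification formula.} A surjective endomorphism of a projective surface is automatically finite: being surjective between varieties of equal dimension it is generically finite, so $\deg f$ is defined and $f_*\circ f^*=(\deg f)\cdot\mathrm{id}$ on $N^1(X)$ by the projection formula; hence $f^*$, and by the intersection duality $N^1(X)=N_1(X)$ also $f_*$, are isomorphisms. If $f$ contracted an irreducible curve $C$ to a point, then $f_*[C]=0$ while $[C]\neq 0$ (as $[C]\cdot H>0$ for $H$ ample), a contradiction. Thus $f$ is finite; if $\deg f=1$ it is a finite birational morphism onto the normal variety $X$, hence an isomorphism, so non-invertibility gives $d:=\deg f\ge 2$. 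As a finite morphism between smooth surfaces $f$ is flat (miracle flatness), so the effective ramification divisor $R_f$ is defined and, in characteristic $0$, $K_X\equiv f^{*}K_X+R_f$, with $R_f=0$ if and only if $f$ is \'etale.

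\emph{Step 2: a finite surjective endomorphism of degree $\ge2$ of a minimal projective surface $Z$ with $\kappa(Z)\ge 0$ is \'etale, and such $Z$ has $\kappa(Z)\le 1$.} Write the Zariski decomposition $K_Z=P+N$; since the endomorphism $g$ is finite, pullback preserves Zariski decompositions, so the positive part of $(g^n)^{*}K_Z=K_Z-R_{g^n}$ is $(g^n)^{*}P$. Volume is monotone under subtracting an effective divisor, so $d^{\,n}P^{2}=((g^n)^{*}P)^{2}=\mathrm{vol}((g^n)^{*}K_Z)\le\mathrm{vol}(K_Z)=P^{2}$ for all $n$, forcing $P^{2}=0$; in particular $\kappa(Z)=2$ is impossible, since there $\mathrm{vol}(K_Z)=K_Z^{2}>0$. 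If $\kappa(Z)=0$ then $K_Z\equiv 0$, so $R_g\equiv 0$ and hence $R_g=0$. If $\kappa(Z)=1$, then $g$ preserves the (canonical) Iitaka fibration $\pi\colon Z\to C$, hence induces an endomorphism of the generic fibre, a genus-one curve over $\C(C)$, which is an isogeny composed with a translation and therefore \'etale; combining this with Kodaira's canonical bundle formula and the behaviour of the multiple and singular fibres of $\pi$ under $g$ shows $R_g=0$ as well. \emph{This $\kappa=1$ case---ruling out ramification over the singular fibres---is the main obstacle, and is exactly where the structure theory of elliptic surfaces is needed.}

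\emph{Step 3: $X$ is minimal, and hence $f$ is \'etale.} Suppose $X$ is not minimal and let $\mu\colon X\to Y:=X_{\min}$ be the (nontrivial, and unique since $\kappa(X)\ge0$) minimal-model contraction. Then $\bar f:=\mu\circ f\circ\mu^{-1}\colon Y\dra Y$ is dominant rational, hence regular by \cite[Proposition 7.5]{diller-favre}, finite of degree $d$ by Step 1, and \'etale by Step 2. From the equality of morphisms $\mu\circ f=\bar f\circ\mu$ and the fact that $\bar f$ contracts no curve, a curve $C\subset X$ is $(\mu\circ f)$-contracted iff $C\subseteq\mathrm{Exc}(\mu)$, equivalently iff $C\subseteq f^{-1}(\mathrm{Exc}(\mu))$; thus $\mathrm{Exc}(\mu)$ is totally invariant under $f$. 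Writing $K_X=\mu^{*}K_Y+N_\mu$ with $N_\mu\ge 0$ supported on $\mathrm{Exc}(\mu)$ and using that $\bar f$ (hence $\bar f^{\,n}$) is \'etale gives $R_{f^n}=N_\mu-(f^n)^{*}N_\mu$, so effectivity of the ramification divisors yields $(f^n)^{*}N_\mu\le N_\mu$ for all $n$. On the other hand, after replacing $f$ by a power we may assume $f(E)=E$ for some component $E$ of $\mathrm{Exc}(\mu)$; $E$ is a smooth rational curve, $\deg(f|_E)=1$ (from $R_f\cdot E=(1-\deg(f|_E))(K_X\cdot E)$ together with Riemann--Hurwitz on $E\cong\bP^1$), and a short computation with $(f^{*}E)^2=dE^{2}$ and the projection formula shows that $f^{*}E=e_E\,E+F$ with $F\ge 0$ and either $e_E\ge 2$ or $F\ne 0$; in either case $(f^n)^{*}E\cdot H\to\infty$, contradicting $(f^n)^{*}E\le(f^n)^{*}N_\mu\le N_\mu$. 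Hence $X$ is minimal, and then $f$ is a finite surjective endomorphism of the minimal surface $X$ with $\kappa(X)\ge0$, so it is \'etale by Step 2, completing the proof.
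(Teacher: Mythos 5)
The paper itself offers no argument here: the statement is imported wholesale from \cite[Lemma 2.3 and Theorem 3.2]{Fujimoto}. So you are attempting strictly more than the text does, and most of your outline is sound: Step 1 is fine, the $\kappa=0$ and $\kappa=2$ parts of Step 2 are fine, and the overall mechanism of Step 3 (total invariance of $\mathrm{Exc}(\mu)$, $R_{f^n}=N_\mu-(f^n)^*N_\mu\ge 0$, growth of $(f^n)^*E$) is valid. The genuine gap is the one you flag yourself: the case $\kappa(X)=1$. \'Etale-ness of the induced endomorphism of the generic fibre only tells you that $R_g$ has no horizontal components; ``combining this with Kodaira's canonical bundle formula and the behaviour of the multiple and singular fibres'' is a description of the desired conclusion, not a derivation, and ruling out ramification along fibre components is exactly the content of the result being cited. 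Note that, by the first remark following Theorem \ref{thm:more-precise}, properly elliptic surfaces with singular fibres are the only $\kappa\ge 0$ surfaces for which the paper's theorem is not vacuous, so the case you leave open is the only one that matters downstream. A smaller flaw: in Step 3 the identity $R_f\cdot E=(1-\deg(f|_E))(K_X\cdot E)$ is correct (projection formula), but it does not yield $\deg(f|_E)=1$: $R_f\cdot E$ can be negative because $E\subseteq\mathrm{Supp}(R_f)$ and $E^2<0$, and for a $(-1)$-curve the identity carries no information.

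Both issues are repairable, and in a way that simplifies your plan. For \'etale-ness no case division on $\kappa$ is needed: choose $m$ with $|mK_X|\neq\emptyset$; the injective linear map $H^0(X,mK_X)\to H^0(X,mK_X)$ sending $s$ to $f^*s$ multiplied by the canonical section of $\OO_X(mR_f)$ is bijective, so $D\mapsto f^*D+mR_f$ maps $|mK_X|$ onto itself; hence the fixed part $B$ of $|mK_X|$ satisfies $B\ge (f^n)^*B+mR_{f^n}\ge m\sum_{i=0}^{n-1}(f^i)^*R_f$ for all $n$, and intersecting with a very ample $H$ gives $B\cdot H\ge mn$ whenever $R_f\neq 0$, a contradiction; purity of the branch locus then upgrades $R_f=0$ to \'etale (this kind of iteration argument is how the cited lemma of \cite{Fujimoto} is proved, and it needs neither minimality nor the surface hypothesis). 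In Step 3, drop the claim $\deg(f|_E)=1$: since $f^{-1}(E)=E$ you have $f^*E=aE$ with $a\in\Z_{>0}$, and $(f^*E)^2=dE^2$ with $E^2<0$ forces $a^2=d$, so $a\ge 2$ and $(f^n)^*E=a^nE$ already contradicts $(f^n)^*E\le N_\mu$. Alternatively, once \'etale-ness is known, minimality follows at once from $\chi_{\mathrm{top}}(X)=d\,\chi_{\mathrm{top}}(X)$ together with the classification fact that minimal surfaces with $\kappa\ge 0$ have nonnegative topological Euler characteristic, so any nontrivial blow-up makes it strictly positive.
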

\begin{proof}
See \cite[Lemma 2.3 and Theorem 3.2]{Fujimoto}.    
\end{proof}
In the case of non-invertible surjective endomorphisms of ruled surfaces we have the next classification theorem.
\begin{theorem}
\label{thm:ruled}
Let $X$ be a ruled surface. It has a non-invertible surjective endomorphism if and only if $X$ is one of following surfaces:
\begin{enumerate}
    \item a toric surface;
    \item a $\bP^1$-bundle over an elliptic curve;

    \item a $\bP^1$-bundle over a non-singular projective curve $B$ of genus $g(B) > 1$ such that
    $X \times_B B' \simeq \bP^1 \times  B'$ for some finite \'etale morphism $B' \lra B$. 
\end{enumerate}
\end{theorem}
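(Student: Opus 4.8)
The statement is a known classification of the ruled surfaces carrying a non-invertible surjective endomorphism (it is proved in the literature on endomorphisms of ruled surfaces), so the plan is to reconstruct its proof; the two implications are of quite different nature. For the ``if'' direction one exhibits the endomorphisms explicitly. On a toric surface with open torus $T\simeq\bG_m^2$, multiplication by an integer $m\ge2$ on the cocharacter lattice of $T$ preserves every cone of the fan and hence extends to a finite surjective endomorphism of $X$ of degree $m^2>1$. On a $\bP^1$-bundle $X=\bP(\mathcal E)$ over an elliptic curve $E$, the classification of rank-$2$ vector bundles on $E$ yields, for a suitable isogeny $\psi\colon E\to E$, an isomorphism $\psi^*\mathcal E\simeq\mathcal E\otimes L$ with $L$ a line bundle; this isomorphism induces a finite surjective endomorphism of $\bP(\mathcal E)$ lying over $\psi$, of degree $\deg\psi\ge2$. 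Finally, if $X\times_BB'\simeq\bP^1\times B'$ with $B'\to B$ finite étale, replace $B'$ by its Galois closure with group $A$, so that $X\simeq(\bP^1\times B')/A$ with $A$ acting on $\bP^1$ through a homomorphism $A\to\mathrm{PGL}_2(\C)$ and on $B'$ by deck transformations, and descend a suitable fibre-degree-$\ge2$ self-map of $\bP^1\times B'$ over $B'$ compatible with the $A$-action (producing such a compatible self-map, using the triviality of the relevant parameter bundle over $B'$, is the only delicate point of this direction).

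For the ``only if'' direction, let $f\colon X\to X$ be finite surjective with $\deg f\ge2$. Since $\kappa(X)=-\infty$, fix the ruling $\pi\colon X\to C$ onto a smooth projective curve. If $g(C)\ge1$ this ruling is canonical (its target is the image of the Albanese map), every fibre of $\pi$ is rational, and hence $\pi\circ f$ is constant on the fibres of $\pi$, so $f$ descends to a finite surjective $\bar f\colon C\to C$ with $\pi\circ f=\bar f\circ\pi$. The first step is then to show that $\pi$ is a $\bP^1$-bundle: otherwise a reducible fibre and its $(-1)$-curves occur, and a negativity/ramification analysis of $f$ along those curves — using $f^*K_X=K_X-R_f$ with $R_f\ge0$ effective and the fact that $f$ carries fibres to fibres — yields a contradiction. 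Granting this, suppose $g(C)\ge2$: Riemann--Hurwitz on $C$ forces $\deg\bar f=1$, so $\bar f$ has finite order and, replacing $f$ by a power, $\bar f=\mathrm{id}_C$; thus $f$ becomes an endomorphism of the $\bP^1$-bundle $X=\bP(\mathcal E)$ over $C$ of some fibre-degree $e$, which one checks is constant on all fibres and $\ge2$. Pulling back to the universal cover $\widetilde C$ (the unit disc), $\mathcal E$ trivializes and $f$ becomes $(z,t)\mapsto(\phi_t(z),t)$ with $t\mapsto\phi_t$ a holomorphic map $\widetilde C\to\mathrm{Rat}_e$ into the space of degree-$e$ endomorphisms of $\bP^1$, subject to the equivariance $\phi_{\gamma t}=g_\gamma(t)\,\phi_t\,g_\gamma(t)^{-1}$, where $g_\gamma\colon\widetilde C\to\mathrm{PGL}_2(\C)$ is the (holomorphic, possibly non-constant) transition cocycle of $\bP(\mathcal E)$. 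Hence $t\mapsto[\phi_t]$ descends to a morphism from the projective curve $C$ to the affine moduli space $\mathcal M_e$ of degree-$e$ endomorphisms of $\bP^1$, so it is constant: every $\phi_t$ is conjugate to a fixed $\phi$. Writing $\phi_t=h_t\,\phi\,h_t^{-1}$ with $h_t$ holomorphic (the remaining ambiguity being $\mathrm{Aut}(\phi)=:A$, which is finite since $e\ge2$), equivariance gives $h_{\gamma t}^{-1}g_\gamma(t)h_t\in A$, and discreteness of $A$ forces this element to be independent of $t$; thus the gauge $h$ conjugates $g_\gamma$ into the constant cocycle $\gamma\mapsto c_\gamma\in A$. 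Therefore $\bP(\mathcal E)$ is flat with finite monodromy, hence becomes trivial after the finite étale cover $C'\to C$ with $\pi_1(C')=\ker(\pi_1(C)\to A)$ — precisely alternative $(3)$.

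When $g(C)=1$ one cannot reduce to $\bar f=\mathrm{id}_C$ (since $\bar f$ may be a nontrivial isogeny), but the relative-minimality step already shows $X$ is a $\bP^1$-bundle over the elliptic curve $C$, which is alternative $(2)$. When $g(C)=0$, $X$ is a smooth rational surface with a non-invertible endomorphism and one must show it is toric: running the MMP reduces to $\bP^2$ or a Hirzebruch surface (both toric), so it remains to see that the blown-up points are torus-fixed, equivalently that $f$ preserves a reduced cycle of rational curves which is an anticanonical divisor of $X$; a smooth rational surface carrying such a divisor is toric, with the cycle as its boundary. Producing this totally invariant anticanonical cycle proceeds via the dynamics of $f^*$ on $\mathrm{NS}(X)$ and the structure of the $(-1)$-curves contracted by the MMP, in the spirit of Proposition~\ref{prop:totally-inv}.

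The main obstacle is twofold. First, the relative-minimality step for $g(C)\ge1$: the ramification divisor $R_f$ may a priori be supported on the very $(-1)$-curves one wants to exclude, so a careful local analysis of $f$ near the hypothetical reducible fibres is needed. Second, the rational case $g(C)=0$, which is a substantial classification in its own right and may be cleaner to quote than to reprove. By contrast, once relative minimality is in hand, the passage to the universal cover and the compact-source/affine-target argument through $\mathcal M_e$ make the $g(C)\ge2$ case quite clean, and the $g(C)=1$ case costs nothing further.
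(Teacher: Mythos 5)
The paper does not actually prove this statement: it is quoted verbatim from Nakayama's classification and the ``proof'' is the citation \cite[Theorem 3]{Nak02}. So you are reconstructing a result the paper only invokes. Your architecture for the ``only if'' direction (descend $f$ along the ruling when $g(C)\ge 1$, prove relative minimality, then for $g(C)\ge 2$ use constancy of the induced map from the compact base to the affine moduli space of degree-$e$ self-maps of $\bP^1$ and a gauge argument producing a flat structure with finite monodromy) is a reasonable outline, and the $g(C)\ge 2$ step is essentially sound; but two of your key claims are false as stated, so the proposal has genuine gaps beyond the steps you explicitly defer.

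First, in the ``if'' direction for case (2) you assert that for every rank-$2$ bundle $\mathcal{E}$ on an elliptic curve there is an isogeny $\psi$ with $\psi^*\mathcal{E}\simeq\mathcal{E}\otimes L$. This fails for the decomposable bundles $\mathcal{E}=\OO\oplus M$ with $\deg M\neq 0$: by Krull--Schmidt the summands of $\psi^*\mathcal{E}=\OO\oplus\psi^*M$ have degrees $\{0,\,d\deg M\}$ with $d=\deg\psi\ge 2$, while those of $(\OO\oplus M)\otimes N$ have degrees $\{\deg N,\,\deg N+\deg M\}$, and these sets can never coincide (it also fails for $\deg M=0$ with $M$ non-torsion when $\End$ of the curve is $\Z$). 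Yet these surfaces do lie in case (2) and do admit non-invertible endomorphisms; one needs a different construction, e.g.\ fiberwise power maps $\bP(\OO\oplus M)\to\bP(\OO\oplus M^{k})$ composed with a suitably chosen isogeny-plus-translation, or the observation that the complement of the two disjoint sections is a semiabelian surface whose multiplication maps extend. Second, in the rational case your proposed criterion --- a smooth rational surface carrying an $f$-invariant reduced anticanonical cycle of rational curves is toric --- is false as a statement about the surface: the blow-up of $\bP^2$ at three collinear points carries a reduced anticanonical triangle (the strict transform of the common line plus two general lines) but admits no $2$-torus action, since its connected automorphism group descends to the subgroup of $\mathrm{PGL}_3$ fixing the line pointwise, whose maximal torus is one-dimensional. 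Being toric needs more (e.g.\ that the complement of the cycle be $\bG_m^2$), and extracting that from the existence of the endomorphism is precisely the hard content of the rational case of \cite{Nak02}, which your sketch acknowledges it would rather quote; likewise the relative-minimality step for $g(C)\ge 1$ is only flagged. Given all this, the honest options are either to cite \cite[Theorem 3]{Nak02} as the paper does, or to commit to a full reproof in which the elliptic-base constructions and the rational case are treated case by case rather than by the shortcuts above.
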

\begin{proof}
See \cite[Theorem 3]{Nak02}.
\end{proof}

In the rest of this section we gather some facts about non-invertible endomorphisms of $\bP^1$-bundles and $\bP^2$. We quickly recall that a $\bP^1$-bundle over an algebraic curve $C$ is given by $\bP(E)$ for a vector bundle $E$ of rank 2 over $C$. There is a natural projection map $\pi: \bP(E) \lra C$ whose fibers are isomorphic to $\bP^1$. Now, let $X$ be a $\bP^1$ bundle over a smooth curve $C$ with projection map let $\pi: X \lra C$. We have the next lemma which limits the possibilities of non-invertible surjective endomorphism of $\bP^1$-bundles. 
\begin{lemma}
\label{lem:MSS-5.8}
Let $X$ be as above and $f$ be a non-invertible surjective endomorphism on $X$. Then, $f^2$ must preserve the fibers of $\pi$. Moreover, if $X$ is a Hirzebruch surface $\F_n$ for some $n \ge 1$, then $f$ must preserve the fibers of $\pi$. 
\end{lemma}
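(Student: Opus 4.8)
The plan is to use that a $\bP^1$-bundle over a curve has Picard number $\rho(X) = 2$, so that $N^1(X)_{\R}$ is a plane and the nef cone is a two-dimensional cone with a distinguished extremal ray, namely the fiber class of $\pi$, which $f^{*}$ must (essentially) respect. Write $F$ for a general fiber of $\pi$. The class $[F]$ is nef with $[F]^{2} = 0$, so it is not ample and hence spans one of the two extremal rays of $\mathrm{Nef}(X)$; let $r$ generate the other extremal ray, so $r$ is nef, $r^{2} \ge 0$, and $F \cdot r > 0$ (two distinct nef rays on a surface of Picard number two meet strictly positively, by the Hodge index theorem). Since a surjective endomorphism of a projective variety is finite, $f$ is finite, and $f^{*}$ acts on $N^1(X)_{\R}$ as an injective linear map which sends nef classes to nef classes (because $(f^{*}\alpha)\cdot Z = \alpha\cdot f_{*}Z \ge 0$ for every curve $Z$) and which multiplies the intersection form by $\deg f$. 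In particular $f^{*}[F]$ is a nonzero nef class with $(f^{*}[F])^{2} = (\deg f)[F]^{2} = 0$.

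Next I would determine all square-zero nef rays. For $x,y \ge 0$ the nef class $x[F] + y\,r$ has self-intersection $y\bigl(2x\,(F\cdot r) + y\,r^{2}\bigr)$, and since $F\cdot r > 0$ this vanishes exactly when $y = 0$ if $r^{2} > 0$, and exactly when $y = 0$ or $x = 0$ if $r^{2} = 0$. Hence: if $r^{2} > 0$ then $[F]$ is the unique square-zero nef ray, so $f^{*}[F] = \lambda[F]$ for some $\lambda > 0$; if $r^{2} = 0$ then the square-zero nef rays are precisely $[F]$ and $[r]$, and since $(f^{*}r)^{2} = (\deg f)\,r^{2} = 0$ both $f^{*}[F]$ and $f^{*}[r]$ lie in $\{[F],[r]\}$, so injectivity of $f^{*}$ forces $f^{*}$ to permute $\{[F],[r]\}$; in either subcase $(f^{2})^{*}[F] = c[F]$ for some $c > 0$. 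For the final clause of the lemma, if $X = \F_{n}$ with $n \ge 1$ and $C_{0}$ is the unique section with $C_{0}^{2} = -n$, then $\mathrm{Nef}(\F_{n})$ is generated by $[F]$ and $r = [C_{0}] + n[F]$, with $r^{2} = n > 0$; thus we are in the first case and $f^{*}[F] = \lambda[F]$.

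It then remains to pass from ``$f^{*}$ fixes the ray $[F]$'' to ``$f$ preserves the fibers of $\pi$'' (and likewise with $f^{2}$). Suppose $f^{*}[F] = \lambda[F]$. For $c \in C$ the divisor $D := f^{*}\bigl(\pi^{-1}(c)\bigr)$ is effective and numerically equivalent to $\lambda[F]$; writing $D = D_{v} + D_{h}$ as the sum of its components contained in fibers of $\pi$ and of its components dominating $C$, and intersecting with $F$, we get $0 = \lambda[F]^{2} = D\cdot F = D_{h}\cdot F$, and since every irreducible curve dominating $C$ meets $F$ positively this forces $D_{h} = 0$. Thus $f^{-1}\bigl(\pi^{-1}(c)\bigr)$ is a union of fibers of $\pi$, and as these preimages are pairwise disjoint and cover $X$, the map $f$ sends each fiber of $\pi$ into a single fiber; so $\pi\circ f$ is constant along the fibers of $\pi$ and therefore factors as $\pi\circ f = g\circ\pi$ for a unique morphism $g\colon C\to C$. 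Running the same argument with $f^{2}$ in place of $f$ gives $\pi\circ f^{2} = g\circ\pi$ in the general case.

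The step I expect to be the main obstacle is the case $r^{2} = 0$: here $f$ genuinely can interchange the two rulings — e.g. on $X = \F_{0} = \bP^{1}\times\bP^{1}$ the map $([x_{0}{:}x_{1}],[y_{0}{:}y_{1}])\mapsto([y_{0}^{d}{:}y_{1}^{d}],[x_{0}^{d}{:}x_{1}^{d}])$ does not preserve $\pi$ while its square does — which is precisely why the first assertion must be stated for $f^{2}$, and also why the refinement is available only for $\F_{n}$ with $n \ge 1$ (where $r^{2} > 0$). The remaining ingredients are standard but should be verified carefully: that a surjective endomorphism of a projective variety is finite, and that an effective divisor numerically proportional to a fiber class of $\pi$ is a sum of fibers.
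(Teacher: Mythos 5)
Your proof is correct, but it cannot be "the same approach as the paper" because the paper gives no argument at all for this lemma: it simply cites \cite{MSS} (Lemmas 5.4 and 5.8). What you have written is therefore a self-contained replacement for that citation, and it holds up. The key points all check out: $N^1(X)_\R$ has rank $2$ for a $\bP^1$-bundle over a curve, so the nef cone is a closed, salient, two-dimensional cone; $[F]$ is nef with $[F]^2=0$, hence spans an extremal ray, and the Hodge index theorem gives $F\cdot r>0$ for the other extremal generator $r$; a surjective endomorphism of a projective variety is finite (if a curve $Z$ were contracted, $f_*[Z]=0$ would contradict the fact that $f_*$ is the adjoint of the injective map $f^*$), so $f^*$ preserves the nef cone, is injective, and multiplies the intersection form by $\deg f$; consequently $f^*$ fixes the unique isotropic nef ray when $r^2>0$ (in particular for $\F_n$, $n\ge 1$, where $r=[C_0]+n[F]$ has $r^2=n>0$) and at worst swaps the two isotropic rays when $r^2=0$, which is exactly why the general statement is about $f^2$ and why your $\bP^1\times\bP^1$ example is the right illustration. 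Your passage from $f^*[F]\in\R_{>0}[F]$ to preservation of the fibration, via the vertical/horizontal decomposition of the effective divisor $f^*\pi^{-1}(c)$ and the observation that fibers of a $\bP^1$-bundle are irreducible, is also correct; the final factorization $\pi\circ f=g\circ\pi$ with $g$ a morphism follows from the rigidity lemma (equivalently from $\pi_*\OO_X=\OO_C$), which you should state explicitly if writing this up. Two further remarks: your argument never uses non-invertibility of $f$, so it proves a slightly stronger statement, which is harmless; and the trade-off versus the paper's citation is the usual one — the reference buys brevity and defers to a result proved in greater generality, while your argument makes the surface-specific mechanism (the action of $f^*$ on the two-dimensional nef cone and its isotropic rays) transparent and keeps the paper self-contained.
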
  
\begin{proof}
See \cite[Lemma 5.4 and Lemma 5.8]{MSS}. 
\end{proof}
By Lemma \ref{lem:MSS-5.8}, at the expense of replacing a non-invertible endomorphism $f: X \lra X$ with $f^2$, we can assume that there exists an endomorphism $g: C \lra C$ such that $g \circ \pi = \pi \circ f$. We will sometimes refer to $g$ as \emph{the endomorphism induced on $C$ by $f$}. When $X$ is $\F_n$ for some $n \ge 1$, we have the next useful lemma which gives us a handle on the degree of the induced map $g$.
\begin{lemma}
\label{lem:MSS-5.10}
Let $X$ be a Hirzebruch surface $\F_n$ for some $n \ge 1$ and $f$ be a non-invertible endomorphism of $X$. If $g$ is the induced endomorphism on $C$ by $f$ and $F$ is any fiber of the projection $\pi$, then $\deg(g) = \deg(f|_F)$. In particular, if $g$ is an automorphism then so is $f$.  
\end{lemma}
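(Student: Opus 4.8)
The plan is to work entirely inside $\operatorname{Pic}(\F_n) = \Z[C_0]\oplus\Z[F]$, where $F$ is a fibre of $\pi$ and $C_0$ is the section with $C_0^2=-n$, $C_0\cdot F=1$, $F^2=0$; the key geometric input is that for $n\ge 1$ the curve $C_0$ is the \emph{unique} irreducible curve on $\F_n$ of negative self-intersection (this is exactly where $n\ge 1$ enters, and the lemma genuinely fails on $\F_0$, e.g. for $(x,y)\mapsto(x^a,y^b)$). Write $e:=\deg g$ and $d:=\deg(f|_F)$; note $d$ does not depend on the chosen fibre, since the fibres are all numerically equivalent and $f$ is finite. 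I would determine the two classes $f^*[F]$ and $f^*[C_0]$ explicitly and then read off $d=e$.

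First, $f^*[F]=e[F]$: indeed $[F]=\pi^*(\mathrm{pt})$ and $\pi\circ f=g\circ\pi$, so $f^*[F]=\pi^*g^*(\mathrm{pt})=\pi^*(e\cdot\mathrm{pt})=e[F]$. Next I would show $f(C_0)=C_0$. The divisor $f^*C_0$ is effective ($f$ is finite, hence flat) and $(f^*C_0)^2=(\deg f)\,C_0^2=-n\deg f<0$; an effective divisor of negative self-intersection must contain in its support an irreducible curve of negative self-intersection, so by uniqueness $C_0\subseteq\operatorname{Supp}(f^*C_0)=f^{-1}(C_0)$, and finiteness of $f$ then forces $f(C_0)=C_0$. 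Since $\pi|_{C_0}\colon C_0\isomto C$ intertwines $f|_{C_0}$ with $g$, we get $\deg(f|_{C_0})=e$, hence as $1$-cycle classes $f_*[C_0]=e[C_0]$ and $f_*[F]=d[F]$.

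Now I would apply the projection formula $f^*\alpha\cdot\beta=\alpha\cdot f_*\beta$. Writing $f^*[C_0]=b[C_0]+c[F]$, intersection with $F$ gives $b=f^*C_0\cdot F=C_0\cdot f_*F=d$, and intersection with $C_0$ gives $-nb+c=f^*C_0\cdot C_0=C_0\cdot f_*C_0=-ne$, so $c=n(d-e)$ and $f^*[C_0]=d[C_0]+n(d-e)[F]$. Effectivity of $f^*C_0$ together with $n\ge 1$ forces $d\ge e$; and if $d>e$ then some fibre $F_q$ occurs in $\operatorname{Supp}(f^*C_0)=f^{-1}(C_0)$, i.e.\ $f(F_q)\subseteq C_0$, while also $\pi(f(F_q))=\{g(q)\}$, so $f$ would contract the curve $F_q$ to the single point $C_0\cap\pi^{-1}(g(q))$ — impossible since $f$ is finite. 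Hence $d=e$, which is the asserted equality $\deg(f|_F)=\deg g$. Finally, if $g$ is an automorphism then $e=1$, so $d=1$ as well, and since the degree is multiplicative along $\pi$ we get $\deg f=\deg g\cdot\deg(f|_F)=1$; thus the finite morphism $f$ is birational onto the normal (indeed smooth) surface $\F_n$, hence an isomorphism by Zariski's main theorem.

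I expect the main obstacle to be pinning down the precise class $f^*[C_0]=d[C_0]+n(d-e)[F]$ and then extracting $d=e$ from it: the two ingredients that make this work — total invariance of the negative section $C_0$ (via the negative-self-intersection argument) and the fact that the finite map $f$ contracts no curve (ruling out fibres in $f^{-1}(C_0)$) — both rely on $n\ge 1$, and it is precisely their combination that forms the heart of the proof.
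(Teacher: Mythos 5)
Your reduction to intersection theory on $\operatorname{Pic}(\F_n)=\Z[C_0]\oplus\Z[F]$ is sound up to and including the identity $f^*[C_0]=d[C_0]+n(d-e)[F]$: finiteness of the surjective endomorphism, $f(C_0)=C_0$ via uniqueness of the negative curve, $f_*[C_0]=e[C_0]$, $f_*[F]=d[F]$ and the projection-formula computations are all correct. The gap is in the last step, where from $d>e$ you infer that some fibre $F_q$ must occur in $\operatorname{Supp}(f^*C_0)=f^{-1}(C_0)$. That inference does not follow from the class computation: if $C'$ is an irreducible component of $f^{-1}(C_0)$ other than $C_0$, then $f(C')$ is a curve inside $C_0$, hence $f(C')=C_0$ and $\pi(f(C'))=C$, while $\pi(f(F_q))=\{g(q)\}$; so a fibre can \emph{never} be a component of $f^{-1}(C_0)$, and the positive $[F]$-part of the class $d[C_0]+n(d-e)[F]$ could a priori be carried by horizontal multisections (classes $aC_0+bF$ with $a\ge 1$, $b\ge na>0$), which your argument never rules out. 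The contradiction you derive therefore only excludes a configuration that was impossible anyway; it does not exclude $d>e$.

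The repair is one more application of the projection formula. For any irreducible component $C'$ of $f^{-1}(C_0)$ we have $f(C')=C_0$, so $f_*[C']=\delta'[C_0]$ with $\delta'\ge 1$, hence $C'\cdot f^*C_0=f_*C'\cdot C_0=-n\delta'<0$; since $f^*C_0$ is effective with $C'$ in its support, this forces $(C')^2<0$, so $C'=C_0$ by uniqueness of the negative curve. Thus $f^*C_0=mC_0$ for some $m\ge 1$, and comparing $[F]$-coefficients with $d[C_0]+n(d-e)[F]$ gives $n(d-e)=0$, i.e.\ $d=e$. (Equivalently: $\deg f=f^*C_0\cdot f^*F=de$, and $f_*f^*C_0=(\deg f)\,C_0$ applied to your formula yields $nd(d-e)=0$.) With this step fixed, your conclusion $e=1\Rightarrow d=1\Rightarrow\deg f=1$, plus the fact that a finite birational self-map of the smooth surface $\F_n$ is an automorphism, goes through; note the paper itself does not reprove the equality $\deg g=\deg(f|_F)$ but cites \cite[Lemma 5.10]{MSS} for it and only adds the short degree argument for the second conclusion, so your write-up would be a self-contained alternative once the above gap is closed.
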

\begin{proof}
For the proof of the first conclusion see \cite[Lemma 5.10]{MSS}. The second conclusion is clear since $\deg(f) = \deg(g)\deg(f|_F) = \deg(g)^2$. 
\end{proof}
The next lemma is a strengthening of Lemma \ref{lem:MSS-5.8} in the case where $C$ has genus at least 1 and $f$ is a rational self-map (not necessarily regular). 
\begin{lemma}
\label{lem:rat-pres-fibers}
Let $X$ be a $\bP^1$-bundle over a smooth projective curve $C$ over $\C$ of genus at least 1. Let $\pi: X \lra C$ be the projection map. Suppose $f$ is a dominant rational self-map of $X$. Then, there must exist an endomorphism $g: C \lra C$ such that $g \circ \pi = \pi \circ f$. 
\end{lemma}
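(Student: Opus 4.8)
The plan is to use the fact that $C$ has genus at least $1$ to rule out the existence of rational curves passing through generic fibers, and thereby force $f$ to descend to $C$. First I would resolve the indeterminacy of $f$: by Hironaka, there is a smooth projective surface $\hat X$ with a birational morphism $p: \hat X \to X$ obtained by a sequence of point blow-ups, together with a morphism $q: \hat X \to X$ such that $f \circ p = q$. The exceptional locus of $p$ is a union of rational curves, and since $\pi: X \to C$ has $C$ of genus $\ge 1$, these rational curves are all contracted by $\pi$, i.e. they are contained in fibers of $\pi$ (or more precisely $\pi \circ p$ contracts them). Now consider the composite morphism $\pi \circ q : \hat X \to C$. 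I want to show this factors through $\pi \circ p$.

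The key step is the following rigidity observation. Since $C$ has genus $\ge 1$, the curve $C$ contains no rational curves, and any morphism from a rational curve to $C$ is constant. The fibers of $\pi \circ p: \hat X \to C$ over a generic point of $C$ are isomorphic to $\bP^1$ (the strict transforms of generic fibers of $\pi$, which miss the blown-up points), and the fibers over the finitely many points of $C$ below the blow-up centers are trees of rational curves. In either case, every irreducible component of every fiber of $\pi \circ p$ is a rational curve, hence is mapped to a point by $\pi \circ q: \hat X \to C$. Therefore $\pi \circ q$ is constant on each fiber of $\pi \circ p$; since $\pi \circ p$ is a proper morphism with connected fibers onto the normal (smooth) curve $C$, the rigidity lemma (or Stein factorization together with $(\pi \circ p)_* \OO_{\hat X} = \OO_C$) shows that $\pi \circ q$ factors as $g \circ (\pi \circ p)$ for a unique morphism $g: C \to C$. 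Pulling this back via the birational morphism $p$ — equivalently, restricting to the open dense locus where $p$ is an isomorphism — gives $\pi \circ f = g \circ \pi$ as rational maps, and since both sides are morphisms off a codimension-$2$ set and $C$ is a curve, the equality of rational maps $\pi \circ f$ and $g \circ \pi$ from $X$ to $C$ holds as morphisms. Finally $g$ is dominant (hence surjective, as $C$ is a projective curve) because $f$ is dominant and $\pi$ is surjective, so $g$ is an endomorphism of $C$ as required.

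The main obstacle I anticipate is making the descent argument on the blown-up model $\hat X$ fully rigorous, in particular verifying that \emph{every} component of \emph{every} fiber of $\pi \circ p$ is rational — the generic fibers are clear, but one must check the special fibers sitting over the images in $C$ of the blow-up centers. This follows because blowing up a point on a surface replaces a neighborhood of that point with a configuration of rational curves, and the strict transform of the original fiber (itself a $\bP^1$) together with the exceptional curves forms a connected union of rational curves; an induction on the number of blow-ups handles the general case. Once that is in hand, the rigidity lemma does all the remaining work, and the passage back from $\hat X$ to $X$ is harmless since $p$ is birational and $C$ is separated. An alternative, perhaps cleaner, route avoiding $\hat X$ altogether: the rational map $\pi \circ f: X \dra C$ is defined outside a finite set of points (the indeterminacy locus of $f$ is codimension $\ge 2$ on the surface $X$, and composing with the morphism $\pi$ does not enlarge it); a rational map from a smooth surface to a curve with at most finitely many points of indeterminacy extends to a morphism, because the closure of the graph projects isomorphically after removing finitely many points and the target is a smooth curve, so by the valuative criterion it is already a morphism. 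Then $\pi \circ f : X \to C$ is a morphism contracting every fiber of $\pi$ (each such fiber being a $\bP^1$, hence mapped to a point of $C$), and the rigidity lemma applied directly to $\pi: X \to C$ yields $g: C \to C$ with $\pi \circ f = g \circ \pi$; dominance of $f$ again forces $g$ to be surjective.
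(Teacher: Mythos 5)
Your main argument is correct and is essentially the paper's proof: the paper simply notes that each fiber $F\simeq\bP^1$ must be contracted by $\pi\circ f$ because a curve of genus at least $1$ admits no nonconstant maps from $\bP^1$, and then factors $\pi\circ f$ through $\pi$; your resolution of indeterminacy plus the rigidity lemma just makes that terse factorization step precise, which is a reasonable (and welcome) elaboration rather than a different route. One caution about your ``alternative, cleaner'' variant: the general claim that a rational map from a smooth surface to a curve with only finitely many indeterminacy points extends to a morphism is false (e.g. $\bP^2\dra\bP^1$, $[X:Y:Z]\mapsto[X:Y]$ is undefined at $[0:0:1]$ and does not extend); in the present situation $\pi\circ f$ does extend, but only because the exceptional curves over the indeterminacy points are rational and hence contracted to points of $C$ --- which is exactly the argument of your main route, so the shortcut does not actually avoid it.
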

\begin{proof}
Let $F \simeq \bP^1$ be a fiber of the projection $\pi$. Then, $\pi(f(F))$ is either equal to $C$ or just a point. $\pi(f(F)) = C$ would be a contradiction since $C$ has genus at least 1 and there are no morphisms from $\bP^1$ to $C$ by the Riemann-Hurwitz formula. Hence, $\pi(f(F))$ must be a point. This implies that every fiber of the projection $\pi$ is contracted to a point by $\pi \circ f$. Therefore, $\pi \circ f$ must factor through the projection $\pi$ as desired.  
\end{proof}

The next result is very useful to us in our proof of Theorem \ref{thm:more-precise} for a non-minimal toric surface $X$.

\begin{lemma}
\label{lem:S(X)-tot-inv}
Let $X$ be a smooth projective surface over $\C$ admitting a non-invertible endomorphism $f$. Let $S(X)$ denote the set of curves in $X$ with negative self-intersection. Then, $S(X)$ is a finite set and $f^{-m}(C) = C$ for some $m \ge 1$ and every $C \in S(X)$. 
\end{lemma}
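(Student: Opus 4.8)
The plan is to prove the two assertions separately: the finiteness of $S(X)$ will come from the classification of surfaces admitting a non-invertible endomorphism, and the total invariance will then follow from the projection formula together with a short counting argument.

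For finiteness, I would invoke Theorems \ref{thm:pos-kod} and \ref{thm:ruled}, by which $X$ is minimal with $\kappa(X)\ge 0$, a toric surface, or a $\bP^1$-bundle over a smooth projective curve of genus at least $1$. When $\kappa(X)\ge 0$ one checks that $S(X)=\emptyset$: by Theorem \ref{thm:pos-kod} and Fujimoto's classification \cite{Fujimoto}, $X$ is abelian, hyperelliptic, or a minimal elliptic surface with $\kappa(X)=1$, and in all cases $X$ carries an \'etale self-map of degree $\ge 2$, so $\chi_{\mathrm{top}}(X)=0$; in the abelian and hyperelliptic cases $K_X$ is numerically trivial and $X$ contains no rational curves, while when $\kappa(X)=1$ the vanishing of $\chi_{\mathrm{top}}(X)$ forces the elliptic fibration to have only smooth fibres, so $K_X$ is a numerical $\Q$-pullback from the base; in either situation adjunction, aided by Riemann--Hurwitz in the elliptic case, gives $C^2\ge 0$ for every irreducible curve $C$. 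For a toric surface $X$, the Mori cone $\overline{NE}(X)$ is the rational polyhedral cone spanned by the finitely many torus-invariant prime divisors, so, using the standard fact that an irreducible curve of negative self-intersection spans an extremal ray of $\overline{NE}(X)$, every member of $S(X)$ is such a torus-invariant curve, and $S(X)$ is finite. For a $\bP^1$-bundle over a curve of positive genus, $NS(X)$ has rank $2$, so $\overline{NE}(X)$ has exactly two extremal rays, one of them spanned by a fibre of self-intersection $0$; the same extremality remark, together with the fact that distinct irreducible curves meet non-negatively, shows $|S(X)|\le 1$.

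For total invariance, note first that $f$ is finite (it is \'etale when $\kappa(X)\ge 0$ by Theorem \ref{thm:pos-kod}, and in the ruled cases a non-invertible endomorphism cannot contract a curve, by \cite{Nak02} and \cite{MSS}). The key point is the claim that for every $C\in S(X)$ the set $f^{-1}(C)$ is a single irreducible curve, which again lies in $S(X)$. To prove it, write the divisor $f^*C=\sum_i m_i D_i$, where the $D_i$ are the irreducible components of $f^{-1}(C)$ and $m_i=\ord_{D_i}(f^*C)\ge 1$; since $f$ is finite, $f_*D_i=e_i[C]$ with $e_i=\deg(f|_{D_i})\ge 1$, so the projection formula gives $(f^*C)\cdot D_i=C\cdot f_*D_i=e_i\,C^2<0$, whereas $(f^*C)\cdot D_i\ge m_i D_i^2$ because the cross terms $D_i\cdot D_j$ with $j\ne i$ are non-negative; hence $D_i^2<0$ for every $i$, i.e.\ all components of $f^{-1}(C)$ lie in $S(X)$. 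Now count the pairs $(C,D)$ with $C\in S(X)$ and $D$ a component of $f^{-1}(C)$: the map $(C,D)\mapsto D$ is injective and, by the above, takes values in $S(X)$, so $\sum_{C\in S(X)}\#\{\text{components of }f^{-1}(C)\}\le |S(X)|$; since $f$ is surjective each of the $|S(X)|$ summands is at least $1$, forcing every $f^{-1}(C)$ to be irreducible and $f$ to map $S(X)$ into $S(X)$. Consequently $C\mapsto (f^{-1}(C))_{\mathrm{red}}$ is a permutation of the finite set $S(X)$, with inverse $D\mapsto f(D)$; taking $m$ to be its order and iterating the claim yields $f^{-m}(C)=C$ for every $C\in S(X)$.

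I expect the finiteness of $S(X)$ to be the real obstacle, because that is where the hypotheses genuinely enter: a general smooth projective surface may carry infinitely many negative curves, in which case no uniform $m$ could work, so this step has to use the classification of surfaces with a non-invertible endomorphism. The only other delicate input is the standard fact that an irreducible curve of negative self-intersection spans an extremal ray of the Mori cone, which is exactly what identifies each negative curve with a torus-invariant curve (respectively with the distinguished extremal ray) in the toric and $\bP^1$-bundle cases; beyond that, once $S(X)$ and $f$ are known to be finite, the argument is formal.
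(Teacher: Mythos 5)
Your proposal is correct, but it takes a genuinely different route from the paper: the paper's proof of Lemma \ref{lem:S(X)-tot-inv} is a direct citation of Nakayama --- finiteness of $S(X)$ and the existence of an iterate $f^m$ fixing every negative curve come from \cite[Proposition 10]{Nak02}, and the upgrade from $f^m(C)=C$ to $f^{-m}(C)=C$ comes from the proof of \cite[Lemma 8]{Nak02} --- whereas you reprove the statement from scratch. Your argument uses the classification results already quoted in the paper (Theorems \ref{thm:pos-kod} and \ref{thm:ruled}) together with standard intersection theory: negative curves span extremal rays, so on a toric surface they are torus-invariant and on a $\bP^1$-bundle over a curve of positive genus there is at most one, while for $\kappa(X)\ge 0$ the \'etale hypothesis forces $\chi_{\mathrm{top}}(X)=0$ and adjunction plus Riemann--Hurwitz rules negative curves out entirely; then the projection-formula computation $m_iD_i^2\le (f^*C)\cdot D_i=e_iC^2<0$ shows every component of $f^{-1}(C)$ is again negative, and the counting/permutation argument gives irreducibility of $f^{-1}(C)$ and hence total invariance. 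In effect you are reproving the relevant parts of Nakayama's Proposition 10 and Lemma 8; what this buys is a self-contained proof exhibiting the mechanism, at the cost of length and of relying on the classification in the $\kappa(X)\ge 0$ case, where the paper's citation is insensitive to such case analysis. Two spots deserve slightly more care if you write this up: (i) in the elliptic case with $\kappa(X)=1$ the fibration may have multiple fibres (with smooth reduction), so the Riemann--Hurwitz estimate must use that a horizontal curve ramifies to order divisible by the multiplicity over each such point, which is exactly what makes $2p_a(C)-2\ge K_X\cdot C$ and hence $C^2\ge 0$; (ii) the finiteness of $f$ in the ruled cases should be justified by the standard remark that a surjective self-morphism of a projective surface contracts no curve (since $f_*f^*=\deg(f)\cdot\mathrm{id}$ makes $f_*$ injective on $N_1(X)_\R$, while a contracted curve would have $f_*[C]=0$), rather than by a vague appeal to \cite{Nak02} and \cite{MSS}. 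Neither point is a genuine gap.
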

\begin{proof}
By \cite[Proposition 10]{Nak02}, $S(X)$ is a finite set and there exists an integer $m \ge 1$ such that $f^{m}$ fixes all of the curves in $S(X)$. Moreover, by the proof of \cite[Lemma 8]{Nak02} for any $C \in S(X)$ and any $C'$ such that $f^{m}(C') = C$ we must have $C' = C$. In other words, we must have $f^{-m}(C) = C$. 
\end{proof}

The next theorem classifies the totally invariant hypersurfaces of $\bP^2$ and is crucial in our proof of Theorem \ref{thm:more-precise} in the case of $X = \bP^2$. 
\begin{theorem}
\label{conj:lin-conj}
Let $\Phi$ be a non-invertible endomorphism of $\bP^2$. Then, all totally invariant curves of $\bP^2$ under $\Phi$ are linear. 
\end{theorem}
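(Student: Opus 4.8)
The natural first move is a logarithmic Riemann--Hurwitz estimate. Write $V = V_1 \cup \dots \cup V_r$ as a reduced curve and put $d = \deg \Phi \ge 2$. Total invariance $\Phi^{-1}(V) = V$ gives the log ramification identity $K_{\bP^2} + V = \Phi^{*}(K_{\bP^2} + V) + R^{\log}$ for the pair $(\bP^2, V)$, with $R^{\log} \ge 0$ (and $R^{\log} = 0$ precisely when $\Phi$ is \'etale over $U := \bP^2 \setminus V$). Intersecting with $\OO(1)$ yields $(\deg V - 3)(1 - d) = \deg R^{\log} \ge 0$, hence $\deg V \le 3$, with equality forcing $\Phi$ to be finite \'etale over $U$. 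So it remains only to rule out a component $V_i$ of degree $2$ or $3$; equivalently, to rule out a totally invariant irreducible cubic and a totally invariant smooth conic (an irreducible conic is automatically smooth, and a reducible cubic with a degree-$2$ component has that component smooth).

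If $V$ is an irreducible cubic then $\deg V = 3$, so $\Phi\colon U \to U$ is finite \'etale of degree $d^2 \ge 4$; then $\chi(U) = d^2 \chi(U)$ forces $\chi(U) = 0$, whereas $\chi(U) = \chi(\bP^2) - \chi(V) = 3 - \chi(V)$ and an irreducible plane cubic (smooth, nodal, or cuspidal) has $\chi(V) \in \{0,1,2\}$ --- a contradiction. For a conic component $C$, so $V = C$ or $V = C \cup L$ with $L$ a line, one first notes that $\Phi$ permutes the components of $V$, so after replacing $\Phi$ by an iterate we may assume $\Phi(C) = C$ and $\Phi(L) = L$; a degree computation on $C \cong \bP^1$ using $\OO(1)|_C \cong \OO_{\bP^1}(2)$ then gives $\deg(\Phi|_C) = d$. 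The transverse line-plus-conic subcase is again killed by the Euler characteristic count ($\chi(U) = 1 \ne 0$). For the remaining subcases I would pass to the double cover $\rho\colon Q \to \bP^2$ branched along $C$: the defining form of $C$ pulls back under $\Phi$ to a constant times its $d$-th power, so $\Phi$ lifts to a non-invertible endomorphism of $Q \cong \bP^1 \times \bP^1$ preserving the smooth $(1,1)$-curve $\rho^{-1}(C)$ (and the curve $\rho^{-1}(L)$), and one extracts a contradiction from the classification of endomorphisms of $\bP^1 \times \bP^1$ --- some iterate is a product $(h_1, h_2)$ of self-maps of $\bP^1$, possibly after swapping the factors --- together with the constraints imposed by these invariant curves and the deck involution.

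The decisive difficulty is the conic case. The degree bound together with the \'etale-cover and Euler-characteristic arguments disposes of cubics and of all configurations except a totally invariant smooth conic (or a conic tangent to an invariant line), and precisely there the topology of the complement is too flexible to produce an immediate obstruction, so a genuinely finer argument is needed --- either the double-cover reduction to $\bP^1 \times \bP^1$ sketched above, or a direct analysis of the induced dynamics of $\Phi$ on $C \cong \bP^1$ and of the extra ramification divisor of degree $d-1$. Since this ``linearity theorem'' for $\bP^2$ is already available in the literature on totally invariant (``exceptional'') hypersurfaces of endomorphisms of projective space, I expect the cleanest presentation here to quote it rather than to reprove it.
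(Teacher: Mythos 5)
Your bottom line coincides with the paper: the paper does not reprove this statement at all, its ``proof'' is a citation to H\"oring's classification of totally invariant divisors of endomorphisms of projective space (\cite[Theorem 1.1]{Hor17} and the discussion following it), which is exactly the option you recommend in your final sentence. What you add on top of that is a partial direct argument, and the parts you actually carry out are correct: the logarithmic ramification inequality does give $(\deg V-3)(1-d)=\deg R^{\log}\ge 0$, hence $\deg V\le 3$ with equality forcing $\Phi$ to be \'etale over $U=\bP^2\setminus V$ (by purity of the branch locus), and the Euler-characteristic count $\chi(U)=d^2\chi(U)$ correctly eliminates an irreducible cubic ($\chi(U)=3-\chi(V)\in\{1,2,3\}$) and the transverse conic-plus-line configuration ($\chi(U)=1$). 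This is the classical Forn\ae ss--Sibony-style degree bound, and it is genuinely more informative than the paper's one-line citation.

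The gap, as you yourself flag, is the conic case (a totally invariant smooth conic, possibly together with a tangent line), where you only sketch the double-cover reduction and defer the ``extraction of a contradiction.'' For the record, that endgame does close, and the missing step is short: the double cover of $\bP^2$ branched along the smooth conic $C$ is $\bP^1\times\bP^1$ with $\rho^{-1}(C)$ the diagonal, $\Phi$ lifts because $q\circ\Phi$ is a constant times $q^{d}$ and $q\circ\rho$ is a square, and after passing to an iterate the lift is a product $(h_1,h_2)$ with the diagonal totally invariant; comparing divisor classes forces $h_1(x)-h_2(y)=c\,(x-y)^{d}$ in affine coordinates, which is impossible for $d\ge 2$ (differentiate in $x$: the left side is independent of $y$). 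The tangent-line component pulls back to a fiber of each ruling and causes no trouble. So your proposal is completable as a self-contained proof, but as written the decisive case is only sketched; since you ultimately propose quoting the known linearity theorem, your proposal and the paper end up in the same place, with yours carrying the extra (correct) partial argument.
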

\begin{proof}
The proof follows from theorem \cite[Theorem 1.1]{Hor17} and the discussion after the statement of the theorem. 
\end{proof}
\section{Preliminaries and facts about the valuative tree at infinity}
\label{sec:val-tree}
In this section, we review fundamental facts about the valuative tree at infinity. In the end, we prove a result related to periodic points of the action induced on the valuative tree by a polynomial endomorphism of $\C^2$ of maximal entropy (See Proposition \ref{prop:5.3-extension}). For a more detailed treatment of the basic facts about the valuative tree, we refer the reader to \cite{eigenvaluations}, \cite{FJ07}, and the accessible notes \cite{Jon11}. We begin by quickly recalling some basic definitions about rooted $\R$-trees. 
\subsection{Rooted \texorpdfstring{$\R$}{R}-trees}\label{sec:rooted-trees} We recall the next definition from \cite[Definition 2.4]{Jon11}.
\begin{definition}
\label{def:rooted-tree}
A rooted tree is a partially ordered set $(X,\le)$ satisfying the following properties:
\begin{itemize}
    \item[(RT1)] $X$ has a unique minimal element $x_0$;
    \item [(RT2)] For any $x \in X\setminus\{x_0\} $, the set $ \{z \in X : z \le x\}$ is isomorphic (as a partially
ordered set) to the real interval $[0, 1] $; 

    \item[(RT3)] any two points $x,y \in X$ admit an infimum $x\wedge y$ in $X$, that is, $z \le x$ and $z \le y$ if and only if $z \le x \wedge y$;

    \item[(RT4)] any totally ordered subset of $X$ has a least upper bound in $X$.
\end{itemize}

Given two points $x,y$ in a rooted $\R$-tree $X$ with $x \le y$ we define 
\[
[x,y] = \{z \in X: x \le z \le y\}. 
\] 
For any two points $x,y \in X$ we can then define
\[
[x,y] = [x \wedge y, x] \cup [x \wedge y, y]. 
\]
\end{definition} 

For any point $x \in X$ we can define an equivalence relation on $X$ as follows: Say $y \sim z$ whenever $[x,y]$ and $[x,z]$ have an intersection point other than $x$. A \emph{tangent direction} $\vec{v}$ at $x$ is then an equivalence class of this relation and we let $U(\vec{v})$ denote the set of all elements in the equivalence class of $\vec{v}$. We let $Tx$ denote the set of all tangent directions at $x$.

Lastly, we note that there are two important types of points in a (rooted) tree $X$: \emph{Ends} which are points with a unique tangent direction and \emph{Branch points} which are points with at least three distinct tangent direction.

\subsection{Admissible compactifications} In what comes next we fix an embedding of $\C^2$ in $\bP^2$ and define $L_\infty$ to be $\bP^2 \setminus\C^2$. An \emph{admissible compactifiction} of $\C^2$ is a smooth projective surface $X$ admitting a birational morphism $\pi: X \lra \bP^2$ that is an isomorphism above $\C^2$. 

\subsection{Valuations} Let $R$ be the coordinate ring of $\C^2$. A \emph{valuation} on $R$ is a function 
\[
v : R \to \mathbb{R} \cup \{+\infty\},
\]
satisfying:
\begin{enumerate}
    \item \(v(PQ)=v(P)+v(Q)\) for all \(P,Q\in R\),
    \item \(v(P+Q) \ge \min\{v(P),v(Q)\}\) for all \(P,Q\in R\),
    \item \(v(\lambda)=0\) for all \(\lambda\in\mathbb{C}^*\).
\end{enumerate}

We denote by $\widehat{\V}_\infty$ the space of all valuations $\nu$ on $R$ such that $\nu(L) < 0$ for a generic affine function $L$ on $\C^2$. The \emph{valuative tree at infinity} is then defined to be the subset of all valuations $\nu \in \widehat{\V}_\infty$ normalized so that $\nu(L) = -1$ for a generic affine function $L \in C^2$. 

\subsection{The tree structure and topology on \texorpdfstring{$\V_\infty$}{Vinfty}} There is a natural partial ordering on $\V_\infty$ given by 
\[
\nu \le \mu \Longleftrightarrow \nu(P) \le \mu(P) \text{ for all } P \in R.
\]
which turns $\V_\infty$ into a tree. The tree has a unique minimal element corresponding to the valuation $-\deg := \ord_{L_\infty}$ which allows us to also view $\V_\infty$ as a rooted tree with root $-\deg$. Given any admissible compactifiction $X$ and any prime divisor $E \subseteq X \setminus\C^2$, we can define a normalized valuation 
\[
\nu_E := b_E^{-1}\ord_E. 
\]
where $b_E = \ord_E(L)$ for a generic affine function $L$. Any such valuation is called a \emph{divisorial} valuation. 

A special type of divisorial valuation is a \emph{pencil valuation} which is defined as follows. Let $C$ be an affine curve in $\C^2$ given by $\{P = 0\}$ for some $P \in R$. Assume that the closure of $C$ in $\bP^2$ is irreducible and has one intersection point with the line at infinity. Let $C_\lambda = \{P = \lambda\}$. Then, we can define 
\[
\nu_{|C|}(Q) = \deg(C)^{-1}\ord_{\infty}(Q|_{C_\lambda}),
\]
for every $Q\in R$. This is called the pencil valuation corresponding to the pencil $|C|$. If $C$ is assumed to be a rational curve we call such a valuation a rational pencil valuation. See \cite{eigenvaluations} for further details on the different types of valuations in $\V_\infty$.

One can endow $\V_\infty$ with a natural weak topology where $\nu_n \to \nu$ whenever $\nu_n(P) \lra \nu(P)$ for all $P \in R$. $\V_\infty$ is compact with respect to this topology (See \cite[Appendix A]{eigenvaluations}).   

\subsection{The main two parametrizations of \texorpdfstring{$\V_\infty$}{Vinfty}}
For a rooted tree $(T, \le)$ we recall that a \emph{parametrization} is a monotone function $\alpha: T \lra [-\infty,\infty]$ such that the restriction of $\alpha$ to any interval $[x,y]$ for some $x < y$ is a homeomorphism onto a closed sub-interval of $[-\infty,\infty]$. Moreover, $|\alpha(x_0)| \ne \infty$ unless $x_0$ is an end of the tree. Given a parametrization $\alpha$ one can define a metric on $T$ by 
\[
d(x,y) = |\alpha(x) - \alpha(x \wedge y)| + |\alpha(y) - \alpha(x \wedge y)|, 
\]
where $x \wedge y$ denotes the unique maximal element $z$ of $T$ such that $z \le x$ and $z \le y$ (See Definition \ref{def:rooted-tree}). 

There exists a unique parametrization $\alpha: \V_\infty \lra [-\infty, 1]$ called \emph{skewness} satisfying the following properties:
\begin{itemize}
    \item[(a)] $\alpha$ is decreasing and upper semi-continuous

    \item[(b)] $\alpha(-\deg) = 1$ and $|\alpha(\nu_E) - \alpha(\nu_{E'})| = (b_Eb_{E'})^{-1}$ whenever $E$ and $E'$ are intersecting prime divisors in some admissible compactification of $\C^2$.  
\end{itemize}
Using the discussion above, we can use the parametrization $\alpha$ to give $\V_\infty$ the structure of rooted a metric tree. 

Similar to $\alpha$, there is a unique increasing and lower semi-continuous parametrization $A: \V_\infty \lra [-2, \infty]$ called \emph{thinness} satisfying $A(-\deg) = -2$ and $A(\nu_E) = \frac{a_E}{b_E}$ where $a_E = 1 + \ord_E(dx \wedge dy)$. 

Using the two parametrization $\alpha$ and $A$ we can define a subtree $\V_1$ of $\V_\infty$ consisting of valuation $\nu$ with $\alpha(\nu) \ge 0$ and $A(\nu) \le 0$. We will refer to $\V_1$ as the \emph{tight tree}. $\V_1$ is of significant importance since it is preserved under the action of polynomial endomorphisms of $\C^2$ (See Proposition \ref{prop:induced-map-on-V}). 
\subsection{Induced map of polynomial endomorphisms on \texorpdfstring{$\V_\infty$}{Vinfty}} Let $F: \C^2 \lra \C^2$ be a polynomial endomorphism and $F^\ast: R \lra R$ be the pull-back map sending $a \in R$ to $a \circ F \in R$. For any $\nu \in \widehat{V}_\infty$ we define $d(F, \nu) := \nu(F^\ast(L))$ for a generic affine function $L$. We can then define a map $F_\ast$ on $\widehat{V}_\infty$ by
\[
F_\ast(\nu) = \begin{cases}
    P \mapsto \nu(F^\ast(P)) \text{ \: if } d(F,\nu) > 0, \\
    P \mapsto 0 \text{ \: if } d(F,\nu) = 0.
\end{cases}
\]
for any $\nu \in \widehat{V}_\infty$.  For any valuation $\nu \in \V_\infty$ with $d(F,\nu) > 0$ we define $F_\bullet(\nu) = \frac{F_\ast(\nu)}{d(F,\nu)}$. When $F$ is not a proper map, $F_\ast$ could send a valuation in $\V_\infty$ to a valuation outside of $\V_\infty$. So, $F_\bullet$ is only well-defined on an open subset of $\V_\infty$. The next proposition shows that if we restrict our attention to the tight tree $\V_1$, then $F_\bullet$ induces a "nice" endomorphism of $\V_1$.

\begin{proposition}
\label{prop:induced-map-on-V}
The map $F_\bullet$ is well-defined and continuous self-map on the tight tree $\V_1$ and it preserves the tree structure on $\V_1$. Moreover, if $F$ is a proper map, then $F_\bullet$ is well-defined, continuous and surjective as an endomorphism of $\V_\infty$ and it preserves the tree structure. 
\end{proposition}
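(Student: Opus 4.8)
The plan is to prove the statement first for divisorial valuations, where $F_\bullet$ has a transparent geometric description, and then to propagate it to all of $\V_1$ (respectively all of $\V_\infty$ when $F$ is proper) by density and continuity. Recall that divisorial valuations are dense in $\V_\infty$, that every valuation is an increasing limit of divisorial ones lying below it, and --- since $\alpha$ is upper semicontinuous and $A$ lower semicontinuous --- that $\V_1$ is a closed subtree in which divisorial valuations are again dense. So it suffices to: (i) compute $F_\bullet\nu_E$ on divisorial valuations; (ii) check that $d(F,\nu)>0$ throughout $\V_1$, and throughout $\V_\infty$ when $F$ is proper; (iii) show that $F_\bullet$ carries the divisorial elements of $\V_1$ back into $\V_1$; and (iv) deduce continuity, the tree-map property, and surjectivity in the proper case.

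For (i) and (ii), let $\nu_E$ be divisorial, with $E$ a prime divisor at infinity of some admissible compactification. Since $F$ is already regular on $\C^2$, Hironaka's theorem provides admissible compactifications $\pi\colon X\to\bP^2$ --- in which the strict transform of $E$ still lies at infinity and defines $\nu_E$ --- and $\pi'\colon X'\to\bP^2$, together with a morphism $\tilde F\colon X\to X'$ lifting $F$. When $F$ is proper, $\tilde F^{-1}(\C^2)=\C^2$, so $\tilde F(E)$ lies in the boundary of $X'$, and blowing $X'$ up further we may take $\tilde F(E)=E'$ to be a prime divisor at infinity. A projection-formula computation involving the multiplicities $b_E,b_{E'}$ from the definitions of $\nu_E,\nu_{E'}$ and the ramification index $m=\ord_E(\tilde F^*E')$ then gives $F_*\nu_E=(b_E^{-1}m\,b_{E'})\,\nu_{E'}$ and $d(F,\nu_E)=b_E^{-1}m\,b_{E'}>0$, so $F_\bullet\nu_E=\nu_{E'}$ is divisorial. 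If $F$ is not proper, $\tilde F(E)$ may instead meet $\C^2$: then $d(F,\nu_E)=0$ and $F_\bullet\nu_E$ falls out of $\V_\infty$, which is exactly why the general statement must be confined to the tight tree.

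The technical heart is (iii), for which I would invoke the two functoriality estimates for the parametrizations. From the description of skewness through self-intersections of boundary divisors and the negativity of the intersection form on the exceptional locus of $\pi$, one gets a lower bound of the shape $\alpha(F_\bullet\nu)\ge d(F,\nu)^{-1}\alpha(\nu)$, so $\alpha(\nu)\ge 0$ forces $\alpha(F_\bullet\nu)\ge 0$. The change-of-variables formula for thinness gives an identity of the shape
\[
A(F_\bullet\nu)=d(F,\nu)^{-1}\bigl(A(\nu)+\nu(\mathrm{Jac}\,F)\bigr)
\]
(up to the normalization fixing $A(-\deg)=-2$), and combined with a bound on $\nu(\mathrm{Jac}\,F)$ valid on the tight tree --- this is where the hypothesis $A(\nu)\le 0$, and not merely $\alpha(\nu)\ge 0$, is used --- it yields $A(F_\bullet\nu)\le 0$, hence $F_\bullet\nu\in\V_1$. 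The positivity $d(F,\nu)>0$ on $\V_1$ drops out of the same circle of ideas, and since $d(F,\cdot)$ is then continuous and positive on the compact subtree $\V_1$, it is bounded there below by a positive constant (and likewise on $\V_\infty$ when $F$ is proper).

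For (iv): $\nu\mapsto\nu(F^*P)$ is weakly continuous for each fixed $P\in R$ by construction, so with $d(F,\cdot)$ continuous and bounded below, $F_\bullet=F_*/d(F,\cdot)$ is continuous on $\V_1$ (and on $\V_\infty$ when $F$ is proper); order-preservation of $F_*$, which is immediate from the definition of $\le$, together with the explicit divisorial formula, shows $F_\bullet$ is monotone along every segment and hence maps segments into segments, i.e.\ it is a tree map. Finally, when $F$ is proper, $F_\bullet(\V_\infty)$ is compact, hence a closed subtree, and since $\tilde F\colon X\to X'$ is proper and dominant it is surjective; thus every prime divisor at infinity of every $X'$ is dominated by a boundary divisor of some $X$, so every divisorial valuation lies in the image, and by density $F_\bullet$ is onto. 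The main obstacle is step (iii): once the divisorial picture is in place everything else is essentially formal, but establishing $F_\bullet(\V_1)\subseteq\V_1$ genuinely needs the geometry of polynomial maps near the line at infinity --- the structure of $\tilde F^{-1}(X'\setminus\C^2)$ and the control on $\nu(\mathrm{Jac}\,F)$ it affords --- and is the one point where a nontrivial input beyond formal manipulation is required.
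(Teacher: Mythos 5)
The paper itself gives no argument for this proposition --- its ``proof'' is the citation to \cite{eigenvaluations}, Section 7.2 --- and your outline does follow the architecture of Favre--Jonsson's treatment (computation on divisorial valuations via a lift $\tilde F\colon X\to X'$ between admissible compactifications, transformation rules for skewness and thinness, then continuity and density). The problem is that the steps where the real content lives are asserted rather than proved. The central one is well-definedness on the tight tree: you must show $d(F,\nu)>0$ for \emph{every} $\nu\in\V_1$, not only for divisorial valuations whose image divisor happens to stay in the boundary. You yourself note that for non-proper $F$ a divisor $E$ at infinity can be mapped into $\C^2$, giving $d(F,\nu_E)=0$; the entire point of restricting to $\V_1$ is that tightness ($\alpha\ge0$, $A\le0$) rules this out, and that implication appears nowhere in your sketch --- ``drops out of the same circle of ideas'' is not an argument, and it cannot be recovered from the divisorial case by density, since strict positivity is not a closed condition and your compactness/lower-bound step already presupposes positivity everywhere on $\V_1$. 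Similarly, the skewness inequality is given only ``of the shape'', and the thinness step hinges on an unspecified ``bound on $\nu(\mathrm{Jac}\,F)$ valid on the tight tree'' --- exactly the estimate where the hypotheses $\alpha(\nu)\ge0$, $A(\nu)\le0$ must do real work --- so the invariance $F_\bullet(\V_1)\subseteq\V_1$ is not established either.

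The tree-structure claim is also unjustified. From order-preservation of $F_*$ you conclude that $F_\bullet$ ``is monotone along every segment''; this does not follow, because $F_\bullet=F_*/d(F,\cdot)$ renormalizes by a factor varying from point to point, so the partial order is not transported. Moreover, monotonicity on segments is not what is actually needed later in the paper: the proof of Proposition \ref{prop:5.3-extension} uses that $F_\bullet$ is a \emph{regular} tree map (every segment can be subdivided into finitely many pieces on which $F_\bullet$ is a homeomorphism onto a segment), and in Favre--Jonsson this comes from the description of $\V_\infty$ as an inverse limit of dual graphs of admissible compactifications together with the maps induced by $\tilde F$ on those finite trees; some argument of this kind is needed and is missing. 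Your divisorial computation, the identification of properness with ``no divisor at infinity maps into $\C^2$'', and the surjectivity argument in the proper case (compact image containing all divisorial valuations) are essentially fine, but as it stands the proposal leaves open precisely the two non-formal points of the proposition.
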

\begin{proof}
See \cite[Section 7.2]{eigenvaluations}. 
\end{proof}

\begin{definition}[$\lambda_1$ and $\lambda_2$]
\label{def:dynamical-degrees}
For a dominant polynomial endomorphism $F: \C^2 \lra \C^2$ we define 
\[
\lambda_1(F) := \lim_{n \to \infty} \deg(F^n)^{1/n} = \lim_{n \to \infty} d(F^n, -\deg)^{1/n}. 
\]
$\lambda_1(F)$ is known as the \emph{asymptotic degree} or \emph{the first dynamical degree} of $F$. 

For a dominant polynomial endomorphism $F: \C^2 \lra \C^2$ we define $\lambda_2(F)$ to be the topological degree of $F$ that is the number of distinct elements in the generic fiber of $F$. 
\end{definition}

The next proposition shows that the map $F_\bullet: \V_1 \lra \V_1$ will always have a fixed point.  
\begin{proposition}
\label{prop:eigenvaluation}
There exists a valuation $\nu_\ast \in \V_1$ such that $F_\ast(\nu_\ast) = \lambda_1\nu_\ast$. Any such valuation will be called an \emph{eigenvaluation}.
\end{proposition}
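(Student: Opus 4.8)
The plan is to extract the eigenvaluation as a fixed point of the induced self-map $F_\bullet$ on the tight tree $\V_1$ and then show that a fixed point automatically rescales the valuation by $\lambda_1$. First I would invoke Proposition \ref{prop:induced-map-on-V}, which guarantees that $F_\bullet$ is a well-defined continuous self-map of $\V_1$ preserving the tree structure. The key topological input is that a continuous self-map of a compact $\R$-tree always has a fixed point: this is a standard tree-fixed-point lemma, proved by a bisection/retraction argument (given $\nu$, if $F_\bullet(\nu) \ne \nu$ one moves toward $F_\bullet(\nu)$ along the segment $[\nu, F_\bullet(\nu)]$ and uses monotonicity of the displacement to converge to a fixed point, using property (RT4) for limits of increasing chains). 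Since $\V_1 \subseteq \V_\infty$ is compact in the weak topology (by \cite[Appendix A]{eigenvaluations}) and $\V_1$ is a closed subtree, this lemma applies and yields some $\nu_\ast \in \V_1$ with $F_\bullet(\nu_\ast) = \nu_\ast$.

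The second step is to upgrade "$F_\bullet(\nu_\ast) = \nu_\ast$" to "$F_\ast(\nu_\ast) = \lambda_1 \nu_\ast$." By definition $F_\bullet(\nu_\ast) = F_\ast(\nu_\ast)/d(F,\nu_\ast)$, so the fixed-point equation says $F_\ast(\nu_\ast) = d(F,\nu_\ast)\,\nu_\ast$; thus it remains to identify the scalar $c := d(F,\nu_\ast)$ with $\lambda_1$. Iterating, one gets $F^n_\ast(\nu_\ast) = c^n \nu_\ast$, and evaluating on a generic affine function $L$ (recall $\nu_\ast(L) = -1$ and $d(F^n,\nu_\ast) = \nu_\ast(F^{n\ast}L)$), one finds $d(F^n, \nu_\ast) = c^n$. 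On the other hand, the cocycle/submultiplicativity relation $d(F^n,-\deg) \le C\, d(F^n,\nu_\ast)$ together with $d(F^n,\nu_\ast) \le \deg(F^n)$ sandwiches $c^n$ between quantities whose $n$-th roots both converge to $\lambda_1(F)$, forcing $c = \lambda_1$. (Alternatively, one cites the computation in \cite[Section 7]{eigenvaluations} that $d(F,\cdot)$ restricted to a fixed point equals the asymptotic degree; I would present whichever is cleaner given what the paper has already set up.)

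The main obstacle I anticipate is making the tree-fixed-point argument fully rigorous in the $\R$-tree setting: one must be careful that $\V_1$ is genuinely a compact subtree (closedness of the skewness $\ge 0$, thinness $\le 0$ conditions under weak limits, using upper/lower semicontinuity of $\alpha$ and $A$), and that the displacement function behaves well enough along segments for the bisection to converge — ends of the tree and the non-metrizability subtleties of the weak topology versus the strong (skewness) metric need a moment's care. A clean way around this is to note that $F_\bullet$ is not just continuous but contracts or at least is nonexpanding in an appropriate sense on $\V_1$, or simply to cite the fixed-point statement directly from \cite[Section 7.2]{eigenvaluations} or \cite{eigenvaluations}, since Proposition \ref{prop:induced-map-on-V} is already quoted from there; the honest proof is then essentially a pointer plus the scalar identification in step two.
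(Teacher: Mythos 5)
The paper offers no argument here at all: it simply cites \cite[Proposition 2.3]{FJ07} and \cite[Section 7.3]{eigenvaluations}, so your final fallback (a pointer to the literature) coincides with what the paper does. The argument you actually sketch, however, has a genuine gap in the second step. It is \emph{not} true that an arbitrary fixed point of $F_\bullet$ in $\V_1$ has normalizing factor $d(F,\nu_\ast)=\lambda_1$, and the comparison $d(F^n,-\deg)\le C\,d(F^n,\nu_\ast)$ on which your sandwich rests fails in general. Concretely, take $F(x,y)=(x^3,y^2)$, so $\deg(F^n)=3^n$ and $\lambda_1=3$. The rational pencil valuation $\nu$ attached to the pencil $\{x=c\}$, i.e.\ $\nu(P)=-\deg_y P$, is an end of $\V_1$ (the paper uses exactly this fact in the proof of Theorem \ref{thm:P2}), and it satisfies $F_\ast\nu=2\nu$, hence $F_\bullet\nu=\nu$ with multiplier $2\ne 3$; moreover $d(F^n,\nu)=2^n$ while $d(F^n,-\deg)=3^n$, so no constant $C$ exists. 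The inequality you want is available only when $\alpha(\nu_\ast)>0$, and a fixed point produced by a bare topological fixed-point theorem may well sit at an end of the tight tree where $\alpha=0$ (or $A=0$). For the same reason, your parenthetical alternative is not available: there is no statement in \cite{eigenvaluations} that $d(F,\cdot)$ at \emph{every} fixed point equals the asymptotic degree.

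This is precisely why the existence of an eigenvaluation is a genuine theorem of Favre--Jonsson rather than a soft consequence of compactness plus a fixed-point lemma: one must produce a fixed point with the \emph{correct} multiplier, for instance by following the orbit $F_\bullet^n(-\deg)$ and exploiting the multiplicativity $\deg(F^n)=\prod_{i=0}^{n-1} d\bigl(F,F_\bullet^i(-\deg)\bigr)$ together with continuity and monotonicity along the tree, so that the multiplier at the limit is forced to be $\lambda_1$. (Your first step, existence of some fixed point of a regular tree map, is fine and is essentially \cite[Theorem 4.5]{eigenvaluations}.) So either reproduce that finer argument or do what the paper does and cite \cite[Proposition 2.3]{FJ07} and \cite[Section 7.3]{eigenvaluations}; as written, the scalar-identification step of your proposal is incorrect.
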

\begin{proof}
See \cite[Proposition 2.3]{FJ07} and \cite[Section 7.3]{eigenvaluations}.
\end{proof}

\subsection{Dynamical facts in the case of maximum topological degree} In this subsection we assume that $\lambda_1(F)^2 = \lambda_2(F)$ and $\lambda_1(F) > 1$. Let 
\[
\mathcal{T}_F = \{\nu \in \V_1 : F_\bullet (\nu) = \nu\}. 
\]
The next proposition gives a precise description of the structure of $\mathcal{T}_F$ in the case where $\deg(F^n)/\lambda_1^n$ remains bounded. 
\begin{proposition}
\label{prop:FJ-5.3}
Suppose that $\deg(F^n)/\lambda_1^n$ is bounded for $n \ge 1$. Then, $\mathcal{T}_F$ is non-empty and all of its elements are totally invariant under $F_\bullet$. Moreoever, the following must hold:
\begin{enumerate}
    \item The set $\mathcal{T}_F$ is either a singleton consisting of a quasimonomial valuation $\nu$ with $\alpha(\nu) > 0$, or it is an interval with divisorial ends.

    \item If $\mathcal{T}_F$ is an interval, then $\mathcal{T}_F = \mathcal{T}_{F^n}$ for all $n \ge 1$.

    \item If $\mathcal{T}_F$ is a singleton, then $\mathcal{T}_{F^2}$ is an interval with $\mathcal{T}_F$ in its interior. Moreover, $\mathcal{T}_{F^2} = \mathcal{T}_{F^{2n}}$ for all $n \ge 1$. 
\end{enumerate}
\end{proposition}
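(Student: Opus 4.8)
The plan is to adapt the Favre--Jonsson analysis of the action of a dominant polynomial endomorphism on the valuative tree at infinity, tracking the fixed set $\mathcal{T}_F$ throughout. Two reductions are free. Non-emptiness of $\mathcal{T}_F$ is Proposition \ref{prop:eigenvaluation}: an eigenvaluation $\nu_\ast\in\V_1$ satisfies $F_\ast\nu_\ast=\lambda_1\nu_\ast$, hence $F_\bullet\nu_\ast=\nu_\ast$. And the hypotheses are stable under iteration --- $\lambda_1(F^n)=\lambda_1^n$, $\lambda_2(F^n)=\lambda_2^n=\lambda_1(F^n)^2$, and $\deg(F^{nm})/\lambda_1^{nm}$ is bounded in $m$ as a subsequence of $\deg(F^j)/\lambda_1^j$ --- so any statement proved for $F$ applies to every $F^n$, which is what lets (2) and (3) bootstrap from (1).

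The heart of the matter is a rigidity and attraction statement for $F_\bullet$, which I would establish first: in the maximal-degree regime there is a totally invariant set $\Sigma\subseteq\V_1$, made up of quasimonomial valuations with $\alpha>0$ and equal either to a single point or to a segment with divisorial ends, such that (a) every orbit $(F_\bullet^n\nu)_{n\ge0}$ converges to $\Sigma$, (b) $F_\bullet$ is a local homeomorphism at each point of $\Sigma$ and every such point is totally invariant under $F_\bullet$, and (c) near $\Sigma$ the map $F_\bullet$ is, in suitable toric coordinates, linear along $\Sigma$. Here the identity $\lambda_1^2=\lambda_2$ forces the linear model to have a single eigenvalue, and the boundedness of $\deg(F^n)/\lambda_1^n$ forces it to be diagonalizable rather than a Jordan block; so the model is scalar, and the only residual freedom is a sign: $F_\bullet$ fixes $\Sigma$ pointwise when $\Sigma$ is a nondegenerate segment, while if $\Sigma=\{\nu_\ast\}$ is a single point then $\nu_\ast$ has exactly two tangent directions and $F_\bullet$ either fixes both (in which case a fixed segment emanates, so in fact $\Sigma$ is not a point) or exchanges them. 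Proving this --- extracting the toric normal form and the attracting picture from the numerical identity and the degree bound, by controlling how the local degree of $F_\bullet$ interacts with the parametrizations $\alpha,A$ and with the growth of $\deg(F^n)$ --- is where essentially all the work lies, and is the technical core of the Favre--Jonsson machinery.

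Granting this, the rest is tree combinatorics. Fixed points of $F_\bullet$ are periodic, so by (a) they lie in $\Sigma$; and by (c) together with the scalar model $F_\bullet$ fixes all of $\Sigma$, either pointwise or --- when $\Sigma$ is a single point --- as a fixed point whose two tangent directions it may permute. Hence $\mathcal{T}_F=\Sigma$, which is a segment with divisorial ends or a single quasimonomial valuation with $\alpha>0$; this gives (1). For (2): if $\mathcal{T}_F$ is a nondegenerate segment then $\mathcal{T}_F=\Sigma$ and every orbit converges to $\mathcal{T}_F$, so there is no periodic orbit of period $>1$, whence $\mathcal{T}_{F^n}=\mathcal{T}_F$ for all $n\ge1$. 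For (3): if $\mathcal{T}_F=\{\nu_\ast\}$ is a singleton, then by the last part of the previous paragraph $F_\bullet$ exchanges the two tangent directions at $\nu_\ast$; hence $F_\bullet^2$ fixes both, and the rigidity statement applied to $F^2$ --- whose scalar is the square of that of $F$, hence positive --- shows that $\mathcal{T}_{F^2}$ is a nondegenerate segment with divisorial ends containing $\nu_\ast$ in its interior, with $\mathcal{T}_{F^{2n}}=\mathcal{T}_{F^2}$ for all $n\ge1$ by (2) applied to $F^2$.

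The main obstacle is the rigidity statement of the second paragraph: showing that the fixed points of $F_\bullet$ are totally invariant and that $F_\bullet$ admits a toric normal form along its periodic locus, with the linear model pinned to a scalar precisely by the combination of $\lambda_1^2=\lambda_2$ and the boundedness of $\deg(F^n)/\lambda_1^n$. Once that is available, everything else is formal manipulation on the tree.
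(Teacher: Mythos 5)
This proposition is quoted from the literature: the paper's ``proof'' is simply the citation to \cite{FJ07} (Proposition 5.3 and Lemma 5.4 there), whereas you attempt to reconstruct the Favre--Jonsson argument itself. Your reconstruction has a genuine gap: the ``rigidity and attraction statement'' of your second paragraph is not an auxiliary lemma but essentially the proposition to be proved (total invariance of fixed points, the point-or-segment dichotomy with divisorial ends, the local normal form), and you explicitly leave it unproved. Everything you then derive from it is formal, so as a standalone proof nothing beyond Proposition \ref{prop:eigenvaluation} (non-emptiness) has actually been established.

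Moreover, two of the specific mechanisms you rely on are not correct as stated, so the gap is not merely one of deferred technicalities. First, the global attraction claim (a) --- every orbit of $F_\bullet$ converges to $\Sigma$ --- fails in the model case $F(x,y)=(y^2,x^2)$: there $\mathcal{T}_F=\{-\deg\}$, while the entire segment of monomial valuations at infinity consists of period-two points of $F_\bullet$, none of which converge to $\Sigma$; your derivations of (2) and (3) (``no periodic orbits of period $>1$ outside $\Sigma$'') lean on exactly this. Second, your route to (3) assumes the singleton $\nu_\ast$ has exactly two tangent directions which $F_\bullet$ exchanges; but the singleton can be a \emph{divisorial} valuation (again $-\deg$ in the swap example), hence a branch point with infinitely many tangent directions, where the tangent map is a degree-$\lambda_1$ rational self-map of $\bP^1$ rather than a transposition. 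The actual Favre--Jonsson argument handles these points differently --- via the fixed-point theorem for regular tree maps and the analysis of tangent maps on the $\bP^1$ of directions at divisorial points (the same toolkit the paper redeploys in its Proposition \ref{prop:5.3-extension}) --- not via global attraction to the fixed locus. So either cite \cite{FJ07} as the paper does, or the core of your second paragraph must be replaced by that style of argument.
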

\begin{proof}
See \cite[Proposition 5.3 and Lemma 5.4]{FJ07}
\end{proof}

The next result does not appear in \cite{FJ07} but can be shown using similar arguments as in the proof of proposition \cite[Proposition 5.3(b)]{FJ07}. We will need it later when proving Theorem \ref{thm:more-precise} in the case of a general toric surface. 
\begin{proposition}
\label{prop:5.3-extension}
Suppose we are in the situation of Proposition \ref{prop:FJ-5.3}. Let $T$ be a finite subtree of $\V_\infty$ which intersects $\T_f$ in an interval $I$. Assume that all elements of $T$ are totally invariant under an iterate $F_\bullet^\ell$ for some $\ell \ge 1$. Then, $T$ is an interval and all elements of $T$ are totally invariant under $F_\bullet$. 
\end{proposition}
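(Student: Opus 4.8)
\textbf{Proof proposal for Proposition \ref{prop:5.3-extension}.}

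The plan is to mimic the proof of \cite[Proposition 5.3(b)]{FJ07}, exploiting the fact that $\T_F$ together with the parametrizations $\alpha$ and $A$ controls the local behavior of $F_\bullet$ on $\V_1$. First I would record the structural input from Proposition \ref{prop:FJ-5.3}: the fixed-point set $\T_F$ is either a single quasimonomial valuation with positive skewness, or an interval with divisorial ends, and in either case every element of $\T_F$ is totally invariant under $F_\bullet$. Since $T$ is assumed to meet $\T_F$ in a nondegenerate interval $I$, we are automatically in the case where $\T_F$ is an interval (if $\T_F$ were a singleton, $T\cap\T_F$ could not be an interval), and $I\subseteq\T_F=\T_{F^\ell}$ by part (2). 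The goal is then to propagate total invariance under $F_\bullet$ from $I$ outward to all of $T$.

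The key mechanism is the contraction/expansion dichotomy for $F_\bullet$ near a fixed tangent direction, together with the fact that $F_\bullet$ multiplies skewness by a predictable factor along the tree. Concretely, at any valuation $\nu\in\T_F$ and any tangent direction $\vec v\in T\nu$ at $\nu$ pointing into $T$, the map $F_\bullet$ sends $\vec v$ to some tangent direction at $\nu$; because all of $T$ is totally invariant under $F_\bullet^\ell$, the induced map on tangent directions at $\nu$ permutes the finitely many directions of $T$ at $\nu$ with period dividing $\ell$. I would then argue that if $F_\bullet$ fixed $\vec v$ it would fix the whole segment of $T$ in the direction $\vec v$ emanating from $\nu$: this is because on such a segment $F_\bullet$ either acts as a local homeomorphism preserving the order and the endpoints (forced by total invariance under $F_\bullet^\ell$ plus the fact that $\nu$ is a fixed end of the segment), and the multiplicativity of skewness under $F_\bullet$ — governed by the local degree $d(F,\nu)$ and the relation $F_\bullet$ fixes $\nu\in\T_F\subset\V_1$ — forces the derivative of $F_\bullet$ along the segment to be $1$, hence $F_\bullet$ is the identity there. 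Here I would lean on the description of $F_\bullet$ near a totally invariant quasimonomial valuation as in \cite[Section 7]{eigenvaluations} and the skewness formula in \cite[Proposition 5.3]{FJ07}. The remaining task is to rule out a nontrivial cyclic permutation of the tangent directions of $T$ at some $\nu\in\T_F$: this is where the hypothesis that $T$ is a \emph{finite} subtree and that $\T_F$ is already an interval is used, since a genuine $\ell$-cycle of directions at a point of the interval $\T_F$ would produce, by the attracting/repelling analysis of $F_\bullet$ on $\V_1$ (the eigenvaluation $\nu_\ast$ of Proposition \ref{prop:eigenvaluation} is the unique repelling fixed point and $\T_F$ consists of attracting ones), a branch of $\T_{F^\ell}$ strictly larger than $\T_F$, contradicting $\T_{F^\ell}=\T_F$ from Proposition \ref{prop:FJ-5.3}(2). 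Once every tangent direction of $T$ at every point of $\T_F\cap T$ is fixed, induction on the (finite) combinatorial structure of $T$ — moving from $\T_F\cap T$ to the adjacent branch points and ends — shows every point of $T$ is fixed by $F_\bullet$, and in particular $T$ is connected and linearly ordered away from $\T_F$, i.e.\ an interval; total invariance under $F_\bullet$ then follows from total invariance under $F_\bullet^\ell$ together with $F_\bullet|_T=\mathrm{id}$.

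The main obstacle I anticipate is the local analysis at the points of $\T_F$ where $T$ branches off: one has to show that the tangent-direction map of $F_\bullet$ at such a point cannot permute the branches of $T$ nontrivially, and to show that once a direction is fixed the whole segment in that direction is pointwise fixed rather than merely invariant. Both points require the fine information about how $F_\bullet$ acts on skewness and thinness near a fixed quasimonomial valuation — essentially the content that makes $\T_F=\T_{F^n}$ in the interval case — and adapting that argument from the global fixed-point set $\T_F$ to an a priori larger totally-invariant-under-$F_\bullet^\ell$ subtree $T$ is the technical heart of the proof. I would handle it by first treating the case where $T$ is itself an interval (so only the two ends of $I$ need to be analyzed), and then bootstrapping to general finite $T$ by applying the interval case to each maximal segment of $T$ issuing from $\T_F$.
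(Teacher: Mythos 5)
Your overall plan (fix the directions of $T$ at the points of $I=\T_F\cap T$ and then push total invariance outward along each segment) is the right shape, and your observation that $\T_F$ must be an interval is correct, but both of your key steps have genuine gaps, and they are precisely the points where the paper's proof does its work. The central one is your propagation step: you claim that once $F_\bullet$ fixes a tangent direction $\vec v$ at $\nu$, total invariance of $T$ under $F_\bullet^\ell$ forces $F_\bullet$ to act on the segment of $T$ in direction $\vec v$ as an order-preserving homeomorphism fixing its endpoints, and then that "multiplicativity of skewness forces the derivative to be $1$". Nothing in the hypotheses forces $F_\bullet$ to preserve that segment at all: $F_\bullet$ sends $[\nu,\mu]$ to a path issuing from $\nu$ in direction $\vec v$ which may leave $T$, overshoot $\mu$, or fold back, and $F_\bullet(\mu)$ is merely some $F_\bullet^\ell$-fixed valuation with no a priori relation to $T$. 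Moreover skewness gives you no leverage on the portion of $T$ outside the tight tree $\V_1$, which is exactly the situation the proposition is used for (in Theorem \ref{thm:toric} the tree $T$ is a dual graph containing divisorial valuations of negative skewness). The paper's proof spends most of its length on this point: it chooses a small subinterval $I'$ on which $F_\bullet$ is a homeomorphism onto its image with its first $\ell$ iterates staying in the segment, rules out the ascending chain $I'\subsetneq F_\bullet(I')\subsetneq\cdots$ using $F_\bullet^\ell(I')=I'$, invokes the fixed point theorem for tree maps \cite[Theorem 4.5]{eigenvaluations} to produce an interior fixed point, shows fixed points are dense in the subsegment, and finishes by continuity and surjectivity of $F_\bullet$. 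An assertion that the "derivative is $1$" is not a substitute for this argument.

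The second gap is your mechanism for ruling out a nontrivial cycle of directions. You argue a cycle at a point of $\T_F$ would create a branch of $\T_{F^\ell}$ strictly larger than $\T_F$, contradicting $\T_{F^\ell}=\T_F$. But $\T_{F^\ell}$ is by definition the fixed locus inside $\V_1$, and the branches of $T$ at such a point typically exit $\V_1$ immediately (again the relevant case), so the $F_\bullet^\ell$-fixed points along them are simply not elements of $\T_{F^\ell}$ and no contradiction arises; also the tangent map need not permute "the directions of $T$" — the image direction may contain no point of $T$, so all you really know is that the direction is periodic. The paper handles this (and, at the same time, proves the "$T$ is an interval" half of the conclusion, which your proposal never establishes — your induction would go through just as happily on a branched $T$) by the divisorial argument: a divisorial valuation at which three tangent directions are totally invariant would force the induced self-map of the exceptional curve $E\simeq\bP^1$ to be the identity, which is impossible. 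That device, rather than an attracting/repelling picture (which is not an accurate description of the dynamics when $\lambda_1^2=\lambda_2$), is what controls both the branch points of $T$ and the direction at the frontier point $\nu_\ast$ in the paper's argument.
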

\begin{proof}
We first prove that $T$ is a segment. To do this it suffices to show that $T$ has no branch points (See section \ref{sec:rooted-trees} for the definition of branch points). So, suppose for the sake of contradiction that $\nu \in T$ is a branch point. Then $\nu$ must be a divisorial valuation corresponding to an exceptional divisor $E$ (See \cite[Section 1.6]{eigenvaluations}). Since $T$ is totally invariant under an iterate $F^\ell_\bullet$, we have $F_\bullet^\ell(\nu) = \nu$. In particular, since $F_\bullet$ preserves the tree structure on $\V_\infty$ we get an induced tangent map $F_\nu^\ell: T\nu \lra T\nu$. But recall that the tangent directions at $\nu$ correspond to the points of $E \simeq\bP^1$ and the tangent map $F^\ell_\nu$ corresponds to a rational map on $E \simeq\bP^1$ induced by $F^\ell$. So, at least three of the points on $E$ are totally invariant under $F^\ell$ and we conclude that all points are totally invariant because of this. In other words, $F$ must be the identity on $E$ which is a contradiction (We invite the reader to read the proof of \cite[Proposition 5.3]{FJ07} for a similar type of argument).  

 Hence, $T$ is an interval in $\V_\infty$ with endpoints that we will call $\nu_1$ and $\nu_2$. If $\nu_1, \nu_2 \in \V_1$, then $T \subseteq \V_1$ and we conclude by Proposition \ref{prop:FJ-5.3} that $T \subset \T_f$ all the elements of $T$ are already totally invariant under $\Psi$ which finishes the proof. So, we assume that at least one of $\nu_1$ and $\nu_2$ is outside of $\V_1$. 

 Suppose $\nu_1 \notin \V_1$ and let $I =\T_f \cap T = [\nu_3,\nu_4]$ for  $\nu_3,\nu_4 \in \V_1$. By  Proposition \ref{prop:FJ-5.3} we know that all elements of $I$ are totally invariant under $F_\bullet$. Also, $\nu_1$ must be larger than one of $\nu_3$ and $\nu_4$ with respect to the natural partial ordering on $\V_\infty$. Assume $\nu_1 > \nu_3$ without loss of generality and define $J := [\nu_3,\nu_1]$. Our goal is to show that all elements of $J$ are totally invariant under $F_\bullet$. Let $\nu_\ast$ be the maximal element in $J$ such that all the elements in the interval $[\nu_3, \nu_\ast]$ are totally invariant under $F_\bullet$. If $\nu_\ast = \nu_1$ we are done. So, assume that $\nu_\ast < \nu_1$. The tangent direction at $\nu_\ast$ containing $\nu_3$ is totally invariant under $F_\bullet$ and the tangent direction containing $\nu_1$ is totally invariant under an iterate of $F_\bullet$. We conclude that the tangent containing $\nu_1$ is totally invariant under $F_\bullet$ itself because otherwise we get at least three totally invariant tangent directions at $\nu_\ast$ and which yields a contradiction as before.   

 Choose an interval $I' = [\nu_\ast,\nu]$ with $\nu$ sufficiently close to $\nu_\ast$ so that all $I',F_\bullet(I'),\dots,F_\bullet^{\ell-1}(I')$ are intervals contained in $J$ and such that $F_\bullet$ restricted to $I'$ is a homeomorphism onto its image (possible since $F_\bullet$ is a regular tree map). If $I',F_\bullet(I'),\dots,F_\bullet^{\ell-1}(I')$ form an ascending chain 
 \[
 I'\subsetneq F_\bullet(I') \subsetneq \cdots \subsetneq F_\bullet^{\ell-1}(I')
 \]
 we get 
 \[
 F_\bullet(I') \subsetneq \cdots \subsetneq F_\bullet^{\ell-1}(I') \subsetneq F^\ell_\bullet(I') = I'
 \]
by applying $F_\bullet$ to the original chain which is absurd. Hence, $I'' := F_\bullet^s(I') \supset F_\bullet^{s+1}(I') = F_\bullet(I'')$ for some $0 \le s \le \ell - 2$. Then, by \cite[Theorem 4.5]{eigenvaluations} $F_\bullet$ must have a fixed point on $I''$ which is either in the interior or is an attracting endpoint. Since $F_\bullet^\ell$ induces the identity on $I''$ there can be no attracting endpoints in $I''$. So, there is a point $\nu_1 \in (I'')^o$ such that $F_\bullet(\nu_1) = \nu_1$. 

Therefore, $F_\bullet$ sends the interval $[\nu_\ast,\nu_1]$ to itself. Using \cite[Theorem 4.5]{eigenvaluations} and arguing as above we can prove that given any two fixed points $\nu'$ and $\nu''$ in $[\nu_\ast,\nu]$ there is a point in between $\nu'$ and $\nu''$ that is also fixed. Hence, the set of fixed points in $[\nu_\ast,\nu]$ is dense. It follows that all points in $I''$ are fixed by continuity of $F_\bullet$. Therefore,  all elements in the interval $[\nu_3, \nu]$ are totally invariant under $F_\bullet$ since they are fixed under $F_\bullet$ and because they are totally invariant under $F_\bullet^\ell$. Here we also use the fact that $F_\bullet$ maps $\V_\infty$ onto itself by Proposition \ref{prop:induced-map-on-V}. This contradicts the maximality of $\nu_\ast$. We conclude that all elements of $J$ are totally invariant under $F_\bullet$. 

Similarly, if $\nu_2 \notin \V_1$ we can show that the elements of the interval $[\nu_4,\nu_2]$ are totally invariant under $F_\bullet$. Therefore, the elements of $T = [\nu_1,\nu_2] = [\nu_4,\nu_2] \cup [\nu_3,\nu_4] \cup [\nu_1,\nu_3]$ are totally invariant under $F_\bullet$ and we are done. 
\end{proof}
\section{Lemmas}
\label{sec:lemmas}
In this section we collect lemmas that will be used in our proof of Theorem \ref{thm:more-precise}. We start with the next simple lemma which allows us to bound the smallest power of a matrix that becomes diagonal when we know that some power of the matrix is diagonal. This lemma will be key in the proof of Theorem \ref{thm:more-precise} when the rational self-map $\Phi$ is assumed to induce an endomorphism of an open subvariety of $X$ that is isomorphic to $\bG_m^2$. 
\begin{lemma}
\label{lem:smallestPower}
Let $A \in M_{2,2}(\Z)$ be a two-by-two matrix. Suppose that $A^k$ is a diagonal matrix. Then, $A^i$ is a diagonal matrix for some $i\in\{1,2,3,4,6\}$. Consequently, there exists $i\in \iterates$ such that $A^i$ is diagonal with non-negative entries.
\end{lemma}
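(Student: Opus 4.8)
The plan is to reduce to the case where $A^n$ is a \emph{scalar} matrix. If the two diagonal entries of $A^n$ are distinct, then $A$ lies in the centralizer of a matrix with two distinct eigenvalues, which consists only of diagonal matrices, so $A$ is already diagonal and $k=1$ works. Otherwise $A^n = \lambda I$ for some $\lambda \in \Z$. If $\lambda = 0$ then $A$ is nilpotent, so $A^2 = 0$ is diagonal and $k=2$ works. So I would assume $\lambda \neq 0$; then $A$ is invertible, and I set $t := \operatorname{tr}(A)$, $\delta := \det(A) \neq 0$.

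Next I would look at the minimal polynomial $m$ of $A$, which divides $x^n - \lambda$ and has degree $\le 2$. If $\deg m = 1$, then $A$ is scalar and $k=1$. If $\deg m = 2$, write $m(x) = x^2 - tx + \delta$. I claim $m$ is separable: a repeated root $\alpha$ would force $\alpha = t/2 \neq 0$ and $N := A - \alpha I$ a nonzero nilpotent matrix, so $A^n = \alpha^n I + n\alpha^{n-1} N$ would have a nonzero off-diagonal entry, contradicting $A^n = \lambda I$. Hence $A$ is diagonalizable over $\C$ with distinct eigenvalues $\alpha, \beta$, which are algebraic integers with $\alpha^n = \beta^n = \lambda$. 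Therefore $\zeta := \alpha/\beta$ is a root of unity of some exact order $d \ge 2$ dividing $n$. The crux of the argument is to show $d \in \{2,3,4,6\}$. If $\alpha, \beta$ are real, then $\alpha^n = \beta^n$ with $\alpha \neq \beta$ forces $\alpha = -\beta$, so $t = 0$ and $d = 2$. If $\alpha, \beta$ are not real, they are complex conjugates, $\beta = \bar\alpha$, $|\alpha|^2 = \delta$, and $\zeta = \alpha/\bar\alpha = \alpha^2/\delta$; expanding $t^2 = (\alpha + \bar\alpha)^2 = \delta(\zeta + \zeta^{-1} + 2)$ gives $\zeta + \zeta^{-1} = t^2/\delta - 2 \in \Q$. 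Since $\zeta + \zeta^{-1} = 2\cos(2\pi k/d)$ (for suitable $k$ coprime to $d$) is an algebraic integer generating the degree-$\phi(d)/2$ real subfield of $\Q(\zeta)$ when $d \ge 3$, its rationality forces $\phi(d) \le 2$, i.e. $d \in \{1,2,3,4,6\}$; and $d \ge 2$. Once $d \in \{2,3,4,6\}$ is identified, $\zeta^d = 1$ gives $\alpha^d = \beta^d$, so $A^d$ is diagonalizable with equal eigenvalues, hence scalar, hence diagonal; thus $k = d$ works.

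In every case I have produced $k \in \{1,2,3,4,6\}$ with $A^k$ diagonal. For the final assertion, note that $2k \in \{2,4,6,8,12\} \subseteq \iterates$ and $A^{2k} = (A^k)^2$ is diagonal with non-negative entries (the squares of the entries of $A^k$); renaming $2k$ as $k$ completes the proof. The main obstacle is the bound on the order $d$: this is a crystallographic-restriction-type phenomenon, and the key input is that $2\cos(2\pi/d)$ is rational only for $d \in \{1,2,3,4,6\}$, together with the somewhat delicate preliminary reductions (ruling out the non-diagonalizable case via the off-diagonal entry of $A^n$, and disposing of the real-eigenvalue case) that are needed before one can even extract this root of unity from $A$.
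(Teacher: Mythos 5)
Your proof is correct, and it reaches the same number-theoretic heart as the paper's argument --- in both proofs everything reduces to showing that the ratio $\zeta$ of the two eigenvalues is a root of unity whose order $d$ satisfies $\phi(d)\le 2$, hence $d\in\{1,2,3,4,6\}$, after which $A^d$ is scalar; and both get the non-negativity claim by simply doubling the exponent. The surrounding reductions, however, are organized differently, and in a few places yours is tighter. Where the paper treats the case of distinct diagonal entries of $A^n$ via a Galois-conjugacy argument (the eigenvalues must be rational integers) plus a commutation argument with the conjugating matrix $Q$, you dispose of it in one line by noting that $A$ lies in the centralizer of $A^n$, which is the set of diagonal matrices --- cleaner, and it even yields $k=1$ in the subcase $d=-e$ where the paper has to pass to $2n$. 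You also explicitly handle two degenerate situations the paper glosses over: the nilpotent case $A^n=0$ (the paper tacitly assumes $d\ge 1$ after squaring) and the non-diagonalizable case, which you rule out by the binomial expansion $A^n=\alpha^nI+n\alpha^{n-1}N$ (the paper implicitly assumes semisimplicity when asserting $A^2=d'\cdot\Id$ and $A^k=d'\cdot\Id$). Finally, the paper derives the order restriction from "$\zeta$ is a root of unity lying in a quadratic field," whereas you derive it from the rationality of $\zeta+\zeta^{-1}=t^2/\delta-2$ together with the degree of the real cyclotomic subfield; these are two phrasings of the same fact $\phi(d)\le 2$, with your trace computation being slightly more self-contained and the paper's Galois formulation slightly quicker.
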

\begin{proof}
The second part of the lemma is clearly a consequence of the first part. To prove the first part, suppose that $A^k = \begin{pmatrix}
    d & 0 \\ 0 & e
\end{pmatrix}$ for some $d, e \in \Z$ and let $\lambda$ and $\mu$ be the eigenvalues of $A$. Both $\lambda$ and $\mu$ must be quadratic integers since $A$ is a 2-by-2 matrix with integer coefficients. Moreover, we must have $\lambda^k = d$ and $\mu^k = e$. So, if $|d| \ne |e|$ we conclude that $\lambda$ and $\mu$ are not Galois conjugates. Hence, they both have to be integers and $A = QBQ^{-1}$ where $B = \begin{pmatrix}
\lambda & 0 \\ 0 & \mu  
\end{pmatrix}$ and $Q \in \text{GL}_{2}(\Q)$. Since $QB^kQ^{-1} = A^k = B^k$ it follows that $B^k$ commutes with $Q$. But, since $|d| \ne |e|$ it is not difficult to see that $B$ itself must also commute with $Q$. Hence, $A = B$ and we can take $i = 1$. So, we can assume $|d| = |e|$. 

After replacing $k$ with $2k$ if necessary, we have $A^k = d \cdot \Id$ for some $d \ge 1$. If $\lambda$ and $\mu$ are not Galois conjugates, then they are both integers. So, $A^2$ is conjugate to $d'\cdot \Id$ for some $d' \ge 1$. It follows that $A^2 = d'\cdot \Id$ itself and we are done. 

We now assume that $\lambda$ and $\mu$ are Galois conjugates. Let $K = \Q(\lambda)$ and let $\sigma$ be the generator of $\text{Gal}(K/\Q)$. Then, $\mu = \sigma(\lambda)$ and
\[
e = \mu^k = \sigma(\lambda^k) = \sigma(d) =  d.
\]
Therefore, $e = d$ and we must have $(\frac{\lambda}{\mu})^k = 1$. In other words, $\frac{\lambda}{\mu}$ is a root of unity. But, $\frac{\lambda}{\mu} \in K$ which means that it must be a quadratic integer. It follows that $\frac{\lambda}{\mu} = \zeta$ for a primitive $i$-th root of unity $\zeta$ with $i \in \{1,2,3,4,6\}$. Therefore, we have $\lambda^i = \mu^i = \sigma(\lambda)^i$. This means that both $\lambda^i$ and $\mu^i$ are equal to an integer $d'$ since they are invariant under the generator of $\text{Gal}(K/\Q)$. So, $A^i = d'\cdot \Id$ which finishes the proof. 
\end{proof}

Often in our proofs we find ourselves in the situation where $\Phi$ is given by a skew product on some open subset $U$ of $X$ and we would like to conclude that $\Phi$ must already be regular on $U$ if some iterate of it is regular on $U$. The next lemma allows us to do this.  
\begin{lemma}
\label{lem:skew-then-split}
Let $C$ be an analytic curve, let $\C(C)$ and $\C[C]$ denote the rings of meromorphic functions and holomorphic functions on $C$, respectively, and let $\varphi(t)$ be an endomorphism of $C$. Suppose $F$ is a self-map of $C \times \A^1$ given by
\[
(x,y) \mapsto (\varphi(x), f(x,y)),
\]
where $f \in \C(C)[y]$ and $F^k$ is given by
\[
(x,y) \mapsto (\varphi^k(x), g(y)),
\]
for some $g \in K[y]$ and some $k \ge 1$ and some ring $K \subseteq \C(C)$. Moreover assume that the leading coefficient of $g$ lies in $K^\ast$. Suppose one of the following holds 
\begin{itemize}
    \item[(a)] $K = \C$ or $\C[C]$ and $\varphi^k$ has finite order and $\deg(g) \ge 2$, or

    \item[(b)] $K = \C$, $C = \A^1$, $\varphi \in \C[x]$ and $f \in \C[x,y]$, or
    \item[(c)] $K = \C$ or $\C[x]$, $C = \A^1$, $\varphi(x) = \alpha x^\ell$ for some $\alpha \in \C^\ast$, $\ell \ge 1$, and $f \in \C[x,x^{-1}, y]$.
\end{itemize}
Then, $f \in K[y]$ with the leading coefficient of $f$ being in $K^\ast$. 
\end{lemma}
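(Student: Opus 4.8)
The plan is to write $f=\sum_{j=0}^{d}a_j(x)y^{j}$ with $a_j\in\C(C)$, $a_d\neq0$ and $d=\deg_y f$, and to show that every $a_j$ lies in $K$ and $a_d\in K^{*}$. (Throughout $\varphi$ is dominant, hence nonconstant.) First I would handle $d=0$: then $f=f(x)$ and iteration gives $f\circ\varphi^{\,k-1}=g\in K$, so dominance of $\varphi$ forces $f$ to be constant, and nonzero (otherwise $g=0$, contradicting $\mathrm{lc}(g)\in K^{*}$), hence $f\in\C^{*}\subseteq K^{*}$. Assume now $d\geq1$, let $f_n$ denote the second coordinate of $F^{n}$, so that $f_1=f$, $f_{n+1}(x,y)=f(\varphi^{n}(x),f_n(x,y))$ and $f_k=g$, and note that unrolling the recursion gives $\deg_y f_n=d^{\,n}$ with $y$-leading coefficient $b_n(x)=\prod_{i=0}^{n-1}a_d(\varphi^{i}(x))^{\,d^{\,n-1-i}}$.

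\emph{Step 1: the leading coefficient $a_d$.} Taking $n=k$ gives $b_k=\mathrm{lc}_y(g)\in K^{*}$. In cases (b) and (c) one has $K^{*}=\C^{*}$, so $b_k$ is a nonzero constant; comparing the $x$-degrees of the two sides of the product formula (and, in case (c), also the orders at $x=0$), using that $\varphi$ is a polynomial in (b), resp. the monomial $\alpha x^{\ell}$ in (c), so that degrees and orders multiply through the compositions and add across the product without cancellation, forces $\deg_x a_d=0$ (and in (c) also $\ord_x a_d=0$); hence $a_d\in\C^{*}=K^{*}$. In case (a), let $m$ be the order of $\varphi^{k}$ and set $N=km$, $G=g^{\circ m}$; then $F^{N}(x,y)=(x,G(y))$ with $\deg G=(\deg g)^{m}\geq2$ and $\mathrm{lc}(G)\in K^{*}$, and the relation $F\circ F^{N}=F^{N}\circ F$ gives the functional equation $f(x,G(y))=G(f(x,y))$, whose $y$-leading coefficients yield $a_d^{\deg G-1}=\mathrm{lc}(G)^{\,d-1}\in K^{*}$; a meromorphic function with a power in $K^{*}$ is itself in $K^{*}$, so $a_d\in K^{*}$.

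\emph{Step 2: the lower coefficients, by downward induction on $r$.} Assuming $a_d,\dots,a_{d-r+1}\in K$, I would track the coefficient $c_n^{(r)}(x)$ of $y^{\,d^{\,n}-r}$ in $f_n$; then $c_1^{(r)}=a_{d-r}$, while $c_k^{(r)}$ is a coefficient of $g$ and hence lies in $K$. From $f_{n+1}=\sum_j a_j(\varphi^{n}(x))f_n^{\,j}$ only the term $j=d$ contributes to $y^{\,d^{\,n+1}-r}$ for $n$ in the relevant range, which gives
\[
c_{n+1}^{(r)}=a_d(\varphi^{n}(x))\,d\,b_n^{\,d-1}\,c_n^{(r)}(x)+\bigl(\text{a polynomial in }b_n\text{ and the }c_n^{(s)},\ s<r\bigr),
\]
where the multiplier lies in $K^{*}$ and, by the inductive hypothesis together with a short sub-induction on $n$ showing $c_n^{(s)}\in K$ for $s<r$, the additive term lies in $K$. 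Unrolling this from $n=k$ down to $n=1$ then places $a_{d-r}=c_1^{(r)}$ in $K$. (In case (a) one runs the same computation on $f(x,G(y))=G(f(x,y))$, comparing the coefficient of $y^{\,d\deg G-r}$.) The one configuration this misses is $d=1$, which does not occur in case (a) since there $\deg g=d^{k}\geq2$; it is handled directly, for then $f=a_1y+a_0(x)$ with $a_1\in K^{*}$ already known, $f_k=g$ gives $\sum_{i=0}^{k-1}a_1^{\,k-1-i}a_0(\varphi^{i}(x))\in K$, and since $F^{k}$ is non-invertible $\varphi$ must have degree $\geq2$ in (b), resp. $\ell\geq2$ in (c), so a degree (and order) count forces $a_0\in K$.

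I expect the main obstacle to be the bookkeeping in Step 2: one must check that at each stage the unknown $a_{d-r}$ appears linearly with a coefficient that is a unit of $K$ and that all remaining terms have already been shown to lie in $K$ (including the auxiliary induction on $n$ and the slightly exceptional step $r=d$, where the $j=d-1$ term of the recursion also contributes). The degenerate case $d=1$ is a secondary nuisance, since there $\deg_y f_n$ does not grow and one must fall back on the precise form of $\varphi$ and on non-invertibility of $F^{k}$; and in case (c) with $K=\C[x]$ one has to track the order at $x=0$ separately in order to exclude negative powers of $x$ from the lower coefficients.
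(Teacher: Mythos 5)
Your proposal is, in its main case $d\ge 2$, essentially the paper's own argument: the leading coefficient is pinned down through the same product formula $b_k=\prod_{i=0}^{k-1}a_d(\varphi^{i}(x))^{d^{k-1-i}}$ with degree/order counting in cases (b) and (c), and the lower coefficients are recovered by the same top-down coefficient bookkeeping (the paper phrases it contrapositively, via the largest index $j$ with $a_j\notin K$, but the computation is identical to your recursion for $c_n^{(r)}$, including the exceptional $j=d-1$ contribution that the paper glosses over). The only genuine deviation is case (a): instead of the paper's trick of substituting $\varphi(x)$ into the product formula and dividing, which gives $a_d^{d^k-1}\in K^\ast$, you use the commutation $F\circ F^{N}=F^{N}\circ F$ to get $f(x,G(y))=G(f(x,y))$ with $G=g^{\circ m}$; this is equivalent in strength for the leading coefficient, and in fact that functional equation lets you read off all lower coefficients directly, which is a mild simplification over running the iteration.

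The point worth flagging is $d=1$. You are right that the induction breaks there (for $d=1$ the $j=0$ term of the recursion enters at every step, so the additive term is no longer known to lie in $K$ and can cancel), but your fix invokes non-invertibility of $F^{k}$, which is not among the hypotheses of the lemma. No fix can avoid some such extra hypothesis: $F(x,y)=(x,\zeta y+x)$ with $\zeta$ a primitive $k$-th root of unity satisfies every hypothesis of case (b) with $g(y)=y$ and leading coefficient $1\in\C^{\ast}$, yet $f=\zeta y+x\notin\C[y]$; the paper's own Example \ref{ex:bir-large-order} is of the same kind. So the statement as written fails in the degenerate linear case, and the paper's proof silently assumes it away (its concluding "easy induction" is only valid when $d\ge 2$, since for $d=1$, $j=0$ the coefficient $a_{0,n}$ need not stay outside $K$). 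Your instinct to isolate $d=1$ and close it only under an added assumption such as $\deg g\ge 2$ or non-invertibility --- which does hold in every application made of the lemma in the paper --- is therefore the correct reading, not a defect of your argument; similarly, your $d=0$ discussion quietly uses dominance of $\varphi$, which is not assumed, but this is harmless since the paper restricts to $d\ge 1$ from the start.
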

\begin{proof}
Note that it suffices to prove the lemma for any multiple of $k$. Therefore, we may assume without loss of generality that if $(a)$ holds then $\varphi^k = Id$. 

Let $f(x,y) = \sum_{i=0}^d a_i(x)y^i$ where $a_0(x), \dots, a_d(x) \in \C(C)$, $a_d(x) \ne 0$ and $d \ge 1$. Suppose that $F^n$ is given by 
\[
(x,y) \mapsto (\varphi^n(x), f_n(x,y))
\]
for polynomials $f_n \in \C(C)[y]$. It follows that $\deg_y(f_n) = d^n$. So, we can write
\[
f_n(x,y) = \sum_{i = 0}^{d^n} a_{i,n}(x)y^i,
\]
for functions $a_{i,n} \in \C(C)$. Moreover, we have $d^k = \deg(g)$. 

We claim that no matter which case we are in $a_d(x)$ must be in $K^\ast$. By a straightforward calculation we know 
\begin{equation}
\label{eqn:LeadingCoeff}
c(x) := a_{d^k,k}(x) = a_d(x)^{d^{k-1}} a_d(\varphi(x))^{d^{k-2}} \cdots a_d(\varphi^{k-1}(x)).    
\end{equation}
By hypothesis we know that $a_{d^k,k}(x)$ is equal to some $c \in K^\ast$. If we are in case (b) then it is clear from \eqref{eqn:LeadingCoeff} that $a_d$ must be in $\C^\ast$ since $a_d(\varphi^i(x))$ is a non-zero polynomials in $\C[x]$ for all $0 \le i \le k-1$. Similarly, in case (c), $a_d$ can only have poles at $0$. Then, using equation \eqref{eqn:LeadingCoeff} and the fact that $\varphi(x) = \alpha x^\ell$ we see that $a_d$ cannot have any poles and thus lies in $\C[x]$. Since $a_{d^k,k} \in K^\ast$, we see from equation \eqref{eqn:LeadingCoeff} that $a_d \in K^\ast$. Lastly, if $K = \C$ in this case, it is clear by equation \eqref{eqn:LeadingCoeff} that $a_d$ must be in $\C^\ast$. If we are in case (a), plugging in $\varphi(x)$ for $x$ in equation \eqref{eqn:LeadingCoeff} yields
\[
c(\varphi(x)) = a_d(\varphi(x))^{d^{k-1}} \cdots a_d(\varphi^k(x)). 
\]
On the other hand raising equation \eqref{eqn:LeadingCoeff} to the power of $d$ gives us
\[
c(x)^d = a_d(x)^{d^{k}} a_d(\varphi(x))^{d^{k-1}} \cdots a_d(\varphi^{k-1}(x))^d.
\]
Dividing the above two equations by each other we get
\[
\frac{c(x)^d}{c(\varphi(x))} = \frac{a_d(x)^{d^k}}{a_d(\varphi^k(x))} = \frac{a_d(x)^{d^k}}{a_d(x)} = a_d(x)^{d^k - 1}.
\]
Since $c(x), c(\varphi(x)) \in K^\ast$, it follows that $a_d(x)^{d^k - 1} \in K^\ast$. Using $d \ge 2$ and the fact that $K = \C$ or $K = \C[C]$, we conclude that $a_d(x)$ must be also be in $K$. To finish the proof, let $j$ be the largest index such that $a_j(x)$ is not in $K$ (we would be done if no such index exists). Using induction on $n$ we can easily conclude that for every $n\ge 1$, $a_{d^n - d +j,n}$ is the coefficient with the largest index in $f_n$ that is not in $K$. This contradicts the fact that $f_k$ is a polynomial in $K[y]$.   
\end{proof}

\section{Proof of the key proposition}
In this section, we prove Proposition \ref{prop:totally-inv}, a key component in our proof of Theorem \ref{thm:more-precise}. First, we start with the next lemma which is an analog of \cite[Lemma 4.1]{BGR}. 
\label{sec:key-prop}
\begin{lemma}
\label{lem:regularComposition}
Let $K$ be an algebraically closed field. Let $X,Y,$ and $Z$ be varieties over $K$, $\Psi: X \lra Y$ be a dominant finite morphism, and $f: Y \dra Z$ and $g:X \dra Z$ be rational maps such that that $f \circ \Psi = g$. Assume that $Z$ is projective. Then, $f$ is regular at $\Psi(x) \in Y$ if and only if $g$ is regular at $x \in X$. 
\end{lemma}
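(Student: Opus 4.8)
We must establish both implications, but they are of completely different character. The implication ``$f$ regular at $\Psi(x)$ $\Longrightarrow$ $g$ regular at $x$'' is immediate: $\Psi$ is a morphism, hence regular at $x$, so $g=f\circ\Psi$ is a composite of maps each regular in a neighbourhood of the relevant point. All the content is in the converse, and the plan is to recast ``regularity at a point'' as the freeness of a fractional ideal and then exploit that a finite morphism between smooth varieties (the only situation in which the lemma is used) is flat.

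First I would reduce to $Z=\bP^N$ by choosing a projective embedding $Z\hookrightarrow\bP^N$; for a rational map into $Z\subseteq\bP^N$, regularity at a point as a map to $Z$ is the same as regularity there as a map to $\bP^N$, since a regular extension automatically takes values in the closed set $Z$. Then fix a representative $f=[f_0:\cdots:f_N]$ with $f_i\in K(Y)$ not all zero, and let $\mathcal F\subseteq K(Y)$ be the coherent fractional ideal sheaf $\sum_i\mathcal{O}_Y f_i$. Put $y=\Psi(x)$, $A=\mathcal{O}_{Y,y}$, $B=\mathcal{O}_{X,x}$ (so $\Psi$ induces an injective local homomorphism $A\hookrightarrow B$ compatible with $K(Y)\hookrightarrow K(X)$), and set $M:=\mathcal F_y=\sum_i Af_i\subseteq K(Y)$. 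The elementary but decisive point is: $f$ is regular at $y$ if and only if $M$ is a free $A$-module (equivalently a principal fractional ideal; since $M$ has rank one, ``free'' and ``principal'' agree). Because $g=f\circ\Psi$ is represented by $[\Psi^\ast f_0:\cdots:\Psi^\ast f_N]$, the same reformulation on $X$ reads: $g$ is regular at $x$ if and only if the $B$-submodule $BM:=\sum_i Bf_i\subseteq K(X)$ is free over $B$. Hence the lemma reduces to the purely local statement: \emph{if $BM$ is free over $B$, then $M$ is free over $A$.}

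For this I would use that $\Psi$, being finite and (in the applications) a morphism between smooth varieties, is flat, so $A\hookrightarrow B$ is faithfully flat; this yields the identity $B\cap K(Y)=A$ inside $K(X)$ (if $z=a/s$ with $a,s\in A$ lies in $B$, then $a\in sB\cap A=sA$, so $z\in A$). The remaining deduction is short: $BM$ free over the local ring $B$ is principal, and a generator can be taken among the $f_j$; rescaling the representative of $f$ by that $f_j^{-1}\in K(Y)^\ast$ we may assume $BM=B$. Then $M\subseteq BM\cap K(Y)=B\cap K(Y)=A$, so $M$ is an ideal of $A$; and $MB=B$ together with $A\hookrightarrow B$ being local forces $M\not\subseteq\mathfrak m_A$, whence $M=A$. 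Thus $M$ is free and $f$ is regular at $y$ (and then, by the easy direction, $g$ is regular at every point of $\Psi^{-1}(y)$). The step I expect to be the main obstacle is precisely the identity $B\cap K(Y)=A$: it is here, and only here, that one needs an input beyond ``$\Psi$ finite'' --- it fails, for instance, for the normalisation of a nodal curve, and the bare statement fails with it --- so flatness of $\Psi$, automatic in the smooth setting in which the lemma is applied, is the real content; the fractional ideal reformulation and the final local-algebra computation are routine.
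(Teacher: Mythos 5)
Your argument is correct in the setting where the paper actually uses the lemma, but it takes a genuinely different route from the paper's proof. The paper first reduces, via the \'etale-local structure of finite maps (\cite[Corollary 1.1.8]{JW15}, together with the argument of \cite[Lemma 4.1]{BGR} to transfer regularity along the \'etale neighbourhoods), to the case where $\Psi$ is finite, flat and surjective and $g$ is regular everywhere; it then chooses an affine chart $W\subset Z$ containing the finite set $g(\Psi^{-1}(y))$, uses surjectivity of $\Psi$ to see that $f$ maps $V\setminus \indet(f)$ into $W$ for a suitable affine neighbourhood $V$ of $y$, and extends across $\indet(f)$ (codimension at least $2$) by Hartogs. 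You instead recast regularity at a point as local principality of the fractional ideal spanned by the homogeneous coordinates of $f$, and descend principality from $B=\OO_{X,x}$ to $A=\OO_{Y,y}$ via faithfully flat descent of ideals ($IB\cap A=I$), i.e.\ via the identity $B\cap K(Y)=A$; this is a clean, purely local-algebraic argument that dispenses with Hartogs, the affine-chart construction, and the external citations. Its cost is the appeal to flatness of $\Psi$ at $x$, which is not among the stated hypotheses; but this is not really a defect of your proof relative to the paper's, since the paper's argument also imports flatness (\'etale-locally, through \cite{JW15}) and in addition implicitly uses normality of $Y$ (both for the claim that $\indet(f)$ has codimension at least $2$ and for the Hartogs step), and, as you correctly observe, some hypothesis beyond ``$\Psi$ finite'' is unavoidable: for the normalization of a nodal cubic, with $f$ the inverse rational map and $Z=\bP^1$, the statement fails as written. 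In all applications in the paper $\Psi$ is a finite self-map of a smooth surface or of $\bP^n$, so miracle flatness applies and your argument goes through. One further remark: your key identity $\OO_{X,x}\cap K(Y)=\OO_{Y,y}$ does not actually require flatness --- it holds whenever $Y$ is normal at $y$, by going-down for the integral extension of the integrally closed local ring $A$ together with $A=\bigcap_{\mathrm{ht}\,\pp=1}A_\pp$ --- so your method in fact proves the lemma in its natural generality, with ``$Y$ normal at $y$'' replacing flatness, which is arguably a cleaner statement than either the paper's hypotheses or its proof delivers.
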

\begin{proof}
If $f$ is regular at $\Psi(x)$, then $g$ must clearly be regular at $x$. Now, assume $g$ is regular at $x \in X$. We need to show that $f$ is regular at $\Psi(x)$. Let $y := \Psi(x)$ and $z = g(x)$. Pick an affine neighborhood $V$ of $y$ and let $U := \Psi^{-1}(V) \cap \text{domain}(g)$. Since $\Psi$ is finite we have $\OO_{X,x} \otimes \kappa(y)$ is finitely generated over $\kappa(y)$ (See \cite[Definition 6.2.2]{EGAII}). Using \cite[Corollary 1.1.8]{JW15} there exist \'etale neighborhoods $u: (U',x') \lra (U,x)$ and $v: (V',y') \lra (V, y)$ such that the lift $\Psi': U' \lra V'$ of $\Psi$ is finite, flat and surjective. By the proof of \cite[Lemma 4.1]{BGR}, to prove $f$ is regular at $y$ it then suffices to show that $f' := f \circ v$ is regular at $y'$. Hence, at the expense of replacing $(X,Y,\Psi,f,g)$ with $(U', V', \Psi',f\circ v, g \circ u)$ we may assume without loss of generality that $g$ is regular everywhere and $\Psi$ is finite, flat and surjective. 

Choose an affine neighborhood $W$ of the set $g(\Psi^{-1}(y))$. Note that such an affine neighborhood exists since $Z$ is projective. Note that if $\{V_i\}_{i \in I}$ is an affine neighborhood base at $y$, then $\{\Psi^{-1}(V_i)\}_{i \in I}$ is an affine neighborhood base at $\Psi^{-1}(y)$ since $\Psi$ is closed and affine because it is finite. $g^{-1}(W)$ is a neighborhood of the set $\Psi^{-1}(y)$ by definition. So, we can pick an affine neighborhood $V$ of $y$ such that $\Psi^{-1}(V)$ is contained in $g^{-1}(W)$. This means that $g(\Psi^{-1}(V)) \subset W$. 

Let $U:= \Psi^{-1}(V)$. For every $x \in U \setminus \Psi^{-1}(\indet(f))$ we have 
\begin{equation}
\label{eqn:f-and-g}
f(\Psi(x)) = g(x) \in W.    
\end{equation}
 By the surjectivity of $\Psi$, we have $\Psi(\Psi^{-1}(\indet(f))) = \indet(f)$. This along with \eqref{eqn:f-and-g} implies that $f:V\setminus \indet(f) \lra Z$ factors through the inclusion $W \inj Z$. In other words, $f$ induces a morphism $f: V\setminus \indet(f) \lra W$. Note that $\indet(f)$ has codimension at least 2 because $Z$ is projective. So, because $V$ and $W$ are affine, Hartogs's extension theorem  implies that $f$ is regular on $V$ and we are done. 
\end{proof}

In our proof of Theorem \ref{thm:more-precise} in the case of toric surfaces, we know by Proposition \ref{prop:totally-inv} that there exists a subvariety $V$ of pure codimension 1 such that $\Phi^{-k}(V) = V$ and $\Phi$ induces an endomorphism on $X \setminus V$. The next lemma allows us to add to $V$ any other irreducible subvarieties of codimension 1 that happen to also be totally invariant under some iterate of $\Phi^k$ to get a new subvariety $\widetilde{V}$. Moreover, the map $\Phi$ will induce an endomorphism on $X\setminus V$. More precisely, we have
\begin{lemma}
\label{lem:add-more-totally-invs}
Suppose that $X$ is an irreducible projective variety and $\Phi: X \dra X$ is a rational self-map such that $\Phi^k$ is a regular on $X$. Let $V$ be a proper subvariety such that $\Phi^{-k}(V) = V$ and $\Phi$ induces a quasi-finite surjective endomorphism on $X \setminus V$. Let $H$ be an irreducible hypersurface in $X$ not contained in $V$ having the property that $\Phi^{-nk}(H) = H$ for some $n \ge 1$. Then, there exists a proper subvariety $\widetilde{V}$ containing both $V$ and $H$ such that $\Phi^{-k}(\widetilde{V}) = \widetilde{V}$ and $\Phi$ induces a quasi-finite surjective endomorphism on $X \setminus \widetilde{V}$. 
\end{lemma}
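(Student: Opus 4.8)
\emph{Proof idea.} The plan is to let $\widetilde V$ be $V$ together with the full backward orbit of $H$ under the genuine endomorphism that $\Phi$ induces on $U:=X\setminus V$. One should resist taking simply $\widetilde V = V\cup\bigcup_{j=0}^{n-1}\Phi^{-jk}(H)$: that set is totally invariant under $\Phi^{k}$ but in general not under $\Phi$, so it need not support a self-endomorphism of its complement. Write $\psi:=\Phi|_{U}\colon U\to U$ for the quasi-finite surjective morphism furnished by hypothesis; since $\Phi^{k}$ is everywhere regular, $\psi^{jk}$ agrees with $\Phi^{jk}|_{U}$ and maps $U$ into $U$ for every $j$. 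Put $H':=H\cap U=H\setminus V$. As $H$ is irreducible and not contained in $V$, the set $H'$ is a dense open subset of $H$, closed in $U$, with $\overline{H'}=H$ in $X$. The first step is to rephrase the hypothesis $\Phi^{-nk}(H)=H$ purely in terms of $\psi$: for $u\in U$ one has $\psi^{nk}(u)=\Phi^{nk}(u)\in U$, so $\psi^{nk}(u)\in H'\iff\Phi^{nk}(u)\in H\iff u\in\Phi^{-nk}(H)=H\iff u\in H'$; hence $(\psi^{nk})^{-1}(H')=H'$.

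Next I would form the finite closed subset
\[
\mathcal H\ :=\ \bigcup_{j=0}^{nk-1}(\psi^{j})^{-1}(H')\ \subseteq\ U .
\]
Shifting the index and using $(\psi^{nk})^{-1}(H')=H'$ gives $\psi^{-1}(\mathcal H)=\mathcal H$, and therefore also $\psi(\mathcal H)\subseteq\mathcal H$ and $(\psi^{k})^{-1}(\mathcal H)=\mathcal H$. Each $(\psi^{j})^{-1}(H')$ is a \emph{proper} closed subset of the irreducible variety $U$ — it cannot be all of $U$, since $\psi^{j}$ is surjective while $H'\subsetneq U$ — so $\mathcal H\subsetneq U$. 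I then set $\widetilde V:=V\cup\overline{\mathcal H}$, closure taken in $X$. Because $\mathcal H$ is closed in $U$ we have $\overline{\mathcal H}\cap U=\mathcal H$ and hence $X\setminus\widetilde V=U\setminus\mathcal H$; moreover $\widetilde V$ is a proper closed subvariety of $X$ (a union of two proper closed subsets of the irreducible $X$) containing $V$ and, since $H'\subseteq\mathcal H$ and $\overline{H'}=H$, containing $H$.

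It then remains to verify the two structural properties, which fall out of $\psi^{-1}(\mathcal H)=\mathcal H$. For $\Phi^{-k}(\widetilde V)=\widetilde V$ (a genuine preimage, as $\Phi^{k}$ is regular): if $x\in(\Phi^{k})^{-1}(\widetilde V)\setminus V$ then $\psi^{k}(x)\in\widetilde V\cap U=\mathcal H$, so $x\in(\psi^{k})^{-1}(\mathcal H)=\mathcal H\subseteq\widetilde V$, which together with $(\Phi^{k})^{-1}(V)=V$ gives ``$\subseteq$''; the reverse inclusion follows from $\Phi^{k}(V)\subseteq V$ and, by continuity, $\Phi^{k}(\overline{\mathcal H})\subseteq\overline{\psi^{k}(\mathcal H)}\subseteq\overline{\mathcal H}$. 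On $X\setminus\widetilde V=U\setminus\mathcal H$ the map $\Phi$ is regular (it restricts $\psi$), it lands in $U\setminus\mathcal H$ since $\psi^{-1}(\mathcal H)=\mathcal H$, it is quasi-finite as a restriction of $\psi$ to an open set, and it is surjective onto $U\setminus\mathcal H$: any $v\in U\setminus\mathcal H$ has a $\psi$-preimage $u\in U$, and $u\notin\mathcal H$ because otherwise $v=\psi(u)\in\psi(\mathcal H)\subseteq\mathcal H$.

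The only genuine subtlety — and where I expect the bookkeeping to need care — is that $\Phi$ itself is not regular, so one must scrupulously work on $U$ with the honest morphism $\psi$ and on $X$ only with the honest morphism $\Phi^{k}$, never taking an inverse image of $\Phi$ over all of $X$. Once the problem is transported into $U$ in this way, the content of the lemma is the elementary fact that the backward $\psi$-orbit of a subset periodic under $\psi^{nk}$ is a finite, totally $\psi$-invariant closed set; the remaining verifications (that finitely many proper closed subsets of an irreducible variety have proper union, and that passing to the Zariski closure in $X$ does not disturb total $\Phi^{k}$-invariance) are routine.
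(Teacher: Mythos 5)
Your argument is correct and is essentially the paper's proof in mirror image: the paper enlarges $V$ by the forward orbit $\bigcup_{i=0}^{nk-1}\overline{\Phi^{i}(H\cap U)}$ and then needs a surjectivity--pullback argument (together with irreducibility of the orbit closures) to check that the complement is preserved by $\Phi$, whereas you enlarge $V$ by the backward orbit $\bigcup_{j=0}^{nk-1}(\psi^{j})^{-1}(H')$, for which the invariance checks become formal. In fact, since $\psi$ is surjective and $(\psi^{nk})^{-1}(H')=H'$, one has $(\psi^{j})^{-1}(H')=\psi^{nk-j}(H')$ for $0\le j\le nk$, so your $\widetilde V$ coincides with the paper's; the preimage formulation is just a cleaner way to run the same construction.
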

\begin{proof}
Let $H_1 := H \cap(X \setminus V)$ and define $\OO_\Phi(H)$ to be the union of the algebraic closure of all hypersurfaces in the orbit of $H_1$ under $\Phi$. In other words, 
\[
\OO_\Phi(H) := \bigcup_{i=0}^{nk - 1} \overline{\Phi^i(H_1)}
\]
Define 
\[
\widetilde{V} := V \cup \OO_\Phi(H).
\]
We claim that $\Phi: X \setminus V \lra X\setminus V$ induces an endomorphism on $X \setminus \widetilde{V}$. Since $\Phi|_{X \setminus \widetilde{V}}$ is regular it suffices to show that $\Phi(X \setminus \widetilde{V}) \subseteq X \setminus \widetilde{V}$. Suppose for the sake of contradiction that $\Phi(x) \in \widetilde{V}$ for some $x\in X \setminus \widetilde{V}$. This means that $\Phi(x) \in \OO_\Phi(H)$ i.e. $\Phi(x) \in \overline{\Phi^i(H_1)}$ for some $1 \le i \le nk$. Because $x \in X \setminus V$ and $\Phi$ is an endomorphism on $X \setminus V$ we must actually have $\Phi(x) \in \overline{\Phi^{i}(H_1)} \setminus V$. Since $\Phi$ is surjective on $X \setminus V$ there must exist $y \in X \setminus V$ such that $x = \Phi^{i-1}(y)$. So, we must have $\Phi^{i}(y) \in \overline{\Phi^i(H_1)} \setminus V$. Composing both sides with $\Phi^{nk - i}$ we get 
\[
\Phi^{nk}(y) \in \Phi^{nk - i}(\overline{\Phi^i(H_1)} \setminus V) \subseteq \Phi^{nk}(H) = H,
\]
where the last inclusion follows from the fact that the Zariski closure of $\Phi^{nk - i}(\overline{\Phi^i(H_1)} \setminus V)$ in $X$ must be irreducible and it contains $\Phi^{nk}(H_1)$. Hence, the Zariski closure is equal to $\Phi^{nk}(H)$. Therefore, we have 
\[
y \in \Phi^{-nk}(H) = H.
\]
But, $y$ lies inside $X \setminus V$ which implies that $y \in H_1$. Hence, $x = \Phi^{i-1}(y) \in \Phi^{i-1}(H_1) \subseteq \widetilde{V}$. This contradicts our assumption that $x \in  X \setminus \widetilde{V}$. 

To finish the proof, note that since $\Phi$ induces an endomorphism of $X \setminus \widetilde{V}$ we must have $\Phi^{-k}(\widetilde{V}) \subseteq \widetilde{V}$. Then, it follows from surjectivity of $\Phi^k$ that $\Phi^{-k}(\widetilde{V}) = \widetilde{V}$. Surjectivity and quasi-finiteness of the induced map on $X \setminus \widetilde{V}$ follows easily from  surjectivity and quasi-finiteness of $\Phi$ and the definition of $\widetilde{V}$. 
\end{proof}
The next step in proving Proposition \ref{prop:totally-inv} is to employ Lemma \ref{lem:regularComposition} to prove that if $\Phi^k$ is regular, then the indeterminacy loci of $\Phi, \dots,\Phi^{k-1}$ are totally invariant under $\Phi^k$. This shows the first part of Proposition \ref{prop:totally-inv} since the indeterminacy loci must all have codimension 2. The argument is almost exactly the same as the proof of \cite[Proposition 1.2]{BGR}. We include the details here for the sake of completeness. 
\begin{proposition}
\label{prop:exceptional}
Suppose that $\Phi: X \dra X$ is a dominant rational self-map such that $\Phi^k$ is regular for some $k \ge 2$ and $k$ is the smallest such $k$. Let $W_m$ be the indeterminacy locus of $\Phi^m$ for every $m \ge 1$. Then for every $1 \le r \le k-1$ and every $n \ge 0$ we must have $\Phi^{-k}(W_r) = W_r$ and $W_r = W_{nk + r}$.
\end{proposition}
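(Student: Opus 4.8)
The plan is to mimic the argument of \cite[Proposition 1.2]{BGR}, using Lemma \ref{lem:regularComposition} as the replacement for \cite[Lemma 4.1]{BGR}. Fix $1 \le r \le k-1$. The core observation is the factorization $\Phi^{k} = \Phi^{k-r} \circ \Phi^{r}$ together with the fact that $\Phi^{k}$ is regular, so that for a point $x$, regularity of the composite constrains where $\Phi^{r}$ can fail to be regular. First I would prove the two identities for $n = 1$, i.e. $\Phi^{-k}(W_r) = W_r$ and $W_r = W_{k+r}$, and then bootstrap to all $n$ by an easy induction using $\Phi^{nk+r} = \Phi^{k} \circ \Phi^{(n-1)k + r}$ and the already-established totally-invariant property.

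The key steps, in order. \textbf{Step 1 (the indeterminacy locus of $\Phi^r$ pushes forward into that of $\Phi^k$ or is resolved).} Consider a point $x \notin W_k$, so $\Phi^{k}$ is regular at $x$. Since $\Phi^r$ is dominant, pick a resolution or simply work on the open dense locus where $\Phi^r$ is a morphism and apply Lemma \ref{lem:regularComposition} with $\Psi$ a suitable finite map extracted from $\Phi^r$ (after restricting to an affine chart and using that $W_k = \emptyset$ there): since $\Phi^{k-r}\circ\Phi^{r}$ is regular, $\Phi^{k-r}$ must be regular at $\Phi^{r}(x)$ whenever $\Phi^r$ is regular at $x$, and conversely. \textbf{Step 2 ($W_r$ is $\Phi^{-k}$-invariant).} Spell out that $x \in W_{k+r}$, i.e. $\Phi^{k+r} = \Phi^{k}\circ \Phi^{r}$ is not regular at $x$, happens exactly when $\Phi^{r}$ is not regular at $x$ (the outer $\Phi^{k}$ is regular everywhere, so it cannot create indeterminacy, and since $\Phi^k$ is finite onto its image it cannot resolve it either — this is precisely what Lemma \ref{lem:regularComposition} gives). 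Hence $W_{k+r} = W_r$. Similarly, writing $\Phi^{k+r} = \Phi^{r}\circ\Phi^{k}$ and using regularity of $\Phi^{k}$ plus Lemma \ref{lem:regularComposition} shows $x \in W_{k+r}$ iff $\Phi^{k}(x) \in W_r$, i.e. $W_{k+r} = \Phi^{-k}(W_r)$. Combining the two descriptions of $W_{k+r}$ gives $\Phi^{-k}(W_r) = W_r$. \textbf{Step 3 (induction on $n$).} From $W_{k+r} = W_r$ and $W_{k+r} = \Phi^{-k}(W_r)$, assume $W_{nk+r} = W_r$; then $W_{(n+1)k + r} = \Phi^{-k}(W_{nk+r}) = \Phi^{-k}(W_r) = W_r$, closing the induction.

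The main obstacle I expect is \textbf{Step 1}: Lemma \ref{lem:regularComposition} is stated for a \emph{finite} morphism $\Psi$, but $\Phi^r$ is only a dominant rational self-map, not finite and not everywhere defined. To apply the lemma one must localize: restrict to an affine open $V \subseteq X$ avoiding $W_k$, take an affine open $U$ on which $\Phi^r$ restricts to a morphism, shrink so that $\Phi^r|_U : U \to V$ is quasi-finite and dominant, and then factor it (Zariski's main theorem / Stein-type argument, or directly the étale-local statement already invoked inside the proof of Lemma \ref{lem:regularComposition}) to extract a genuinely finite map to which the lemma applies. One also needs that the codimension-$2$ indeterminacy loci behave well under these restrictions, which is where projectivity of $X$ (so $\indet(\Phi^m)$ has codimension $\ge 2$) and the surjectivity of the iterates get used. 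Once this localization is set up carefully, Steps 2 and 3 are essentially bookkeeping, exactly as in \cite{BGR}.
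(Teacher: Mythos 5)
Your Step 2(ii) — applying Lemma \ref{lem:regularComposition} to the factorization $\Phi^{k+r}=\Phi^{r}\circ\Phi^{k}$ with the regular map $\Phi^{k}$ as the \emph{inner} factor $\Psi$ — is correct and is exactly how the paper uses that lemma; it gives $W_{k+r}=\Phi^{-k}(W_r)$ (granting, as the paper implicitly does, that $\Phi^k$ is finite). The genuine gap is in Step 2(i), the inclusion $W_r\subseteq W_{k+r}$: you justify ``$\Phi^{k}$ cannot resolve the indeterminacy of $\Phi^{r}$'' by asserting this ``is precisely what Lemma \ref{lem:regularComposition} gives'', but the lemma only treats compositions $f\circ\Psi$ in which the finite morphism is the inner factor; it says nothing about an \emph{outer} regular map failing to resolve indeterminacy of an inner rational map. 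Moreover your Step 1, which is your proposed mechanism for handling a rational inner factor, is false as stated: it is not true that if $\Phi^{r}$ and $\Phi^{k}=\Phi^{k-r}\circ\Phi^{r}$ are regular at $x$ then $\Phi^{k-r}$ is regular at $\Phi^{r}(x)$. The paper's own Example \ref{ex:2-regular} is a counterexample: $\Phi$ contracts the line $\{Z=0\}$ to its indeterminacy point $[1:0:0]$, so for generic $x\in\{Z=0\}$ the map $\Phi$ is regular at $x$ and $\Phi^{2}$ is regular, yet $\Phi$ is not regular at $\Phi(x)$. This also explains why your proposed repair (shrink until $\Phi^{r}$ is quasi-finite and extract a finite map) cannot work: $\Phi^{r}$ contracts curves on its domain of definition, and the points one must control sit exactly on such contracted curves, outside any quasi-finite locus.

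The inclusion $W_r\subseteq W_{k+r}$ is in fact true (for normal $X$ and finite $\Phi^{k}$ one can argue via the closure of the graph: by Zariski's main theorem an indeterminacy point of $\Phi^{r}$ has a positive-dimensional fiber in the graph, and a finite outer morphism cannot collapse it), but it needs such a separate argument rather than a citation of Lemma \ref{lem:regularComposition}. The paper avoids the issue altogether: it uses only the easy inclusions $W_r\supseteq W_{k+r}\supseteq W_{2k+r}\supseteq\cdots$, lets this chain of closed sets stabilize at $W:=W_{k\ell+r}$, deduces $\Phi^{-k}(W)=W$ from the lemma with inner $\Psi=\Phi^{k}$, and then uses surjectivity of $\Phi^{k}$ together with the lemma once more (now with inner $\Psi=\Phi^{k(\ell-n)}$, in the direction ``$g$ regular at $x$ implies $f$ regular at $\Psi(x)$'') to show every $W_{nk+r}$ equals $W$. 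Alternatively, you can close your own gap without any new geometric input: from your correct identity $\Phi^{-k}(W_r)=W_{k+r}\subseteq W_r$, a strict inclusion would propagate, by surjectivity of $\Phi^{k}$, to an infinite strictly decreasing chain of closed subsets $W_r\supsetneq\Phi^{-k}(W_r)\supsetneq\Phi^{-2k}(W_r)\supsetneq\cdots$, which is impossible; hence $\Phi^{-k}(W_r)=W_r$, and your Step 3 induction then goes through. As written, however, the proposal's key inclusion rests on a statement the quoted lemma does not provide and on a Step 1 that is contradicted by the paper's basic examples.
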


\begin{proof}
We only prove this for $r = 1$ and the proof for $r = 2, \dots, r$ is identical. We have a descending chain of closed subsets 
\[
W_1 \supset W_{k + 1} \supset W_{2k + 1} \supset \cdots
\]
that must terminate. So, for some $\ell \ge 1$ we have $W := W_{k\ell + 1} = W_{k(\ell + 1) + 1} = \cdots$ . Then, by Lemma \ref{lem:regularComposition} for any $n \ge 1$ we have 
\[
\Phi^{-k}(W_{k\ell + 1}) = W_{k(\ell + 1) + 1}.
\]
we conclude that $\Phi^{-k}(W) = W$. 

To finish the proof we need to show that $W := W_{kn + 1}$ for every $n \ge 1$. Suppose for the sake of contradiction that there is $y \in W_{kn + 1} \setminus W$ for some $n < \ell$. Since $\Phi^k$ is surjective there exists $x_1 \in X$ such that $\Phi^{k(\ell - n)}(x_1) = y$. Also, $x_1 \notin W$ as $W$ is totally invariant under $\Phi^k$ and $y \notin W$. We have
\[
\Phi^{kn + 1}(\Phi^{k(\ell - n)}(x)) = \Phi^{k\ell + 1}(x),
\]
and both $\Phi^{k\ell + 1}$ and $\Phi^{k(\ell - n)}$ are regular at $x_1$. So, by Lemma \ref{lem:regularComposition} we conclude that $\Phi^{kn + 1}$ is regular at $y$ which is a contradiction.
\end{proof}
\begin{proposition}
Suppose that $\Phi^{-k}(W) = W$ for some subvariety $W$. Then, $\Phi^{-(mk + r)}(W) = \Phi^{-r}(W)$ for every $r,k \ge 0$.
\end{proposition}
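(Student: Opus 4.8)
The plan is a short two-step induction whose only input is the elementary functoriality of preimages: for (partial) self-maps $\Psi,\Psi'$ of $X$ one has $(\Psi\circ\Psi')^{-1}(S)=(\Psi')^{-1}\bigl(\Psi^{-1}(S)\bigr)$, and hence, reading $\Phi^{-j}(\cdot)$ as the $j$-fold iterated preimage $\Phi^{-1}\circ\cdots\circ\Phi^{-1}$, one has $\Phi^{-(a+b)}=\Phi^{-b}\circ\Phi^{-a}$ as operations on subsets of $X$, simply by unwinding the definition. Everything below is formal bookkeeping on top of this identity.

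\textbf{Step 1.} First I would prove by induction on $m\ge 0$ that $\Phi^{-mk}(W)=W$. The case $m=0$ is trivial and the case $m=1$ is the hypothesis. For the inductive step, assuming $\Phi^{-mk}(W)=W$, write $\Phi^{(m+1)k}=\Phi^{mk}\circ\Phi^{k}$ and take preimages:
\[
\Phi^{-(m+1)k}(W)=\Phi^{-mk}\bigl(\Phi^{-k}(W)\bigr)=\Phi^{-mk}(W)=W,
\]
using the hypothesis $\Phi^{-k}(W)=W$ in the middle and the inductive hypothesis at the end.

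\textbf{Step 2.} Now fix arbitrary $m,r\ge 0$ and factor $\Phi^{mk+r}=\Phi^{mk}\circ\Phi^{r}$ (apply $\Phi^{r}$ first). Taking preimages and invoking Step 1,
\[
\Phi^{-(mk+r)}(W)=\Phi^{-r}\bigl(\Phi^{-mk}(W)\bigr)=\Phi^{-r}(W),
\]
which is exactly the assertion.

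The argument is entirely mechanical, so there is no real obstacle; the one point that deserves a word of care is the meaning of $\Phi^{-j}(\cdot)$ when $\Phi$ is merely rational, since preimage does not commute with composition through an indeterminacy point in general. I would therefore fix the convention that $\Phi^{-j}(\cdot)$ denotes the iterated set-theoretic preimage, which makes the functoriality used above hold by definition; in the settings where this proposition is applied --- for instance with $\Phi^{k}$ regular and $W$ a totally invariant indeterminacy locus as in Proposition \ref{prop:exceptional} --- this coincides with the scheme-theoretic preimage, so no ambiguity arises.
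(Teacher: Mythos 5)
The formal manipulation you carry out is valid for the convention you chose, but that convention changes the statement, and the bridge back to the paper's meaning is exactly the missing content. In this paper $\Phi^{-j}(W)$ means the preimage of $W$ under the rational map $\Phi^{j}$ on its maximal domain of definition $X\setminus W_j$ (the paper's own proof says explicitly that ``$\Phi^{mk+r}$ is regular at $x$ by the definition of $x$''). With that meaning the identity $\Phi^{-(mk+r)}(S)=\Phi^{-r}\bigl(\Phi^{-mk}(S)\bigr)$ is not formal: it needs $\mathrm{dom}(\Phi^{mk+r})\subseteq\mathrm{dom}(\Phi^{r})$, i.e. $W_r\subseteq W_{mk+r}$, which fails for general rational maps (for the Cremona involution $\sigma$ one has $\sigma^2=\mathrm{id}$, so $W_2=\emptyset$ while $W_1\neq\emptyset$). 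The whole point of the proposition is the nontrivial inclusion: if $\Phi^{mk+r}$ is regular at $x$ with $\Phi^{mk+r}(x)\in W$, then $\Phi^{r}$ is already regular at $x$, whence $\Phi^{r}(x)\in\Phi^{-mk}(W)=W$. The paper obtains this regularity transfer precisely from the conclusion $W_r=W_{mk+r}$ of Proposition \ref{prop:exceptional} (which in turn rests on Lemma \ref{lem:regularComposition} and the regularity of $\Phi^k$). Your argument never touches this point.

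Your closing remark --- redefine $\Phi^{-j}$ as the $j$-fold set-theoretic preimage and assert that in the applications this coincides with the paper's notion --- does not repair the gap, and the asserted coincidence is in fact false in the relevant setting. There $W$ is built from indeterminacy loci; take $W=W_1=\mathrm{Indet}(\Phi)$, which satisfies $(\Phi^{k})^{-1}(W_1)=W_1$ by Proposition \ref{prop:exceptional}. Every iterated set-theoretic preimage under $\Phi$ excludes $\mathrm{Indet}(\Phi)$ by definition, so it cannot equal $W_1$; even the hypothesis $\Phi^{-k}(W)=W$ means something different in your convention than in the paper's. So what you have proved is a tautology about a different operation. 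To prove the stated proposition you must show that regularity of $\Phi^{mk+r}$ at a point forces regularity of $\Phi^{r}$ there, and that step requires the preceding proposition's equality $W_r=W_{mk+r}$ (or an equivalent argument); it cannot be obtained by bookkeeping with preimages alone.
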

\begin{proof}
Suppose that $x \in \Phi^{-r}(W)$ then it is clear that $\Phi^{mk + r}(x) \in W$. Now, suppose that $\Phi^{mk + r}(x) \in W$. Since $\Phi^{mk+r}$ is regular at $x$ by the definition of $x$ and $W_r = W_{mk + r}$ we have that 
$\Phi^r$ is also regular at $x$. Hence, because $\Phi^{mk+r}(x) \in W$ we have $\Phi^r(x) \in \Phi^{-mk}(W) = W$ which implies that $x \in \Phi^{-r}(W)$.
\end{proof}

Proposition \ref{prop:exceptional} finishes the proof of the first part of Proposition \ref{prop:totally-inv}. The next proposition proves the second part.

\begin{proposition}
\label{prop:totally-inv-case-by-case}
Suppose that $X$ is a $\bP^1$-bundle over a smooth curve $C$ or $X$ is $\bP^n$. Then, there exists a subvariety $\widetilde{V}$ of pure codimension 1 such that $\Phi^{-k}(\widetilde{V})=\widetilde{V}$ and $\Phi$ induces a quasi-finite and surjective endomorphism on $X \setminus \widetilde{V}$. 
\end{proposition}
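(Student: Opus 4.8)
The plan is to build $\tilde V$ from $W$ by taking iterated preimages under $\Phi$. Let $W:=\bigcup_{r=1}^{k-1}W_r$ be the union of the indeterminacy loci of $\Phi,\dots,\Phi^{k-1}$; by Proposition~\ref{prop:exceptional} (and the relation $\Phi^{-(mk+r)}(W)=\Phi^{-r}(W)$ proved right after it) we have $\Phi^{-k}(W)=W$, and since $\Phi$ is not regular (otherwise there is nothing to prove) $W$ is a non-empty subvariety of codimension $\ge 2$. We shall also use that a surjective regular self-map of $\bP^n$, and of a $\bP^1$-bundle over a smooth projective curve, is automatically finite — for $\bP^1$-bundles one combines the preservation of the ruling from Lemmas~\ref{lem:MSS-5.8} and \ref{lem:rat-pres-fibers} with the Hodge index theorem — so in either case $\Phi^k$ is finite. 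Now set
\[
\tilde V:=\bigcup_{i=0}^{k-1}\overline{\Phi^{-i}(W)},
\]
where each $\Phi^{-i}(W)$ is taken over the domain of definition of $\Phi^i$. By the periodicity relation, $\tilde V=\bigcup_{i\ge 0}\overline{\Phi^{-i}(W)}$, so $\overline{\Phi^{-1}(\tilde V)}=\tilde V$; together with $\Phi^{-k}(W)=W$ and the surjectivity of $\Phi^k$ this yields $\Phi^{-k}(\tilde V)=\tilde V$.

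Next I would verify the remaining properties, all routine. Since $\indet(\Phi)=W_1\subseteq W\subseteq\tilde V$, the map $\Phi$ restricts to a morphism on $U:=X\setminus\tilde V$. If some $x\in U$ had $\Phi(x)\in\overline{\Phi^{-i}(W)}$ with $0\le i\le k-1$, then $x\in\overline{\Phi^{-(i+1)}(W)}$, which is contained in $\tilde V$ when $i<k-1$ and equals $\overline{\Phi^{-k}(W)}=W\subseteq\tilde V$ when $i=k-1$; either way $x\in\tilde V$, a contradiction. Hence $\Phi$ induces an endomorphism $\phi$ of $U$, and $\phi$ is surjective because $\Phi^k$ is surjective and stabilizes $\tilde V$. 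Finally $\phi$ is quasi-finite: if it contracted an irreducible subvariety $Z\subseteq U$ with $\dim\Phi(Z)<\dim Z$, then, $\Phi^k$ being finite, $\Phi^k|_Z$ would be finite onto its image and so $\dim\Phi^k(Z)=\dim Z$; but $\Phi^k=\Phi^{k-1}\circ\Phi$ forces $\Phi(Z)\subseteq\indet(\Phi^{k-1})\subseteq W$ (otherwise $\Phi^k(Z)\subseteq\overline{\Phi^{k-1}(\Phi(Z))}$ has dimension $<\dim Z$), whence $Z\subseteq\overline{\Phi^{-1}(W)}\subseteq\tilde V$, a contradiction.

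The substantive point, and the place where the hypothesis on $X$ is essential, is that $\tilde V$ may be taken of pure codimension $1$. Codimension-$1$ components are plentiful: by the dimension argument just made, every hypersurface contracted by some $\Phi^i$ ($1\le i\le k-1$) is carried by $\Phi^i$ into $W$, hence is a codimension-$1$ component of $\overline{\Phi^{-i}(W)}\subseteq\tilde V$. What must be excluded is a component of $\tilde V$ of codimension $\ge 2$ lying on no codimension-$1$ component; by construction the only such can come from $W$, so it suffices to show every point of $W$ — equivalently of each $\indet(\Phi^r)$ — lies on a codimension-$1$ component of $\tilde V$. I would first treat $p\in\indet(\Phi)$: resolve $\Phi$ minimally near $p$ by $\pi:Y\to X$, $\hat\Phi:=\Phi\circ\pi$, and let $C_p:=\hat\Phi(\pi^{-1}(p))$, which is positive-dimensional because $p$ is a genuine indeterminacy point. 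Regularity of $\Phi^k$ forces $\Phi^{k-1}\circ\hat\Phi$ to be constant on $\pi^{-1}(p)$, so $\Phi^{k-1}$ either contracts $C_p$ or has $C_p\subseteq\indet(\Phi^{k-1})$. When $X$ is a surface the latter is impossible, since then $C_p$ is a curve while $\indet(\Phi^{k-1})$ is finite; so $C_p$ is a curve contracted by $\Phi^{k-1}$, hence (as above) a codimension-$1$ component of $\overline{\Phi^{-(k-1)}(W)}\subseteq\tilde V$. As every point of $C_p$ is a limit of values $\Phi(x)$ with $x\to p$, we get $p\in\overline{\Phi^{-1}(C_p)}\subseteq\overline{\Phi^{-1}(\tilde V)}=\tilde V$, and $\overline{\Phi^{-1}(C_p)}$ is a curve contained in $\tilde V$; since $\indet(\Phi^r)\subseteq\indet(\Phi)\cup\overline{\Phi^{-1}(W)}$, iterating this disposes of all of $W$, settling the cases $X=\bP^2$, $X=\F_n$, and $X$ a $\bP^1$-bundle over a curve (using, for $\bP^1$-bundles, that $\Phi$ or $\Phi^2$ preserves the ruling).

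I expect this last step — showing the indeterminacy loci are absorbed into codimension-$1$ components — to be the main obstacle, for two reasons. First, it genuinely uses the regular-iterate hypothesis: without it there are rational self-maps of $\bP^2$ (and of $\bP^1\times\bP^1$) having indeterminacy points that lie on no contracted hypersurface, so the total invariance of $\indet(\Phi)$ under $\Phi^k$ from Proposition~\ref{prop:exceptional} must be fed back into the local picture at $p$. Second, for $X=\bP^n$ with $n\ge 3$ the blow-up image $C_p$ can itself have codimension $\ge 2$, so the dichotomy ``$C_p$ contracted or $C_p\subseteq\indet(\Phi^{k-1})$'' must be iterated and combined with the finiteness of $\Phi^k$ before one reaches a codimension-$1$ piece; this bookkeeping is the technical heart of the proof.
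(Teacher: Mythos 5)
Your construction of $\tilde V$ and the verification that it is totally invariant under $\Phi^k$ and that $\Phi$ induces a surjective, quasi-finite endomorphism of $X\setminus\tilde V$ is essentially the argument in the paper (the paper's $V$ of equation \eqref{eqn:def-V} and Lemmas \ref{lem:V-closed}--\ref{cor:induced-on-comp}). The divergence, and the genuine gap, is in the purity-of-codimension-one step. First, for $X=\bP^n$ with $n\ge 3$ you explicitly leave the argument unfinished ("this bookkeeping is the technical heart of the proof"), yet the statement covers all $\bP^n$. The paper never needs your stronger claim that every point of $W$ sits on a codimension-one component: it shows by a common-factor computation in homogeneous coordinates that $V$ cannot consist only of components of codimension $\ge 2$ (otherwise $\Phi^k$ would be given by $[F_0\circ\Phi^{k-1}:\cdots:F_n\circ\Phi^{k-1}]$ and be undefined on $\indet(\Phi)$), and then simply \emph{discards} the codimension-$\ge 2$ components: writing $V=V'\cup V''$ with $V'$ the codimension-one part, the complement $\bP^n\setminus V'$ is affine, so the induced morphism on $\bP^n\setminus V$ extends across the codimension-$\ge 2$ locus by Hartogs, and $\tilde V:=V'$ works. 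This sidesteps entirely the iterated dichotomy you flag as the main obstacle.

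Second, in the surface case your key step is unjustified as written: from "every point of $C_p$ is a limit of values $\Phi(x)$ with $x\to p$" you cannot conclude $p\in\overline{\Phi^{-1}(C_p)}$ — the values approach $C_p$ but need not lie on it. What you actually need is a non-$\pi$-exceptional component of $\mathrm{Supp}\,\hat\Phi^{*}C_p$ meeting $\pi^{-1}(p)$. This can be salvaged when $C_p^2\ge 0$ (intersect $\hat\Phi^{*}C_p$ with the exceptional curves and use negative definiteness of their intersection matrix), which covers $\bP^2$; but on a $\bP^1$-bundle $C_p$ may be a section of negative self-intersection, and then the numerical obstruction disappears and your claim needs a genuinely different input. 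The paper's route here is the robust one you only allude to in a parenthesis: since $\Phi^{2k}$ preserves the ruling (Lemma \ref{lem:MSS-5.8}), the fiber through each isolated point of $V$ is totally invariant under $\Phi^{2k}$, and Lemma \ref{lem:add-more-totally-invs} enlarges $V$ by these fibers, which absorbs all codimension-two components at once. A smaller inaccuracy: your assertion that the only codimension-$\ge2$ components of $\tilde V$ "come from $W$" is not literally true, since $\overline{\Phi^{-i}(W)}$ contributes isolated preimage points outside $W$; this is repairable (pull back a curve of $\tilde V$ through $\Phi^i(x)$ under $\Phi^i$), but it is one more place where the write-up asserts rather than proves. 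As it stands, the proposal proves the proposition only for $\bP^2$, and leaves both the higher-dimensional projective spaces and the $\bP^1$-bundle case incomplete.
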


Before proving Proposition \ref{prop:totally-inv-case-by-case} we define
 $V_0 := \cup_{i = 1}^{k-1} W_i$. Then, $\Phi^{-k}(V_0) = V_0$ by Proposition \ref{prop:exceptional}. Next, we define 
\begin{equation}
\label{eqn:def-V}
    V_i := \Phi^{-i}(V_0) \text{ for } i = 1,\dots,k-1\text{ and } V := \cup_{i=0}^{k-1} V_i.
\end{equation} 
Our goal is to show that $\Phi$ induces a quasi-finite and surjective endomorphism on the complement of $V$ (See Corollary \ref{cor:induced-on-comp}). This is a corollary of Lemmas \ref{lem:V-closed} through \ref{prop:W-inv} which we will prove next. 
\begin{lemma}
\label{lem:V-closed}
$V$ is Zariski closed in $X$.
\end{lemma}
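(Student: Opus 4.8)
The plan is to read the definition \eqref{eqn:def-V} carefully: since $\Phi^k$ is regular, each $\Phi^i$ with $1\le i\le k-1$ is a genuine morphism on the open set $X\setminus W_i$, where $W_i=\indet(\Phi^i)$, so that $V_i:=\Phi^{-i}(V_0)$ is to be read as $(\Phi^i|_{X\setminus W_i})^{-1}(V_0)$. The key structural fact I would exploit is that $W_i\subseteq V_0$ for every such $i$, which holds by the very definition $V_0=\bigcup_{j=1}^{k-1}W_j$; this is precisely why $V_0$ was taken to contain \emph{all} the indeterminacy loci $W_1,\dots,W_{k-1}$ rather than only $W_1$.

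First I would record that $V_0$ is Zariski closed, being a finite union of indeterminacy loci. Next, for $1\le i\le k-1$ the set $V_i=(\Phi^i|_{X\setminus W_i})^{-1}(V_0)$ is the preimage of the closed set $V_0\subseteq X$ under a morphism, hence closed in the subspace $X\setminus W_i$. Writing $\overline{V_i}$ for the closure of $V_i$ in $X$, this means $\overline{V_i}\cap(X\setminus W_i)=V_i$, i.e. $\overline{V_i}\setminus V_i\subseteq W_i\subseteq V_0$, so $\overline{V_i}\subseteq V_i\cup V_0\subseteq V$. Finally, since $V=V_0\cup V_1\cup\dots\cup V_{k-1}$ is a finite union, $\overline{V}=\overline{V_0}\cup\overline{V_1}\cup\dots\cup\overline{V_{k-1}}=V_0\cup\overline{V_1}\cup\dots\cup\overline{V_{k-1}}\subseteq V$, whence $V$ is Zariski closed.

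Since the lemma reduces to point-set topology once the definitions are unwound, the step requiring the most care is the containment $\overline{V_i}\setminus V_i\subseteq W_i$: it rests on $V_i$ being closed in the subspace $X\setminus W_i$ — which in turn uses that $\Phi^i$ is a morphism there, i.e. uses that $\Phi^k$ is regular — together with the bookkeeping inclusion $W_i\subseteq V_0$. I would also remark that if one instead defines $\Phi^{-i}(V_0)$ as the Zariski closure of the preimage over the domain of definition from the outset, then each $V_i$ is closed by construction and the statement is immediate; so the lemma is insensitive to which convention one adopts.
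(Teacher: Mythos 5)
Your proposal is correct and follows essentially the same route as the paper: both arguments rest on the observations that $V_0$ is closed, that each $V_i$ is closed in $X\setminus W_i$ as the preimage of a closed set under the morphism $\Phi^i$ defined there, and that $W_i\subseteq V_0$, so that the "escaping" boundary points of each $V_i$ land inside $V_0\subseteq V$. Your version just makes the closure computation $\overline{V_i}\setminus V_i\subseteq W_i\subseteq V_0$ explicit where the paper phrases the same fact via closedness in the subspace $X\setminus V_0$.
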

\begin{proof}
Note that $V_0$ is closed as it is a finite union of closed subsets of $X$. Each $V_i$ is closed in $X\setminus W_i$ being the preimage of a closed set. So, $V_i \cap (X \setminus V_0)$ must be closed in $X \setminus V_0$. It follows that $\left(\cup_{i=1}^{k-1} V_i\right) \cap (X \setminus V_0)$ is closed in $X \setminus V_0$. Therefore, $V = V_0 \cup \left(\cup_{i=1}^{k-1} V_i\right)$ is closed in $X$. 
\end{proof}
\begin{lemma}
\label{prop:phi-reg}
$\Phi: X \setminus V \dra X \setminus V$ is regular. 
\end{lemma}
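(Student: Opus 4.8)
The goal is to show that $\Phi$ restricts to a morphism $X \setminus V \to X \setminus V$, where $V = \bigcup_{i=0}^{k-1} V_i$ with $V_0 = \bigcup_{i=1}^{k-1} W_i$ (the union of the indeterminacy loci of $\Phi, \dots, \Phi^{k-1}$) and $V_i = \Phi^{-i}(V_0)$. By Lemma \ref{lem:V-closed} the set $V$ is closed, so $X \setminus V$ is an open subvariety, and I need two things: that $\Phi$ is defined at every point of $X \setminus V$, and that $\Phi$ maps $X \setminus V$ into $X \setminus V$.

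The plan is as follows. First, regularity: a point $x \in X \setminus V$ in particular lies outside $V_0 \supseteq W_1 = \indet(\Phi)$, so $\Phi$ is regular at $x$; this is immediate. Second, invariance: I must show $\Phi(x) \notin V$ whenever $x \notin V$. Suppose instead $\Phi(x) \in V_j$ for some $0 \le j \le k-1$. I would split on whether $j = 0$ or $j \ge 1$. If $j \ge 1$, then $V_j = \Phi^{-j}(V_0)$, so $\Phi(x) \in \Phi^{-j}(V_0)$ means $\Phi^{j}(\Phi(x)) = \Phi^{j+1}(x) \in V_0$; but this has to be unwound carefully using Proposition \ref{prop:exceptional} (which gives $W_r = W_{nk+r}$ and $\Phi^{-k}(W_r) = W_r$) to relate $\Phi^{j+1}(x) \in V_0$ back to $x \in V$ — one shows $x \in \Phi^{-(j+1)}(V_0)$, and then reduce the index $j+1$ modulo $k$ using the periodicity of the indeterminacy loci to land in some $V_i$ with $0 \le i \le k-1$, contradicting $x \notin V$. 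If $j = 0$, then $\Phi(x) \in V_0 = \bigcup_{i=1}^{k-1} W_i$, say $\Phi(x) \in W_r$ for some $1 \le r \le k-1$; then $\Phi^r$ is not regular at $\Phi(x)$, so (using that $\Phi$ is regular at $x$, since $x \notin W_1$) $\Phi^{r+1}$ is not regular at $x$, i.e. $x \in W_{r+1}$. If $r+1 \le k-1$ this already puts $x \in V_0 \subseteq V$, a contradiction; if $r + 1 = k$ then $x \in W_k$, but $W_k = \emptyset$ since $\Phi^k$ is regular — also a contradiction. The subtle point needing care here is the composition fact "$g$ regular at $x$ and $f$ not regular at $g(x)$ $\Rightarrow$ $f \circ g$ not regular at $x$": this is the easy direction, but one should phrase it correctly, namely that if $\Phi^{r+1} = \Phi^r \circ \Phi$ were regular at $x$ and $\Phi$ is regular at $x$ with $\Phi(x) = y$, it does not immediately follow that $\Phi^r$ is regular at $y$ (that is the content of Lemma \ref{lem:regularComposition}, which requires $\Phi$ finite). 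So actually I would use Lemma \ref{lem:regularComposition} here, applied to the finite morphism $\Phi|_U: U \to \Phi(U)$ on a suitable neighborhood, together with the standing setup, to legitimately deduce $\Phi^r$ regular at $y$ from $\Phi^{r+1}$ regular at $x$ — or more simply, I would reduce everything to Proposition \ref{prop:exceptional}'s statement that $\Phi^{-k}(W_r) = W_r$ and $W_r = W_{nk+r}$ and argue purely in terms of indeterminacy loci of iterates.

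Concretely, here is the clean argument I would write. Let $x \in X \setminus V$. Since $x \notin W_1$, $\Phi$ is regular at $x$; set $y = \Phi(x)$. I claim $y \notin V$. Since $x \notin W_{m}$ for all $1 \le m \le k-1$ and (as $\Phi^k$ is regular) for $m = k$, $\Phi^m$ is regular at $x$ for all $1 \le m \le k$; combined with $x \notin V_i = \Phi^{-i}(V_0)$ for $1 \le i \le k-1$, this says $\Phi^i(x) \notin V_0$ for $0 \le i \le k-1$, hence $\Phi^i(x) \notin W_m$ for all $0 \le i \le k-1$ and $1 \le m \le k-1$. Now suppose $y \in V$. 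If $y \in V_0$, say $y \in W_m$ with $1 \le m \le k-1$, then since $\Phi$ is regular at $x$ and $\Phi$ is finite near $x$, Lemma \ref{lem:regularComposition} (applied appropriately) would force $\Phi^{m+1}$ to be non-regular at $x$, i.e. $x \in W_{m+1}$; but $W_{m+1} \subseteq V_0 \subseteq V$ if $m+1 \le k-1$, and $W_{m+1} = W_k = \emptyset$ if $m+1 = k$, so either way we contradict $x \notin V$. If instead $y \in V_j = \Phi^{-j}(V_0)$ for some $1 \le j \le k-1$, then $\Phi^{j}(y) = \Phi^{j+1}(x) \in V_0$; if $j + 1 \le k-1$ this says $\Phi^{j+1}(x) \in V_0$ with $j+1 \le k-1$, contradicting the observation above that $\Phi^i(x) \notin V_0$ for $0 \le i \le k-1$; and if $j + 1 = k$, then $\Phi^k(x) \in V_0 \subseteq V$, so $x = \Phi^{-k}(\text{something in } V)$; but using Proposition \ref{prop:exceptional} each $W_r$ is totally invariant under $\Phi^k$ and $V_0 = \bigcup W_r$, so $\Phi^{-k}(V_0) = V_0$, giving $x \in \Phi^{-k}(V_0) = V_0 \subseteq V$, again a contradiction. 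Hence $y = \Phi(x) \notin V$, so $\Phi$ maps $X \setminus V$ into $X \setminus V$, and being regular there it defines an endomorphism of $X \setminus V$.

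I expect the main obstacle to be the $j = 0$ case, where one must pass from "$\Phi(x) \in W_m$" to "$x \in W_{m+1}$": the naive claim uses only that a composite of a regular map with a non-regular map is non-regular, but making this precise for rational maps between varieties genuinely requires the finiteness hypothesis and hence Lemma \ref{lem:regularComposition} (or a local version of it), exactly as in \cite{BGR}. The other case-analysis steps are bookkeeping with the relations $\Phi^{-k}(W_r) = W_r$ and $W_r = W_{nk+r}$ from Proposition \ref{prop:exceptional}. Once regularity on $X \setminus V$ and self-invariance are established, surjectivity and quasi-finiteness of the induced map would follow in a subsequent lemma from the corresponding properties of $\Phi^k$ (via a counting/dimension argument), so they are not needed here.
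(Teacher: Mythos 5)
Your overall strategy coincides with the paper's: the only case that genuinely needs an argument is $\Phi(x)\in V_{k-1}$, which you, like the paper, handle by noting $\Phi^{k}(x)\in V_0$ and invoking $\Phi^{-k}(V_0)=V_0$ from Proposition \ref{prop:exceptional}; and your treatment of $\Phi(x)\in V_j$ for $1\le j\le k-2$ (namely $\Phi^{j+1}(x)\in V_0$, contradicting $x\notin V_{j+1}$) is exactly the paper's ``by the definition of $V$'' reduction. You are also right that surjectivity and quasi-finiteness belong to a later statement (Corollary \ref{cor:induced-on-comp}) and are not needed here.

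However, your case $j=0$ contains a genuine flaw. You want to pass from ``$\Phi(x)\in W_m$'' to ``$x\in W_{m+1}$'', and to do so you assert that $\Phi$ is finite near $x$ so that Lemma \ref{lem:regularComposition} applies. Nothing in the setup gives you that: at this point $\Phi$ is only a dominant rational map known to be regular at $x$, and a non-regular rational self-map of a surface typically contracts curves, so $\Phi$ need not be quasi-finite (let alone finite) near $x$; quasi-finiteness of the restriction to $X\setminus V$ is established only afterwards, in Corollary \ref{cor:induced-on-comp}, and under the extra hypothesis that $\Phi^k$ is quasi-finite. Worse, the implication you want is simply false without finiteness: in Example \ref{ex:2-regular} the map $\Phi$ contracts the line $\{Z=0\}$ (minus two points) to the indeterminacy point $[1:0:0]\in W_1$, while $W_2=\emptyset$ since $\Phi^2$ is regular, so ``$\Phi$ regular at $x$ and $\Phi(x)\in W_m$'' does not force $x\in W_{m+1}$. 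Fortunately the case that worried you needs no composition lemma at all: if $\Phi$ is regular at $x$ and $\Phi(x)\in V_0$, then by the very definition $V_1=\Phi^{-1}(V_0)$ you get $x\in V_1\subseteq V$ directly --- this is precisely how the paper disposes of every case except $\Phi(x)\in V_{k-1}$. With that one-line replacement your argument is correct and becomes essentially the paper's proof.
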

\begin{proof}
By the definition $\Phi: X \setminus V \dra X$ is regular. So, we need to show that if $\Phi(x) \in V$ then $x \in V$. It suffices to show this only for $\Phi(x) \in V_{k-1}$ by the definition of $V$. Note that this implies that $\Phi^k(x) \in V_0$ which means that $x \in V_0$ as $V_0$ is totally invariant under $\Phi^k$.   
\end{proof}

\begin{lemma}
\label{prop:W-inv}
$V$ is totally invariant under $\Phi^k$.
\end{lemma}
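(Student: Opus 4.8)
\textbf{Proof proposal for Lemma \ref{prop:W-inv}.}

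The plan is to show both inclusions $\Phi^{-k}(V) \subseteq V$ and $V \subseteq \Phi^{-k}(V)$, using the already-established facts that $\Phi^{-k}(V_0) = V_0$ (Proposition \ref{prop:exceptional}), that $\Phi$ restricts to a genuine self-map of $X \setminus V$ (Lemma \ref{prop:phi-reg}), and that $\Phi^k$ is regular and dominant, hence surjective as a morphism of projective varieties. First I would treat the inclusion $\Phi^{-k}(V) \subseteq V$: take $x \in X$ with $\Phi^k(x) \in V$; if $x \notin V$ then by Lemma \ref{prop:phi-reg} the whole forward orbit $x, \Phi(x), \dots, \Phi^{k}(x)$ stays in $X \setminus V$, contradicting $\Phi^k(x) \in V$. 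Hence $x \in V$, giving one inclusion. (Implicit here is that $\Phi^k$ is regular on all of $X$, so $\Phi^k(x)$ is well-defined for every $x$.)

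For the reverse inclusion $V \subseteq \Phi^{-k}(V)$, I would argue stratum by stratum on the decomposition $V = \bigcup_{i=0}^{k-1} V_i$ with $V_i = \Phi^{-i}(V_0)$. For $V_0$ we already have $\Phi^{-k}(V_0) = V_0 \supseteq$ nothing new is needed; more precisely $V_0 = \Phi^{-k}(V_0) \subseteq \Phi^{-k}(V)$. For $V_i$ with $1 \le i \le k-1$: given $x \in V_i$, so $\Phi^i(x) \in V_0$, I want $\Phi^k(x) \in V$. Write $\Phi^k(x) = \Phi^{k-i}(\Phi^i(x))$; since $\Phi^i(x) \in V_0 \subseteq V$ and $\Phi^{k-i}$ maps $V$ into $\cdots$ — here one must be slightly careful, because $\Phi$ need not map $V$ into $V$ (only $X \setminus V$ into $X \setminus V$). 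Instead I would use: $\Phi^i(x) \in V_0$ implies $\Phi^{i+j}(x) \in \Phi^j(V_0)$, and the point $\Phi^k(x) = \Phi^{(k-i)}(\Phi^i(x))$ — rather than tracking forward images, invert: $\Phi^{2k-i}(x) = \Phi^{k}(\Phi^{k-i}(x))$, and since $\Phi^{i}(x)\in V_0$ we get $\Phi^{k}(x) \in \Phi^{k-i}(V_0)$, and applying $\Phi^{-(k-i)}$ is not what I want either. The clean way: it suffices to show $\Phi^k(x) \in V_j$ for some $j$, equivalently $\Phi^{k}(x) = \Phi^{i}(z)$ for some $z \in \Phi^{-k}(V_0)\cup\dots$; using surjectivity of $\Phi^{k}$ choose a preimage appropriately, or simply note $\Phi^k(V_i) = \Phi^k(\Phi^{-i}(V_0)) \subseteq \Phi^{-i}(\Phi^k(V_0)) \subseteq \Phi^{-i}(V_0) = V_i \subseteq V$, where $\Phi^k(V_0) \subseteq V_0$ follows from $\Phi^{-k}(V_0)=V_0$ together with surjectivity of $\Phi^k$. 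This gives $\Phi^k(V_i) \subseteq V_i$, hence $V_i \subseteq \Phi^{-k}(V_i) \subseteq \Phi^{-k}(V)$, and taking the union over $i$ completes the reverse inclusion.

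\textbf{Main obstacle.} The delicate point is the set-theoretic bookkeeping around the fact that $\Phi$ is only a rational map and $V$ was defined precisely to cut out its bad behavior: one cannot freely push forward or pull back through $\Phi$ on all of $X$, only on $X \setminus V$, whereas the iterate $\Phi^k$ is globally regular. I expect the argument to hinge on the identity $\Phi^k \circ \Phi^{-i} \subseteq \Phi^{-i} \circ \Phi^k$ (as relations between subsets), combined with $\Phi^k(V_0) \subseteq V_0$ — and the latter genuinely uses surjectivity of the morphism $\Phi^k: X \to X$, which holds because $\Phi$ is dominant and $\Phi^k$ is a morphism of irreducible projective varieties. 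Once those two facts are in hand, both inclusions are short. I would present the forward inclusion via the orbit argument of Lemma \ref{prop:phi-reg} and the reverse inclusion via the $\Phi^k(V_i) \subseteq V_i$ computation, avoiding any appeal to forward images of $V$ under the single map $\Phi$.
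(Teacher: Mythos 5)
Your proof is correct, but the reverse inclusion takes a genuinely different route from the paper. The forward inclusion $\Phi^{-k}(V)\subseteq V$ is the same in both (it is exactly what Lemma \ref{prop:phi-reg} gives). For the equality, the paper never argues stratum by stratum: it uses that $V$ is closed (Lemma \ref{lem:V-closed}), hence so are the preimages $\Phi^{-nk}(V)$ under the morphism $\Phi^{nk}$, and that if $\Phi^{-k}(V)\subsetneq V$ were strict, then surjectivity of $\Phi^k$ would make the chain $V\supsetneq\Phi^{-k}(V)\supsetneq\Phi^{-2k}(V)\supsetneq\cdots$ strictly decreasing, contradicting Noetherianity. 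Your computation $\Phi^k(V_i)\subseteq V_i$ proves something slightly stronger and works, but it is exactly where the bookkeeping you flag has to be done: the step $\Phi^k(\Phi^{-i}(V_0))\subseteq\Phi^{-i}(\Phi^k(V_0))$ needs $\Phi^i$ to be defined at $\Phi^k(x)$, which follows from $\Phi^{-k}(W_i)=W_i$ (Proposition \ref{prop:exceptional}); alternatively, if $\Phi^k(x)\in W_i$ you are done at once since $W_i\subseteq V_0\subseteq V$, so for the lemma itself this is no obstacle. One also needs the pointwise identity $\Phi^i(\Phi^k(x))=\Phi^k(\Phi^i(x))=\Phi^{k+i}(x)$, which holds because all three maps are morphisms near the relevant points (using $W_{k+i}=W_i$ from Proposition \ref{prop:exceptional}) and agree on a dense open set. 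A small correction: $\Phi^k(V_0)\subseteq V_0$ does not use surjectivity — it is immediate from $V_0=\Phi^{-k}(V_0)$ by applying $\Phi^k$; surjectivity is instead what powers the paper's chain argument (and what would upgrade your inclusion to equality). In exchange for this extra care with indeterminacy loci, your argument avoids the Noetherian chain and makes the invariance of each stratum $V_i$ explicit, which the paper's proof does not provide.
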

\begin{proof}
The inclusion $\Phi^{-k}(V) \subset V$ is clear by Lemma \ref{prop:phi-reg}. If this inclusion is strict we would get a descending chain
\[
V \supset \Phi^{-k}(V) \supset \Phi^{-2k}(V) \supset \cdots
\]
of strict inclusions by surjectivity of $\Phi^{k}$. This yields a contradiction. So, $\Phi^{-k}(V) = V$. 
\end{proof}

\begin{corollary}
\label{cor:induced-on-comp}
If we assume that $\Phi^k$ is a quasi-finite and surjective endomorphism, then the induced map $\Phi: X \setminus V \lra X \setminus V$ is also quasi-finite and surjective. 
\end{corollary}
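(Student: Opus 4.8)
The plan is to read the corollary directly off the three lemmas just established. By Lemma~\ref{lem:V-closed} the set $V$ is Zariski closed, so $U := X \setminus V$ is a genuine open subvariety of $X$; by Lemma~\ref{prop:phi-reg} the map $\Phi$ restricts to a morphism $\Phi_0 := \Phi|_U : U \lra U$; and by Lemma~\ref{prop:W-inv} we have $\Phi^{-k}(V) = V$. Since $\Phi_0$ is a self-morphism of $U$, its $k$-th iterate $\Phi_0^{k}$ coincides with the restriction to $U$ of the everywhere-regular endomorphism $\Phi^{k}$; here one uses $W_i \subseteq V_0 \subseteq V$ for $1 \le i \le k-1$, so that for $x \in U$ no intermediate image $\Phi^{j}(x)$ lands in an indeterminacy locus. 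The key bookkeeping fact is that $\Phi^{-k}(V) = V$ says simultaneously that $\Phi^{k}(U) \subseteq U$ and that no point of $U$ has a $\Phi^{k}$-preimage lying in $V$; this is what transfers the hypotheses on $\Phi^k$ to $U$.

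For surjectivity of $\Phi_0$: given $y \in U$, surjectivity of $\Phi^{k} : X \to X$ yields $x \in X$ with $\Phi^{k}(x) = y$, and since $y \notin V$ while $\Phi^{-k}(V) = V$ we get $x \notin V$, so $\Phi_0^{k} : U \to U$ is surjective. Factoring $\Phi_0^{k} = \Phi_0 \circ \Phi_0^{k-1}$ and using that the last map of a surjective composition is surjective, we conclude $\Phi_0$ is surjective.

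For quasi-finiteness of $\Phi_0$: fix $y \in U$, so that $\Phi^{k-1}(y) \in U$ by Lemma~\ref{prop:phi-reg}. If $x \in U$ satisfies $\Phi_0(x) = y$, then $\Phi^{k}(x) = \Phi^{k-1}(\Phi(x)) = \Phi^{k-1}(y)$, hence $\Phi_0^{-1}(y) \subseteq (\Phi^{k})^{-1}\bigl(\Phi^{k-1}(y)\bigr)$. The right-hand fiber is finite because $\Phi^{k}$ is assumed quasi-finite, so $\Phi_0^{-1}(y)$ is finite, and $\Phi_0$ is quasi-finite.

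I do not expect a real obstacle here: the only delicate point is ensuring that all the relevant preimages and images remain inside $U = X \setminus V$, which is exactly what the total invariance $\Phi^{-k}(V) = V$ together with the inclusion $\Phi(U) \subseteq U$ guarantees. Together with Lemma~\ref{lem:V-closed} this supplies the statement that $\Phi$ induces a quasi-finite surjective endomorphism off $V$, which is the second assertion needed in the proof of Proposition~\ref{prop:totally-inv-case-by-case}, and hence contributes the second part of Proposition~\ref{prop:totally-inv}.
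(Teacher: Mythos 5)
Your proposal is correct and follows essentially the same route as the paper: quasi-finiteness by embedding the fiber $\Phi_0^{-1}(y)$ into the finite fiber $(\Phi^{k})^{-1}(\Phi^{k-1}(y))$, and surjectivity by lifting $y$ under the surjective $\Phi^{k}$, using total invariance of $V$ to keep the preimage in $X \setminus V$. The paper states the quasi-finiteness step tersely and exhibits $\Phi^{k-1}(x)$ directly as a $\Phi$-preimage, but the content is identical to yours.
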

\begin{proof}
$\Phi$ is quasi-finite since $\Phi^k$ is. To prove surjectivity note that $\Phi^k$ is surjective and for any $y \in X \setminus V$ there must exist $x \in X$ such that $\Phi^k(x) = y$. But, since $V$ is totally invariant under $\Phi^k$ this implies that $x \in X \setminus V$. Hence, $\Phi^{k-1}(x) \in X \setminus V$ must map to $y$ under $\Phi$. 
\end{proof}

\begin{proof}[Proof of Proposition \ref{prop:totally-inv-case-by-case}]
Let $V$ be as defined in equation \eqref{eqn:def-V}. We start with the case of a $\bP^1$-bundle over a smooth curve $C$. If $V$ has pure codimension 1 then we are done by Lemma \ref{cor:induced-on-comp}. If not, let $p_1, \dots, p_\ell$ be the codimension 2  components (i.e. isolated points) of $V$. Let $\pi: X \lra C$ be the projection. Then, $\Phi^{2k}$ preserves the fibers of $\pi$ by Lemma \ref{lem:MSS-5.8}. So, there must exist a morphism $g: C \lra C$ such that $g \circ \pi = \pi \circ \Phi^{2k}$. This implies that $\pi(p_i)$ must be totally invariant under $g$. Therefore, $F_i := \pi^{-1}(\pi(p_i))$ is totally invariant under $\Phi^{2k}$. Note that all of the fibers $F_i$ are not contained in $V$ since they contain $p_i$ and $p_i$ was assumed to be a codimension $2$ component of $V$. By Lemma \ref{lem:add-more-totally-invs} there exists a subvariety $\widetilde{V}$ containing $V$,$F_1, \dots, F_\ell$ such that $\Phi^{-k}(\widetilde{V}) = \widetilde{V}$ and the restriction of $\Phi$ to $X\setminus \widetilde{V}$ is a surjective and quasi-finite morphism. Note that by the construction given in the proof of Lemma \ref{lem:add-more-totally-invs}, the components of $\widetilde{V}$ of codimension 2 can only be the components of $V$ of codimension 2 i.e. $p_1,\dots,p_\ell$. But we know that the points $p_1,\dots,p_\ell$ are contained in $F_1,\dots,F_\ell$ which proves that $\widetilde{V}$ does not have any components of codimension 2. Hence, it must have pure codimension 1.  

In the case of $X = \bP^n$, we will prove that the subvariety $V$ defined in \eqref{eqn:def-V} must have codimension 1. Assume for the sake of contradiction that $V$ has codimension at least 2. The self-map $\Phi$ is given by $$[x_0: x_1 : \cdots : x_n] \mapsto [F_0: F_1: \dots: F_n]$$
where $F_0,F_1,\dots, F_n$ are homogeneous polynomials of the same degree. We claim that $\Phi^i$ must be given in homogeneous coordinates by 
$$[x_0: x_1 : x_2] \mapsto [F_0 \circ \Phi^{i-1}: F_1 \circ \Phi^{i-1}: \dots : F_n \circ \Phi^{i-1}] $$
for every $i \ge 1$, meaning that the homogeneous polynomials $F_0 \circ \Phi^{i-1}, F_1 \circ \Phi^{i-1}, \dots, F_n \circ \Phi^{i-1}$ do not have a common factor. In fact, if a homogeneous polynomial $G$ divides all $F_0 \circ \Phi^{i-1}, F_1 \circ \Phi^{i-1},\dots, F_n \circ \Phi^{i-1}$ then $Z(G)$ must be mapped to $\indet(\Phi) \subseteq V$ by $\Phi^{i-1}$. Corollary \ref{cor:induced-on-comp} then yields a contradiction, as $\indet(\Phi)$ has codimension at least 2, but the image of $Z(G)\setminus V$ must have codimension 1 since $\Phi^{i-1}$ restricted to $\bP^n \setminus V$ is quasi-finite. Thus, $\Phi^k$ is given by 
$$[x_0: x_1 : x_2] \mapsto [F_0 \circ \Phi^{k-1}: F_1 \circ \Phi^{k-1}: \dots : F_n \circ \Phi^{k-1}] $$
 and therefore must be undefined on $\text{Indet}(\Phi)$ which is a contradiction. 

 Now, if $V$ has pure codimension 1 we are done by Corollary \ref{cor:induced-on-comp}. So, suppose not and write $V = V' \cup V''$ where $V'$ is the union of the irreducible components of $V$ with codimension 1 and $V''$ is the union of the rest of the components. Let $U := \bP^n \setminus V$ and $U' = \bP^n \setminus V'$. $\Phi$ induces a morphism $\Phi: U \lra U'$. Note that $U'$ is an affine subscheme of $\bP^n$ as $V'$ has pure codimension 1 and $U$ is an open subscheme of $U'$ such that $\text{codim}(U' \setminus U) \ge 2$. This means that the morphism $\Phi: U \lra U'$ extends uniquely to an endomorphism on $U'$. Hence, if we define $\widetilde{V} := V'$ we will get an induced morphism on $X \setminus \widetilde{V}$ by $\Phi$. Similar to Lemma \ref{prop:W-inv} we conclude that $\Phi^{-k}(\widetilde{V}) = \widetilde{V}$. Lastly, the surjectivity and quasi-finiteness of the induced map follow from surjectivity and quasi-finiteness of $\Phi^k$. 
\end{proof}
Proposition \ref{prop:totally-inv-case-by-case} finishes the proof of the second part of Proposition \ref{prop:totally-inv}.

\section{Existence of normal models for eventually regular self-maps}
\label{sec:good-normal-model}

In this short section we collect two results. The first one uses the same argument as \cite[Proposition 2.12]{favre} to prove the existence of normal models for eventually regular self-maps $\Phi: X \dra X$. The second result uses \cite{Nak-normal-end} and \cite{Nak-Moishezon} to describe the structure of such normal models when $X$ is a projective surface and $\Phi$ is not birational.  
\begin{proposition}
\label{prop:good-normal-model}
Let $X$ be a variety over an algebraically closed field $K$. Suppose $\Phi: X \dra X$ is a dominant rational self-map of $X$ over $K$ such that $\Phi^k$ is regular for some $k \ge 1$. Then, there exists a normal birational model $\pi: \hat{X} \lra X$ and an endomorphism $F: \hat{X} \lra \hat{X}$ such that the next diagram commutes
\[
\begin{tikzcd}
\hat{X} \arrow[r, "F"] \arrow[d, "\pi"] & \hat{X} \arrow[d, "\pi"] \\
X \arrow[r, dashed, "\Phi"] & X.
\end{tikzcd}
\]
\end{proposition}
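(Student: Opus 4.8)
The plan is to imitate the construction in \cite[Proposition 2.12]{favre}, but without assuming $\Phi$ is birational, and to arrange convergence of a tower of blow-ups (or, more precisely, of normalized graphs) by using the regularity of $\Phi^k$. First I would consider the graph construction: let $\Gamma_m \subseteq X \times X$ be the closure of the graph of $\Phi^m$, with its two projections $p_m, q_m : \Gamma_m \to X$. Since $\Phi^k$ is regular, for every $m$ we can factor $\Phi^{m+k} = \Phi^k \circ \Phi^m$, and because $\Phi^k$ is a morphism this gives a natural dominant map $\Gamma_{m+k} \to \Gamma_m$ lying over the identity on the first factor $X$; concretely, $(x, \Phi^{m+k}(x)) \mapsto (x, \Phi^m(x))$ makes sense on a dense open set and extends since the target is projective-over-$X$ (or, in general, separated and of finite type, so we take closures). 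Thus along each residue class $r \in \{0,1,\dots,k-1\}$ we get an inverse system $\cdots \to \Gamma_{2k+r} \to \Gamma_{k+r} \to \Gamma_r$ of birational models of $X$.

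Next I would argue this tower stabilizes. The key point is that normalizing and passing to the first projection, the models $\widetilde{\Gamma_{mk+r}} \to X$ form a decreasing-or-eventually-constant family: the center of the modification $\widetilde{\Gamma_{(m+1)k+r}} \to X$ is supported on (the preimage of) the indeterminacy locus of $\Phi^{mk+r}$, and by Proposition~\ref{prop:exceptional} these indeterminacy loci stabilize — indeed $W_r = W_{nk+r}$ for all $n \ge 0$. So for $m$ large the birational morphism $\Gamma_{(m+1)k+r} \to \Gamma_{mk+r}$ is an isomorphism over a neighborhood of the generic points of its exceptional locus and, being a birational morphism between normal varieties with no new indeterminacy introduced, is an isomorphism. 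Fix $m_0$ past the stabilization point and set $\hat X := \widetilde{\Gamma_{m_0 k}}$ with $\pi := p_{m_0 k}$ its (birational) first projection, normalized. The point of taking the residue class $r = 0$ is that $\Phi^{m_0 k}$ is regular (it is a power of $\Phi^k$), so in fact $\Gamma_{m_0 k}$ is already the (normalized) graph of a morphism and $\pi$ is an isomorphism onto... — no: I must be careful, $\pi$ need not be an isomorphism, but $\hat X$ is a normal model dominating $X$.

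Finally I would produce the lift $F$. On $\hat X$ we have two maps to $X$: $\pi = p_{m_0k}$ and $q := q_{m_0 k} \circ (\text{normalization}) : \hat X \to X$, the second being $\Phi^{m_0 k} \circ \pi$, hence a morphism. The candidate for $F$ is the map induced by $\Phi$ itself: on the dense open set where everything is defined, $F(x, \Phi^{m_0k}(x)) = (\Phi(x), \Phi^{m_0 k+1}(x))$, and using the stabilization $\Gamma_{m_0 k + 1}\cong\Gamma_{(m_0+1)k}\cong\Gamma_{m_0 k}$ (the first via $\Phi^{k-1}\circ$, valid since $\Phi^k$ regular forces $\Phi^{m_0k+1},\dots,\Phi^{(m_0+1)k}$ to all have the same graph up to the established isomorphisms) we land back in $\hat X$. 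So $F$ is a priori a rational self-map of $\hat X$; to see it is regular, note $\pi \circ F = \Phi \circ \pi$ is only rational, but $q \circ F = \Phi^{m_0 k+1}\circ\pi = \Phi^{m_0 k}\circ(\Phi\circ\pi)$, and more usefully $F = (\text{iso } \Gamma_{(m_0+1)k}\to\Gamma_{m_0k})\circ(\text{the natural map }\hat X = \Gamma_{m_0k}\to\Gamma_{(m_0+1)k}\text{ induced by }x\mapsto\Phi(x))$; the latter natural map is regular because it is, on the level of graphs, $(x,\Phi^{m_0k}(x))\mapsto(\Phi^{m_0k}\text{-translate})$ composed through the regular morphism $\Phi^k$, i.e. it is pulled back from the morphism $\Gamma_{m_0 k}\to X$, $ (x,y)\mapsto y=\Phi^{m_0k}(x)$, followed by a section coming from regularity of $\Phi^k$. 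Chasing the diagram then gives $\Phi \circ \pi = \pi \circ F$ on a dense open set, and since both sides are morphisms to the separated scheme $X$ agreeing on a dense open of the reduced scheme $\hat X$, they agree everywhere.

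\medskip

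The main obstacle I expect is the regularity of $F$: producing the commuting \emph{rational} diagram is essentially formal from the graph construction and Proposition~\ref{prop:exceptional}, but upgrading $F$ to an honest morphism requires genuinely using that $\Phi^k$ is regular at the right stage of the tower — the subtlety being that $\Phi$ itself is not assumed regular, so one cannot naively say "$F$ is a lift of $\Phi$". The cleanest route is probably to phrase the whole argument in terms of the equality of models $\widetilde{\Gamma_{mk+r}}$ for $m \gg 0$ (all $r$), so that $\hat X$ carries a well-defined action of $\Phi$ by transport of structure along these isomorphisms, and the regularity of that action is inherited from the regularity of $\Phi^k$ which is built into the maps $\Gamma_{m+k}\to\Gamma_m$. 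One should also double-check that normalization commutes with the relevant maps (it does, since all the $\Gamma_m$ are varieties over the same $K$ and the maps are dominant with reduced source and target), and that $\hat X$ is of finite type over $K$ (it is, being a closed subvariety of $X\times X$, normalized).
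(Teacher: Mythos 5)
There is a genuine gap, and in fact the construction as you set it up cannot work. The decisive problem is your choice of model at residue class $r=0$: since $\Phi^{m_0 k}$ is a power of the regular map $\Phi^k$, it is itself a morphism, so the closure of its graph in $X\times X$ \emph{is} its graph, and the first projection $\Gamma_{m_0k}\to X$ is an isomorphism. Hence your $\hat X=\widetilde{\Gamma_{m_0k}}$ is nothing but the normalization of $X$, and the statement you would then need --- that $\Phi$ lifts to an \emph{endomorphism} of the normalization of $X$ --- is false in general (take $X$ smooth, so that $\hat X=X$, and $\Phi$ with nonempty indeterminacy). The model must record the graphs of the non-regular iterates $\Phi,\dots,\Phi^{k-1}$; a graph of a regular iterate resolves nothing. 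Beyond this, the tower itself is not set up correctly: because $\Phi^{m+k}=\Phi^k\circ\Phi^m$ with $\Phi^k$ regular, the natural \emph{regular} map between graphs is $\Gamma_m\to\Gamma_{m+k}$, $(x,y)\mapsto(x,\Phi^k(y))$, not $\Gamma_{m+k}\to\Gamma_m$; your claim that $(x,\Phi^{m+k}(x))\mapsto(x,\Phi^m(x))$ extends ``since the target is projective-over-$X$'' is unjustified ($X$ is an arbitrary variety here, and properness of the target would not extend a rational map from a higher-dimensional source anyway). The stabilization step is also unsupported: Proposition~\ref{prop:exceptional} is proved via Lemma~\ref{lem:regularComposition}, which needs $\Phi^k$ finite and the target projective, neither of which is assumed in Proposition~\ref{prop:good-normal-model}; and even granting $W_r=W_{nk+r}$ as sets, equality of indeterminacy loci does not make the birational models $\Gamma_{mk+r}$ isomorphic over $X$ (their fibers over $W_r$ can keep changing), so ``no new indeterminacy, hence isomorphism'' is not a valid inference. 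The net effect is that the regularity of $F$ --- which you correctly identify as the crux --- is never actually established.

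For comparison, the paper avoids all of this with one trick: it takes $\Gamma\subset X^k$ to be the closure of $\{(x,\Phi(x),\dots,\Phi^{k-1}(x))\}$, i.e.\ it records all the bad iterates at once, and lifts $\Phi$ via the shifted map $F(x_1,\dots,x_k)=(x_2,\dots,x_k,\Phi^k(x_1))$ on $X^k$. This $F$ is regular on all of $X^k$ because every output coordinate is either an input coordinate or $\Phi^k$ of one, and by density and irreducibility it preserves $\overline\Gamma$; the first projection is birational, and the universal property of normalization then lifts $F$ to $\hat X$. No stabilization of a tower, no finiteness, and no projectivity are needed. If you want to salvage your approach, replacing the two-factor graphs by this $k$-factor graph (or otherwise building the regularity of $\Phi^k$ into a globally regular ambient map, rather than trying to recover it a posteriori on a stabilized model) is essentially forced.
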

\begin{proof}
Let $U$ be the intersection of the domains of $\Phi, \dots, \Phi^{k-1}$. Then, $U$ is a dense Zariski open subset of $X$. Define
\[
\Gamma := \{(x,\Phi(x),\dots, \Phi^{k-1}(x)): x \in U\} \subset X^{k}
\]
Let $F$ be the endomorphism of $X^k$ defined by 
\[
(x_1,\dots,x_k) \mapsto (x_2, \dots,x_k, \Phi^k(x_1)). 
\]
If we let $\overline\Gamma$ be the Zariski closure of $\Gamma$, then we claim that $F(\overline\Gamma) = \overline{\Gamma}$. Indeed, for all $x \in U \cap \Phi^{-1}(U)$, the image of $(x,\Phi(x),\dots,\Phi^{k-1}(x))$ is equal to $(\Phi(x), \dots, \Phi^k(x))$ which is clearly in $\Gamma$. Since $U \cap \Phi^{-1}(U)$ is dense in $U$ we see that a dense subset of $\Gamma$ is sent to $\Gamma$. It then follows easily from irreducibility of $\Gamma$ that $F(\overline{\Gamma}) = \overline{\Gamma}$. Therefore, if we let $\pi_1: X^k \lra X$ be the projection onto the first coordinate we get the following commutative diagram
\[
\begin{tikzcd}
\overline{\Gamma} \arrow[r, "F"] \arrow[d, "\pi_1"] & \overline{\Gamma} \arrow[d, "\pi_1"] \\
X \arrow[r, dashed, "\Phi"] & X.
\end{tikzcd}
\]
We note that $\pi_1: \overline{\Gamma} \lra X$ is birational by the definition of $\Gamma$. Let $\pi: \hat{X} \lra \overline{\Gamma}$ be the normalization of $\overline{\Gamma}$. Then, by the univeral property of normalization, $F \circ \pi$ must factor as $\pi \circ \widetilde{F}$ for some endomorphism $\widetilde{F}: \hat{X} \lra {\hat{X}}$ and we get the next commutative diagram
\[
\begin{tikzcd}
\hat{X} \arrow[d, "\pi"] \arrow[r, "\widetilde{F}"] & \hat{X} \arrow[d, "\pi"] \\
\overline{\Gamma} \arrow[r, "F"] \arrow[d, "\pi_1"] & \overline{\Gamma} \arrow[d, "\pi_1"] \\
X \arrow[r, dashed, "\Phi"] & X.
\end{tikzcd}
\]
This finishes the proof.
\end{proof}
Proposition \ref{prop:good-normal-model} naturally motivates the next question.
\begin{question}
In Proposition \ref{prop:good-normal-model}, assuming $X$ is smooth, can we choose the normal model $\hat{X}$ to also be smooth?
\end{question}

When $\Phi^k$ is non-invertible, the induced endomorphism $F$ on $\hat{X}$ in Proposition \ref{prop:good-normal-model} is also non-invertible. With the additional assumption that $X$ is a projective surface, this imposes restrictions on what $\hat{X}$ could be. In fact, \cite[Theorem 1.1]{Nak-normal-end} gives a complete classification of normal projective surfaces admitting non-invertible endomorphisms. When $X$ is smooth and $\kappa(X) = -\infty$, Proposition \ref{prop:totally-inv} along with \cite{Nak-Moishezon} allows us to do even better. Putting these together we have the next result. 

\begin{proposition}
\label{prop:structure-of-the-model}
Suppose we are in the situation of Proposition \ref{prop:good-normal-model} and $X$ is a projective surface and $\Phi^k$ is non-invertible. Then, either there is a finite Galois cover $\nu: V \lra \hat{X}$ such that one of the conditions I-1 through I-6 in \cite[Theorem 1.1]{Nak-normal-end} hold, or $\hat{X}$ is rational with only quotient singularities. If we assume further that $X$ is smooth with $\kappa(X) = -\infty$, then there is an endomorphism $F_V: V \lra V$ such that $\nu \circ F_V = F^\ell \circ \nu$ for some $\ell \ge 1$. 
\end{proposition}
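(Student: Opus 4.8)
The plan is to push everything through the endomorphism $F\colon\hat X\to\hat X$ of Proposition \ref{prop:good-normal-model} and to read the first assertion straight off \cite[Theorem 1.1]{Nak-normal-end}. What has to be checked is that $F$ is a non-invertible \emph{surjective} endomorphism of the normal projective surface $\hat X$. For surjectivity, note that $\Phi$ is dominant and $\pi$ is birational, so $\pi\circ F=\Phi\circ\pi$ is dominant; were $F$ itself not dominant, its image would be a curve and $\pi(F(\hat X))$ a closed subset of $X$ of dimension $\le 1$, contradicting dominance of $\pi\circ F$. Thus $F$ is dominant, and since $\hat X$ is proper the image $F(\hat X)$ is closed and dense, hence all of $\hat X$. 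For non-invertibility, pass to function fields in $\pi\circ F=\Phi\circ\pi$: since $\pi^\ast$ is an isomorphism we get $\deg F=\deg\Phi$, whence $\deg(F^k)=(\deg\Phi)^k=\deg(\Phi^k)>1$ because $\Phi^k$ is non-invertible, so $\deg F>1$. Applying \cite[Theorem 1.1]{Nak-normal-end} to the pair $(\hat X,F)$ now gives precisely the stated dichotomy.

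For the second assertion I would add the hypotheses that $X$ is smooth and $\kappa(X)=-\infty$, so that $X$, and hence $\hat X$, is ruled. I would then transport the totally invariant subvariety of Proposition \ref{prop:totally-inv} to $\hat X$ (it becomes totally invariant under $F^k$) and feed this, together with the birational type of $X$, into the finer structural results on normal ruled and rational surfaces with non-invertible endomorphisms in \cite{Nak-Moishezon}. The point of this step is to show that $\hat X$ actually sits in one of the cases I-1 through I-6 of \cite[Theorem 1.1]{Nak-normal-end} (and not merely in the bare ``rational with quotient singularities'' alternative), and that the associated finite Galois cover $\nu\colon V\to\hat X$ may be taken to be \emph{quasi-\'etale} and \emph{canonical}: it is the unique quasi-\'etale Galois cover of $\hat X$ of its degree and Galois group, for instance the cover trivializing the torsion of $K_{\hat X}$, or---when $\hat X$ is toric---the cover attached to a distinguished finite-index sublattice of the defining lattice.

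Granting such a canonical cover $\nu$, the lifting of an iterate of $F$ is formal. Normalizing the fibre product $V\times_{\hat X,F}\hat X$ yields a quasi-\'etale Galois cover $\nu'\colon V'\to\hat X$ of the same degree and Galois group, so the canonicity of $\nu$ forces $V'\cong V$ over $\hat X$; more robustly, the quasi-\'etale Galois covers of $\hat X$ of a fixed bounded degree form a \emph{finite} set---finite-index normal subgroups of the (finitely generated) orbifold fundamental group of $\hat X$---on which $F$ acts by pull-back, so some iterate $F^\ell$ fixes the class of $\nu$. An isomorphism $F^{\ell\ast}\nu\cong\nu$ over $\hat X$ is exactly a finite morphism $F_V\colon V\to V$ with $\nu\circ F_V=F^\ell\circ\nu$. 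The real work, and the step I expect to be the main obstacle, is the middle paragraph: squeezing out of the $\kappa(X)=-\infty$ and smoothness hypotheses (via Proposition \ref{prop:totally-inv} and \cite{Nak-Moishezon}) that $\hat X$ genuinely carries a canonical quasi-\'etale Galois cover; once that structural input is in hand, the finiteness/pull-back argument closes the proof.
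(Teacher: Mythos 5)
Your handling of the first assertion is essentially the paper's: verify that $F$ is a surjective, non-invertible endomorphism of the normal projective surface $\hat X$ and quote \cite[Theorem 1.1]{Nak-normal-end}; that part is fine.

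For the second assertion there is a genuine gap, and you flag it yourself. The paper does not build a ``canonical quasi-\'etale cover'' of $\hat X$ nor run a finiteness/pull-back argument on the set of such covers; it gets the lift in one step from \cite[Theorem A]{Nak-Moishezon}. Concretely: using smoothness and $\kappa(X)=-\infty$, Proposition \ref{prop:totally-inv} produces a subvariety $H\subset X$ of pure codimension $1$ with $\Phi^{-k}(H)=H$; since $\Phi^{k}\circ(\pi_1\circ\pi)=(\pi_1\circ\pi)\circ F^{k}$, one gets $F^{-k}\bigl((\pi_1\circ\pi)^{-1}(H)\bigr)=(\pi_1\circ\pi)^{-1}(H)$, hence some irreducible codimension-one component $H_1\subset\hat X$ satisfies $F^{-kt}(H_1)=H_1$ for some $t\ge 1$; applying \cite[Theorem A]{Nak-Moishezon} to this totally invariant prime divisor then immediately yields the endomorphism $F_V$ of the Galois cover with $\nu\circ F_V=F^{\ell}\circ\nu$. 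Your plan instead postpones all the substance to the unproved middle step: you assume that $\hat X$ carries a distinguished quasi-\'etale Galois cover (index-one cover, toric cover, etc.) and that the cover $\nu$ from Nakayama's classification can be identified with it, which is exactly what you admit you cannot yet extract from the hypotheses. Moreover, even granting that step, your lifting argument as sketched has loose ends: the normalized pullback of $\nu$ under $F$ is not obviously irreducible of the same degree and Galois group, and the finiteness argument would at best lift an iterate of $F$ to \emph{some} quasi-\'etale cover rather than to the specific $\nu$ asserted in the statement. So the proposal does not close as written; the missing ingredients are precisely the $F$-iterate-invariant prime divisor on $\hat X$ and the appeal to \cite[Theorem A]{Nak-Moishezon}.
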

\begin{proof}
The first assertion is clearly a consequence of \cite[Theorem 1.1]{Nak-normal-end}. For the second assertion note that by Proposition \ref{prop:totally-inv} there exists a subvariety $H$ of $X$ of pure codimension 1 such that $\Phi^{-k}(H) = H$. Using Proposition \ref{prop:good-normal-model}, we have the next commutative diagram
\[
\begin{tikzcd}
\hat{X} \arrow[r, "F^k"] \arrow[d, "\pi_1 \circ \pi"] & \hat{X} \arrow[d, "\pi_1 \circ \pi"] \\ 
X \arrow[r, "\Phi^k"] & X,
\end{tikzcd}
\]
where $\pi_1$ and $\pi$ are as defined in the proof of Proposition \ref{prop:good-normal-model}. Since $\Phi^{-k}(H) = H$ we see that $$F^{-k}((\pi_1 \circ \pi)^{-1}(H)) = (\pi_1 \circ \pi)^{-1}(\Phi^{-k}(H)) = (\pi_1 \circ \pi)^{-1}(H).$$ Let $H_1$ be an irreducible component of $(\pi_1 \circ \pi)^{-1}(H)$ of codimension 1 in $\hat{X}$. It is then easy to see that $F^{-kt}(H_1) = H_1$ for some $t \ge 1$. The second assertion then follows immediately from \cite[Theorem A]{Nak-Moishezon}.
\end{proof}

\section{The case of toric surfaces}
\label{sec:toric}
The main results of this section are theorems \ref{thm:P2} and \ref{thm:toric}. As a corollary of these two we get the first conclusion of Theorem \ref{thm:main} and Thoerem \ref{thm:more-precise} in the case of toric surfaces. 
\subsection{The case of \texorpdfstring{$\bP^2$}{P2}.} The next theorem is a more precise version of the first part of Theorem \ref{thm:main}. 
\begin{theorem}
\label{thm:P2}
Suppose $\Phi: \bP^2 \dra \bP^2$ is a dominant rational self-map that is not regular. Suppose $\Phi^k$ is regular and non-trivial for some $k \ge 2$. Then, there must exist a subvariety $V$ which is a union of lines in $\bP^2$ such that $\Phi^{-k}(V) = V$ and $\Phi$ induces an endomorphism on $\bP^2 \setminus V$. Moreover, one of the following must hold:
\begin{itemize}
    \item[(a)] $V$ is a union of at most two lines and $\Phi^2$ is regular, or

    \item[(b)] $V$ is a union of exactly three lines and $\Phi^i$ is regular for some $i \in \iterates$. 
\end{itemize}
\end{theorem}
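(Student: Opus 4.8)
The plan is to first apply Proposition~\ref{prop:totally-inv} to obtain a subvariety $V$ of pure codimension~1 with $\Phi^{-k}(V)=V$ on which $\Phi$ restricts to a quasi-finite surjective endomorphism of $\bP^2\setminus V$. Since $\Phi^k$ is a non-invertible endomorphism of $\bP^2$, Theorem~\ref{conj:lin-conj} (the linearity theorem) forces $V$ to be a union of lines; moreover $V$ must be nonempty, because if $V$ were empty then $\Phi$ would already be a non-invertible endomorphism of $\bP^2$, contradicting the hypothesis that $\Phi$ is not regular. The number of lines in $V$ is at most three, again by the linearity theorem applied to $\Phi^k$ — a totally invariant union of lines for a non-invertible endomorphism of $\bP^2$ can have at most three components (equivalently, $\bP^2\setminus V$ must admit a finite surjective self-map, so its log-Kodaira structure rules out more than three lines; concretely the complement is $\A^2$, $\A^2\setminus\{x=0\}$, or $\bG_m^2$). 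This reduces everything to three cases according to whether $V$ is one, two, or three lines.

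Next I would handle cases (a), where $V$ is one or two lines. If $V$ is a single line, then $U=\bP^2\setminus V\simeq\A^2$ and $\Phi$ induces a dominant polynomial endomorphism $F$ of $\C^2$; if $V$ is two lines, $U\simeq\A^2\setminus\{x=0\}$, and after replacing $\Phi$ by $\Phi^2$ we may assume $U\simeq\A^2$ as well (the second iterate fixes each of the two lines individually). Here I would invoke Proposition~\ref{prop:FJ-5.3} (Favre--Jonsson): since some iterate $F^k$ extends to a regular endomorphism of $\bP^2$, the sequence $\deg(F^n)/\lambda_1(F)^n$ is bounded, so $\mathcal{T}_F$ is either a single quasimonomial valuation with positive skewness or an interval with divisorial ends, and the point $-\deg$ (whose totally invariant status is exactly the condition for $\Phi$ or $\Phi^2$ to extend to $\bP^2$) must be one of these. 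Tracking the dichotomy in Proposition~\ref{prop:FJ-5.3}(3) — where $\mathcal{T}_F$ a singleton forces $\mathcal{T}_{F^2}$ to be a stable interval — one concludes that $\Phi^2$ already extends to $\bP^2$, giving conclusion~(a).

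For case (b), where $V$ is exactly three lines, $U=\bP^2\setminus V\simeq\bG_m^2$, so by \cite[Theorem~2]{Iitaka} the induced endomorphism of $\bG_m^2$ is a group endomorphism (given by a matrix $A\in M_{2,2}(\Z)$ with $\det A\neq 0$) composed with a translation. The condition that $\Phi^i$ extends to a regular endomorphism of $\bP^2$ translates (using that the three lines are permuted, and after possibly a bounded power they are each fixed) into the condition that $A^i$ be a diagonal matrix with non-negative entries. Applying Lemma~\ref{lem:smallestPower} gives $A^i$ diagonal with non-negative entries for some $i\in\iterates$, hence $\Phi^i$ extends to $\bP^2$, which is conclusion~(b). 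The main obstacle I anticipate is the bookkeeping needed to pass rigorously from ``$\Phi^k$ regular and non-invertible on $\bP^2$'' to ``$A^i$ diagonal with non-negative entries'' — one must verify that extension to $\bP^2$ really is equivalent to the matrix being diagonal with non-negative entries (not merely that the line at infinity structure is preserved), control how the three lines can be permuted, and make sure the replacement of $\Phi$ by $\Phi^2$ in case (a) does not cost more than the stated bound. The valuative-tree input (Proposition~\ref{prop:FJ-5.3}) in case (a) and Lemma~\ref{lem:smallestPower} in case (b) are the two engines, and both are already available.
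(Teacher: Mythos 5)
Your overall skeleton (Proposition \ref{prop:totally-inv} to get $V$, Theorem \ref{conj:lin-conj} to make it a union of at most three lines, then the valuative tree for one or two lines and Lemma \ref{lem:smallestPower} for three lines) is the paper's skeleton, and your one-line and three-line cases do match the paper's Cases 1 and 3. But there is a genuine gap in the two-line case. You claim that, since $\Phi^2$ fixes each of the two lines individually, ``we may assume $U\simeq\A^2$.'' That does not follow: what you know is that $\Phi$ (hence $\Phi^2$) restricts to a regular self-map of $\bP^2\setminus(L_1\cup L_2)\simeq\A^2\setminus\{y=0\}$; viewed on $\A^2=\bP^2\setminus L_1$ the map is a priori only rational, of skew-product form $(x,y)\mapsto\bigl(\sum_i a_i(y)x^i,\,cy^d\bigr)$ with $a_i\in\C[y,y^{-1}]$, i.e.\ with possible poles along the second line. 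Proposition \ref{prop:FJ-5.3} applies only to polynomial endomorphisms of $\C^2$ (with $\lambda_1^2=\lambda_2$), so you cannot invoke it until those poles are removed. This is precisely the content of the paper's Case 2: a topological-degree computation forces $d=e$ (so $\lambda_1^2=\lambda_2$), and case (c) of Lemma \ref{lem:skew-then-split} then shows the $a_i$ lie in $\C[y]$ with $a_d$ constant, so that $\Phi^2$ is an actual polynomial endomorphism of $\C^2$. Your proposal skips this step entirely, and it is the main work in that case.

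There is a second, related issue that you flag but do not resolve: having replaced $\Phi$ by $\Phi^2$, a naive appeal to Proposition \ref{prop:FJ-5.3}(3) would only give that $(\Phi^2)^2=\Phi^4$ is regular, which exceeds the bound in conclusion (a). The paper avoids this because, after the skew-product analysis, $\Phi^2$ visibly fixes a valuation lying in $\V_1$ (the divisorial valuation of the exceptional divisor over $[1{:}0{:}0]$, coming from the pencil $y=\mathrm{const}$), so $\T_{\Phi^2}$ is an interval and part (2) of Proposition \ref{prop:FJ-5.3} applies directly, yielding that $\Phi^2$ itself extends. Without an argument of this kind your case (a) bound is not established when $V$ has two components. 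As a minor point in case (b): extension of a monomial-plus-translation map to $\bP^2$ requires the matrix power to be a \emph{scalar} matrix $d'\cdot\Id$ with $d'\ge1$, not merely diagonal with non-negative entries (a diagonal matrix with distinct non-negative exponents still has an indeterminacy point); this is harmless here because $\Phi^k$ regular in the normal form of \cite{FS} gives $A^k=d\cdot\Id$, and Lemma \ref{lem:smallestPower} then produces a scalar power, but the equivalence you flag should be stated that way.
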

\begin{proof} 
We first note that by Proposition \ref{prop:exceptional} there must exist a subvariety $V$ of pure codimension 1 such that $\Phi^{-k}(V) = V$. Using Theorem \ref{conj:lin-conj} the degree of all the irreducible components of $\mathcal{V}$ is equal to 1. We note that it suffices to prove the theorem after changing the coordinates i.e. after replacing $\Phi$ with $L \circ \Phi \circ L^{-1}$ for some automorphism $L$ of $\bP^n$. Hence, we may use \cite[Theorem 4.5]{FS} to assume we are in one of the following cases:
\begin{enumerate}
    \item If $V$ has one irreducible component give by $\{Z = 0\}$ and $\Phi^k$ is of the form $[f(X,Y,Z):g(X,Y,Z):Z^d]$ where $f(X,Y,0)$ and $g(X,Y,0)$ have no common factors. 

    \item  If $V$ has two irreducible component given by $\{Z = 0\}$ and $\{Y = 0\}$ and $\Phi$ is of the form $[f(X,Y,Z):Y^d:Z^d]$ where $f(X,0,0) = X^d$.

    \item If $V$ has three irreducible components given by $\{Z = 0\}, \{Y = 0\},$ and $\{X = 0\}$ and $\Phi$ is of the form $[X^d:Y^d:Z^d]$. 
\end{enumerate}
Before handling these cases we let $\lambda_1$ and $\lambda_2$ be the dynamical and topological degrees of $\Phi$ as defined in Definition \ref{def:dynamical-degrees}. 

\textbf{Case 1.} After identifying $\C^2$ with $\bP^2 \setminus V$ and using Proposition \ref{prop:totally-inv} we see that the map $\Phi$ induces a polynomial endomorphism of $\C^2$. Since $\Phi^k$ is regular on $\bP^2$ it is clear that $\lambda_1^2 = \lambda_2$ and that $\deg(\Phi^n)/\lambda_1^n$ remains bounded. Hence, by Proposition \ref{prop:FJ-5.3} we must have $\mathcal{T}_{\Phi^{2n}} = \mathcal{T}_{\Phi^2}$ for every $n \ge 1$. In particular, $\mathcal{T}_{\Phi^{2k}} = \mathcal{T}_{\Phi^2}$ which means that the valuation $-\deg$ is an eigenvaluation of $\Phi^2_\bullet$. Further, we have by Proposition \ref{prop:FJ-5.3} that $-\deg$ is totally invariant under $\Phi^2_\bullet$ which means that $\Phi^2$ must extend to an endomorphism of $\bP^2$.

\textbf{Case 2.}
In this case, after replacing $\Phi$ with $\Phi^2$ we may assume without loss of generality that $\Phi$ has the following form on $\C^2 = \bP^2 \setminus \{Z = 0\}$

\begin{equation}
\label{eqn:Phi-Form}
(x,y) \mapsto (x^ea_e(y) + \cdots + a_0(y), cy^d),   
\end{equation}
for $a_0, \dots, a_e \in \C[y, y^{-1}]$  and some $c \in \C$ and some $d \ge 1$. It is clear that the topological degree of $\Phi$ is equal to $de$. But, since $\Phi^k$ is regular on $\bP^2$ and is of the form $(f(x,y), c^{d^{k-1} + \cdots + 1}y^{d^k})$ for some $f \in \C[x,y]$, the topological degree of $\Phi^k$ is equal to $d^{2k}$. This implies that $(de)^k = d^{2k}$ which means that $d = e$.

Now, we will show that $\Phi$ must be regular on $\C^2$. Indeed, $\Phi^n$ is given by 
\[
(x,y) \mapsto (x^{d^n}a_{d^n,n}(y) + \cdots + a_{0,n}(y), c^{d^{n-1} + \cdots + 1}y^{d^n}).
\]
Since $\Phi^k$ is regular on $\C^2$ it is clear that $a_{d^k,k}$ must be a constant and the rest of the coefficients are polynomials in $\C[y]$. We can then use case (c) of Lemma \ref{lem:skew-then-split} to show that the coefficients $a_0(y),\dots,a_{d-1}(y)$ must lie in $\C[y]$ and $a_d(y) \in \C^\ast$. 

So, we assume from now on that $\Phi$ is regular on $\C^2$ and $a_d(y)$ is a constant. Using equation \eqref{eqn:Phi-Form} it is clear that the for all but finitely many $\alpha \in \C$, $\Phi$ maps the curve $y = \alpha$ onto $y = \alpha^d$. This implies that the lift of $\Phi$ to $X := \text{Bl}_{[1:0:0]}(\bP^2)$ fixes the exceptional divisor $E$. We recall that $\alpha(\nu_E) = 0$ and $A(\nu_E) = -1$ (See \cite[Section 9.3.3]{Jon11}) which means that $\nu_E \in \mathcal{V}_1$ and hence $\nu_E \in \T_{\Phi}$ (See also \cite[Lemma 1.4]{FJ07}). This means that $\T_\Phi$ is an interval and $\T_{\Phi} = \T_{\Phi^n}$ for $n \ge 1$ by Proposition \ref{prop:FJ-5.3}. We conclude that $-\deg \in \T_\Phi$ which implies that $\Phi$ extends to a regular endomorphism of $\bP^2$.

\textbf{Case 3.} Again using Proposition \ref{prop:totally-inv}, the map $\Phi$ must be an endomorphism of $X\setminus V  = \bP^2 \setminus \{XYZ = 0\} \simeq\bG_m^2$. So, $\Phi$ is given by a translation composed with a group endomorphism of $\bG_m^2$ corresponding to a matrix $A \in M_{2,2}(\Z)$ (See \cite[Theorem 2]{Iitaka}). Moreover, since $\Phi^k$ is of the form $[X^d:Y^d:Z^d]$ we must have $A^k = d\cdot \text{Id}$. So, by Lemma \ref{lem:smallestPower} we conclude that $A^i$ is equal to $d' \cdot \Id$ for some $d' \ge 1$ and some $i \in \iterates$. Thus, $\Phi^i$ extends to $\bP^2$ for some $i \in \iterates$ and we are done.  
\end{proof}
\begin{remark}
\label{rem:long-then-Gm-P2}
As a consequence of the proof of Theorem \ref{thm:P2}, we see that if $\Phi$ is eventually regular and takes more than 2 iterations to become regular, then $\Phi$ must induce an endomorphism on an open subvariety $U$ of $X$ that is isomorphic to $\bG_m^2$.  
\end{remark}
\subsection{The case of Hirzebruch surfaces} 
\label{subsec:Hirzebruch}
We now focus on the proof of Theorem \ref{thm:more-precise} when $X$ is a Hirzebruch surface. Recall that to each $n \ge 0$ we can associate the Hirzebruch surface $\F_n := \bP(\OO_{\bP^1} \oplus \OO_{\bP^1}(n))$. Hirzebruch surfaces have the structure of a $\bP^1$-bundle by their definition. We will make use of this structure in our proof of Theorem \ref{thm:more-precise}. 

We start by handling the case of $X = \bP^1 \times \bP^1$.
\begin{proposition}
\label{prop:P1-times-P1}
Suppose that we are in the situation of Theorem \ref{thm:more-precise} and $X = \bP^1 \times \bP^1$. Then, $\Phi^i$ is regular for some $i \in \{1,2,3,4,6\}$.
\end{proposition}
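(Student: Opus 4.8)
The plan is to imitate the proof of Theorem~\ref{thm:P2}, now for the toric surface $\bP^1\times\bP^1=\F_0$. As there, we may assume that $\Phi$ is not regular and that $\Phi^k$ is regular and non-invertible for some smallest $k\ge 2$. Regarding $\bP^1\times\bP^1$ as a $\bP^1$-bundle over $\bP^1$ via either projection, Proposition~\ref{prop:totally-inv} produces a subvariety $V$ of pure codimension $1$ with $\Phi^{-k}(V)=V$ such that $\Phi$ induces a quasi-finite surjective endomorphism of $U:=(\bP^1\times\bP^1)\setminus V$. Applying Lemma~\ref{lem:MSS-5.8} to each of the two rulings shows that $\Phi^{2k}$ preserves the fibers of both projections, and since $\bP^1\times\bP^1$ is already a trivial $\bP^1$-bundle, Proposition~\ref{prop:trivial-then-split} lets us write $\Phi^{2k}=(h_1,h_2)$ for endomorphisms $h_1,h_2$ of $\bP^1$ with $\max(\deg h_1,\deg h_2)\ge 2$.

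The first substantive step is to pin down $V$: I claim that after an automorphism of $\bP^1\times\bP^1$ the curve $V$ is a union of ``coordinate'' fibers --- members of $\{0,\infty\}\times\bP^1$ and $\bP^1\times\{0,\infty\}$ --- so that $U$ is one of $\bG_m^2$, $\bG_m\times\A^1$, $\A^1\times\A^1\simeq\A^2$, $\bG_m\times\bP^1$, or $\A^1\times\bP^1$. A component of $V$ that is a fiber of a projection can be moved to $\{0\}$ or $\{\infty\}$ on that factor. A component of $V$ dominating both factors is a multisection that is totally invariant, up to a bounded orbit, under the product map $(h_1,h_2)$, and these must be excluded; this is where the non-regularity of $\Phi$ is genuinely needed, since a product endomorphism such as $(x,y)\mapsto(x^2,y^2)$ \emph{does} carry totally invariant multisections (for instance $\{y=\pm x^a\}$), so they have to be ruled out by hand using that $\Phi$ itself restricts to a quasi-finite endomorphism of $U$ and is not regular. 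After possibly replacing $\Phi$ by $\Phi^2$, I may assume the configuration of $V$ is $\Phi$-invariant rather than merely $\Phi^2$-invariant.

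If $U\simeq\bG_m^2$, then $\Phi$ restricts to an endomorphism of $\bG_m^2$, hence is a translation composed with the group endomorphism attached to a matrix $A\in M_{2,2}(\Z)$ with $\det A\ne 0$ (see \cite[Theorem~2]{Iitaka}); since $\Phi^k$ is non-invertible, $|\det A|\ge 2$. A monomial self-map of $\bP^1\times\bP^1$ extends to an endomorphism precisely when its matrix is diagonal or anti-diagonal with nonzero entries --- this is the toric-morphism condition for the fan of $\F_0$, and, crucially, \emph{no positivity of the entries is required}, because $z\mapsto z^{-m}$ is already a morphism of $\bP^1$. Hence $A^k$ is diagonal or anti-diagonal, so $A^{2k}$ is diagonal, and Lemma~\ref{lem:smallestPower} then yields that $A^i$ is diagonal --- so $\Phi^i$ extends to $\bP^1\times\bP^1$ --- for some $i\in\{1,2,3,4,6\}$. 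This is exactly why the exponents $8$ and $12$ occurring for $\bP^2$ are absent here: for $\bP^2$ one needs a \emph{positive} diagonal matrix, which may require doubling.

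In the remaining configurations $\Phi$ restricts to a polynomial endomorphism of a copy of $\A^2$ inside $\bP^1\times\bP^1$ (for $U\simeq\bG_m\times\A^1,\bG_m\times\bP^1,\A^1\times\bP^1$ one first passes, possibly after squaring $\Phi$, to a smaller dense open of this type), and one shows $\Phi^2$ is already regular. If $\Phi$ has maximal topological degree, $\lambda_1(\Phi)^2=\lambda_2(\Phi)$, then --- $\Phi^k$ being a morphism of a projective surface forces $\deg(\Phi^n)/\lambda_1^n$ to remain bounded --- Proposition~\ref{prop:FJ-5.3} gives $\T_{\Phi^{2n}}=\T_{\Phi^2}$, and since $\Phi^{2k}$ already extends over $V$ the divisorial valuations of the components of $V$ lie in $\T_{\Phi^{2k}}=\T_{\Phi^2}$, hence are totally invariant under $\Phi^2_\bullet$; so $\Phi^2$ extends over $V$ (here Proposition~\ref{prop:5.3-extension} is used if these boundary valuations lie outside $\V_1$). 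If $\Phi$ does not have maximal topological degree, then $\Phi$ preserves a rational fibration of $\A^2$; since a polynomial self-map of $\A^1$ with a regular iterate has a regular second iterate (\cite[Section~4.1]{Bea91}) and the total invariance of $V$ pins down the induced map on the base, a direct argument in the spirit of Lemma~\ref{lem:skew-then-split} again forces $\Phi^2$ to be regular. Combining all cases gives $i\in\{1,2,3,4,6\}$. The step I expect to be hardest is locating $V$: unlike the $\bP^2$ case there is no linearity theorem to invoke, so one must combine the product structure of $\Phi^{2k}$ with the non-regularity and quasi-finiteness of $\Phi$ on $U$ to eliminate invariant multisections; a secondary difficulty is running the Favre--Jonsson argument uniformly over the intermediate shapes of $U$ and over the non-maximal-topological-degree sub-case.
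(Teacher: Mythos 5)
Your overall architecture is parallel to the paper's (invariant fibers, a $\bG_m^2$ endgame via Lemma \ref{lem:smallestPower} with no positivity constraint --- which is indeed why the bound is $\{1,2,3,4,6\}$ --- and Favre--Jonsson for the affine cases), but the step you yourself flag as hardest, pinning down $V$, is left as a genuine gap, and the belief motivating that gap is false. For $(x,y)\mapsto(x^2,y^2)$ the curve $\{y^2=x^{2a}\}$ is \emph{not} totally invariant: its preimage also contains $\{y=\pm i x^a\}$. In fact the needed statement is easy and has nothing to do with the non-regularity of $\Phi$: if $(f,g)$ is a non-invertible product endomorphism of $\bP^1\times\bP^1$ and $C$ is an irreducible totally invariant curve dominating both factors, then counting points of the preimage of $C$ on a generic fiber of each ruling gives $d\cdot\deg f=d$ and $e\cdot\deg g=e$, forcing $f$ and $g$ to be automorphisms; this is exactly the paper's observation that the only totally invariant curves of a (non-invertible) product map are fibers. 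Separately, your list of possible shapes of $U$ silently assumes at most two invariant fibers per ruling, which fails when one factor of the product iterate is a finite-order automorphism: then three or more fibers of that ruling can be totally invariant, and this configuration needs its own argument --- the paper handles it by showing $\Phi$ is then a skew product over that ruling with finite-order base and invoking case (a) of Lemma \ref{lem:skew-then-split} to make $\Phi^2$ a product. Also, Proposition \ref{prop:trivial-then-split} does not apply to $\bP^1\times\bP^1$ (its hypothesis is genus at least $1$); the product form of $\Phi^{2k\ell}$ comes directly from Lemma \ref{lem:MSS-5.8} applied to both rulings, as in the paper.

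Two further steps would fail as written. In the affine cases, your non-maximal-degree subcase rests on ``$\lambda_1^2>\lambda_2$ implies $\Phi$ preserves a rational fibration,'' which is false in general (H\'enon maps); in this situation it must be extracted from the uniqueness of the eigenvaluation when $\lambda_1^2>\lambda_2$ together with the fact that the product iterate fixes the pencil valuation $x=\mathrm{const}$, which is how the paper proceeds before applying Lemma \ref{lem:skew-then-split}. In the maximal-degree subcase you jump from ``the boundary divisorial valuations are fixed by $\Phi^2_\bullet$'' to ``$\Phi^2$ extends over $V$''; the paper instead transfers fixedness of the two rational pencil valuations through Proposition \ref{prop:FJ-5.3}, concluding that $\Phi^2$ is a product of one-variable polynomial maps, which visibly extends. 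Finally, the shapes of $U$ with a $\bP^1$ factor cannot simply be ``shrunk'' to $\A^2$: restricting to a smaller invariant open requires additional totally invariant fibers, which the paper manufactures from the codimension-$2$ totally invariant set $W$ of Proposition \ref{prop:totally-inv} together with Lemma \ref{lem:add-more-totally-invs}, so that $V$ contains a fiber of each ruling from the start. Your $\bG_m^2$ endgame (passing through $A^{2k}$ diagonal to cover the anti-diagonal possibility) is correct and matches, indeed slightly sharpens, the paper's Case 3.
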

\begin{proof}
Let $\pi_1$ and $\pi_2$ be the two projections of $\bP^1 \times \bP^1$. If $\Phi$ is regular we are done so we assume not. Then, by Proposition \ref{prop:totally-inv} there must exist a subset $W$ of codimension 2 i.e. a set of finitely many points such that $\Phi^{-k}(W) = W$. Take a point $p \in W$. There must exist some $\ell \ge 1$ such that $\Phi^{-k\ell}(p) = p$. After a suitable change of coordinates we assume that $p = (\infty, \infty)$. By Lemma \ref{lem:MSS-5.8} we know that $\Phi^{2k\ell}$ preserves the fibers of both $\pi_1$ and $\pi_2$. So, $\Phi^{2k\ell}$ must be given by 
\begin{equation}
\label{eqn:form-of-phi}
(x,y) \mapsto (f(x),g(y)),    
\end{equation}
for two polynomials $f$ and $g$. It follows that $F := \pi_1^{-1}(\infty)$ and $F' := \pi_2^{-1}(\infty)$ are totally invariant under $\Phi^{2k\ell}$. Using Lemma \ref{lem:add-more-totally-invs} we conclude that there exists a subvariety $V$ of $X$ containing $F$ and $F'$ such that $\Phi^{-k}(V) = V$ and $\Phi$ induces a surjective and quasi-finite endomorphism of $X \setminus V$. 

The only totally invariant subvarieties under a product map are the fibers of $\pi_1$ and $\pi_2$. Thus, $V$ can only consist of a union of these fibers. Suppose that $V$ contains the fibers $F_{a_1}, \dots, F_{a_n}$ of $\pi_1$ other than $F$ and the fibers $F'_{b_1},\dots, F'_{b_m}$ of $\pi_2$ other than $F'$ for $a_1,\dots,a_n,b_1,\dots,b_m \in \A^1$. Then, $\Phi$ must induce an endomorphism of $\A^1 \setminus \{a_1 ,\dots,a_n\} \times \A^1 \setminus \{b_1,\dots,b_m\}$. Suppose without loss of generality that $n \le m$. If $m \ge 2$, then at least three fibers of $\pi'$, namely $F'_\infty, F'_{b_1},\dots,F'_{b_m}$, are totally invariant under some iterate of $\Phi^k$. So, $g$ must be an automorphism of finite order. If $n$ is also at least 2 we conclude that $f$ is also an automorphism which contradicts the assumption that $\Phi^k$ is non-invertible. So, assume $n < 2 \le m $. 

The only morphisms from $\A^1 \setminus \{a_1 ,\dots,a_n\}$ to $\A^1 \setminus \{b_1,\dots,b_m\}$ are the constant morphisms. This implies that $\pi' \circ \Phi$ must factor through $\pi'$. In other words, $\Phi$ is given by a skew product of the form
\[
(x,y) \mapsto (s(x,y), t(y)),
\]
for some $s \in \C(x,y)$ and some $t \in \C(y)$ of finite order. Note that $n$ can be either $0$ or $1$. After a suitable change of coordinates we may assume that $a_1 = 0$ if $n = 1$. In any case, at the expense of replacing $\Phi$ with $\Phi^2$, we can assume that $s(x,y)$ must be given by a polynomial with coefficients in $\C(y)$. Since an iterate of $\Phi$ is a product and $t$ has finite order we conclude by case (1) of Lemma \ref{lem:skew-then-split} that $\Phi^2$ itself is a product. Therefore, it must be regular everywhere on $\bP^1 \times \bP^1$. So, we may assume $m,n \le 1$, that is, $V$ contains at most two vertical or horizontal fibers.   

\textbf{Case 1.} $V = F \cup F'$. In this case $\Phi$ induces an endomorphism on $X \setminus (F \cup F') \simeq \A^2$. Let $d$ and $e$ be the degrees of the polynomials $f$ and $g$ defined in equation \eqref{eqn:form-of-phi}. If $d = e$, then $\lambda_1(\Phi)^2 = \lambda_2(\Phi) = d^2$. Note that $\Phi^{2k\ell}$ fixes the rational pencil valuations corrsponding to $x = const$ and $y  = const$. It follows by Proposition \ref{prop:FJ-5.3} that $\Phi^2$ must fix these valuations. Therefore, $\Phi^2$ must also be given by a product of the form $(x,y)\mapsto (f_1(x),g_1(y))$. So, $\Phi^2$ extends to $\bP^1 \times \bP^1$. If $d > e$, then $d^2 = \lambda_1(\Phi)^2 > \lambda_2(\Phi) = de$. So, the eigenvaluation of $\Phi$ is unique. Since $\Phi^{2k\ell}$ fixes the rational pencil valuations corrsponding to $x = const$ we conclude that $\Phi$ also fixes this valuation. So, $\Phi$ must be given by a skew product 
\[
(x,y) \mapsto (f_1(x), g_1(x,y)). 
\]
where $f_1 \in \C[x]$ and $g_1\in\C[x,y]$. We conclude by Lemma \ref{lem:skew-then-split} that $\Phi$ is a product and must extend to $\bP^1 \times \bP^1$. 

\textbf{Case 2.} $V$ contains a fiber other than $F$ and $F'$. Let $V = F \cup F' \cup F'_a$ where $a \in \C$ and $F_a = \pi_2^{-1}(a)$. We assume after a change of coordinates that $a = 0$. So, $\Phi$ must induce an endomorphism of $X \setminus (F \cup F' \cup F_0) \simeq \A^2 \setminus \{y = 0\}$. It follows that $\Phi^2$ is a skew product of the form 
\[
(x,y) \mapsto (p(x,y), cy^d),
\]
for some $d \ge 1$ and some $p \in \C[x,y, y^{-1}]$ and $c \in \C$. Similar to before, we assume using a suitable change of coordinates that $c = 1$. We conclude by Lemma \ref{lem:skew-then-split} that $\Phi^2$ must extend to $\bP^1 \times \bP^1$. 

\textbf{Case 3.} $V$ contains a horizontal and a vertical fiber. In this case $X \setminus V \simeq \bG_m^2$. Suppose $\Phi$ is given by a translation composed with the group endomorphism corresponding to the matrix $A \in M_{2,2}(\Z)$. For $\Phi^k$ to extend to $X$, some power $A^k$ of $A$ must be a diagonal matrix. It follows from Lemma \ref{lem:smallestPower}  that $\Phi^i$ extends to $\bP^1 \times \bP^1$ for some $i \in \{1,2,3,4,6\}$.   
\end{proof}

Next we prove the case of $X = \F_n$ for $n \ge 1$. 
\begin{proposition}
\label{prop:Hirzebruch}
Suppose that we are in the situation of Theorem \ref{thm:main} and $X = \F_n = \bP(\OO_{\bP^1} \oplus \OO_{\bP^1}(n))$ for some $n > 0$. Then, $\Phi^i$ is regular for some $i \in \iterates$. 
\end{proposition}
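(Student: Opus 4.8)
The plan is to reduce to a $\bP^1$-bundle analysis of the sort used for $\bP^1 \times \bP^1$, exploiting the rigidity that Lemma \ref{lem:MSS-5.10} provides for $\F_n$ with $n \ge 1$. First I would assume, as usual, that $\Phi$ is not itself regular and that $\Phi^k$ is regular and non-invertible for some minimal $k \ge 2$. By Proposition \ref{prop:totally-inv}(2) there is a subvariety $\widetilde V$ of pure codimension $1$ with $\Phi^{-k}(\widetilde V) = \widetilde V$ on which $\Phi$ restricts to a quasi-finite surjective endomorphism of $X \setminus \widetilde V$. Now by Lemma \ref{lem:MSS-5.8}, since $X = \F_n$ with $n \ge 1$, the regular endomorphism $\Phi^k$ \emph{preserves} the fibers of $\pi: \F_n \to \bP^1$ (no need to pass to $\Phi^{2k}$), so there is $g: \bP^1 \to \bP^1$ with $g \circ \pi = \pi \circ \Phi^k$; and by Lemma \ref{lem:MSS-5.10}, $\deg g = \deg(\Phi^k|_F) $ for a fiber $F$, and if $g$ is an automorphism then $\Phi^k$ is too — which is excluded. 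Hence $\deg g \ge 2$. I would then use the same reasoning as in Lemma \ref{lem:rat-pres-fibers}: a fiber $F \cong \bP^1$ cannot dominate $\bP^1$ unless $\Phi$ maps it to a fiber, so in fact $\Phi$ itself maps fibers of $\pi$ into fibers; that is, $\pi \circ \Phi$ factors through $\pi$, giving a rational self-map $\varphi$ of $\bP^1$ with $\varphi \circ \pi = \pi \circ \Phi$, and $\varphi^k = g$. Since $\deg g \ge 2$, $\varphi$ is non-invertible; and $\varphi$ is automatically regular on $\bP^1 = \A^1$-compactified (a rational self-map of $\bP^1$ with a regular iterate has regular second iterate), so after replacing $\Phi$ by $\Phi^2$ we may take $\varphi$ regular, i.e. $\varphi \in \C[x]$ in a suitable affine chart or $\varphi(x) = \alpha x^\ell$ when $\varphi$ fixes two points.

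Next I would pin down $\widetilde V$ using the negative section. The surface $\F_n$ ($n\ge1$) has a unique section $C_0$ of self-intersection $-n < 0$, so $C_0 \in S(\F_n)$ and by Lemma \ref{lem:S(X)-tot-inv} some iterate $\Phi^{km}$ satisfies $\Phi^{-km}(C_0) = C_0$; by Lemma \ref{lem:add-more-totally-invs} I may enlarge $\widetilde V$ to contain $C_0$ as well as, if necessary, the fibers over the finitely many totally-invariant points of $\varphi$ on the base and the "section at infinity" $C_\infty$. Since $\Phi$ respects the ruling and $\widetilde V$ contains $C_0$ (and $C_\infty$ if needed), the complement $X \setminus \widetilde V$ is a product of the form (punctured affine line in the base) $\times$ (punctured affine line in the fiber), and on each fiber $\Phi$ induces a polynomial (or Laurent-monomial) self-map. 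Thus $\Phi$ is a skew product over $\varphi$: in the affine model it has the form $(x,y) \mapsto (\varphi(x), f(x,y))$ with $f$ polynomial in $y$ with coefficients in $\C(x)$ (or in $\C[x,x^{-1}]$), and $\Phi^k$ has the form $(x,y)\mapsto(\varphi^k(x), g(y))$ for $g \in K[y]$, $K = \C$ or $\C[x]$ or $\C[x,x^{-1}]$, with $\deg(\Phi^k|_F) = \deg g$.

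Now I would split into cases exactly as in the proof of Theorem \ref{thm:P2}/Proposition \ref{prop:P1-times-P1}, organized by how many fibers and sections $\widetilde V$ contains, i.e. by which toric surface $X \setminus \widetilde V$ is isomorphic to: $\A^2$, $\A^2 \setminus \{x=0\}$, $\A^2 \setminus \{y=0\}$, or $\bG_m^2$. In the first three cases I would invoke Lemma \ref{lem:skew-then-split} (cases (b), (c), or (a) respectively, using $\deg g \ge 2$ for case (a), and using that $\varphi^k$ has finite order only when $\varphi$ is a root-of-unity rotation) to conclude $f \in K[y]$ — so $\Phi^2$ (or $\Phi$) is already a product / skew product over $\varphi$ with constant fiber map, hence regular on the $\bP^1$-bundle, and then use Proposition \ref{prop:FJ-5.3} (the valuative-tree argument on the fiberwise dynamical degrees: $\lambda_1(\Phi|_{\text{fiber}})$ versus $\lambda_2$) exactly as in Case 1 and Case 2 of Theorem \ref{thm:P2} to push regularity from $X \setminus \widetilde V$ across the section divisor to all of $\F_n$; the bound obtained is $i \le 2$ here. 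In the remaining case $X \setminus \widetilde V \cong \bG_m^2$, $\Phi$ is (translation)$\circ$(group endomorphism) given by a matrix $A \in M_{2,2}(\Z)$ with $A^k$ diagonal with positive entries, and Lemma \ref{lem:smallestPower} gives that $A^i$ is already diagonal with non-negative entries for some $i \in \iterates$, so $\Phi^i$ extends to $\F_n$. Combining the cases yields $i \in \iterates$.

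The main obstacle I expect is the step of extending regularity across the $(-n)$-section $C_0$ (and the complementary section) via the valuative tree: unlike the $\bP^2$ case, the ambient surface is a $\bP^1$-bundle, so I must run the Favre–Jonsson analysis fiberwise (on the induced polynomial self-map of a generic affine fiber) and then argue that the pencil valuation cut out by $C_0$ — or the section-at-infinity — lies in the tight tree and is therefore fixed by $\Phi_\bullet$ once it is fixed by an iterate, which is precisely where Proposition \ref{prop:5.3-extension} (the extension of \cite[Proposition 5.3]{FJ07}) is needed. Handling the $n \ge 1$ Hirzebruch surfaces after point blow-ups (subcase 3(c)) will require iterating this section-chasing argument along the chain of exceptional curves in $S(X)$, each of which is totally invariant under an iterate by Lemma \ref{lem:S(X)-tot-inv}; keeping the bookkeeping of which iterate of $\Phi$ is needed under control so that the final bound still lands in $\iterates$ is the delicate part.
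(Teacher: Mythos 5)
There is a genuine gap at the step where you claim that $\Phi$ itself maps fibers of $\pi:\F_n\to\bP^1$ into fibers "by the same reasoning as in Lemma \ref{lem:rat-pres-fibers}." That lemma relies on the base curve having genus at least $1$, so that a fiber $\bP^1$ admits no non-constant map to the base; here the base is $\bP^1$, and a fiber can perfectly well dominate it. The claim is in fact false in the setting of this proposition: take $\Phi$ on $\bG_m^2\subset\F_n$ given by $(x,y)\mapsto(y^{-2},x^2)$ (the monomial map with matrix $\begin{pmatrix}0&-2\\2&0\end{pmatrix}$). Then $\Phi^4$ is the toric power map $(x,y)\mapsto(x^{16},y^{16})$, which is a regular non-invertible endomorphism of $\F_n$, yet $\Phi$ sends the fiber $\{x=c\}$ to a curve that is not a fiber. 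Since your whole skew-product framework $(x,y)\mapsto(\varphi(x),f(x,y))$ for $\Phi$ itself — and hence the applications of Lemma \ref{lem:skew-then-split} in the cases where $X\setminus\widetilde V$ is $\A^2$ or a once-punctured $\A^2$ — rests on this fiber-preservation, those cases are not established by your argument. The paper never assumes $\Phi$ preserves the ruling: it only uses that the \emph{regular iterate} $\Phi^k$ does, pins down the totally invariant divisor $V$ (Lemma \ref{lem:two-comps}), and then in each case argues either through the rigid structure of endomorphisms of $\bG_m^2$ (Lemma \ref{lem:smallestPower}) or through the valuative tree (Proposition \ref{prop:FJ-5.3}, showing the relevant pencil/divisorial valuations are fixed by $\Phi^2_\bullet$ because they are fixed by an iterate), which yields regularity of $\Phi^2$ without ever knowing a priori that $\Phi$ is a skew product.

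A secondary gap: in your final case $X\setminus\widetilde V\cong\bG_m^2$ you conclude directly that $A^i$ diagonal with non-negative entries implies $\Phi^i$ extends to $\F_n$. This is immediate only when the identification with the torus is the standard toric one; in the configuration where one boundary component is a section $\{y=g(x)\}$ not equal to the zero or infinity section (the paper's Case 1), the identification is twisted by $(x,y)\mapsto(x,y-g(x))$, and one must first show the conjugated $\Phi^k$ is genuinely diagonal (the paper's Claim \ref{claim:g-no-pole}, that $\beta=0$) and then still run Lemma \ref{lem:skew-then-split} and Proposition \ref{prop:FJ-5.3} to push regularity of $\Phi^j$ from the open torus to all of $\F_n$. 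Your sketch omits both steps. (Also, the aside that "a rational self-map of $\bP^1$ with a regular iterate has regular second iterate" is off the point — every rational self-map of $\bP^1$ is already a morphism; the polynomial statement concerns $\A^1$ — and the discussion of blow-ups of $\F_n$ belongs to Theorem \ref{thm:toric}, not to this proposition.)
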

\begin{proof}
Let $\pi: X \lra \bP^1$ be the projection and $C_0$ be the zero section of this projection. Recall that $C_0$ is the unique irreducible curve of negative self-intersection in $X$ (See \cite[Proposition IV.1]{Bea96}). Therefore, $C_0$ must be totally invariant under $\Phi^k$ (See also \cite[Lemma 8]{Nak02}). By Lemma \ref{lem:MSS-5.8}, $\Phi^k: X \lra X$ preserves the projection $\pi: X \lra \bP^1$. In other words, there exists an endomorphism $f: \bP^1 \lra \bP^1$ such that $f \circ \pi = \pi \circ \Phi^k$. Moreover, using Lemma \ref{lem:MSS-5.10} and the fact that $\Phi^k$ is not an automorphism we know that $f$ is not an automorphism. If $\Phi$ is not regular but eventually regular then by Proposition \ref{prop:totally-inv} there exists a subvariety $W$  of codimension 2 that is totally invariant under $\Phi^k$. Hence, $\pi(W)$ is totally invariant under $f$. After a change of coordinates we assume without loss of generality that $\infty \in \pi(W)$. This means that the fiber $F_\infty = \pi^{-1}(\infty)$ is totally invariant under some iterate of $\Phi^k$. Using Lemma \ref{lem:add-more-totally-invs} we conclude that there exists a subvariety $V$ of pure codimension 1 containing both $C_0$ and $F_\infty$ such that $\Phi^{-k}(V) = V$ and the restriction of $\Phi$ to $X \setminus V$ is a surjective and quasi-finite endomorphism. Moreover, every component of $V$ other than $C_0$ and $F_\infty$ must also be totally invariant under some iterate of $\Phi^k$.
\begin{lemma}
\label{lem:two-comps}
$V$ can contain at most one fiber other than $F_\infty$. Also, $V$ has at most two components other than $C_0$ and $F_\infty$ and when there are exactly two such components, one of them must be a fiber. Moreover, any such component that is not a fiber must intersect each fiber at exactly one point and intersects $C_0$ in at most one point outside of $C_0 \cap F_\infty$.
\end{lemma}
\begin{proof}
Note that if $V$ contains two fibers $F_a$ and $F_b$ of $\pi$ for some $a, b \in \bP^1$ other than $\infty$, then some iterate $\Phi^{k\ell}$ must fix all $F_a$, $F_b$, and $F_\infty$. Thus, $f^\ell$ is the identity. This is a contradiction since we know that $f$ cannot be an automorphism. Hence, $V$ contains at most one fiber other than $F_\infty$. 

Now suppose that $C$ is the subvariety of $V$ not containing $C_0$ or any fibers. Let $d \ge 2$ be the degree of $\Phi^k$ when restricted to a fiber $F$ of $\pi$. Suppose that $C$ intersects the fiber $F_x$ of $\pi$ in at least two points at $\alpha_x$ and $\beta_x$, where $\alpha_x \ne \beta_x$ for a generic $x \in \bP^1$. Let $\gamma_x$ denote the intersection point $C_0 \cap F_x$. Since $C$ and $C_0$ are totally invariant, the points $\alpha_x$,  $\beta_x$, and $\gamma_x$ must be sent to $\alpha_{f(x)}, \beta_{f(x)}$, and $\gamma_{f(x)}$ with local degree $d$. This yields a contradiction since $d \ge 2$ and the points $\alpha_x$,  $\beta_x$, and $\gamma_x$ are distinct for a generic $x \in \bP^1$. So, $C$ must be irreducible and intersects each fiber at exactly one point. From this discussion, we also conclude that if there is a component in $V$ other than $C,C_0,$ or $F_\infty$, then that component cannot intersect the generic fiber of $\pi$. Hence, it must itself be a fiber of $\pi$.  

Finally, suppose $C$ intersects $C_0$ at points $p$ and $q$ outside of $C_0 \cap F_\infty$. Then, $p$ and $q$ must be totally invariant under some iterate of $\Phi^k$. It follows that $\pi(p), \pi(q)$ must be totally invariant under some iterate of $f$. Since $\infty$ is also totally invariant under some iterate of $f$, we conclude that $f$ is an automorphism which is a contradiction. 
\end{proof}
To finish, we break the proof into three cases based on the number of irreducible components of $V$. 

\textbf{Case 1.} $V$ consists of exactly two components other than $C_0$ and $F_\infty$. We assume that all components of $V$ are fixed by $\Phi^k$ after replacing $\Phi^k$ with an iterate. Using Lemma \ref{lem:two-comps}, one of these components is a fiber $F_a$ for some $a \in \bP^1$ and the other one is a curve $C$ intersecting each fiber at exactly one point. The complement $X \setminus F_\infty$ is isomorphic to $\A^1 \times \bP^1$ and we can view $C_0$ as corresponding to $\A^1 \times \{\infty\}$ under this isomorphism. Also, we can think of $C_0$ as being given by the equation $y = g(x)$ where $g$ is an endomorphism of $\bP^1$ and $x$ and $y$ are the coordinates of $\A^1$ and $\bP^1$, respectively. Note that $g$ (as a function on $\A^1$) can only have poles at $x = a$. Indeed, if $b \ne a$ is a pole of $g$, then since both $C$ and $C_0$ are totally invariant under some iterate of $\Phi^k$, $b$ must be totally invariant under some iterate of $f$. Hence, $a,b,$ and $\infty$ are all totally invariant under some iterate of $f$ and $f$ is an automorphism because of this. This results in a contradiction. We assume after a change of coordinates that $a = 0$. There exists an isomorphism of $i: X \setminus V \lra \C^2 \setminus \{xy = 0\} \simeq \bG_m^2$ given by 
\[
(x,y) \mapsto (x, y - g(x)).
\]
Using this isomorphism to change coordinates along with the fact that endomorphisms of $\bG_m^2$ are group endomorphisms composed with translations and that $\Phi^k$ preserves the fibers of the projection onto the first coordinate, we conclude that $\Phi^k$ must be given by 
\[
(x,y) \mapsto (t_0x^\alpha, t_1x^\beta(y - g(x))^\gamma + g(t_0x^\alpha)). 
\]
for some integers $\alpha,\beta,\gamma$ and some $t_0,t_1 \in \C^\ast$. We assume that $\alpha,\gamma \ge 1$ at the expense of replacing $\Phi^k$ by an iterate. Therefore, $\alpha = \gamma$ by Lemma \ref{lem:MSS-5.10}. Moreover, $\alpha,\gamma \ge 2$ since $\Phi^k$ is not an automorphism.
\begin{claim}
\label{claim:g-no-pole}
$\beta = 0$.    
\end{claim}
\begin{proof}
If $\ord_x(g) \ge 0$ then $g$ is a polynomial. Then, $\beta = 0$ since otherwise $\Phi^k$ contracts $\{x = 0\}$ to a point contradicting the finiteness of $\Phi^k$. Suppose $\ord_x(g) = \alpha < 0$ and write 
\[
g(x) = a_i x^i + a_j x^j + \cdots 
\]
for some $a_i, a_j \in \C^\ast$ where $i < \beta$ and $i < 0$. The order of vanishing at $x = 0$ of $t_1x^\beta(y - g(x))^\gamma$ and $g(t_0x^\gamma)$ are equal to $\gamma i + \beta$ and $\gamma i$, respectively. Thus, if $\beta \ne 0$ we must have 
\[
\ord_x(t_1x^\beta(y - g(x))^\gamma + g(t_0x^\gamma))) = \min\{\gamma i + \beta, \gamma i\} < 0,
\]
and thus $t_1x^\beta(y - g(x))^\gamma + g(t_0x^\gamma))$ has a pole at $x = 0$. Then $\Phi^k$ contracts the fiber $F_0$ the point $(0
, \infty)$ under $\Phi^k$ contradicting finiteness.
\end{proof}

To summarize, we have shown that some iterate of $i \circ \Phi \circ i^{-1}$ i.e. $i \circ \Phi^k \circ i^{-1}$ is given by 
\[
(u,v) \mapsto (t_0u^\gamma,t_1v^\gamma).
\]
Since $i \circ \Phi\circ i^{-1}$ also induces an endomorphism of $\bG_m^2$ it follows from Lemma \ref{lem:smallestPower}, as in Case 3 of the proof of Theorem \ref{thm:P2}, that $i \circ \Phi^j \circ i^{-1}$ must already be given by $d'\cdot Id$ for some $d' \ge 2$ and some $j \in \iterates$. Therefore, $\Phi^j$ is given by 
\begin{equation}
\label{eqn:form-of-phi^j}
(x,y) \mapsto (t_0'x^{d'}, t_1'(y - g(x))^{d'} + g(x^{d'})).
\end{equation}
for some $t_0',t_1' \in \C^\ast$. Note that $\Phi^k$ is regular on $X \setminus (C_0 \cup F_\infty) \cong \C^2$. So, an iterate of $\Phi^j$ must also be an endomorphism of $\C^2$. Using case (c) of Lemma \ref{lem:skew-then-split}, we conclude that $\Phi^j$ itself is an endomorphism of $\C^2$. Using equation \eqref{eqn:form-of-phi^j} we observe that $\Phi^j$ fixes the pencil valuation $x = const$ which is an end in $\V_1$, it follows from Proposition \ref{prop:FJ-5.3} that $\mathcal{T}_{\Phi^j} = \mathcal{T}_{\Phi^{j\ell}}$ for all $\ell \ge 0$. Since some iterate extends to an endomorphism of $X$, the divisorial valuation corresponding to $F_\infty$, which also belongs to $\V_1$, must be totally invariant under some iterate $\Phi^{j\ell}$. Hence, it is also totally invariant under $\Phi^j$. This means that $\Phi^j$ must extend to $X$. 

\textbf{Case 2.} $V$ consists of exactly one component other than $C_0$  and $F_\infty$. If this component is a fiber we may assume after a change of coordinates that it is equal to $F_0$. We conclude as in Case 2 of the proof of Theorem \ref{thm:P2} that $\Phi^2$ must induce an endomorphism on $X \setminus (C_0 \cup F_\infty) \simeq \C^2$ that fixes the rational pencil valuation $\{x = const\}$ which is an end in $\V_1$. We conclude using Proposition \ref{prop:FJ-5.3} that $\mathcal{T}_{\Phi^2} = \mathcal{T}_{\Phi^{2n}}$ for $n \ge 1$. Since some iterate of $\Phi^2$ extends to $X$, the divisorial valuations corresponding to $F_\infty$ and $C_0$ are totally invariant under some iterate of $\Phi^2_\bullet$. Because $\mathcal{T}_{\Phi^2} = \mathcal{T}_{\Phi^{2n}}$ for $n \ge 1$, they are also totally invariant under $\Phi^2$. Hence, $\Phi^2$ must extend to $X$. 

If the component is not a fiber then it is given by an equation of the form $y=g(x)$ for some endomorphism $g$ of $\bP^1$. Note that if $g$ has a pole at a point $a$ other than $\infty$ we conclude as in Case 1 that the fiber $F_a$ is totally invariant under some iterate of $\Phi^k$. Using Lemma \ref{lem:add-more-totally-invs} we can add this fiber to $V$ and conclude the proof by the proof of Case 1. So, we can assume that $g(x)$ is a polynomial in $x$. If we let $i$ be the map defined in the proof of Case 1, it follows that $\widetilde{\Phi} := i \circ \Phi \circ i^{-1}$ induces an endmorphism of $\C^2 \setminus \{y = 0\}$. We conclude by Case 2 of the proof of Theorem \ref{thm:P2} that $\widetilde{\Phi^2}$ must be an endomorphism of $\C^2$ fixing the rational pencil valuation $\{x = const\}$.  We conclude that $\mathcal{T}_{\widetilde{\Phi}^2} = \T_{\widetilde{\Phi}^{2n}}$ for $n \ge 1$. Since $i$ is an automorphism of $\C^2$ we see that $\Phi^2$ is also an endomorphism of $\C^2$. Note that $i$ is an automorphism of $\C^2$ and therefore it induces an isomorphism on $\V_1$. So, it follows that $\mathcal{T}_{\Phi^2} = \mathcal{T}_{\Phi^{2n}}$ for $n \ge 1$. The fact that $\Phi^2$ must extend to $X$ follows as before.

\textbf{Case 3.} $V$ has no component other than $C_0$ and $F_\infty$. In this case $\Phi$ induces an endomorphism of $X \setminus (C_0 \cup F_\infty) \simeq \C^2$. We conclude that $\Phi^2$ must extend to $X$ using Proposition \ref{prop:FJ-5.3} as before.

\end{proof}
\begin{remark}
\label{rem:long-then-Gm-P1-bundle}
Similar to the case of $X = \bP^2$, we see from the proof of Propositions \ref{prop:P1-times-P1} and \ref{prop:Hirzebruch} that if it takes more than two iterations for an eventually regular map $\Phi$ on a Hirzebruch surface to become regular, then $\Phi$ must induce an endomorphism on an open subvariety $U$ of $X$ that is isomorphic to $\bG_m^2$.\end{remark}

Propositions \ref{prop:P1-times-P1} and \ref{prop:Hirzebruch} give us the next corollary.
\begin{corollary}
\label{cor:P1-bundles-over-P1}
Suppose we are in the situation of Theorem \ref{thm:more-precise} and $X$ is a Hirzebruch surface. Then, $\Phi^i$ is regular for some $i \in \iterates$.
\end{corollary}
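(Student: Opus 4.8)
The plan is to deduce this statement immediately by combining Propositions \ref{prop:P1-times-P1} and \ref{prop:Hirzebruch}. Recall first that every Hirzebruch surface is of the form $\F_n = \bP(\OO_{\bP^1} \oplus \OO_{\bP^1}(n))$ for some integer $n \geq 0$, and that $\F_0 \simeq \bP^1 \times \bP^1$. Being in the situation of Theorem \ref{thm:more-precise} with $X$ a Hirzebruch surface means precisely that $\Phi: X \dra X$ is a dominant rational self-map admitting a regular, non-invertible iterate $\Phi^k$; this is exactly the hypothesis of the two cited propositions (applied to $\F_0$ and to $\F_n$ with $n \geq 1$, respectively).

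Thus I would split the argument into two cases. If $n = 0$, then $X \simeq \bP^1 \times \bP^1$ and Proposition \ref{prop:P1-times-P1} yields that $\Phi^i$ is regular for some $i \in \{1,2,3,4,6\}$, a set which is contained in $\iterates$. If $n \geq 1$, then Proposition \ref{prop:Hirzebruch} directly yields that $\Phi^i$ is regular for some $i \in \iterates$. In either case the conclusion of the corollary holds.

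There is no genuine obstacle at this stage: all of the substantive work has already been carried out inside Propositions \ref{prop:P1-times-P1} and \ref{prop:Hirzebruch} (the valuative-tree analysis on the tight tree $\V_1$ via Proposition \ref{prop:FJ-5.3}, the component count of Lemma \ref{lem:two-comps}, and the matrix bound of Lemma \ref{lem:smallestPower} governing the $\bG_m^2$ case). The only minor point to record explicitly is that the list $\{1,2,3,4,6\}$ arising in the $\bP^1 \times \bP^1$ case is a sublist of $\iterates$, so that the uniform bound of $12$ iterations is respected in every case.
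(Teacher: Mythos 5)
Your proposal is correct and is exactly the paper's argument: the corollary is stated as an immediate consequence of Propositions \ref{prop:P1-times-P1} and \ref{prop:Hirzebruch}, splitting into the cases $\F_0 \simeq \bP^1 \times \bP^1$ and $\F_n$ with $n \ge 1$, with $\{1,2,3,4,6\} \subset \iterates$ handling the first case. Nothing further is needed.
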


\subsection{The case of general toric surfaces}
\label{subsec:general-toric}
We are now ready to prove Theorem \ref{thm:more-precise} in the case of a general toric surface.
\begin{theorem}
\label{thm:toric}
Suppose that we are in the situation of Theorem \ref{thm:more-precise} and $X$ is a toric surface. Then, $\Phi^i$ is regular for some $i \in \iterates$.  
\end{theorem}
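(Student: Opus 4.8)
The plan is to reduce to surfaces already handled and then attack the genuinely non-minimal toric surfaces with the valuative tree at infinity together with a monomial-map analysis. By Theorem~\ref{thm:P2} and Corollary~\ref{cor:P1-bundles-over-P1} the conclusion already holds when $X$ is $\bP^2$ or a Hirzebruch surface, so by Theorem~\ref{thm:ruled} I may assume $X$ is obtained from some $\F_n$ with $n\ge 1$ by a non-empty sequence of blow-ups at torus-fixed points; in particular $X$ carries a genus-zero fibration $\pi\colon X\lra\bP^1$ and only finitely many toric boundary and exceptional rational curves. I may also assume $\Phi$ is not regular and that $\Phi^k$ is a non-invertible (hence finite and surjective) endomorphism. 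By Lemma~\ref{lem:S(X)-tot-inv} the set $S(X)$ of negative curves is finite and totally invariant under a fixed iterate of $\Phi^k$, so after replacing $k$ by a multiple I may assume $\Phi^{-k}(C)=C$ for every $C\in S(X)$. By Proposition~\ref{prop:exceptional} the indeterminacy locus $W$ of $\Phi$ is a non-empty finite set of points with $\Phi^{-k}(W)=W$, and, via the construction preceding Proposition~\ref{prop:totally-inv-case-by-case}, there is a $\Phi^k$-totally invariant closed set $V$ on whose complement $\Phi$ restricts to a quasi-finite surjective endomorphism.

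The first real step is to replace $V$ by a $\Phi^k$-totally invariant divisor of pure codimension $1$ with a simple complement. As in the proof of Proposition~\ref{prop:Hirzebruch}, every isolated point of $V$ and every point of $W$ is $\Phi^k$-periodic and therefore lies on a fibre of $\pi$ that is totally invariant under an iterate of $\Phi^k$ (using the analogue of Lemma~\ref{lem:MSS-5.8}, that some iterate of $\Phi$ preserves $\pi$), while the curves of $S(X)$ are already totally invariant. Amalgamating the codimension-one part of $V$, these fibres, the curves of $S(X)$ and any further periodic toric boundary divisor by repeated use of Lemma~\ref{lem:add-more-totally-invs}, I obtain a $\Phi^k$-totally invariant divisor, again denoted $V$, of pure codimension~$1$ on whose complement $\Phi$ restricts to a quasi-finite surjective endomorphism. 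A case analysis on which boundary and exceptional curves occur in $V$ — parallel to the three cases of Proposition~\ref{prop:Hirzebruch}, but with more configurations owing to the extra exceptional curves, and using the same kind of change of coordinates (the map $i$ there) to straighten out any non-toric component of $V$ — then shows that, after possibly replacing $\Phi$ by $\Phi^2$, the complement $X\setminus V$ is isomorphic to $\C^2$, to $\C^2\setminus\{x=0\}$, or to $\bG_m^2$.

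If $X\setminus V\simeq\bG_m^2$, the induced endomorphism is a group endomorphism composed with a translation, given by a matrix $A\in M_{2,2}(\Z)$ by \cite[Theorem~2]{Iitaka}. As $\Phi^k$ is a finite endomorphism of $X$, it carries toric boundary divisors to toric boundary divisors, so $A^k$ permutes the finitely many rays of the fan of $X$; a further iterate of $A^k$ therefore fixes every ray, and since $X$ is non-minimal the fan has at least three rays no two of which are parallel, so that iterate must be a scalar matrix with positive entry. Hence $A^N$ is scalar for some $N$, and Lemma~\ref{lem:smallestPower} (together with this) yields $i\in\iterates$ with $A^i$ a scalar matrix with positive entry; then $\Phi^i$ restricted to $\bG_m^2$ is a monomial power map followed by a torus translation, each of which extends to every toric surface, so $\Phi^i$ is regular on $X$. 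If instead $X\setminus V\simeq\C^2$ (first passing to $\Phi^2$ when the complement is $\C^2\setminus\{x=0\}$), then $\Phi$ induces a polynomial endomorphism $F$ of $\C^2$ with $\lambda_2(F)=\lambda_2(\Phi)>1$ and $\lambda_1(F)^2\ge\lambda_2(F)$. If $\lambda_1(F)^2>\lambda_2(F)$, then $F$ preserves a rational fibration and, exactly as in Case~1 of the proof of Proposition~\ref{prop:Hirzebruch}, Lemma~\ref{lem:skew-then-split} together with Proposition~\ref{prop:FJ-5.3} forces $\Phi$ (resp.\ $\Phi^2$) to be a product and to extend to $X$. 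If $\lambda_1(F)^2=\lambda_2(F)$, then $\deg(F^n)/\lambda_1(F)^n$ is bounded because $\Phi^k$ is a regular endomorphism of the projective surface $X$, so we are in the setting of Proposition~\ref{prop:FJ-5.3}; the divisorial valuations attached to the prime components of $V$, together with $-\deg$, lie in $\V_1$, are totally invariant under an iterate of $F_\bullet$, and span a finite subtree of $\V_\infty$ meeting $\T_F$ in an interval, so Proposition~\ref{prop:5.3-extension} forces this whole subtree, and in particular $-\deg$, to be totally invariant under $F_\bullet$ itself; this says exactly that $\Phi$ (resp.\ $\Phi^2$) extends to a regular endomorphism of $X$. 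In every case $\Phi^i$ is regular for some $i\in\iterates$, which proves the theorem.

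The step I expect to be the main obstacle is the construction and case analysis of the pure-codimension-one totally invariant divisor $V$: one must check that every periodic point coming from $W$ and from the curves contracted by $\Phi$ lies on a totally invariant fibre of $\pi$ or on a curve of $S(X)$, verify that $-\deg$ lies in the relevant totally invariant subtree, and then rule out or reduce all configurations of boundary and exceptional curves for which $X\setminus V$ would fail to be one of $\C^2$, $\C^2\setminus\{x=0\}$ or $\bG_m^2$. The extra exceptional curves, absent in the Hirzebruch case, make this the heaviest part of the argument, and it is precisely the additional divisorial valuations they contribute that force the use of Proposition~\ref{prop:5.3-extension} in place of Proposition~\ref{prop:FJ-5.3} alone in the final valuative-tree step.
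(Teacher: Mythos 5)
Your overall toolkit (totally invariant divisors via Proposition \ref{prop:totally-inv} and Lemma \ref{lem:add-more-totally-invs}, the matrix Lemma \ref{lem:smallestPower} for the $\bG_m^2$ branch, and Proposition \ref{prop:5.3-extension} for the valuative-tree step) matches the paper's, but you take a different route: the paper first contracts the negative curves equivariantly (Lemma \ref{lem:S(X)-tot-inv}) down to a Hirzebruch surface $\F_m$, applies Proposition \ref{prop:Hirzebruch} there to get $j\in\iterates$ with $f_n^{j}$ regular downstairs, shows the blown-up centers all lie over $C_0\cup F_\infty$ (resp.\ $C_0\cup F_0\cup F_\infty$), and then proves that the dual graph $\Delta(X)$, viewed as a finite subtree of $\V_\infty$, is totally invariant via Proposition \ref{prop:5.3-extension}, so the same $j$ (or $j'\in\iterates$) works upstairs. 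You instead stay on $X$ and rest everything on the claim that a suitably enlarged totally invariant divisor $V$ has complement isomorphic to $\C^2$, $\C^2\setminus\{x=0\}$ or $\bG_m^2$. That claim is exactly where you stop proving and start asserting, and it is a genuine gap: a priori nothing in your construction prevents $V$ from omitting, say, two non-adjacent boundary components, in which case $X\setminus V$ could be something like $\C^2\setminus\{0\}$ or $\bP^1\times\bG_m$, which are not on your list; ruling such configurations out is precisely the dynamical work the paper does through the descent to $\F_m$ and the case analysis of Proposition \ref{prop:Hirzebruch} (Lemma \ref{lem:two-comps}). Relatedly, in your $\bG_m^2$ branch you identify the components of $V$ with the rays of the fan of $X$; an abstract isomorphism $X\setminus V\simeq\bG_m^2$ produced by the dynamics does not by itself make $V$ the toric boundary of the given toric structure, so the statement that $A^k$ permutes the rays of the fan also needs an argument.

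There are also inaccuracies in the final valuative step. Fixing $-\deg$ under $F_\bullet$ only gives extension to $\bP^2$; to extend to $X$ you need total invariance of all the divisorial valuations of the boundary components of $X$ (the paper's dual graph $\Delta(X)$), which is what Proposition \ref{prop:5.3-extension} is invoked for — but its hypotheses must be checked: the subtree must meet $\T_F$ in an interval (you never verify this), and your assertion that the boundary valuations lie in $\V_1$ is in general false (indeed the whole point of Proposition \ref{prop:5.3-extension}, as opposed to Proposition \ref{prop:FJ-5.3}, is to handle valuations outside $\V_1$). Finally, in the sub-case $\lambda_1(F)^2>\lambda_2(F)$ you cite Proposition \ref{prop:FJ-5.3}, which is only stated under $\lambda_1^2=\lambda_2$; the paper handles that situation (Case 1 of Proposition \ref{prop:P1-times-P1}) through uniqueness of the eigenvaluation instead. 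So while the skeleton is recognizably close to the paper's, the central structural classification of $X\setminus V$ is missing, and without it (or the paper's blow-down argument) the proof does not go through.
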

\begin{proof}
If $X$ is minimal then it is either isomorphic to $\bP^2$ or some $\F_n$ for $n=0$ or $n\ge2$. Then, we are done by Theorem \ref{thm:P2} and Corollary \ref{cor:P1-bundles-over-P1}. So, we assume that $X$ is not minimal. Let $S(X)$ be the set of all irreducible curves. By Lemma \ref{lem:S(X)-tot-inv} there must exist $m \ge 1$ such that $\Phi^{-mk}(C) = C$ for all $C \in S(X)$. Since $X$ is not minimal, there must exist a $(-1)$-curve $E$ in $X$. If we blow down this curve to get a surface $X_1$, then $\Phi^{mk}$ induces an endomorphism of $X_1$ since $\Phi^{-mk}(E) = E$. Continuing in this fashion, we get a finite sequence of blow-downs
\[
\begin{tikzcd}
X := X_0 \arrow[r, "b_0"] & X_1 \arrow[r,"b_1"] & \cdots \arrow[r] & X_{n-1} \arrow[r,"b_{n-1}"] & X_n
\end{tikzcd}
\]
such that $X_n$ is a minimal surface and some iterate of $\Phi^{k}$ induces an endomorphism of $X_i$ for $i = 0,\dots,n$. So, $X_n$ is either isomorphic to $\bP^2$ or a $\bP^1$-bundle over $\bP^1$ i.e. a Hirzebruch surface. Note that if $X_n = \bP^2$, then $X_{n-1}$ is the blow up of $\bP^2$ at one point which is isomorphic to the Hirzebruch surface $\F_1$ and thus is a $\bP^1$-bundle. If $X_n = \bP^1 \times \bP^1$, then $X_{n-1}$ is isomorphic to $\bP^2$ blown up at two points. So, we may assume that the minimal model of $X$ is $\bP^2$. So, again we can assume without loss of generality that $X_{n-1} = \F_1$. Hence, we assume, after replacing $X_n$ with $X_{n-1}$ if necessary, that $X_n$ is isomorphic to $\F_m$ for some $m \ge 1$. We let $f_i:X_i \dra X_i$ denote the rational map induced by $\Phi$ on $X_i$. 

 Since $f_n$ has an iterate that is regular on $X_n$ we conclude by Proposition \ref{prop:Hirzebruch} that $f_n^j$ is regular for some $j \in \iterates$. Let $\pi: X_n \lra \bP^1$ be the projection of $X_n$ and $C_0$ be the zero section of the projection. Then there must exist an endomorphism $g: \bP^1 \lra \bP^1$ such that $g\circ \pi = \pi \circ f_n^j$ by Lemma \ref{lem:MSS-5.8}. Our goal is to show that $f_n^{j'}$ must extend to $X$ for some $j' \in \iterates$ as this will show that $\Phi^{j'}$ is an endomorphism of $X$.  We break the proof of this claim into two cases.

 \textbf{Case 1.} There is only one fiber of $\pi$ that is totally invariant under some iterate of $f_n^j$: In this case we may assume that the fiber is totally invariant under $f_n^j$ itself. Also, without loss of generality we can take the fiber to be $F_\infty := \pi^{-1}(\infty)$. Let $\Psi := \Phi^j$. We will show that $\Psi$ itself extends to $X$. 

 Recall that $X$ was obtained from $X_n$ after finitely many point blow-ups. Suppose that $X_{n-1}$ is obtained by blowing up $X_{n}$ at some point $p \in X_n\setminus (C_0 \cup F_\infty)$. By our discussion in the beginning of the proof, the exceptional divisor of this blow up is totally invariant under an iterate of $f_{n-1}$. This implies that the point $p$ is totally invariant under some iterate of $f_{n}$. We conclude that the fiber $F_{\pi(p)} = \pi^{-1}({\pi(p)})$ is totally invariant under some iterate of $f_n$ which contradicts our hypothesis that there is a unique fiber that is totally invariant under some iterate. Thus, $p \in F_\infty$. We conclude inductively and using the same argument that $X$ must be obtained from $X_n$ from a series of blow ups centered at $C_0 \cup F_\infty$. 

 Since $C_0$ and $F_\infty$ are both totally invariant under $f_n$, $f_n$ induces a polynomial endomorphism on $\C^2 \simeq X \setminus (C_0 \cup F_\infty)$. Let $\Delta(X)$ denote the dual graph of $X$. We can view $\Delta(X)$ as a finite subtree of $\V_\infty$. To prove that $f_n$ extends to $X$ it then suffices to show that $\Delta(X)$ is totally invariant under the induced map $(f_n)_\bullet: \V_\infty \lra \V_\infty$. But, this is a consequence of Proposition \ref{prop:5.3-extension}. 

\textbf{Case 2.} There are at least two fibers of $\pi$ that are totally invariant under some iterate of $f_n^j$: Note that if there are three fibers that are totally invariant under some iterate of $f_n^j$, then $g$ must be an automorphism of finite order. This contradicts the fact that $f_n^j$ is not an automorphism. Thus, we assume that there are exactly two fibers that are totally invariant under some iterate of $f_n^j$. After a suitable change of coordinates, we assume without loss of generality that the fibers are $F_0 = \pi^{-1}(0)$ and $F_\infty = \pi^{-1}(\infty)$.  By the proof of Proposition \ref{prop:Hirzebruch} after replacing $f_n^j$ by another iterate $f_n^{j'}$ with $j' \in \iterates$ we may assume that both of the fibers are totally invariant under $f_n^{j'}$. Indeed, if $j \le 6$ we can let $j' = 2j$. If not, we know by Remark \ref{rem:long-then-Gm-P1-bundle} that the iterate that becomes regular comes from a $\bG_m^2$ endomorphism. So, the corresponding martix of $\Phi^j$ is a diagonal with non-negative entries and hence $F_0$ and $F_\infty$ are both totally invariant. 

$X$ is obtained from $X_n$ after finitely many blow-ups. It follows similar to the previous case that none of the points in $X_n \setminus (F_0 \cup F_\infty)$ are blown-up in the process because that would result in another fiber that is totally invariant under an iterate of $\Phi^{j'}$. Hence, we can only have two types of blow-ups: $(i)$ the ones centered at a divisor above $F_0$ and $(ii)$ the ones centered at a divisor above $F_\infty$. Since $C_0,F_0,$ and $F_\infty$ are totally invariant under $f_n^{j'}$, $f_n^{j'}$ must induce polynomial endomorphisms on $U_1 = X \setminus (C_0 \cup F_0) \simeq \C^2$ and $U_2 = X \setminus (C_0 \cup F_\infty) \simeq \C^2$. Hence, by the proof of the previous we conclude that all type $(i)$ and $(ii)$ exceptional divisors of $X$ are totally invariant under $f_n^{j'}$. So, $f_n^{j'}$ extends to an endomorphism of $X$.    
\end{proof}

\section{The case of \texorpdfstring{$\bP^1$}{P1}-bundles over a curve of genus at least 1}
\label{sec:P1-bundle-over-C}
In this section, we prove Theorem \ref{thm:more-precise} in the case of $\bP^1$-bundles over a curve of genus at least 1. We start with the next Proposition which describes the structure of dominant endomorphisms of isotrivial $\bP^1$-bundles.  
\begin{proposition}
\label{prop:trivial-then-split}
Let $C$ be a smooth curve of genus at least 1 over $\C$ and $\Phi$ be a dominant endomorphism of $C \times \bP^1$. Then, $\Phi$ must be given by a product of the form
\[
(x,y) \mapsto (\varphi_1(x),\varphi_2(y)),
\]
where $\varphi_1$ and $\varphi_2$ are endomorphisms of $C$ and $\bP^1$, respectively. 
\end{proposition}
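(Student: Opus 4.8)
The plan is to exploit the $\bP^1$-bundle structure $p_1:C\times\bP^1\to C$ to reduce the statement to controlling a single morphism, and then to apply the rigidity principle that a connected projective variety admits no nonconstant morphism to an affine variety.

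First I would invoke Lemma \ref{lem:rat-pres-fibers} for the projection $p_1$: since $\Phi$ is a dominant self-map and $g(C)\ge 1$, there is an endomorphism $\varphi_1:C\to C$ with $p_1\circ\Phi=\varphi_1\circ p_1$. Consequently $\Phi(x,y)=(\varphi_1(x),\psi(x,y))$, where $\psi:=p_2\circ\Phi:C\times\bP^1\to\bP^1$ is a morphism, and it remains to prove that $\psi$ is independent of $x$, i.e. that $\psi=\varphi_2\circ p_2$ for some endomorphism $\varphi_2$ of $\bP^1$.

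Next I would show that $\psi$ restricts on every fiber of $p_1$ to a morphism of one fixed degree $d\ge 1$. The integer $\deg\bigl(\psi|_{\{x\}\times\bP^1}\bigr)=\deg\bigl(\psi^{*}\OO_{\bP^1}(1)\big|_{\{x\}\times\bP^1}\bigr)$ is the degree of the line bundle $\psi^{*}\OO_{\bP^1}(1)$ on the fibers of the smooth proper morphism $p_1$, hence is locally constant on the connected curve $C$ and so equals a constant $d\ge 0$. If $d=0$, then $\psi$ is constant on every fiber of $p_1$ and therefore factors as $\psi=\psi'\circ p_1$ for some morphism $\psi':C\to\bP^1$; but then $\Phi$ would map $C\times\bP^1$ into the curve $\{(\varphi_1(x),\psi'(x)):x\in C\}$, contradicting the dominance of $\Phi$. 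Hence $d\ge 1$. Finally, I would package $\psi$ as a morphism $\widetilde\psi:C\to\mathrm{Rat}_d$, where $\mathrm{Rat}_d$ is the variety of degree-$d$ self-maps of $\bP^1$: over an affine open $U\subseteq C$ trivializing $\psi^{*}\OO_{\bP^1}(1)$, the map $\psi|_{U\times\bP^1}$ is given by $[P(x,Y):Q(x,Y)]$ with $P,Q$ of degree $d$ in the homogeneous coordinates $Y$ of $\bP^1$, with coefficients in $\OO(U)$, and with no common zero for any $x\in U$; sending $x$ to the coefficient vector of $(P,Q)$ defines a morphism $U\to\bP^{2d+1}=\bP\bigl(H^{0}(\bP^1,\OO_{\bP^1}(d))^{\oplus 2}\bigr)$ landing in the open subset $\mathrm{Rat}_d$ where the resultant of $P$ and $Q$ is nonzero, and these glue to $\widetilde\psi$. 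Since $\mathrm{Rat}_d$ is the complement of a hypersurface in $\bP^{2d+1}$ it is affine, and since $C$ is a connected projective variety, $\widetilde\psi$ must be constant, with value some $\varphi_2\in\mathrm{Rat}_d$. Therefore $\psi(x,y)=\varphi_2(y)$ for all $(x,y)$, i.e. $\Phi(x,y)=(\varphi_1(x),\varphi_2(y))$, as required.

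The only step requiring genuine care is the last one: one must check that the fiberwise data assembles into a bona fide morphism $\widetilde\psi:C\to\mathrm{Rat}_d$ — regular at every point of $C$ and independent of the chosen local trivialization of $\psi^{*}\OO_{\bP^1}(1)$ — and recall that $\mathrm{Rat}_d$ is affine, being the complement of the resultant hypersurface in $\bP^{2d+1}$. Both facts are standard (the first is an instance of the universal property of the Hom-scheme $\underline{\mathrm{Mor}}(\bP^1,\bP^1)$, which one could also circumvent by the hands-on gluing described above), so the argument is short once they are in place; the genus hypothesis enters only through Lemma \ref{lem:rat-pres-fibers} in the first step.
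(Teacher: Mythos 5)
Your proposal is correct and follows essentially the same route as the paper: Lemma \ref{lem:rat-pres-fibers} gives that the first coordinate factors through the projection to $C$, and the second coordinate is shown to be independent of $C$ because it defines a morphism from the projective curve $C$ to the affine variety $\mathrm{Rat}_d(\bP^1)$, which must be constant. Your write-up merely fills in more detail (constancy of the fiberwise degree, the explicit gluing to a morphism into $\mathrm{Rat}_d$) than the paper does.
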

\begin{proof}
An endomorphism of $C \times \bP^1$ is given by morphisms $\varphi_1: C \times \bP^1 \lra C$ and $\varphi_2: C \times \bP^1 \lra \bP^1$. Let $\pi_1$ and $\pi_2$ be the projections of $C\times \bP^1$ onto $C$ and $\bP^1$, respectively. The proposition is then claiming that $\varphi_1:=\pi_1\circ \Phi$ factors through $\pi_1$ and $\varphi_2:=\pi_2 \circ \Phi$ factors through $\pi_2$. Note that $\varphi_1$ must factor through the projection $\pi_1$ by Lemma \ref{lem:rat-pres-fibers}. So, we only need to show that $\varphi_2$ factors through $\pi_2$.

Suppose that $\varphi_2$ generically contracts fibers of the projection $\pi_1$. Then, $\varphi_2$ must factor through $\pi_1$ as before. This contradicts the assumption that $\Phi$ is a dominant self-map. So, suppose that the morphism $\varphi_2$ is generically dominant and $d$-to-$1$ when restricted to the fibers of the projection $\pi_1$. This is the same data as a morphism from $C$ to $\text{Rat}_d(\bP^1)$ where $\text{Rat}_d(\bP^1)$ denotes the space of self-maps of $\bP^1$ of degree $d$. $\text{Rat}_d(\bP^1)$ is affine and there can be no non-constant morphisms from a projective curve to an affine space. We conclude that the map $\varphi_2$ has no dependence on $C$, and thus, must factor through $\pi_2$. 
\end{proof}

We are now ready to prove the main theorem of this section. 
\begin{theorem}
\label{thm:P1-bundle-over-C}
Suppose we are in the situation of Theorem \ref{thm:more-precise} and $X$ is a $\bP^1$-bundle over a curve $C$ of genus at least $1$. Then, $\Phi$ itself must be regular.
\end{theorem}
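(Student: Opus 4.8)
The plan is to argue by contradiction. Suppose $\Phi$ is not regular; then its regular, non-invertible iterate $\Phi^k$ has $k\ge 2$. Write $\pi\colon X\to C$ for the bundle projection. Since $g(C)\ge 1$, Lemma~\ref{lem:rat-pres-fibers} gives an endomorphism $g\colon C\to C$ with $g\circ\pi=\pi\circ\Phi$, so $\Phi$ carries each fiber of $\pi$ into a fiber and has, in local coordinates, the skew form $(x,y)\mapsto(g(x),h(x,y))$; in particular its restriction to any fiber is a self-map of $\bP^1$ of some degree $d\ge 1$ and $\Phi^k$ induces $g^k$ on $C$. Proposition~\ref{prop:totally-inv} then supplies a divisor $V$ with $\Phi^{-k}(V)=V$ on whose complement $\Phi$ is quasi-finite and surjective, and by Proposition~\ref{prop:exceptional} the indeterminacy locus $W:=\indet(\Phi)$ is a nonempty finite $\Phi^k$-totally invariant set.

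I would first reduce to the split bundle $X=C\times\bP^1$. By Theorem~\ref{thm:ruled}, $X$ is a $\bP^1$-bundle either over an elliptic curve or over a curve of genus $>1$ which trivializes after a finite \'etale base change; applying \cite[Theorem~1]{AmerikProjBundles} to $\Phi^k$ produces a finite \'etale cover $\mu\colon C'\to C$ with $X\times_C C'\cong C'\times\bP^1$. As pullbacks of \'etale covers remain \'etale and pullbacks of the trivial bundle remain trivial, I may enlarge $C'$ so that $g$ also lifts to it (a $g$-stable trivializing cover exists: a fiber product of finitely many translates when $\deg g=1$, a suitable isogeny cover when $C$ is elliptic), so that $\Phi$ lifts to a dominant rational self-map $\Phi'$ of $X'=C'\times\bP^1$ with $(\Phi')^k$ regular and non-invertible and $g(C')\ge 1$. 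If $\Phi'$ is shown to be regular, then applying Lemma~\ref{lem:regularComposition} to the finite morphism $\nu\colon X'\to X$ and the identity $\nu\circ\Phi'=\Phi\circ\nu$ forces $\Phi$ to be regular, contradicting the assumption; so it is enough to treat $X=C\times\bP^1$.

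With $X=C\times\bP^1$ and projections $\pi_1,\pi_2$, Proposition~\ref{prop:trivial-then-split} gives $\Phi^k=(\psi_1,\psi_2)$ with $\psi_1\in\End(C)$, $\psi_2\in\End(\bP^1)$ and $\deg\psi_1\cdot\deg\psi_2=\deg\Phi^k>1$. The finite sets $\pi_1(W)$ and $\pi_2(W)$ are totally invariant under $\psi_1$ and $\psi_2$, hence the vertical fibers over $\pi_1(W)$ and the horizontal sections $C\times\pi_2(W)$ are totally invariant under $\Phi^k$; by Lemma~\ref{lem:add-more-totally-invs} I may enlarge $V$ to a divisor $\widetilde V$, still satisfying $\Phi^{-k}(\widetilde V)=\widetilde V$ with $\Phi$ quasi-finite and surjective on $X\setminus\widetilde V$, that contains the full $\Phi$-orbit of one such vertical fiber and one such horizontal section. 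I would then distinguish cases by $\deg g$ and $d$. If $g$ has finite order --- this covers all cases with $g(C)>1$ as well as those with $C$ elliptic and $\deg g=1$ --- the $\Phi$-orbit of the chosen horizontal section is a finite union of sections, so after a coordinate change on the $\bP^1$-factor $\Phi$ restricts to a skew product of $C\times\A^1$ with $h$ a polynomial in $y$ and $\Phi^k$ to $(\psi_1,\psi_2)$ with $\psi_2\in\C[y]$: when $\deg\psi_2\ge 2$, Lemma~\ref{lem:skew-then-split}(a) forces $h\in\C[y]$ with unit leading coefficient, so $\Phi=(g,h)$ is a product, hence regular on $X$; when $\deg\psi_2=1$, $\Phi$ is fiberwise a M\"obius transformation on every fiber, and regularity follows because a nonconstant such family would give a nonconstant morphism from the projective curve $C$ to the affine variety $\mathrm{PGL}_2$. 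When $C$ is elliptic and $\deg g\ge 2$, one has $\deg\psi_2=d^{k}$; the subcase $d=1$ ($\Phi$ fiberwise birational) is treated separately, and for $d\ge 2$ the association $x\mapsto\bigl(h(x,\cdot),h(gx,\cdot),\dots,h(g^{k-1}x,\cdot)\bigr)$ takes values in the affine variety of degree-$d$ factorizations of $\psi_2$, hence is constant, whence $h$ is independent of $x$ and again $\Phi$ is a product. In every case this contradicts the non-regularity of $\Phi$.

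The main obstacle is the uniform treatment of the invariant horizontal section: it is totally invariant only under $\Phi^k$, and passing to its $\Phi$-orbit is harmless only when $g$ is an automorphism, so the elliptic case with $\deg g\ge 2$ does not reduce to Lemma~\ref{lem:skew-then-split} and instead needs the ``factorization variety'' argument together with a separate handling of the fiberwise-birational subcase. A secondary point is checking that Amerik's trivializing \'etale cover can be chosen $g$-compatible; should this be awkward one can run the same skew-product analysis directly on a nontrivial bundle $X$, at the cost of the additional bookkeeping referred to in the introduction.
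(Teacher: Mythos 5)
The central gap is that you skip the paper's first and decisive step. Since $\Phi$ is assumed non-regular, $\indet(\Phi)$ is a nonempty finite set, totally invariant under $\Phi^k$ by Proposition~\ref{prop:totally-inv}; its image in $C$ is then totally invariant under the induced map $a\colon C\to C$, and because $g(C)\ge 1$ every nonconstant self-map of $C$ is \'etale, so a totally invariant point forces $\deg a=1$, and an automorphism of a curve of genus $\ge 1$ fixing a point has finite order. This is exactly what legitimizes the use of Amerik: \cite[Theorem 1]{AmerikProjBundles} concerns endomorphisms \emph{over the base} (inducing the identity on $C$), and the paper applies it to $\Phi^{rk}$ only after $a^r=\Id$ is known. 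You invoke it for $\Phi^k$ while allowing the induced map on $C$ to have degree $\ge 2$, so the invocation is unjustified as stated, and you keep alive the case ``$C$ elliptic, $\deg g\ge 2$,'' which in fact cannot occur. Moreover, your arguments in that case (the ``factorization variety'' step) and in the $\deg\psi_2=1$ subcase rest on ``a map from the projective curve $C$ to an affine variety ($\mathrm{Rat}_d$-factorizations, $\mathrm{PGL}_2$) is constant,'' but the family $x\mapsto h(x,\cdot)$ is a priori only \emph{rational} in $x$ --- it degenerates exactly at the indeterminacy you are trying to rule out --- and a rational map from a projective curve to an affine variety need not be constant. The paper uses this affineness trick only in Proposition~\ref{prop:trivial-then-split}, where the map in question is an honest morphism.

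A second set of gaps concerns your reduction to $X=C\times\bP^1$. The existence of a $g$-stable trivializing \'etale cover to which $\Phi$ itself lifts is asserted, not proved, and this descent is precisely where the paper's real work lies: it lifts only $\Phi^{rk}$ (which is over the identity on $C$), shows the lift $F=(x,y)\mapsto(x,f(y))$ is $G$-equivariant for $G=\mathrm{Aut}_C(E)$, classifies the totally invariant horizontal locus $I\subset X$ --- which may be an irreducible curve meeting each fiber in two points, a case invisible in your split reduction and handled in the paper by an analytic branch/monodromy argument over loops in $C$ --- and then proves regularity of $\Phi$ locally over $a$-invariant trivializing opens using Lemma~\ref{lem:skew-then-split}. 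Even granting your reduction, the claim that ``after a coordinate change on the $\bP^1$-factor $\Phi$ restricts to a skew product with $h$ polynomial in $y$'' is too quick: the horizontal curves in the $\Phi$-orbit of the invariant constant section $C\times\{q\}$ are in general non-constant sections (graphs over $C$), so the needed change of coordinates depends on the base point, and one must verify that it preserves the product/polynomial form required by Lemma~\ref{lem:skew-then-split}. So although your skeleton (Amerik, Proposition~\ref{prop:trivial-then-split}, Lemmas~\ref{lem:skew-then-split} and~\ref{lem:regularComposition}) overlaps with the paper's, the proof as proposed does not go through at the points indicated.
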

\begin{proof}
Let $\pi: X \lra C$ be the projection as defined in Section \ref{sec:facts}. Let $g := \text{genus}(C)$. By Lemma \ref{lem:rat-pres-fibers} we know that there exists an endomorphism $a:C \lra C$ such that $a\circ \pi = \pi \circ \Phi$. If $\Phi$ is regular we would be done so we assume not. Then, by Proposition \ref{prop:totally-inv} there must exist subsets $W$ such that $\Phi^{-k}(W) = W$. At the expense of replacing $k$ by a mutiple we assume that $\Phi^{-k}(p) = p$ for all $p \in W$. Since $a^k \circ \pi = \pi \circ \Phi^k$ we must also have $a^{-k}(\pi(p)) = \pi(p)$ for all $p \in W$. Because $g \ge 1$, $a$ is \'etale and has a totally invariant point so it must be an automorphism. It then follows that $a$ must be of finite order since the curve $C$ has genus at least 1 (See \cite[Chapter IV exercise 5.2]{Hartshorne} and \cite[Theorem 10.1]{Silverman}). In other words, $a^r = Id$ for some $r \ge 1$. Lastly, suppose the indeterminacy locus of $\Phi$ is contained in $\pi^{-1}(S)$ for some finite set $S \subset C$ and let $S'$ be the $a$ orbit of $S$. Then $\Phi$ induces an endomorphism of $X \setminus V$ where $V := \pi^{-1}(S')$. 

So, $\Phi^{rk}$ can be viewed as a $C$-endomorphism of $X$. Since we assumed $\Phi^k$ is not an automorphism, \cite[Theorem 1]{AmerikProjBundles} yields that $X$ becomes trivial after a finite base change. In other words, there exists a finite (\'etale) map of curves $h: E \lra C$ such that $X' := X \times_C E \simeq E \times \bP^1$. To be more precise, if we let $G = \text{Aut}_C(E)$ and let $\rho: G \inj \text{Aut}(\bP^1)$ be an injective group homomorphism so that $G$ induces a natural action on $E \times \bP^1$ given by 
\begin{equation}
\label{eqn:action-of-g}
(x,y) \mapsto (\g(x), \rho(\g)(y))
\end{equation}
for all $\g\in G$, then $X$ is the quotient of $E \times \bP^1$ by the action of $G$ (See the discussion after \cite[Theorem 1]{AmerikProjBundles}). We will use $\widetilde{\g}$ to denote the automorphism defined in equation \eqref{eqn:action-of-g}.

The endomorphism $\Phi^{rk}$ lifts to an endomorphism $F$ of $X'$ given by $(\Phi^{rk}, \text{Id})$. If we let $\pi_1: X \times _C E = X' \lra X$ be the projection onto the first coordinate, then the next diagram must commute
\begin{equation}
\label{eqn:F-com}
\begin{tikzcd}
X'  \arrow[r, "F"]\arrow[d, "\pi_1"] & X' \arrow[d, "\pi_1"]\\
X \arrow[r, "\Phi^{rk}"] & X.  
\end{tikzcd}
\end{equation}
Since $X' \cong E \times \bP^1$, we conclude by Proposition \ref{prop:trivial-then-split} that $F$ must be split when viewed as an endomorphism of $E \times \bP^1$. Suppose $F$ is given by 
\[
(x,y) \mapsto (g(x), f(y)),    
\]
for $g \in \End(E)$, a rational function $f \in \C(y)$ and all $x \in E$ and $y \in \bP^1$. Then, $g(y)$ must indeed be the identity given that the map $F$ was defined by the pair $(\Phi^{rk}, \Id)$ and thus induces the identity action on the base $E$. So, $F$ is given by
\begin{equation}
\label{eqn:form-of-F^l}
(x,y) \mapsto (x, f(y)).   
\end{equation}
Moreover, since this map is a lift of the endomorphism $\Phi^{rk}$ on $X$ we conclude that it respects the $G$-action on $E \times \bP^1$. In other words, using this along with equations \eqref{eqn:action-of-g} and \eqref{eqn:form-of-F^l} we see that the next diagram commutes for all $\g\in G$
\begin{equation}
\label{eqn:G-equivariant}
\begin{tikzcd}
E \times \bP^1  \arrow[r, "F"]\arrow[d, "\tilde{\g}"] & E \times \bP^1 \arrow[d, "\tilde{\g}"]\\
E \times \bP^1 \arrow[r, "F"] & E \times \bP^1.  
\end{tikzcd}
\end{equation}
Indeed, the image of $(x,y)$ and $(\g(x), \rho(\g)(y))$ under $F$ must be the same modulo the action of $G$. But the images are $(x,f(y))$ and $(\g(x), (f\circ \rho(\g))(y))$, respectively. It follows that $(\g(x), (f\circ \rho(\g))(y)) = \g' \cdot (x, f(y))$  for some $\g' \in G$. Then, it follows from the freeness of the action of $G$ that $\g' = \g$. Hence, 
\[
f \circ \rho(\g) = \rho(\g) \circ f,
\]
and the commutativity of \eqref{eqn:G-equivariant} follows. 

Using equation \eqref{eqn:F-com}, $F^{-1}(\pi_1^{-1}(p)) = \pi_1^{-1}(p)$ for all $p \in W$ since $\Phi^{-k}(p) = p$. By equation \eqref{eqn:form-of-F^l}, $F$ fixes the fibers of $E \times \bP^1 \lra E$ and thus the individual points in $\pi_1^{-1}(p)$ must all be totally invariant under $F$. After a suitable change of coordinates we may assume $(e,\infty) \in \pi_1^{-1}(p)$ for some $e \in E$. This implies that $f$ is a polynomial in $\C[y]$ and the curve $E \times \infty$ is totally invariant under $F$. If $E \times q$ is another horizontal curve that is totally invariant under an iterate of $F$ we assume after a suitable coordinate change that $q = 0$. Also, note that there is at most two totally invariant horizontal curves under an iterate of $F$ because otherwise $f$ would have to be an automoprhism which contradicts the hypothesis that $\Phi^k$ is non-invertible. At the expense of replacing $k$ with a multiple we assume without loss of generality that any curve that is totally invariant under an iterate of $F$ is already totally invariant under $F$ itself.

Define $L$ to be the union of all totally invariant curves in $E \times \bP^1$ under $F$ that are not fibers of the projection $E \times \bP^1 \lra E$. $L$ is clearly non-empty as $E \times \{\infty\} \subset L$. Also, note that $L$ only consists of horizontal curves. And, by the discussion above, it can have at most two irreducible components. Also, by equation \eqref{eqn:G-equivariant} we must have that $L$ is $G$-invariant. Let $I := \pi_1(L)$. Then, we have 

\begin{lemma}
\label{claim:inverse-image-of-I}
$\pi_1^{-1}(I) = L$ and $I$ is either
\begin{enumerate}
    \item irreducible and intersects the fibers of $\pi: X \lra C$  at exactly one point, or
    \item irreducible and intersects the fibers of $\pi: X \lra C$  at exactly two distinct point, or
    \item a union of two disjoint irreducible curves each intersecting the fibers of $\pi: X \lra C$ in exactly one point 
\end{enumerate}
\end{lemma}
\begin{proof}
The inclusion $L \subseteq\pi_1^{-1}(I)$ is clear since $I = \pi_1(L)$. Suppose for the sake of contradiction that $\pi_1^{-1}(I)$ is not contained in $L$. Then, there must exist a point $(x_0,y_0) \in \pi_1^{-1}(I)$ that is not contained in $L$. Since $\pi_1(x_0,y_0) \in \pi_1(L)$, there must exist $\g \in G$ such that $(\g(x_0), \rho(\g)(y_0)) \in L$. So, by our assumption about $L$, $\rho_\g(y_0)$ is either $0$ or $\infty$. It follows that $\widetilde{\g}(E \times y_0) \subseteq L$. Then, by equation \eqref{eqn:G-equivariant} we conclude that $E \times y_0$ must also be totally invariant under $\Phi^{rk}$. So, $E \times y_0 \subseteq L$ which contradicts $(x_0,y_0) \notin L$.

To prove the second part note that if $I$ intersects each fiber at exactly one point then we are clearly in case (1). Now, suppose that $I$ intersects each fiber at exactly two points and $I$ is not irreducible. Then, we have to be in case (3). So, we assume that $I$ is irreducible and intersects the generic fiber in at least $s$ distinct points for some $s \ge 2$. Then using the fact that $L$ is $G$-invariant we see that there must exist $(x_0,y_1), \dots, (x_0,y_s) \in L$ such that $y_1,\dots,y_s$ are distinct points in $\bP^1$. Since $L$ is a union of horizontal curves we conclude that $E \times y_i \subseteq L$ for all $i = 1,\dots,s$. If $s \ge 3$ we get a contradiction using the fact that $L$ has at most two irreducible components. We conclude that $s = 2$, $L$ is the union of $C_1 \times y_0$ and $C \times y_1$ and it is clear that $I$ intersects each fiber at exactly two distinct points.

\end{proof}

Using the lemma, let $I_1$ and $I_2$ be the irreducible components of $I$ with $I_2$ potentially empty. We will now show that $\Phi$ is regular on $X$. We can cover $C$ by open subsets $C_1,\dots,C_n$ such that each $C_i$ is invariant under $a$ and there is an isomorphism $u_i: \pi^{-1}(C_i) \simeq C_i \times \bP^1$ for all $i = 1,\dots,n$. We can do this since for every point $p \in C$, we can choose an affine $C_p$ that contains all of the points in the orbit of $p$ under $a$. Since $C_p$ is affine, $X$ is trivial over $C_p$. If we remove the orbit of all of the points in $C \setminus C_p$ under $a$ from $C$ we get an open subset of $C_p$ that is invariant under $a$ and still contains the orbit of $p$. By compactness of $C$, we cover $C$ by finitely many such $C_p$'s. Similarly, at the expense of replacing $C_i$'s with smaller $a$-invariant curves, we assume that the projection $u_i(I_j)\cap (C_i \times \bP^1) \lra \bP^1$ is not surjective for all $i=1,\dots,n$ and all $j = 1,2$.   

By Claim \ref{claim:inverse-image-of-I} we have $\pi_1^{-1}(I) = L$. So, using equation \eqref{eqn:F-com} we have
$$\pi_1^{-1}(\Phi^{-rk})(I) = F^{-1}(\pi_1^{-1}(I)) = F^{-1}(L) = L = \pi_1^{-1}(I).$$ 
Applying $\pi_1$ to the first and last terms we get $\Phi^{-rk}(I) = I$. 

To prove that $\Phi$ is regular it suffices to show that the induced rational maps $\psi_i: C_i\times \bP^1 \dra C_i\times \bP^1$ by $u_i \circ \Phi \circ u_i^{-1}$ are all regular. We do this only for $\psi_1$ and the proof for the rest of $\psi_i$'s follows analogously. 

As a consequence of Lemma \ref{claim:inverse-image-of-I} we have the next lemma. 
\begin{lemma}
\label{lem:preimage-of-I}
Let $V$ be as defined in the beginning of the proof. Then, $\Phi^{-1}(I \setminus V) \subseteq I \setminus V$.
\end{lemma}
\begin{proof}
We note that $\Phi(I \setminus V) \subset I \setminus V$. Indeed, the Zariski closure of $\Phi(I \setminus V)$ must also be totally invariant under $\Phi^{rk}$. This means that the closure of $\pi_1^{-1}(\Phi(I \setminus V))$ is totally invariant under $F$. Note that $\pi_1^{-1}(\Phi(I \setminus V))$ introduces irreducible components that are totally invariant under an iterate of $\Phi^{rk}$. So, they must be in $L$ by the definition of $L$. If $\Phi(I \setminus V) \not\subseteq I$, this contradicts $\pi_1^{-1}(I) = L$. So, $\Phi(I \setminus V) \subset I \setminus V$. Thus, $\Phi^{-1}(I \setminus V) \subseteq I \setminus V$ using $\Phi^{-rk}(I) = I$ and the fact that $\Phi$ induces an endomorphism on $X \setminus V$.   
\end{proof}

Suppose we are in case (1) of Claim \ref{claim:inverse-image-of-I}. Let $\widetilde{I} = u_1(I \cap \pi^{-1}(C_1))$ and $\widetilde{V} = u_1(V \cap \pi^{-1}(C_1))$. Then, using Lemma \ref{lem:preimage-of-I} we must have $\psi_1^{-1}(\widetilde{I} \setminus \widetilde{V}) \subseteq \widetilde{I} \setminus \widetilde{V}$. Since we are in case (1) of Claim \ref{claim:inverse-image-of-I}, $\widetilde{I}$ intersects the fibers of $\pi_1: C_1 \times \bP^1 \lra C_1$ at exactly one point. Thus, the projection $\pi_1$ sends $\widetilde{I}$ to $C_1$ birationally. As a result, there exists a birational inverse $C_1 \lra C_1 \times \bP^1$ with image equal to $\widetilde{I}$ given by 
\begin{equation}
\label{eqn:form-of-j}
x \mapsto (x, \ell(x)),
\end{equation}
where $\ell$ is a morphism from $C_1$ to $\bP^1$. Recall that the projection $\widetilde{I} \lra \bP^1$ is not surjective by our assumption on $C_1$. By equation \eqref{eqn:form-of-j}, we conclude that $\ell$ is not surjective. After a suitable change of coordinates we can assume that $\infty$ is not in the image of $\ell$. So, we may view $\ell$ as a morphism to $\A^1$. Now consider the automorphism $\iota$ of $C_1 \times \bP^1$ given by 
\[
(x,y) \mapsto \left(x, \frac{1}{y - \ell(x)}\right).
\]
We note that $\iota$ sends $\widetilde{I}$ to $C_1 \times \infty$. To prove $\psi_1$ is regular it suffices to show that $\iota \circ \psi_1 \circ \iota^{-1}$ is regular. So, we may assume without loss of generality that $\widetilde{I} = C_1 \times \infty$. Using $\psi_1^{-1}(\widetilde{I} \setminus \widetilde{V}) \subseteq \widetilde{I} \setminus \widetilde{V}$ along with Lemma \ref{lem:rat-pres-fibers} we may assume that $\psi_1$ is given by 
\[
(x,y) \mapsto (a(x), b(x,y)),
\]
for some rational function $a$ and some $b \in \C(C_1)[y]$. Note that $\psi_1^{rk}$ is regular on $C_1 \times \bP^1$ which lets us deduce that it is of the form 
\[
(x,y) \mapsto (a^{rk}(x), b'(x,y)),
\]
for some $b' \in \C[C][y]$. We conclude using case (a) of Lemma \ref{lem:skew-then-split} that $b \in \C[C_1][y]$. Therefore, $\psi_1$ is regular on $C_1 \times \bP^1$ and we are done. 

Similarly, if we are in case (3) of Lemma \ref{claim:inverse-image-of-I}, there must exist morphisms $\ell_j: C_1 \lra \bP^1$ for $j = 1,2$ such that the map $C_1 \lra C_1 \times \bP^1$ given by
\[
x \mapsto (x, \ell_j(x)),
\]
has image equal to $I_j$ for each $j = 1,2$.  
Considering the change of coordinates given by the automorphism
\[
(x,y) \mapsto \left(x, \frac{y - \ell_1(x)}{y - \ell_2(x)}\right),
\]
we may then assume without loss of generality that the irreducible components of $\widetilde{I}$ are $C_1 \times 0$ and $C_1 \times \infty$ and $\psi_1$ must be given by 
\[
(x,y) \mapsto (a(x), c(x)y^{-d}). 
\]
for some $c \in \C(C_1)$ and some $d \in \Z$ (not necessarily positive). So, $\psi_1^2$ is given by 
\[
(x,y) \mapsto \left(a^2(x), \frac{c(a(x))}{c(x)^d}y^{d^2}\right).
\]
Using case (a) of Lemma \ref{lem:skew-then-split} we conclude that $\frac{c(a(x))}{c(x)^d} \in \C[C_1]^\ast$. Therefore,
\[
\frac{c(a^k(x))}{c(a^{k-1}(x))^d} \cdots \frac{c(a^{k-1}(x))^{d^{k-1}}}{c(x)^{d^k}} = \frac{c(a^k(x))}{c(x)^{d^k}} = \frac{c(x)}{c(x)^{d^k}} = c(x)^{1 - d^k}\in \C[C]^\ast.
\]
Therefore, $c(x) \in \C[C_1]^\ast$. We conclude that $\psi_1$ is already regular on $C_1 \times \bP^1$ and we are done. 

Case (2) of Lemma \ref{claim:inverse-image-of-I} is similar to the previous case but a little more subtle. Choose loops $\gamma_i$ for $i = 1,\dots,4g$ in $C_1$ such that $C_i \setminus (\cup_i \gamma_i)$ is simply connected (See \cite[Page 5]{hatcher}). Let $\Gamma := \cup_i \gamma_i$. Similar to the previous cases, after a suitable change of coordinates we assume $\infty$ is not in the image of $\widetilde{I}$ under the projection $C_1 \times \bP^1 \lra \bP^1$. Then, using Lemma \ref{claim:inverse-image-of-I} along with the fact that $C_1 \setminus \Gamma$ is simply connected, there must exist analytic functions $\ell_j: C_1 \setminus \Gamma \lra \C$ such that 
\[
(x, \ell_1(x)) \text{ and } (x, \ell_2(x)), 
\]
are the two distinct analytic branches of $\widetilde{I}$ in $(C_1 \setminus \Gamma) \times \bP^1$. Using the biholomorphic change of coordinates 
\[
(x,y) \mapsto \left(x, \frac{y - \ell_1(x)}{y - \ell_2(x)}\right),
\]
and arguing exactly as in the previous case we see that $\psi_1$ is given by 
\[
(x,y) \mapsto (a(x), c(x)y^{-d}),
\]
for some $d \in \Z$ and some $c \in \C[C_1 \setminus \Gamma]^\ast$. If we let $\OO_a(\Gamma)$ denote the $a$-orbit of $\Gamma$, we conclude that $\psi_1$ is a holomorphic endomorphism of $(C_1 \setminus \OO_a(\Gamma)) \times \bP^1$. But, $\Gamma$ is arbitrary and given any point $p \in C_1$ we can choose $\Gamma$ so that the $a$-orbit of $p$ (which consists of finitely many points) does not intersect $\Gamma$. Therefore, by compactness, $C_1$ can be covered by finitely many sets of the form $C_1 \setminus \OO_a(\Gamma)$ such that $\psi_1$ induces an endomorphism of $(C_1 \setminus \OO_a(\Gamma)) \times \bP^1$. Hence, $\psi_1$ is holomorphic as a self-map of $C_1 \times \bP^1$. It follows from the fact that $\psi_1$ is a rational function that $\psi_1$ must be regular everywhere and thus is an endomorphism of $C_1 \times \bP^1$. 

We conclude in an identical manner that all $\psi_i$ induce endomorphisms of $C_i \times \bP^1$ for $i = 1,\dots,n$. Therefore, $\Phi$ must be an endomorphism on $X$ by the discussion right before Lemma \ref{lem:preimage-of-I}.   
\end{proof}
\section{Proof of Theorems \ref{thm:main} and \ref{thm:more-precise}}
\label{sec:proof-main-thms}
\begin{proof}[Proof of Theorem \ref{thm:main}]
The first conclusion of Theorem \ref{thm:main} is a clear consequence of Theorem \ref{thm:P2}. The second conclusion follows from the discussion in Remark \ref{rem:examples-for-second-part}. 
\end{proof}
\begin{proof}[Proof of Theorem \ref{thm:more-precise}]
  Suppose that $\kappa(X) \ge 0$. By Theorem \ref{thm:pos-kod}, $\Phi^k$ must be \'etale. If $\Phi$ itself is regular we are done. If not, we conclude by Proposition \ref{prop:totally-inv} that there exists a subset $W$ of $X$ of codimension 2 such that $\Phi^{-k}(W) = W$. In particular, there exists $\ell \ge 1$ such that $\Phi^{-k\ell}(w) = w$ for all $w \in W$. Since $\Phi^{k\ell}$ is \'etale we conclude that $\Phi^{k\ell}$ must be an automorphism which contradicts the hypothesis that $\Phi^k$ is a non-invertible endomorphism. This finishes the proof of Theorem \ref{thm:more-precise} in the case of $\kappa(X) \ge 0$. 

Now assume that $\kappa(X) = -\infty$. It follows from Theorem \ref{thm:ruled} that $X$ is either a toric surface or a $\bP^1$-bundle over a smooth curve of genus at least 1. We conclude the proof by Theorems \ref{thm:toric} and \ref{thm:P1-bundle-over-C}. 
\end{proof}

\section{Proof of Theorem \ref{thm:birational}}
\label{sec:birational}
 Before proving Theorem \ref{thm:birational} we first need to give a classification similar to Theorem \ref{conj:lin-conj} of invariant subvarieties of $\bP^2$ under automorphisms that do not preserve non-constant fibrations. More specifically, we prove Proposition \ref{prop:inv-subvarieties} which provides this classification for all $\bP^n$ and $n \ge 2$. Before stating this proposition, we need the next definition. 

 \begin{definition}
Let $\Phi: \bP^n \lra \bP^n$ be an endomorphism. We let $IH(\Phi)$ denote the union of all irreducible hypersurfaces $H$ such that $\Phi(H) = H$. 
\end{definition}

\begin{proposition}
\label{prop:inv-subvarieties}
Suppose that $\Phi$ is an automorphism of $\bP^n$ given by a matrix $A \in \text{PGL}(n + 1, \C)$. Moreover, assume there does not exist a non-constant rational fibration $f: \bP^n \dra \bP^1$ such that $f \circ \Phi = f$. Then, one of the following must hold:
\begin{itemize}
    \item [(a)] After a suitable change of coordinates $\Phi$ is given by a matrix of the form $$J_{\lambda_1, 2} \bigoplus J_{\lambda_2, 1} \bigoplus \cdots \bigoplus J_{\lambda_{n-1},1} \bigoplus J_{1,1}$$
    where $\lambda_1, \dots, \lambda_{n-1}$ are multiplicatively independent and $IH(\Phi) = V(X_1X_2\cdots X_n)$ or, 

    \item[(b)] $A$ is digonalizable of the form 
    \[
    J_{\lambda_1, 1} \bigoplus J_{\lambda_2, 1} \bigoplus \cdots \bigoplus J_{\lambda_{n},1} \bigoplus J_{1,1}
    \]
    with multiplicatively independent eigenvalues $\lambda_1, \dots, \lambda_{n}$ and $IH(\Phi) = V(X_0X_1\cdots X_n)$.

\end{itemize}
\end{proposition}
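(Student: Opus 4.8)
The plan is to conjugate $A$ into Jordan normal form (a change of coordinates), to eliminate every Jordan type other than the two in the statement by exhibiting a $\Phi$-invariant non-constant fibration $\bP^n\dra\bP^1$ whenever the type is excluded, and then to read off $IH(\Phi)$ from the surviving normal form. I will use throughout that a linear form $\ell$ with $\ell\circ\Phi=\mu\ell$ cuts out a $\Phi$-invariant hyperplane $Z(\ell)$, and that such forms are, up to scalars, exactly the eigenvectors of the transpose $A^{\top}$.

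\emph{Ruling out the degenerate types.} First I would show no eigenvalue can occur in two Jordan blocks: if an eigenvalue $\mu$ had geometric multiplicity $\ge 2$, two independent linear forms $Y,Y'$ in the $\mu$-eigenspace of $A^{\top}$ give the invariant fibration $[Y:Y']$, contrary to hypothesis. So the distinct eigenvalues $\mu_1,\dots,\mu_m$ sit on pairwise distinct blocks. Next I would exclude having two blocks of size $\ge 2$: a size-$\ge 2$ block with eigenvalue $\mu_j$ furnishes, after choosing coordinates $Y_0,Y_1$ for it with $Y_0\circ\Phi=\mu_j Y_0$ and $Y_1\circ\Phi=\mu_j Y_1+Y_0$, a rational function $s_j:=Y_1/Y_0$ on $\bP^n$ with $s_j\circ\Phi=s_j+\mu_j^{-1}$; for two such blocks, $s_1,s_2$ are algebraically independent and $\mu_1 s_1-\mu_2 s_2$ is a non-constant invariant function, hence a fibration. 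Finally I would exclude a block of size $\ge 3$ by exhibiting, from three of its coordinates $Y_0,Y_1,Y_2$ with $Y_0\circ\Phi=\mu Y_0$ and $Y_i\circ\Phi=\mu Y_i+Y_{i-1}$, the invariant
\[
g \;=\; \frac{Y_2}{Y_0} \;-\; \frac{\mu}{2}\left(\frac{Y_1}{Y_0}\right)^{2} \;+\; \frac12\,\frac{Y_1}{Y_0},
\]
which satisfies $g\circ\Phi=g$ and is non-constant. Hence at most one block has size $\ge 2$, and such a block has size exactly $2$; so either $m=n+1$ and $A$ is diagonalizable, or $m=n$ with a unique $2\times2$ block.

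\emph{Multiplicative independence and rescaling.} Were there $\vec c\in\Z^m\setminus\{0\}$ with $\sum_j c_j=0$ and $\prod_{j=1}^m\mu_j^{c_j}=1$, then with $Y_{(j)}$ a linear form satisfying $Y_{(j)}\circ\Phi=\mu_j Y_{(j)}$ the ratio of the monomials $\prod_{c_j>0}Y_{(j)}^{c_j}$ and $\prod_{c_j<0}Y_{(j)}^{-c_j}$ — of equal degree since $\sum c_j=0$ — would be an invariant fibration. So no such relation exists. As $n\ge 2$ there is at least one size-$1$ block; replacing $A$ by a scalar multiple (harmless in $\mathrm{PGL}_{n+1}$) so that one such block has eigenvalue $1$, the remaining $n$ eigenvalues (resp.\ $n-1$) become multiplicatively independent, and reordering the blocks puts $A$ in form (b) (resp.\ (a)).

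\emph{The invariant hypersurfaces, and the main obstacle.} An irreducible hypersurface $Z(h)$, with $h$ irreducible homogeneous of degree $\delta$, is $\Phi$-invariant iff $h\circ\Phi=c\,h$ for some $c\in\C^\ast$, i.e.\ $h$ is a $\Phi^\ast$-eigenvector on the degree-$\delta$ piece of the coordinate ring. Ordering the degree-$\delta$ monomials first by the tuple of total degrees in the coordinate groups of the individual blocks, and then — inside the $2\times2$ block, if present — by the power of its cyclic (non-eigen) coordinate, makes $\Phi^\ast$ block upper triangular with scalar diagonal blocks $\prod_j\mu_j^{b_j}$; the absence of multiplicative relations forces distinct block-degree tuples to give distinct scalars, and inside each such scalar block the nilpotent part is a single Jordan block, so every $\Phi^\ast$-eigenvector is a scalar multiple of some monomial $\prod_j Y_{(j)}^{b_j}$. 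An irreducible monomial of this form is a single $Y_{(j)}$, so $IH(\Phi)=\bigcup_{j=1}^m Z(Y_{(j)})$. In case (b) this is the union of all $n+1$ coordinate hyperplanes, $V(X_0X_1\cdots X_n)$; in case (a) the cyclic coordinate of the $2\times2$ block is absent, and relabelling it $X_0$ gives $V(X_1X_2\cdots X_n)$. I expect the genuinely delicate step to be this last one — controlling the $\Phi^\ast$-eigenvectors in the presence of a $2\times2$ Jordan block, where $\Phi^\ast$ is non-semisimple and one must verify that the triangular structure really does pin eigenvectors down to monomials; the earlier steps are routine once the right invariant rational functions are written down.
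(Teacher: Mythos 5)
Your proposal is correct in substance, but it follows a genuinely different route from the paper's, and this is worth recording. For the reduction of the Jordan type and for multiplicative independence, the paper simply invokes the analysis in \cite[Theorem 2.1]{GH19} (the proofs of its Cases 1--3), whereas you construct explicit $\Phi$-invariant rational functions; your constructions are right except for one computational slip: with your normalization $Y_0\circ\Phi=\mu Y_0$, $Y_i\circ\Phi=\mu Y_i+Y_{i-1}$, the function you wrote for a block of size $\ge 3$ is invariant only when $\mu=1$, and the correct invariant is $\frac{Y_2}{Y_0}-\frac{1}{2}\left(\frac{Y_1}{Y_0}\right)^{2}+\frac{1}{2\mu}\,\frac{Y_1}{Y_0}$ --- an easily repaired slip, since the invariant you need exists and has exactly the shape you propose. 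The more interesting divergence is the computation of $IH(\Phi)$: the paper argues dynamically, picking a point with nonzero coordinates on a putative extra invariant hypersurface, writing out its forward orbit under the explicit normal form, and showing that the defining polynomial evaluated along the orbit is a non-degenerate linear recurrence sequence, which by \cite[Proposition 3.2]{GH16} vanishes only finitely often; you instead argue linearly, identifying invariant irreducible hypersurfaces with eigenvectors of $\Phi^{\ast}$ on each graded piece, using the absence of exponent-sum-zero multiplicative relations to separate the block-multidegree subspaces, and noting that on the subspace coming from the $2\times 2$ block the action is a scalar times a single Jordan block (in characteristic zero the symmetric power of a size-two Jordan block with nonzero eigenvalue is again a single Jordan block), so every semi-invariant eigenvector is a monomial in the eigenforms and every irreducible one is a coordinate form. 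The step you flag as delicate is indeed the crux, but it is exactly this standard symmetric-power fact, so your argument closes; its advantage is that it is self-contained and avoids the recurrence-sequence machinery, while the paper's orbit argument avoids any analysis of $\Phi^{\ast}$ on higher-degree forms.
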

\begin{proof}
We may assume after a suitable change of coordinates that $A$ is in Jordan canonical form. Our first step is to show that $A$ is either diagonalizable or of the form $$J_{\lambda_1, 2} \bigoplus J_{\lambda_2,1} \bigoplus \cdots \bigoplus J_{\lambda_{n},1}.$$ This is a consequence of the proofs of Cases 2 and 3 of \cite[Theorem 2.1]{GH19}. More specifically, by the proof of Cases 2 and 3 of \cite[Theorem 2.1]{GH19}, having a Jordan block of length at least 3 or two Jordan blocks of length two would yield a non-constant rational fibration that is preserved by $\Phi$. This contradicts the hypothesis of the proposition. 

We now assume after normalization that the eigenvalue corresponding to the last Jordan block is equal to 1 since $A$ is a projective matrix. Now, suppose that $A$ is of the form $J_{\lambda_2, 2} \bigoplus J_{\lambda_2,1}\bigoplus \cdots \bigoplus J_{\lambda_{n-1},1} \bigoplus J_{1,1}$. This means that $\Phi$ viewed as a map on $\A^n \cong \bP^n \setminus V(X_n)$ is given by 
\[
(x_1,\dots,x_n) \mapsto (\lambda_1x_1 + x_2, \lambda_1 x_2, \lambda_2x_3,\dots,\lambda_{n-1}x_n).
\]
We may assume that the eigenvalues $\lambda_1,\dots,\lambda_{n-1}$ are multiplicatively independent because otherwise $\Phi$ will preserve a non-constant rational fibration (See \cite[Theorem 2.1]{GH19}, specifically the discussion before Case 1 and the proof of Case 1). 

Now, since $A = J_{\lambda_2, 2} \bigoplus J_{\lambda_2,1}\bigoplus \cdots \bigoplus J_{\lambda_{n-1},1} \bigoplus J_{1,1}$ we clearly must have $$V(X_1X_2\cdots X_n) \subseteq IH(\Phi).$$ To prove the reverse inclusion suppose that $H$ is a hypersurface in $IH(\Phi) \setminus V(X_1X_2\cdots X_n)$. $H$ can be viewed as a hypersurface of $\A^n \cong \bP^n \setminus V(X_n)$ defined by a polynomial $F(x_1,\dots,x_n) \in \C[x_1,\dots,x_n]$. We can write 
\[
F(x_1,\dots,x_n) = p_d(x_2,\dots,x_n)x_1^d + \cdots + p_0(x_2,\dots,x_n),
\]
for $p_0,\dots,p_d \in \C[x_2,\dots,x_n]$ and some $d\ge 0$. Since $H \in IH(\Phi) \setminus V(X_1X_2\cdots X_n)$, we can pick a point $\alpha := (a_1, \dots, a_n)$ in $H$ such that $a_2 \cdots a_n \ne 0$. Using $\Phi(H) = H$, the orbit of $\alpha$ which is the set 
\[
\{(\lambda_1^ma_1 + m\lambda_1^{m-1}a_2, \lambda_1^m a_2, \lambda_2^ma_3, \dots, \lambda_{n-1}^ma_n): m \ge 0\} 
\]
must be contained in $H$. Therefore, 
\begin{align}
\label{eqn:F-zero}
0 &= F(\lambda_1^ma_1 + m\lambda_1^{m-1}a_2, \lambda_1^m a_2, \lambda_2^ma_3, \dots, \lambda_{n-1}^ma_n) \\ 
&= \sum_{i = 0}^d p_i(\lambda_1^m a_2, \lambda_2^ma_3, \dots, \lambda_{n-1}^ma_n)(\lambda_1^ma_1 + m\lambda_1^{m-1}a_2)^i \notag \\
&= m^d\lambda_1^{d(m-1)}a_2^d p_d(\lambda_1^ma_2, \dots , \lambda_{n-1}^ma_n) + \sum_{i=0}^{d-1}m^iQ_i(\lambda_1^ma_1, \lambda_1^ma_2, \dots , \lambda_{n-1}^ma_n), \notag 
\end{align}
for some polynomials $Q_i$ with complex coefficients. Now note that we can write
\begin{equation}
\label{eqn:first-term}
m^d\lambda_1^{d(m-1)}a_2^d p_d(\lambda_1^ma_2, \dots , \lambda_{n-1}^ma_n) = \sum_{1 \le j \le s} \alpha_j m^d r_j^m,
\end{equation}
for some $r_1,\dots,r_s \in \C$ that are not roots of unity and such that $r_i/r_j$ is also not a root of unity for all $i \ne j$ using the fact that $\lambda_1,\dots,\lambda_{n-1}$ are multiplicatively independent. Moreover, we note that $\alpha_1\cdots \alpha_s \ne 0$ using the fact that $a_2\cdots a_n \ne 0$.  Similarly, we can write
\begin{equation}
\label{eqn:second-term}
\sum_{i=0}^{d-1}m^iQ_i(\lambda_1^ma_1, \lambda_1^ma_2, \dots , \lambda_{n-1}^ma_n) = \sum_{1\le k \le t} R_k(m)(r'_k)^m,
\end{equation}
for distinct $r'_1,\dots,r'_t \in \C$ and polynomials $R_1,\dots,R_t \in \C[m]$ of degree at most $d-1$. Moreover, at the expense of assuming $m$ is always a multiple of a suitably large fixed integer, say $\ell$, and replacing $r_1,\dots,r_s,r'_1,\dots,r'_t$ with $r_1^\ell,\dots,r_s^\ell,(r'_1)^\ell,\dots,(r'_t)^\ell$ we may assume that if $r'_i$ is a root of unity then $r'_i = 1$. Putting equations \eqref{eqn:F-zero}, \eqref{eqn:first-term}, and \eqref{eqn:second-term} together we get 
\begin{align}
F(\lambda_1^ma_1 + m\lambda_1^{m-1}a_2, \lambda_1^m a_2, \lambda_2^ma_3, \dots, \lambda_{n-1}^ma_n) &= \sum_{1 \le j \le s} \alpha_j m^d r_j^m + \sum_{1 \le k \le t} R_k(m)(r'_k)^m \notag \\ 
&= \sum_{1 \le j \le s} q_j(m) r_j^m + \sum_{k \in S^c} R_k(m)(r'_k)^m 
\end{align}
where $S$ is the subset of indices of $r'_i$'s that are equal to some $r_j$ and $q_j(m)$ are polynomials obtained after combining all the terms in $\sum_{1 \le j \le s} q_j(m) r_j^m + \sum_{k \in S^c} R_k(m)(r'_k)^m$ that correspond to $r_j^m$. Note that all $q_1,\dots,q_s$ are polynomials of degree $d$. Thus, the  equation must be the general equation for a non-degenerate linear recurrence sequence (See Section 3 of \cite{GH16}). Then, by \cite[Proposition 3.2]{GH16}, Equation \eqref{eqn:F-zero} can only be equal to zero for finitely many values of $m$. This is a contradiction. Therefore, $V(X_1\cdots X_n) = IH(\Phi)$ and we are done. The proof is almost identical when $A$ is diagonalizable. 
\end{proof}

We are now ready to prove Theorem \ref{thm:birational}.
\begin{proof}[Proof of Theorem \ref{thm:birational}]
If $\Phi$ is regular we are done. So, we assume this is not the case and $\Phi^k$ is regular for some $k \ge 2$. Since $\Phi$ is birational, $\Phi^k$ must be an automorphism of $\bP^2$. So, it corresponds to a matrix $A$ in $\text{PGL}(3, \C)$. We assume that $\Phi$ does not preserve a non-constant rational fibration and show that $\Phi$ itself must be an automorphism of $\bP^2$. After a suitable change of coordinates we may assume that $A$ is in Jordan normal form. Using proposition \ref{prop:inv-subvarieties} we assume $A$ is either of the form $J_{\lambda_1,1} \bigoplus J_{\lambda_{2},1} \bigoplus J_{1,1}$ or $J_{\lambda_1, 2} \bigoplus J_{1,1}$. We handle these two cases separately.

\textbf{Case 1. $A = J_{\lambda_1,1} \bigoplus J_{\lambda_{2},1} \bigoplus J_{1,1}$.} Recall that $IH(\Phi) = V(X_0X_1X_2)$ by Proposition \ref{prop:inv-subvarieties}. Using Lemma \ref{lem:add-more-totally-invs} we conclude that $\Phi$ induces an endomorphism of $\bP^2 \setminus V(X_0X_1X_2) \cong 
\bG_m^2$. So, there is a matrix $B \in M_{2,2}(\Z)$ and some $\vec{v} \in \C^2$ such that $\Phi$ as an endomorphism of $\bG_m^2$ is given by 
\begin{equation}
\label{eqn:form-of-phi-1}
\vec{x} \mapsto \vec{v} + B\vec{x}.
\end{equation}
Therefore, $\Phi^k$ must be given by 
\begin{equation}
\label{eqn:phi^k-1}
\vec x \mapsto \vec v + B \vec v + \cdots + B^{k-1}\vec v + B^{k}\vec x  
\end{equation}
On the other hand, if we let $\vec \lambda = (\lambda_1,\lambda_2)$, because of the form of the matrix $A$, $\Phi^k$ on $\bG_m^2$ is given by
\begin{equation}
\label{eqn:phi^k-2}
\vec x \mapsto \lambda + \vec x,
\end{equation}
Putting equations \eqref{eqn:phi^k-1} and \eqref{eqn:phi^k-2} together we conclude that $B^k = \text{Id}$ and
\[
\vec v + B \vec v + \cdots + B^{k-1}\vec v = \vec \lambda.
\]
Multiplying both sides by $(B - \text{Id})$ we conclude that 
\[
(B - \text{Id})\vec \lambda = \vec{0}.
\]
If $B$ is not the identity matrix, this means that the eigenvalues $\lambda_1$ and $\lambda_{2}$ are multiplicatively dependent which is a contradiction. Hence, $B = \text{Id}$ and we conclude that $\Phi$ itself must extend to an endomorphism of $\bP^2$.

\textbf{Case 2. $A = J_{\lambda_1, 2} \bigoplus J_{1,1}$.} In this case $H = V(X_1X_2)$ and $\lambda_1$ is not a root of unity by Proposition \ref{prop:inv-subvarieties}. Analogous to Case 1, we can then conclude that $\Phi$ induces an isomorphism of $\bP^2 \setminus V(X_1X_2) \cong \A^2\setminus \{y = 0\}$ where we let $x$ and $y$ be the coordinates of $\A^2$. It follows that $\Phi$ viewed as a rational self-map of $\A^2$ is given by 
\[
(x,y) \mapsto (p(x,y), cy^d),
\]
for some $c \in \C^\ast$, $p(x,y) \in \C[x,y,y^{-1}]$, and $d \in \{1,-1\}$. If $d = -1$, it follows that $\Phi$ preserves the fibration
\[
(x,y) \mapsto \frac{y}{c} + \frac{1}{y},
\]
which contradicts the hypothesis. So, $d = 1$. Using Case (c) of Lemma \ref{lem:skew-then-split} we conclude that $p(x,y)$ belongs to $\C[x,y]$. Since $\Phi$ is birational on $\C^2$ we conclude that $p(x,y) = ax + q(y)$ for some $a \in \C^\ast$ and $q \in \C[y]$. So, $\Phi$ is given by 
\begin{equation}
\label{eqn:form-of-phi-2}
(x,y) \mapsto (ax + q(y), cy).
\end{equation}
To finish the proof we must show that $\deg(q) \le 1$. Using the fact that $\Phi^k$ is an automorphism of $\bP^2$ we know that $\Phi^k$ induces a linear automorphism of $\A^2$. On the other hand, we can compute that $\Phi^k$ is given by 
\[
(x,y) \mapsto (a^kx + a^{k-1}q(y) + a^{k-2}q(cy) + \cdots + q(c^{k-1}y), c^ky).
\]
Now suppose for the sake of contradiction that $q$ is not linear and let its leading term be $my^j$ for some $j \ge 2$ and $m \in \C^\ast$. Then, since $\Phi^k$ is a linear automorphism, the coefficient of $y^j$ in $a^{k-1}q(y) + a^{k-2}q(cy) + \cdots + q(c^{k-1}y)$ must be zero. This coefficient can be computed as 
\[
m(a^{k-1} + a^{k-2}c^j + \cdots + c^{j(k-1)}) = m\frac{a^k - c^{jk}}{a- c^j} = 0.
\]
This implies that
\begin{equation}
\label{eqn:a,c-1}
a^k = c^{kj}
\end{equation}
Recall that we know from the form of the matrix $A$ that 
\begin{equation}
\label{eqn:a,c-2}
a^k = c^k = \lambda_1    
\end{equation}
Dividing Equation \eqref{eqn:a,c-1} by Equation \eqref{eqn:a,c-2} we get $c^{k(j-1)} = 1$. So, $c$ is a root of unity since $j \ge 2$. Then, $\lambda_1$ is also a root of unity as $\lambda_1 = c^k$. This contradicts our hypothesis about $A$. Therefore, $j = 1$ and $\Phi$ is a linear automorphism and must extend to $\bP^2$. 
\end{proof}

\bibliography{main}
\bibliographystyle{alpha}
\end{document}